\documentclass[10pt,english,reqno]{amsart}

\newcommand{\da}{{\hat{\alpha}}}
\newcommand{\db}{{\hat{\beta}}}
\newcommand{\wh}{\widehat}

\newcommand{\BA}{\mathbb{A}} 
\newcommand{\BP}{\mathbb{P}} 
\newcommand{\wpair}[1]{\left\{{#1}\right\}}
\newcommand{\ov}{\overline} 
\newcommand{\wt}{\widetilde} 
\newcommand{\bpm}{\begin{pmatrix}}
\newcommand{\epm}{\end{pmatrix}}

\usepackage[T1]{fontenc}

\usepackage{amssymb,url,xspace,amsthm,adjustbox}%, stix}

\usepackage[alphabetic]{amsrefs}

\usepackage{mathtools}
\usepackage{graphicx}

\tolerance 400
\pretolerance 200

\usepackage{float}

\author[C. Cunningham]{Clifton Cunningham}
\address{University of Calgary}
\email{cunning@math.ucalgary.ca}
\thanks{Clifton Cunningham gratefully acknowledges the support of NSERC Discovery Grant 696185 and additional support from the Pacific Institute for the Mathematical Sciences (PIMS)}

\author[A. Fiori]{Andrew Fiori}
\address{University of Lethbridge}
\email{andrew.fiori@uleth.ca}
\thanks{Andrew Fiori thanks and acknowledges the University of Lethbridge for their financial support.}

\author[Q. Zhang]{Qing Zhang}
\address{University of Calgary}
\email{qing.zhang1@ucalgary.ca}
\thanks{Qing Zhang would like to thank the support NSFC grant 11801577, a postdoc fellowship from the Pacific Institute for the Mathematical Sciences (PIMS) and financial support from University of Calgary.}

\thanks{All of the authors would like to thank the Banff International Research Station (BIRS) for hosting the 2019 Focused Research Group 19frg277 on The Voganish Project}

\keywords{Arthur packets, exception groups, admissible representations, Langlands correspondence, perverse sheaves, cubics}
\subjclass[2010]{11F70(primary), and 32S60(secondary)} 

\usepackage{datetime}
\usepackage[T1]{fontenc}
%\usepackage[utf8]{inputenc}
%\usepackage[margin=4cm]{geometry}
%\renewcommand{\baselinestretch}{1.0}
%\bibstyle{alphanumeric}
%\bibliographystyle{amsalpha}
\usepackage{chngcntr}

\usepackage{amssymb}
%\usepackage{amsrefs}
% Fonts
\usepackage{mathrsfs,stmaryrd}
\usepackage{yfonts, bbm}
% Enumitem
\usepackage{enumitem}

% Hyperrefs
\usepackage{hyperref}
\hypersetup{
  colorlinks   = true, %Colours links instead of ugly boxes
  urlcolor     = blue, %Colour for external hyperlinks
  linkcolor    = blue, %Colour of internal links
  citecolor   = green %Colour of citations
}

% Diagrams
\usepackage{tikz}
\usetikzlibrary{shapes,arrows,calc,matrix}
\usepackage{tikz-cd}
% Editing
\usepackage{todonotes}
% Color
\usepackage{color}

% For Bin's table spacing
\usepackage{amsmath}
\usepackage{lipsum}
\usepackage{setspace}

%\usepackage{smfthm}

%%%%%%%%%%%%%%% THEOREM STYLES %%%%%%%%%%%%%%%
\theoremstyle{plain}
      \newtheorem{theorem}{Theorem}[section]
      \newtheorem{proposition}[theorem]{Proposition}
      \newtheorem{lemma}[theorem]{Lemma}

      \theoremstyle{definition}

      \newtheorem{remark}[theorem]{Remark}

      \newtheorem*{conjecture*}{Conjecture}

% number one equation of many      
 %     \newcommand\numberthis{\addtocounter{equation}{1}\tag{\theequation}}

\counterwithin{table}{subsection}

%\emergencystretch 10pt
%\tolerance=9000

\newcommand{\pair}[1]{\langle {#1} \rangle}

%%%%%%%%%%%%%%% RINGS AND GROUPS %%%%%%%%%%%%%%%
\newcommand{\FF}{{\mathbb{F}}}
\newcommand{\ZZ}{{\mathbb{Z}}}

\newcommand{\CC}{{\mathbb{C}}}

\newcommand{\RR}{{\mathbb{R}}}

%%%%%%%%%%%%%%% ALGEBRAIC GROUPS %%%%%%%%%%%%%%%

\DeclareMathOperator{\GL}{GL}
\DeclareMathOperator{\Sym}{Sym}
 % Component group
\newcommand{\G}{\textbf{G}}
\DeclareMathOperator{\SL}{SL}

\DeclareMathOperator{\PGL}{PGL}
\DeclareMathOperator{\Sp}{Sp}

\DeclareMathOperator{\PGSpin}{PGSpin}
\DeclareMathOperator{\SO}{SO}
\DeclareMathOperator{\PSO}{PSO}

\newcommand{\Lgroup}[1]{{\hskip-2 pt \,^L\hskip-1pt{#1}}}
\newcommand{\dualgroup}[1]{{\widehat{#1}}}

%%%%%%%%%%%%%%% NAMED OPERATORS %%%%%%%%%%%%%%%

\newcommand{\Frob}{{\operatorname{Fr}}}

\DeclareMathOperator{\id}{id}

\DeclareMathOperator{\trace}{trace}

\newcommand{\Spec}[1]{{\operatorname{Spec}(#1)}}
\DeclareMathOperator{\Ad}{Ad}

%%%%%%%%%%%% MISCELLANEOUS OPERATORS %%%%%%%%%%%%

\newcommand{\abs}[1]{{\vert #1 \vert}}
\newcommand{\ceq}{{\, :=\, }}
\newcommand{\tq}{{\ \vert\ }}
\newcommand{\iso}{{\ \cong\ }}

\DeclareMathOperator{\diag}{diag}

%% Limits

%% Fonts for quasicharacter sheaves

%% Categories

 %Deligne's category
\newcommand{\Perv}{\mathsf{Per}}

\newcommand{\Loc}{\mathsf{Loc}}
\newcommand{\Rep}{\mathsf{Rep}}

%Brylinki's functor
\newcommand{\Ev}{\operatorname{\mathsf{E}\hskip-1pt\mathsf{v}}}
\newcommand{\Evs}{\operatorname{\mathsf{E}\hskip-1pt\mathsf{v}\hskip-1pt\mathsf{s}}}

%Normalized Ev

\newcommand{\NEvs}{\operatorname{\mathsf{N}\hskip-1pt\mathsf{E}\hskip-1pt\mathsf{v}\hskip-1pt\mathsf{s}}}

\newcommand{\IC}{{\mathcal{I\hskip-1pt C}}}

\newcommand{\RPhi}{{\mathsf{R}\hskip-0.5pt\Phi}}

% Functors

\newcommand{\Ft}{\operatorname{\mathsf{F\hskip-1pt t}}}

%% Labeled items
\makeatletter
\newcommand{\labitem}[2]{
\def\@itemlabel{\textbf{#1}}
\item
\def\@currentlabel{#1}\label{#2}}
\makeatother
%% Shorthand for bars

% Constant sheaf
\newcommand{\1}{{\mathbbm{1}}}

%% Representations and packets

%non-tempered Langland parameter

%non-tempered Arthur parameter

%Langlands parameter

%Arthur parameter

%tempered parameters

%representations or L-packet

%single representation of classical group

%centralizer of parameter mod center

%component group of centralizer of parameter
%\renewcommand{\S}[1]{\mathcal{S}_{#1}}

%direct sum

%formal direct sum

%centre of classical group

%Restriction of representation

%Induction of representation
\newcommand{\Ind}{\text{Ind}}

%Centralizer

%Two group

%Complex numbers
\newcommand{\C}{\mathbb{C}}

%Real numbers

%Adel numbers

%Normalizer

%Norm

%normalizer in the dual group

%component group of normalizer in the dual group

%maximal torus in the dual group

%Weyl group

%automorphism 
\newcommand{\Aut}{\text{Aut}}

%inner automorphism

%outer automorphism

%homomorphism group

%Kernel

%Character of centralizer group

%Rep
%\newcommand{\Rep}{\text{Rep}}

%Jac

%Moeglin's parameter l

%Moeglin's parameter eta

% Dual group

% Langlands parameter

% Arthur parameter
%\renewcommand{\q}{\psi}

% Image
\renewcommand{\Im}{\operatorname{Im}}

%Special fibre
\newcommand{\s}{{s}}

%Slodowy slice

%Hypercohomolgy

%field of values of Langlands parameters !

%\renewcommand*{\thetheorem}{\Alph{theorem}}

\newcommand{\KPair}[2]{( \, #1\, \vert\, #2\, )}

\newcommand{\Lie}{\operatorname{Lie}}

\setcounter{tocdepth}{2}

\title[Arthur packets for $G_2$ and perverse sheaves on cubics]
{Arthur packets for $G_2$ and perverse sheaves on cubics}
%{Arthur packets for the subregular unipotent orbit of $p$-adic $G_2$ and perverse sheaves on cubics}

\date{\today}                                           % Activate to display a given date or no date

\begin{document}

\begin{abstract}
This paper begins the project of defining Arthur packets of all unipotent representations for the $p$-adic exceptional group $G_2$. Here we treat the most interesting case by defining and computing Arthur packets with component group $S_3$. We also show that the distributions attached to these packets are stable, subject to a hypothesis. This is done using a self-contained microlocal analysis of simple equivariant perverse sheaves on the moduli space of homogeneous cubics in two variables. In forthcoming work we will treat the remaining unipotent representations and their endoscopic classification and strengthen our result on stability.
\end{abstract}

\maketitle

%\tableofcontents

%\setcounter{section}{-1}

\section{Introduction}

%Let $F$ be a $p$-adic field. 
Appearing first in \cite{Arthur:Conjectures}, Arthur packets play the lead role in Arthur's endoscopic classification \cite{Arthur:book} of admissible representations of the symplectic and orthogonal groups $\Sp_{2n}$, $\mathrm{O}_{2n}$ and $\SO_{2n+1}$ over local fields.
When combined with the work of many others, especially on the stable trace formula, Arthur's endoscopic classification is the keystone in the proof of the local Langlands correspondence for these groups.
While this classification has been extended to other classical groups in \cite{Mok:Unitary} and \cite{KMSW:Unitary}, work remains to be done in several directions, including extending the endoscopic classification to all forms of classical groups, spin groups, exceptional groups and generalising the notion of Arthur packets to include admissible representations that are not of Arthur type.
We are interested in extending Arthur's endoscopic classification of admissible representations to the split exceptional group $G_2$ and also extending the classification to include admissible representations that are not of Arthur type.

In this paper we begin the project of finding the endoscopic classification of unipotent representations of $G_2(F)$, where $F$ is a $p$-adic field; see \cite{Lusztig:Classification1}*{Section 0.3} for the definition of unipotent representations.
These representations correspond to unramified Langlands parameters $G_2$.
Since a local Langlands correspondence is available for unipotent representations of $G_2$ as a particular case of \cite{Lusztig:Classification1}, what we seek, first, is an answer to the following question.
\begin{quotation}{\it
If $\phi: W_F' \to \Lgroup{G}_2$ is an unramified Langlands parameter of Arthur type $\psi$, how can we define a packet $\Pi_\psi(G_2(F))$ of irreducible representations that includes the L-packet $\Pi_\phi(G_2(F))$ and a function 
\[
\begin{array}{rcl}
\langle \ , \ \rangle_\psi : \Pi_{\psi}(G_2(F)) &\to& {\widehat A_\psi} \\
\pi &\mapsto& \langle \ , \pi\rangle_\psi,
\end{array}
\]
that has the properties predicted by Arthur's main local result \cite{Arthur:book}*{Theorem 1.5.1} and also such that
\[
\Theta_{\psi} \ceq \sum_{\pi \in\Pi_{\psi}(G_2(F))} \langle a_\psi , \pi\rangle_\psi \ \Theta_{\pi}
\]
is stable, for some $a_\psi \in A_\psi$?
}\end{quotation}

Arthur's work shows that the function $\langle \ , \ \rangle_\psi$ is canonically determined by endoscopic character relations, derived mainly from the stable trace formula.   
We approach this problem by using \cite{CFMMX} to construct candidate packets from which stable distributions can be formed; in forthcoming work we show that these packets are compatible, in a precise sense, with endoscopy and twisted endoscopy.
We believe that this justifies calling these packets Arthur packets.
In the case of $G_2$, we will show that these candidate packets satisfy conditions coming from the fact that $G_2$ is a twisted endoscopic group of $\PSO_8$; likewise, we will show that these candidate packets satisfy conditions related to the endoscopic subgroups of $G_2$, which are $\PGL_3$, split $\SO_4$, $\GL_2$ and $\GL_1\times\GL_1$. 
Our approach to finding these candidate packets relies on ideas that can be traced back to Vogan \cite{Vogan:Langlands} and are developed further in \cite{CFMMX}.
The candidate packets we construct are called ABV-packets, defined in \cite{CFMMX}.

Our goal in this paper is to construct ABV-packets for $p$-adic $G_2$ in the most complicated case, namely, the case when these candidate packets contain representations for which the component  group of the corresponding Langlands parameter is the symmetric group $S_3$.
In subsequent work we construct ABV-packets for the remaining unipotent representations of $G_2(F)$ and show that they satisfy the conditions imposed by twisted endoscopic restriction from $\PSO_8(F)$ and by endoscopic restriction to $\PGL_3(F)$ and $\SO_4(F)$.  
Even though this paper does not complete that process, we prejudicially refer to ABV-packets as Arthur packets in the title of this paper, begging the reader's indulgence until forthcoming work is finished.

\subsection{Langlands parameter of $G_2$ with component group $S_3$}  Let $F$ be a $p$-adic field.
Up to $\dualgroup{G}_2$-conjugacy, the $p$-adic exceptional group $G_2(F)$ admits exactly one Langlands parameter  $\phi_{3} : W_F \times \SL_2(\CC) \to \Lgroup{G_2}$ with component group $S_3$; this parameter arises from the subregular nilpotent orbit of $\Lie\widehat{G}_2$. 
%\todo{This sentence is an awkward way to introduce notation for lie algebra of $G_2$}
%todo{We can list the $L$-packets and state several interesting properties of $\pi_3^{\varepsilon}$.}
 The L-packet for $\phi_3$ is
\[
\Pi_{\phi_3}(G_2(F)) = \{ \pi_3, \pi_3^\varrho, \pi_3^\varepsilon \},
\]
where $\pi_3$ is the admissible representation which corresponds to the trivial representation of $S_3$, $\pi_3^{\varrho}$ corresponds to the unique 2-dimensional irreducible representation $\varrho$ of $S_3$ and $\pi_3^{\varepsilon}$ corresponds to the sign character $\varepsilon$ of $S_3$.
It is expected that the distribution
\[
\Theta_{\phi_3} \ceq \Theta_{\pi_3} + 2\Theta_{\pi_3^\varrho} + \Theta_{\pi_3^\varepsilon},
\]
attached to the L-packet $\Pi_{\phi_3}(G_2(F))$ is stable, where $\Theta_{\pi}$ denotes the Harish-Chandra distribution character determined by the admissible representation $\pi$.
Note that the representations in this L-packet are all discrete series representations (so, in particular, they are tempered) and depth-zero, but only $\pi_3^\varepsilon$ is supercuspidal. 
Note the appearance of $2 = \dim \varrho$ as a coefficient in this linear combination.
The exceptional group $G_2$ is the smallest rank group for which a multiplicity of greater than $1$ appears in the expected stable distribution attached to an Arthur packet.

These three unipotent representations are tempered and $\pi_3$ and $\pi_3^\varrho$ have nonzero Iwahori-fixed vectors while $\pi_3^{\varepsilon}$ is supercuspidal. % and corresponds to \cite{Lusztig:Classification2}*{Case 7.33}.
Using a Chevalley group scheme for $G_2$, $\pi_3^\varepsilon$ is given by compact induction
%\begin{equation}%\label{eqn:pi3e}
\[
\pi_3^\varepsilon \ceq \operatorname{cInd}_{G_2(\mathcal{O}_F)}^{G_2(F)} G_2[1];
\]
%\end{equation}
here, $G_2[1]$ is the cuspidal unipotent representation of $G_2(\FF_q)$ appearing in \cite{Carter}*{page 460} and $G_2(\mathcal{O}_F)$ is the maximal compact subgroup of $G_2(F)$.
% \todo{This sentence is very long and hard to follow}.
%
This supercuspidal representation is of great interest for a variety of reasons. 
For instance, while there are four irreducible supercuspidal unipotent representations of $G_2(F)$, only $\pi_3^\varepsilon$ has the property that its corresponding Langlands parameter is trivial on $W_F$. 
%\todo{Why the word ``but'', if it is in particular of interest for some reason} 
However, our interest in the representation $\pi_3^{\varepsilon}$ lies in the collection of A-packets that contain it, and the main results of this paper concern those representations. 

\subsection{Main results}

In order to state our main result, recall that, for any Langlands parameter $\phi : W_F'\to \Lgroup{G}$, the infinitesimal parameter $\lambda_\phi : W_F'\to \Lgroup{G}$ is defined by
\[
\lambda_{\phi}(w)=\phi\left(w, \bpm |w|^{1/2} & 0 \\ 0 &|w|^{-1/2} \epm \right).
\]
Let $\lambda_{\mathrm{sub}}:=\lambda_{\phi_3}:W_F\to \Lgroup{G_2}$ be the infinitesimal parameter of $\phi_3$.
It is natural to consider all Langlands parameters $\phi$ such that $\lambda_\phi=\lambda_{\mathrm{sub}}$.
In this paper we use the Bernstein decomposition to find an intrinsic description of category $\Rep(G_2(F))_{\mathrm{sub}}$ of all representations of $G_2(F)$ with infinitesimal parameter $\lambda_{\mathrm{sub}}$.
It turns out that every Langlands parameter $\phi$ with $\lambda_\phi=\lambda_{\mathrm{sub}}$ is of Arthur type, {\it i.e.}, there exists an Arthur parameter $\psi:W_F\times \SL_2(\CC)\times \SL_2(\CC)\to \Lgroup{G_2}$ such that $\phi=\phi_\psi$, where 
\[
\phi_\psi(w,x):=\psi\left(w,x, \bpm |w|^{1/2} & 0 \\ 0 &|w|^{-1/2} \epm \right).
\]
For an Arthur parameter $\psi$, denote by $A_{\psi}:=\pi_0(Z_{\wh{G_2}}(\Im(\psi)))$ the component group of $\psi$. 
For an Arthur parameter $\psi$, we refer to $\lambda_{\phi_\psi}$ as its infinitesimal parameter.

We write ${\wh A_\psi}$ for the set of equivalence classes of irreducible representations of $A_\psi$; below, we will identify a representation of $A_\psi$ with its character.

The two main theorems of this paper are the following.

\begin{theorem}[See also Theorem~\ref{thm:main}]\label{intromaintheorem}
For each Arthur parameter $\psi$ with infinitesimal parameter $\lambda_{\mathrm{sub}}$, there exists a finite set $\Pi_{\psi}(G_2(F))$ of irreducible unipotent representations and a function 
\[
\begin{array}{rcl}
\langle \ , \ \rangle_\psi : \Pi_{\psi}(G_2(F)) &\to& {\widehat A_\psi} \\
\pi &\mapsto& \langle \ , \pi\rangle_\psi,
\end{array}
\]
such that
\begin{enumerate} 
%\labitem{(a)}{intromain:irreducible} the representation $\langle \ , \pi \rangle_\psi$ is irreducible;
\labitem{(a)}{intromain:tempered} if $\psi$ is trivial on $\SL_2(\C)$ then all the representations in $\Pi_{\psi}(G_2(F))$ are tempered and $\langle \ , \ \rangle_\psi$ is bijective;
\labitem{(b)}{intromain:nontempered} if $\psi$ is not trivial on $\SL_2(\C)$ then $\Pi_\psi(G_2(F))$ contains non-tempered representations and $\langle \ , \ \rangle_\psi$ is not necessarily bijective;
\labitem{(c)}{intromain:spherical} if $\pi$ is spherical then $\langle \ , \pi \rangle_\psi = \1$, the trivial representation of $A_\psi$.
\end{enumerate}
\end{theorem}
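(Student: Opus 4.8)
The plan is to enumerate the Arthur parameters $\psi$ with infinitesimal parameter $\lambda_{\mathrm{sub}}$ explicitly, build the candidate packets as ABV-packets in the sense of \cite{CFMMX}, and then verify the three properties by direct inspection.

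\textbf{Step 1: classify the relevant Arthur parameters.} First I would use the fact, asserted in the excerpt, that every Langlands parameter $\phi$ with $\lambda_\phi = \lambda_{\mathrm{sub}}$ is of Arthur type. Since $\lambda_{\mathrm{sub}}$ is the infinitesimal parameter attached to the subregular nilpotent orbit of $\Lie\widehat{G}_2$, the Arthur parameters $\psi$ with this infinitesimal parameter are governed by the geometry of the subregular orbit and its closure; I would match the $\SL_2(\CC)$-factor (the Arthur $\SL_2$) against the nilpotent orbits in $\Lie\widehat{G}_2$ dominating or equal to the subregular one, obtaining a short explicit list. For each such $\psi$ one reads off $A_\psi = \pi_0(Z_{\widehat{G}_2}(\operatorname{Im}\psi))$; for the tempered member ($\psi$ trivial on the Arthur $\SL_2$) this is $S_3$, and for the strictly larger orbits the component groups degenerate to smaller groups. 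This is essentially bookkeeping with the $G_2$ nilpotent cone.

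\textbf{Step 2: define the packets and the pairing.} For each $\psi$ on the list I would set $\Pi_\psi(G_2(F))$ to be the ABV-packet of \cite{CFMMX} attached to $\phi_\psi$, and define $\langle\ ,\ \rangle_\psi$ by the normalized vanishing-cycles / microlocal pairing of that reference, evaluated on the moduli space of homogeneous cubics in two variables — this is the geometric model referred to in the abstract, on which the self-contained microlocal analysis is carried out. Concretely, the representations in the packet are indexed by pairs (a stratum, i.e. a Langlands parameter with infinitesimal parameter $\lambda_{\mathrm{sub}}$, together with an irreducible equivariant local system / character of the relevant component group), and the pairing sends such a representation $\pi$ to the character of $A_\psi$ obtained from the microlocal multiplicity of the simple perverse sheaf $\IC$ attached to $\pi$ along the conormal to the stratum of $\psi$.

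\textbf{Step 3: verify (a), (b), (c).} For \ref{intromain:tempered}: when $\psi$ is trivial on the Arthur $\SL_2$, $\phi_\psi = \phi_3$ is tempered (indeed discrete, as noted in the excerpt the three members are discrete series), the ABV-packet coincides with the L-packet $\Pi_{\phi_3}$, which has three tempered members, and the pairing with $\widehat{A_\psi} = \widehat{S_3}$ is the standard Langlands–Vogan bijection $\pi_3 \leftrightarrow \1$, $\pi_3^\varrho \leftrightarrow \varrho$, $\pi_3^\varepsilon \leftrightarrow \varepsilon$. For \ref{intromain:nontempered}: when $\psi$ is nontrivial on the Arthur $\SL_2$, the ABV-packet picks up representations supported on strictly smaller strata — these are the non-tempered (Langlands-quotient) constituents — and I would exhibit one explicitly to show the pairing need not be injective (the smaller $A_\psi$ forces collisions, e.g. several representations mapping to $\1$). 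For \ref{intromain:spherical}: the spherical (unramified) representation is always the one attached to the trivial local system on the open/relevant stratum; its $\IC$ is the constant sheaf there, and the microlocal stalk computation gives multiplicity-one data that is manifestly $A_\psi$-trivial, so $\langle\ , \pi\rangle_\psi = \1$. This last point can also be checked against the known unramified Langlands correspondence and the fact that the base point of a Vogan variety always carries the trivial character.

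\textbf{Main obstacle.} The hard part is Step 2–3 for the non-tempered parameters: one must actually compute the simple equivariant perverse sheaves on the moduli space of binary cubics, determine their stalks and their characteristic cycles, and extract the microlocal pairing — in particular verifying that the resulting function lands in $\widehat{A_\psi}$ and matches the predictions of \cite{Arthur:book}*{Theorem 1.5.1}. The multiplicity $2 = \dim\varrho$ appearing in the stable distribution signals that the perverse sheaf geometry here is genuinely more intricate than in the rank-one or classical small-rank cases, so the characteristic-cycle bookkeeping on the cubic cone is where the real work lies; everything else is organizing the (finite) combinatorial data around it.
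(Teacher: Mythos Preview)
Your approach is essentially the paper's: classify the four Arthur parameters $\psi_0,\psi_1,\psi_2,\psi_3$ with infinitesimal parameter $\lambda_{\mathrm{sub}}$, define $\Pi_\psi$ and $\langle\ ,\ \rangle_\psi$ via the normalized microlocal vanishing-cycles functor $\NEvs$ of \cite{CFMMX} applied to $\Perv_{\GL_2}(\det^{-1}\otimes\Sym^3)$, and then read off (a)--(c) from the resulting table. Two small corrections to your Step~3: the spherical representation $\pi_0$ corresponds to $\IC(\1_{C_0})$ on the \emph{closed} orbit (the zero cubic), not the open one---the open orbit $C_3$ carries the generic tempered representation $\pi_3$; and the component groups do not simply shrink as the Arthur $\SL_2$ grows, since $A_{\psi_0}=S_3$ as well (it is $\psi_1,\psi_2$ that have $A_\psi\cong S_2$, and there the three-element packets force the failure of injectivity you describe).
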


\begin{remark}\label{remark:spherical}
An admissible representation $\pi$ of $G_2(F)$ is spherical iff it has a non-trivial vector which is fixed by the maximal compact $G_2(\mathcal{O}_F)$. 
Such representations are sometimes called ``unramified'' representations.
For every spherical $\pi$, there is a unique Arthur parameter $\psi$ such that $\pi \in \Pi_\psi(G_2(F))$.
\end{remark}

The packets $\Pi_\psi(G_2(F))$ are displayed in Table~\ref{table:ALpackets}; note that each one contains $\pi_3^\varepsilon$! 
As further justification for referring to $\Pi_\psi(G_2(F))$ as an Arthur packet, we study the stability of these distributions $\Theta_{\psi}$.
 At present, our result in this direction, Theorem~\ref{thm:stable} is conditional.
To state the hypothesis of this theorem, we make a definition:
let us say that an Arthur parameter $\psi : W'_F \times \SL_2(\C)\to \Lgroup{G}_2$ is tempered if the Langlands parameter $\phi_\psi$ is tempered, which means $\phi_\psi$ is bounded upon restriction to $W_F$.

\begin{theorem}[See also Theorem~\ref{thm:stable}]\label{introstabletheorem}
For every Arthur parameter $\psi$ with subregular infinitesimal parameter, consider the invariant distribution 
\begin{equation}
\Theta_{\psi} \ceq \sum_{\pi \in\Pi_{\psi}(G_2(F))} \langle a_\psi , \pi\rangle_\psi \ \Theta_{\pi},
\end{equation}
where $a_\psi$ is the image of $s_\psi \ceq \psi(1,-1)$ in $A_\psi$.
Suppose $\Theta_{\psi}$ is stable when $\psi$ is tempered.
Then the distributions $\Theta_\psi$ are stable for all Arthur parameters $\psi$.
%The stable distributions $\Theta_\psi$ are displayed in Table~\ref{table:stable}.
\end{theorem}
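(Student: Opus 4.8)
The plan is to reduce the theorem to the case of the one tempered parameter by two stability-preserving operations --- Aubert--Zelevinsky duality and parabolic induction --- organised along the short list of Arthur parameters of Table~\ref{table:ALpackets}. There are only finitely many Arthur parameters $\psi$ with $\lambda_\psi=\lambda_{\mathrm{sub}}$, one for each orbit on the moduli space of binary cubics, and by Theorem~\ref{intromaintheorem}(a) exactly one is tempered: the parameter $\psi_3$ that is trivial on the Arthur factor $\SL_2(\C)$ and satisfies $\phi_{\psi_3}=\phi_3$. For it $s_{\psi_3}=\psi_3(1,-1)=1$, so $a_{\psi_3}$ is trivial and $\Theta_{\psi_3}=\Theta_{\pi_3}+2\,\Theta_{\pi_3^\varrho}+\Theta_{\pi_3^\varepsilon}$; the hypothesis of the theorem is exactly that this one distribution is stable. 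The remaining three parameters are non-tempered, and the work is to propagate stability to them.

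First I would treat the ``anti-tempered'' parameter $\psi_0=\widehat{\psi_3}$ obtained by interchanging the two $\SL_2(\C)$-factors of $\psi_3$. Because the infinitesimal parameter $\lambda_{\phi_\psi}$ is built symmetrically from the two factors, the involution $\psi\mapsto\widehat\psi$ preserves the condition $\lambda_\psi=\lambda_{\mathrm{sub}}$; on the moduli space of binary cubics it is implemented by the Fourier transform, which interchanges the simple equivariant perverse sheaves attached to $\psi$ and to $\widehat\psi$ compatibly with their characteristic cycles. On the representation side this says that the Aubert--Zelevinsky involution $\widehat{(\ )}$ sends $\Theta_\psi$ to $\pm\,\Theta_{\widehat\psi}$. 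Since Aubert--Zelevinsky duality carries stable distributions to stable distributions --- on regular elliptic elements it multiplies a character by a sign that is constant there, and on the remaining regular semisimple elements it is expressed through constant terms, which preserve stability --- stability of $\Theta_{\psi_3}$ forces stability of $\Theta_{\psi_0}$.

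The two remaining parameters $\psi_1$ and $\psi_2=\widehat{\psi_1}$, attached to the cubics with a triple, respectively a double, root, have image the subgroup $\SO_4\subset G_2$, so they factor through the dual of the endoscopic subgroup $\SO_4$ of $G_2$. For these I would use the explicit ABV-packets to express $\Theta_{\psi_1}$ and $\Theta_{\psi_2}$, in the Grothendieck group of $\Rep(G_2(F))_{\mathrm{sub}}$, as integer combinations of $\Theta_{\phi_3}$ and of parabolic inductions of stable distributions from the $\GL_2$-Levi subgroups of $G_2$; concretely this amounts to subtracting off the parabolically induced part of the packet and recognising the rest inside $\Theta_{\phi_3}$, and it can also be seen through the $\SO_4$-endoscopy of $G_2$. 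Because parabolic induction carries stable distributions to stable distributions (van Dijk's formula for induced characters respects stable conjugacy), and the distributions attached to representations of general linear groups are stable (there stable conjugacy coincides with conjugacy), stability of $\Theta_{\psi_1}$ and $\Theta_{\psi_2}$ then follows from the hypothesis. Together with the previous step this establishes stability for every Arthur parameter with subregular infinitesimal parameter.

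I expect the main obstacle to be the two geometric inputs used above rather than the formal reduction. The first is the compatibility of the Fourier transform on the cubic moduli space with Aubert--Zelevinsky duality on $\Rep(G_2(F))_{\mathrm{sub}}$, with the exact sign in $\widehat{\Theta_\psi}=\pm\,\Theta_{\widehat\psi}$ and the precise relation between the pairings $\langle\,\cdot\,,\pi\rangle_\psi$ and $\langle\,\cdot\,,\widehat\pi\rangle_{\widehat\psi}$; the second is the precise integer identity expressing $\Theta_{\psi_1}$ and $\Theta_{\psi_2}$ through $\Theta_{\phi_3}$ and induced distributions. Both rely on the detailed computation of the vanishing cycles of the simple equivariant perverse sheaves on the moduli space of homogeneous cubics in two variables --- the technical core of the paper, which also produces the coefficients $\langle a_\psi,\pi\rangle_\psi$ that define $\Theta_\psi$.
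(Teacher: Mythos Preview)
Your instinct to reduce to $\Theta_{\psi_3}$ together with stable parabolically induced distributions is correct, and for $\psi_1,\psi_2$ your concrete suggestion (``integer combinations of $\Theta_{\phi_3}$ and of parabolic inductions from the $\GL_2$-Levi subgroups'') is essentially what the paper does. But the paper's argument is both simpler and uniform across all three non-tempered parameters, and it avoids two detours you take that are either unnecessary or not available.

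The paper proceeds as follows. Using the explicit packets and the values $a_{\psi_0}=a_{\psi_3}=1$, $a_{\psi_1}=a_{\psi_2}=-1$, one reads off
\[
\Theta_{\psi_0}=\Theta_{\pi_0}+2\Theta_{\pi_1}+\Theta_{\pi_3^\varepsilon},\quad
\Theta_{\psi_1}=\Theta_{\pi_1}-\Theta_{\pi_2}+\Theta_{\pi_3^\varepsilon},\quad
\Theta_{\psi_2}=\Theta_{\pi_2}-\Theta_{\pi_3^\varrho}-\Theta_{\pi_3^\varepsilon}.
\]
The standard modules $M_0=I_\beta(1,\pi(1,1))$, $M_1=I_\alpha(1/2,\delta(1))$, $M_2=I_\beta(1/2,\delta(1))$ are all parabolically induced from $\GL_2(F)$, so $\Theta_{M_0},\Theta_{M_1},\Theta_{M_2}$ are stable by Arthur's result on induced characters. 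Inverting the multiplicity matrix (Table~\ref{table:repmult}) gives the explicit upper-triangular identity
\[
\begin{pmatrix}\Theta_{\psi_0}\\ \Theta_{\psi_1}\\ \Theta_{\psi_2}\\ \Theta_{\psi_3}\end{pmatrix}
=
\begin{pmatrix}1&1&-3&1\\0&1&-2&1\\0&0&1&-1\\0&0&0&1\end{pmatrix}
\begin{pmatrix}\Theta_{M_0}\\ \Theta_{M_1}\\ \Theta_{M_2}\\ \Theta_{\psi_3}\end{pmatrix},
\]
and stability of all $\Theta_{\psi_i}$ follows at once from the hypothesis on $\Theta_{\psi_3}$. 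No Aubert duality, no endoscopy, no case split.

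Two specific comments on your route. First, the Aubert argument for $\psi_0$ is an unnecessary detour: $\Theta_{\psi_0}$ already sits in the span of $\Theta_{M_0},\Theta_{M_1},\Theta_{M_2},\Theta_{\psi_3}$ just like the others, and invoking ``Aubert preserves stability'' requires its own justification that the paper never needs (and that you yourself flag as uncertain with the sign). Second, your appeal to $\SO_4$-endoscopy for $\psi_1,\psi_2$ is not available here: the paper explicitly postpones the endoscopic character identities for $G_2$ to the sequel, so you cannot use them as an input. Fortunately your fallback description (``subtracting off the parabolically induced part and recognising the rest inside $\Theta_{\phi_3}$'') is exactly right --- it is just the change-of-basis above.
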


%\begin{remark}\label{remark:stable}
We expect to prove that $\Theta_{\psi}$ is stable when $\psi$ is tempered and therefore remove 
this hypothesis from Theorem~\ref{thm:stable}, making that result unconditional.
%\end{remark}

In forthcoming work we fully justify calling $\Pi_\psi(G_2(F))$ an Arthur packet by showing that $\Pi_{\psi}(G_2(F))$ also satisfies properties relating to (twisted) endoscopy for $G_2$; see also Section~\ref{ssec:forthcoming}. 
As mentioned above, we believe that these two properties of the packets $\Pi_\psi(G_2(F))$ -- stability of the associated distribution and compatibility with endoscopy and twisted endoscopy -- justify calling these Arthur packets.  

\subsection{Strategy of the proof}
\iffalse
Let $$H_{\mathrm{sub}}:=\{g\in \wh{G_2}|~  \lambda(w)g \lambda(w)^{-1}=g, \forall w\in W_F \},$$
and $$V_{\mathrm{sub}}:=\{x\in \wh{\mathfrak{g}_2}|~ \lambda(\mathrm{Fr})x \lambda(\mathrm{Fr})=qx \}.$$
There is a natural action of $H_{\mathrm{sub}}$ on $V_{\mathrm{sub}}$. The Langlands parameters $\phi$ with infinitesimal parameter $\lambda_{\mathrm{sub}}$ are classified by the orbits of the $H_{\mathrm{sub}}$ action on $V_{\mathrm{sub}}$.
The Langlands correspondence for unipotent representations of $G_2(F)$ determines a bijection between isomorphism classes of simple objects in two categories:
\begin{equation}\label{eqn:introLVC}
\left( \Rep(G_2(F))_\mathrm{sub} \right)^\mathrm{simple}_{/\mathrm{iso}}
=
\left( \Perv_{H_{\mathrm{sub}}}(V_\mathrm{sub})  \right)^\mathrm{simple}_{/\mathrm{iso}}.
\end{equation}
\fi
The Langlands correspondence for unipotent representations of $G_2(F)$ determines a bijection between isomorphism classes of simple objects in two categories:
\begin{equation}\label{eqn:introLVC}
\begin{array}{rcl}
\mathcal{P} : \Big( \Rep(G_2(F))_\mathrm{sub} \Big)^\mathrm{simple}_{/\mathrm{iso}}
&\to&
\left( \Perv_{\dualgroup{G_2}}(X_\mathrm{sub})  \right)^\mathrm{simple}_{/\mathrm{iso}}\\
\pi &\mapsto& \mathcal{P}(\pi) , %= \IC(\mathfrak{C}_\pi,\mathcal{L}_\pi) ,
%= j_{!*}\mathcal{L}_\pi[\dim \mathfrak{C}_\pi],
\end{array}
\end{equation}
where $X_\mathrm{sub}$ is the moduli space of Langlands parameters $\phi: W'_F \to \dualgroup{G_2}$ for which $\lambda_\phi$ is $\dualgroup{G_2}$-conjugate to $\lambda_\mathrm{sub}\ceq \lambda_{\phi_3}$ and where $\Perv_{\dualgroup{G_2}}(X_\mathrm{sub}) $ is the category of equivariant perverse sheaves on $X_\mathrm{sub}$; see Proposition~\ref{prop:LVC}.
We use the ``microlocal vanishing cycles'' functor introduced in \cite{CFMMX} to define
\[
\Evs_{\psi} : \Perv_{\dualgroup{G_2}}(X_\mathrm{sub}) \to {\widehat A_\psi}.
\]

After passing to isomorphism classes of simple objects, the packet $\Pi_\psi(G_2(F))$ is defined as support of $\Evs_{\psi}$:
\[
\Pi_\psi(G_2(F)) =\{ \pi\in \left( \Rep(G_2(F))_\mathrm{sub} \right)^\mathrm{simple}_{/\mathrm{iso}}:  \Evs_{\psi} \mathcal{P}(\pi)\ne 0 \}.
\]
%where  \todo{Since $\mathcal{P}$ is defined earlier, it is possible to delete this sentence.} $\mathcal{P}(\pi)$ is the simple perverse sheaf on $X_\mathrm{sub}$ corresponding to $\pi$ using \eqref{eqn:introLVC}.
 By restriction, $\Evs_{\psi}$ gives the function $\langle \ , \ \rangle_\psi : \Pi_{\psi}(G_2(F)) \to {\widehat A_\psi}$ appearing in Theorem~\ref{intromaintheorem}.
Finally we use $\Evs_{\psi}$ to define the stable distributions appearing in Theorem~\ref{introstabletheorem}.

It is important to observe that even when a Langlands parameter, $\phi$, is not of Arthur type one can still define a packet $\Pi_{\phi}^{\mathrm{ABV}}$ using the functor $\Evs$ from \cite{CFMMX}. 
This will be important in our forthcoming work since not all unipotent representations of $G_2(F)$ are of Arthur type.

In the proof of Theorem \ref{intromaintheorem} we use \eqref{eqn:introLVC} and also establish a tighter connection between these categories, known to us as the Kazhdan-Lusztig conjecture. 
Recall that any irreducible representation $\pi$ can be written as a Langlands quotient of $M_\pi$, where  $M_\pi$ is  the standard module of $\pi$.
%$M_\pi$ is of the form $\Ind_P^{G_2(F)}(\sigma\otimes \Theta)$ for a tempered representation $\sigma$ of the Levi of a parabolic subgroup $P$ and  an unramified character $\Theta$ of the Levi which is strictly positive in a fixed Weyl chamber. The representation

\begin{proposition}[See also Proposition~\ref{prop:KL}]\label{prop:KazhdanLusztig}
For any irreducible admissible $\pi$ with infinitesimal parameter $\lambda_\mathrm{sub}$, let $\mathcal{P}(\pi) = j_{!*}\mathcal{L}_\pi[\dim \mathfrak{C}_\pi]$ be the corresponding simple perverse sheaf under \eqref{eqn:introLVC}, where $j : \mathfrak{C}_\pi \hookrightarrow X_\mathrm{sub}$ is inclusion. Let $\mathcal{L}_\pi^!\ceq j_!\mathcal{L}_\pi$, be the standard sheaf defined by $\mathcal{L}_\pi$.
Then, the multiplicity of $\pi'$ in $M_\pi$ is equal to the multiplicity of $\mathcal{L}_\pi^!$ in $\mathcal{P}(\pi')$.
\end{proposition}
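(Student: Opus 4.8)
The plan is to compare two expressions for the same multiplicity: on the representation-theoretic side, the multiplicity of an irreducible $\pi'$ in the standard module $M_\pi$, and on the geometric side, the multiplicity of the standard sheaf $\mathcal{L}_\pi^!$ in the simple perverse sheaf $\mathcal{P}(\pi')$. Both sides are most naturally computed in the Grothendieck group of the relevant category, so first I would pass to $K$-theory. On the geometric side, the class $[\mathcal{L}_\pi^!] = [j_!\mathcal{L}_\pi]$ in $K\big(\Perv_{\dualgroup{G_2}}(X_\mathrm{sub})\big)$ expands in the basis of simple perverse sheaves $\{[\mathcal{P}(\pi')]\}$ with coefficients governed by the stalks of the intersection cohomology sheaves $\mathcal{P}(\pi') = j'_{!*}\mathcal{L}_{\pi'}[\dim\mathfrak{C}_{\pi'}]$ along the strata; conversely $[\mathcal{P}(\pi')]$ expands in the basis of standard sheaves $\{[\mathcal{L}^!_{\pi''}]\}$ with the inverse (signed) matrix. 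So the multiplicity of $\mathcal{L}^!_\pi$ in $\mathcal{P}(\pi')$ is the $(\pi,\pi')$-entry of the inverse of the "stalk matrix" of local intersection cohomology on $X_\mathrm{sub}$.

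The second step is to identify that inverse matrix with the decomposition matrix of standard modules into irreducibles. This is exactly the content of the Kazhdan--Lusztig-type conjecture referred to in the excerpt: the transition matrix between the basis of irreducible representations and the basis of standard modules in $K\big(\Rep(G_2(F))_\mathrm{sub}\big)$ should match the transition matrix between simple and standard perverse sheaves under $\mathcal{P}$. I would invoke the established instance of this for unipotent representations — in the form available through Lusztig's classification and the geometric realization compatible with the bijection \eqref{eqn:introLVC} — which matches standard modules $M_\pi$ with standard sheaves $\mathcal{L}_\pi^!$ (up to the normalization built into the definition $\mathcal{L}_\pi^! = j_!\mathcal{L}_\pi$) and irreducibles $\pi'$ with simple perverse sheaves $\mathcal{P}(\pi')$. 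Given this dictionary, the multiplicity $[M_\pi : \pi']$ equals the multiplicity of $[\mathcal{L}_\pi^!]$ in $[\mathcal{P}(\pi')]$ in $K$-theory. Since $X_\mathrm{sub}$ will turn out (via the microlocal analysis of cubics that occupies the body of the paper) to have only finitely many $\dualgroup{G_2}$-orbits with well-understood closures, I can alternatively verify this entry-by-entry against the explicit tables, which also serves as a consistency check.

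The third step is to upgrade the $K$-theoretic equality to an equality of genuine multiplicities. On the perverse-sheaf side there is no issue: the category is artinian, and multiplicity in the Jordan--Hölder sense is detected in $K$-theory. On the representation side, $M_\pi$ has finite length, so again $K$-theoretic multiplicity equals Jordan--Hölder multiplicity; the subtlety is only to make sure the standard module $M_\pi$ is the object whose class corresponds to $[\mathcal{L}_\pi^!]$ and not some twist or shift of it — this is a bookkeeping matter about the normalization of the Langlands classification (standard module = the module with the prescribed Langlands quotient) versus the normalization $j_!$ (lower-shriek of the clean local system on the stratum, without intersection-cohomology corrections).

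The main obstacle I anticipate is precisely the second step: pinning down that the geometric "standard = $j_!\mathcal{L}$" object corresponds under the Langlands--Vogan bijection to the representation-theoretic standard module, with matching normalizations and with the component-group twist tracked correctly. This is where one must either cite the Kazhdan--Lusztig conjecture for unipotent representations in the precise categorified form needed (as opposed to merely a multiplicity-one statement or a statement about L-packets), or else prove it in this special case using the explicit stratification of $X_\mathrm{sub}$ by nilpotent-type orbits and the known structure of the affine Hecke algebra module generated by the Iwahori-fixed vectors together with the supercuspidal piece $\pi_3^\varepsilon$. Everything else — the $K$-theory formalism, the inversion of the stalk matrix, the finite-length reductions — is routine once that identification is in place.
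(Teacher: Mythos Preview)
Your proposal correctly identifies the two viable routes, and the paper takes the second one you mention as an alternative: rather than invoking a general Kazhdan--Lusztig theorem for unipotent representations (which the paper does acknowledge in a remark following the proof, pointing to \cite{Lusztig:Cuspidal2}*{Corollary 10.7}), the paper's actual proof is a direct entry-by-entry verification. On the representation side, the decompositions of the standard modules $M_0,M_1,M_2$ into irreducibles are read off from Mui\'c's results (equations \eqref{eqn:muic1} and \eqref{eqn:muic}), while $\pi_3,\pi_3^\varrho,\pi_3^\varepsilon$ are their own standard modules; this produces a $6\times 6$ multiplicity matrix (Table~\ref{table:repmult}). On the geometric side, the stalks of each simple perverse sheaf along each orbit are computed from the explicit proper covers $\rho_j:\widetilde C_j\to\overline C_j$ via the decomposition theorem and proper base change (Theorem~\ref{thm:stalks}), giving the geometric multiplicity matrix (Table~\ref{table:geomult}). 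The proof ends with the observation that one matrix is the transpose of the other.

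What each approach buys: your primary route (cite Lusztig's general result) is conceptually cleaner and explains why the statement should hold in every such situation, but --- exactly as you anticipated in your last paragraph --- it forces you to track the normalization matching Lusztig's standard objects with the Vogan-style sheaves $j_!\mathcal{L}$, and to handle the supercuspidal block $\pi_3^\varepsilon$ alongside the Iwahori block. The paper's approach sidesteps all of that bookkeeping; it is elementary and self-contained once the $\GL_2$-equivariant geometry of homogeneous cubics has been worked out, but of course it is feasible only because there are just four strata and six simples to check.
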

See Proposition~\ref{prop:KL} for unexplained notations and a more precise statement.

In Section~\ref{ssec:Ft} we also define a Fourier transform $\Ft$ functor on $\Perv_{\dualgroup{G_2}}(X_\mathrm{sub})$ and show that it corresponds with the Aubert involution $D_{G_2}$ defined in  \cite{Aubert:Dualite} on the Grothendieck group $K\Rep(G_2(F))_{\mathrm{sub}}$ of $\Rep(G_2(F))_{\mathrm{sub}}$.

\begin{theorem}[See also Theorem~\ref{thm:Ft}]\label{thm:aubertinvolution}
The Aubert involution on $K \Rep(G_2(F))_\mathrm{sub} $ commutes with the Fourier transform on $K\Perv_{\dualgroup{G_2}}(X_\mathrm{sub}) $ under the correspondence \eqref{eqn:introLVC}, {\it i.e.}, the diagram
\[
\begin{tikzcd}
K \Rep(G_2(F))_\mathrm{sub}\arrow{rr}{D_{G_2}} \arrow{d}{\mathcal{P}}& & \arrow{d}{\mathcal{P}}  K \Rep(G_2(F))_\mathrm{sub} \\
K\Perv_{\dualgroup{G_2}}(X_\mathrm{sub})\arrow{rr}{\Ft} && K\Perv_{\dualgroup{G_2}}(X_\mathrm{sub})
\end{tikzcd}
\]
commutes.
\end{theorem}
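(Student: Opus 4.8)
The plan is to compare the two involutions on generators of the Grothendieck groups and to reduce everything to a statement about how the Fourier transform interacts with the Langlands--Vogan correspondence $\mathcal{P}$ of \eqref{eqn:introLVC}. Recall that both sides are free abelian groups: on the representation-theoretic side the standard modules $M_\pi$ form a basis of $K\Rep(G_2(F))_\mathrm{sub}$, and by Proposition~\ref{prop:KazhdanLusztig} the class of $M_\pi$ corresponds under $\mathcal{P}$ to the class of the standard sheaf $\mathcal{L}_\pi^![\dim\mathfrak{C}_\pi]$ (up to the usual shift conventions); the standard sheaves in turn form a basis of $K\Perv_{\dualgroup{G_2}}(X_\mathrm{sub})$. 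So it suffices to check that $D_{G_2}$ and $\Ft$ agree on these two matched bases. Since $\Ft$ is the Fourier--Sato/Deligne transform on the vector space $X_\mathrm{sub}$ (which, by Proposition~\ref{prop:LVC} and the discussion preceding it, is an $H_\mathrm{sub}$-representation, hence a linear space with a dual), it permutes the $H_\mathrm{sub}$-orbits according to the duality of the orbit stratification, and it carries a standard sheaf $j_!\mathcal{L}[\dim \mathfrak{C}]$ supported on an orbit $\mathfrak{C}$ to (a shift of) the costandard sheaf $j'_{!*}$-completion data on the ``dual'' orbit; more precisely $\Ft$ sends standard sheaves to costandard sheaves up to a shift and a sign, so on the Grothendieck group it is computed by an explicit unipotent-triangular change-of-basis matrix recording incidences of orbits.

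First I would make the vector-space structure explicit: identify $X_\mathrm{sub}$ with the moduli space of homogeneous cubics in two variables (the ``cubics'' of the title), list the finitely many $\dualgroup{G_2}$-orbits together with their closure relations and the equivariant local systems supported on each, and record the dimensions $\dim\mathfrak{C}_\pi$. This is the combinatorial backbone; the orbit poset for binary cubics is small (a handful of orbits distinguished by the partition of multiplicities of roots), so both the Fourier transform matrix and the Kazhdan--Lusztig/$IC$ transition matrix can be written down by hand. Second, I would compute the action of $\Ft$ on $K\Perv_{\dualgroup{G_2}}(X_\mathrm{sub})$ in the standard-sheaf basis, using the standard facts that Fourier transform is an equivalence, is (up to shift) involutive, exchanges $j_!$ and $j_*$ for the open stratum, and commutes with Verdier duality up to the obvious twist; combined with homogeneity of the strata this pins down $\Ft$ on each $\mathcal{L}_\pi^!$. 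Third, on the representation side, I would use Aubert's results \cite{Aubert:Dualite}: $D_{G_2}$ is, up to sign, the involution that on the basis of standard modules is given by parabolic induction composed with the Zelevinsky-type involution on Levis, and for the Bernstein block $\Rep(G_2(F))_\mathrm{sub}$ — whose structure we have pinned down via the Bernstein decomposition earlier in the paper — this is again an explicit signed unipotent-triangular matrix in the standard-module basis.

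The proof is then completed by matching the two matrices entry by entry under the dictionary of Proposition~\ref{prop:KazhdanLusztig}: the orbit closure incidences that govern $\Ft$ on standard sheaves are exactly the multiplicities of $\mathcal{L}_\pi^!$ in $\mathcal{P}(\pi')$, which by that proposition are the multiplicities of $\pi'$ in $M_\pi$, which in turn govern $D_{G_2}$ on standard modules. One subtlety to be careful about is the bookkeeping of cohomological shifts and the signs $(-1)^{\dim}$ that appear both in the definition of $\Ft$ (Fourier transform introduces a shift by $\dim X_\mathrm{sub}$) and in Aubert's normalization of $D_{G_2}$; these must be reconciled so that the diagram commutes on the nose and not merely up to a sign twist, and getting the normalization of $\Ft$ in Section~\ref{ssec:Ft} to line up with Aubert's normalization is where I expect the real work to be. A clean way to handle this is to check commutativity on one well-understood class — for instance the class of the spherical representation $\pi_3$, whose $\mathcal{P}(\pi_3)$ is the $IC$ of the open orbit and whose Aubert dual is known — which fixes the overall normalization, and then propagate via the triangularity of both matrices. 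The remaining verification is a finite check over the orbit poset of binary cubics, which is the kind of bounded computation the rest of the paper is already set up to carry out.
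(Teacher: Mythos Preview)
Your proposal takes a genuinely different route from the paper, and it has a gap.

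\textbf{What the paper actually does.} The paper works entirely with \emph{simple} objects, not standard ones. On the geometric side, Theorem~\ref{thm:geoFt} computes $\Ft$ on each of the six simple $\IC$ sheaves directly, via the vanishing-cycle computations carried out in Sections~\ref{ssec:covC0}--\ref{sssec:covC3''}; after identifying $V_\mathrm{sub}^*$ with $V_\mathrm{sub}$ (so that $C_i^*$ corresponds to $C_{3-i}$), this gives an explicit permutation of the six simples. On the representation side, Proposition~\ref{prop:Aubert} quotes the Aubert involution on the six irreducibles from \cite{Muic}. The proof of Theorem~\ref{thm:Ft} is then a one-line comparison of these two short lists under the dictionary of Proposition~\ref{prop:LVC}. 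No standard modules, no triangular matrices, no normalization bookkeeping are needed.

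\textbf{Where your proposal runs into trouble.} You propose to compute $\Ft$ on the standard-sheaf basis using only ``structural'' properties: $\Ft$ is an equivalence, involutive up to shift, ``exchanges $j_!$ and $j_*$ for the open stratum'', and is compatible with Verdier duality. These do not pin down $\Ft$ on $j_!\mathcal{L}$ for the intermediate orbits $C_1$ and $C_2$. The Fourier transform of a standard sheaf on an intermediate orbit is in general a genuinely complicated perverse sheaf on $V^*$, not a single (co)standard sheaf on a ``dual'' orbit; knowing this in the Grothendieck group still requires real geometric input, which in this paper is precisely the vanishing-cycle computation of Section~\ref{sec:cubics}. (The easy cases are the extremes: $\Ft$ of the skyscraper at $0$ is the constant sheaf on $V^*$, and vice versa; that is the $C_0\leftrightarrow C_3$ swap, not the open stratum exchanging $j_!$ and $j_*$.) So your Step~2 is not a shortcut; it is where the actual work lives, and the paper does that work on the $\IC$ basis instead.

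A small slip: you call $\pi_3$ ``the spherical representation''. In this paper $\pi_3=\pi(1)'$ is the \emph{generic} discrete series; the spherical one is $\pi_0=J_\beta(1,\pi(1,1))$ (Lemma~\ref{lem:generic}). Its perverse sheaf $\mathcal{P}(\pi_0)=\IC(\1_{C_0})$ is supported at the origin, not on the open orbit.
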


The two theorems above suggest that there is a stronger connection between these two categories, $\Rep(G_2(F))_{\mathrm{sub}}$ and $\Perv_{\wh{G_2}}(X_{\mathrm{sub}})$ than \eqref{eqn:introLVC} might suggest on its own. See also Remark~\ref{remark:CLLC}.

\subsection{Geometric calculations}

We find an equivalence of categories
\[
\Perv_{\dualgroup{G_2}}(X_\mathrm{sub}) \iso \Perv_{\GL_2}(\det\!\,^{-1}\otimes \Sym^3),
\]
where $\det^{-1}\otimes \Sym^3$ is the twist by $\det^{-1}$ of the canonical four-dimensional representation of $\GL_2$ on the vector space $P_3[x,y]$ of homogeneous cubic polynomials in two variables. The proof of our main theorems relies on a microlocal study of the simple $\GL_2$-equivariant perverse sheaves on $\det\!\,^{-1}\otimes \Sym^3$. Since we believe such a purely geometric problem might be of some independent interest, we present it in a self-contained way so that it can be read without reference to our application. As such, Section~\ref{sec:cubics} is precisely a study of the microlocal geometry of homogeneous cubics in two variables.

More precisely, using \cite{CFMMX}, in Section~\ref{sec:cubics} we compute the functor 
\[
\Evs : \Perv_{\GL_2}(\det\!\,^{-1}\otimes \Sym^3) \to \Loc_{\GL_2}(\Lambda^\mathrm{reg})
\]
on simple objects, where $\Lambda^\mathrm{reg}$ is the regular part of the conormal bundle 
\[
\Lambda = T^*_{\GL_2}(\det\!\,^{-1}\otimes \Sym^3).
\]
The functor $\Evs$ is full and faithful and so it actually converts equivariant perverse sheaves on $\det\!\,^{-1}\otimes \Sym^3$ into local systems on the regular part of the conormal bundle of $\det\!\,^{-1}\otimes \Sym^3$. 
Not only is this a great tool for understanding $\Perv_{\GL_2}(\det\!\,^{-1}\otimes \Sym^3)$ itself but, as explained in \cite{CFMMX}, it is the key to understanding ABV-packets of admissible representations.
We refer to $\Evs$ as a microlocal vanishing cycles functor because, for every $(r,s)\in \Lambda^\mathrm{reg}$,
\[
\left(\Evs \mathcal{F}\right)_{(r,s)}
=
\left(\RPhi_s[-1]\mathcal{F}\right)_r[\dim C_r^*][-\dim (\det\!\,^{-1}\otimes \Sym^3)],
\]
where $C_r^*$ is the dual orbit to the $\GL_2$-orbit $C_r$ of the cubic $r$ in $\det\!\,^{-1}\otimes \Sym^3$ and where $\RPhi_s$ is the vanishing cycles functor.

In Section~\ref{sec:cubics} we also study the Fourier transform on $\Perv_{\GL_2}(\det\!\,^{-1}\otimes \Sym^3)$, which interacts nicely with $\Evs$. 
%In Proposition~\ref{prop:Aubert} we show that, under the Langlands-Vogan correspondence Proposition~\ref{prop:LVC}, it matches the Aubert involution on $\Rep(G_2(F))_\mathrm{sub}$. \todo{this is repeditive}

\subsection{Structure of the paper}
 
The proofs in Section~\ref{sec:main} rely on results from Section~\ref{sec:cubics}.
Geometers who feel that a microlocal study of $\Perv_{\GL_2}(\det\!\,^{-1}\otimes \Sym^3)$ requires no motivation may enjoy reading Section~\ref{sec:cubics} first and then moving to Section~\ref{sec:main} as an application of the geometric results.

\subsection{Forthcoming work}\label{ssec:forthcoming}

%\todo{We can also mention the future work on Arthur packets for unipotent representations; and also the future work related to \cite{Huang-Magaard-Savin}.}
%Let us recall that, following \cite{Lusztig:Classification1}*{Section 0.3}, an irreducible admissible representation of $G_2(F)$ (or more generally, a split adjoint simple algebraic group over a non-Archimedean local field) is {\it unipotent} if there is a parahoric subgroup of $G(F)$ such that compact restriction to the reductive quotient of that parahoric subgroup produces a cuspidal unipotent representation, in the sense of \cite{DL}*{Definition 7.8}.
%As we have already mentioned, besides $\pi_3^\varepsilon$, there are exactly three other supercuspidal unipotent representations of $G_2(F)$; they are described in \cite{Lusztig:Classification1}*{Cases 7.34 (there are two representation in this case), 7.35}.
In a sequel to this paper, \cite{CFZ:unipotent}, we consider the remaining unipotent representations of $p$-adic $G_2(F)$ and calculate the ABV-packets for these representations. 
Then, in \cite{CFZ:endoscopic} we show that ABV-packets for all unipotent representations of $G_2$ are indeed Arthur packets by showing that they satisfy the conditions imposed on them by the theory of endoscopy and twisted endoscopy. More precisely, \cite{CFZ:endoscopic} uses \cite{Huang-Magaard-Savin} and twisted endoscopic transfer from $\PSO(8)$, one for each unramified cubic etale algebra $E/F$, to $G_2$ over $F$; it also uses endoscopic transfer from $G_2$ to $\PGL_3$ and split $\SO_4$;
\[
\begin{tikzcd}
& \PSO(8) & \\
& G_2 \arrow[dashed]{u} & \\
\PGL_3 \arrow[dashed]{ur} && \arrow[dashed]{ul}  \SO_4 %\\
%\GL_2 \arrow[dashed]{u} && \arrow[dashed]{u}  \GL_2 \\
%& \GL_1\times\GL_1 \arrow[dashed]{ul} \arrow[dashed]{ur}  & 
\end{tikzcd}
\]

\subsection{Other exceptional groups}\label{ssec:other}

We have already remarked that $\pi_3^\varepsilon$ is the only irreducible supercuspidal unipotent representation of $G_2(F)$ for which the corresponding Langlands parameter is trivial on $W_F$. 
This "superunipotent" representation $\pi_3^\varepsilon$ is actually the first of a triple of such representations.
We have seen that the exceptional group $G_2$ admits a unique supercuspidal unipotent representation $\pi_3^{\varepsilon}$ with component $S_3$ and enhanced Langlands parameter $(\phi_3,\varepsilon)$ where $\varepsilon$ is the sign character of $S_3$. 
For large residual characteristic, the exceptional group $F_4$ admits a unique supercuspidal unipotent representation $\pi_4^{\varepsilon}$ with component group $S_4$ and enhanced Langlands parameter $(\phi_4,\varepsilon)$ where $\varepsilon$ is the sign character of $S_4$.
Likewise, for large residual characteristic, $E_8$ admits a unique supercuspidal unipotent representation $\pi_5^{\varepsilon}$ with component group $S_5$ and enhanced Langlands parameter $(\phi_5,\varepsilon)$ where $\varepsilon$ is the sign character of $S_5$.
In each case, the Langlands parameter is of Arthur type and therefore determined by a unipotent orbit in the appropriate split exceptional group; these are given in Table~\ref{table:triple}. 
The $L$-packets containing these representations have cardinality $3$, $5$ and $7$, respectively, since there are $3$, $5$ and $7$ irreducible representations of the symmetric groups $S_3$, $S_4$ and $S_5$, respectively.

\begin{table}[htp]
\caption{A triple of ``superunipotent" representations}
\begin{center}
$
\begin{array}{cccccc}
\mathrm{split} & \mathrm{unipotent}  & \mathrm{fund'l} &\mathrm{irred} & \mathrm{superunipotent} & \mathrm{Lusztig's} \\ 
\mathrm{group} & \mathrm{orbit}  & \mathrm{group} & \mathrm{rep'n} & \mathrm{representation} & \mathrm{classification} \\ 
 & \text{\cite{Carter}} &  &  &  & \text{\cite{Lusztig:Classification1}} \\
\hline
G_2 & G_2(a_1) & S_3 & \varepsilon \mathrm{\ (sign)} &\pi_3^\varepsilon = \operatorname{cInd}_{G_2(\mathcal{O}_F)}^{G_2(F)} G_2[1]& 7.33 \\
F_4 &  F_4(a_3) & S_4 & \varepsilon \mathrm{\ (sign)} & \pi_4^\varepsilon = \operatorname{cInd}_{F_4(\mathcal{O}_F)}^{F_4(F)} F^{\mathrm{II}}_4[1] & 7.26 \\
E_8 & E_8(a_7) & S_5 & \varepsilon \mathrm{\ (sign)} & \pi_5^\varepsilon  = \operatorname{cInd}_{E_8(\mathcal{O}_F)}^{E_8(F)} E^{\mathrm{II}}_8[1] & 7.1 
\end{array}
$
\end{center}
\label{table:triple}
\end{table}%

We remark on one final common feature of these examples: the groups involved, $G_2$, $F_4$, and $E_8$ are the only three semisimple algebraic groups which are simply connected, adjoint and have no outer automorphisms. It is a direct consequence that over a $p$-adic field there is a unique form, which must be split.
For $G_2$ this is explained in \cite{FioriG2}*{Section 2--4}, but follows also from \cite{Involutions}*{Theorem 26.19, Proposition 33.24} and \cite{PR}*{Theorem 6.4}.
The proofs for the other cases are analogous.

We have begun a study of the supercuspidal unipotent representation $\pi_4^{\varepsilon}$, parallel to the study of $\pi_3^\varepsilon$ in this paper. 
While Langlands parameters with the same infinitesimal parameter as $\pi_3^{\varepsilon}$ are classified by orbits of homogeneous cubics in two variables, 
Langlands parameters with the same infinitesimal parameter as $\pi_4^{\varepsilon}$ are classified by orbits of pencils of quadratics. 
We hope some day to make a similar study of $\pi_5^{\varepsilon}$.

\subsection{Acknowledgements}

The authors gratefully acknowledge and thank Ahmed Moussaoui for patiently answering our many questions about Lusztig's classification of unipotent representations and many related matters.
The authors would also like to thank all the \href{http://automorphic.ca}{Voganists}, especially Bin Xu, Sarah Dijols, Kam-Fai Tam, Jerrod Smith, Geoff Vooys and Nicole Kitt for participating in numerous talks as these ideas and calculations were worked out. We thank Thomas Bitoun for helpful conversations.
We thank George Lusztig for pointing out to us that Proposition~\ref{prop:KazhdanLusztig} follows from \cite{Lusztig:Cuspidal2}*{Corollary 10.7}.
Finally C.C. would like to thank James Arthur for many helpful conversations over the years and for drawing our attention to the triple of "superunipotent" representations of the exceptional groups $G_2$, $F_4$ and $E_8$.

\section{Main results on admissible representations of $p$-adic $G_2$}\label{sec:main}

Let $F$ be a $p$-adic field; let $q$ be the cardinality of the residue field of $F$ and let $\mathcal{O}_F$ be the ring of integers in $F$.

\subsection{Roots and coroots}\label{ssec:roots}

Let $G_2$ be the unique group of type $G_2$ over a $p$-adic group $F$.
Recall that $G_2$ is split over $F$ as discussed in Section~\ref{ssec:other}.
Fix a maximal split torus $T$ of $G_2$ and denote the corresponding set of roots by $\Phi(G_2)$.
Let $\alpha,\beta : T \rightarrow F^\times$ be a choice simple roots of $G_2$ with $\alpha$ short and $\beta$ long. 
Then the positive roots of $\Phi(G_2)$ are
\[
\alpha, \beta, \alpha+\beta, 2\alpha+\beta, 3\alpha+\beta, 3\alpha+2\beta,
\]
and the Cartan matrix is 
\[
\begin{pmatrix}
 \pair{\alpha,\alpha} &\pair{\alpha,\beta} \\ 
 \pair{\beta,\alpha} &\pair{\beta,\beta} 
 \end{pmatrix}
=
 \begin{pmatrix}
  2&-1\\ 
 -3&2
\end{pmatrix}.
\]
Denote $\Phi^\vee(G_2)$ the set of coroots of $G_2$ and by $\alpha^\vee,\beta^\vee : F^\times \to T$ the coroots of $\alpha$ and $\beta$.
%Then the positive roots of $\Phi^\vee(G_2)$ are \[\da, \db, \da+\db, \da+2\db, \da+3\db, 2\da+3\db.\]
Following \cite{Muic}, we fix an isomorphism $T \rightarrow F^\times \times F^\times$ by 
 \[ t \mapsto ((2\alpha+\beta)(t),(\alpha+\beta)(t)). \]
  We use the notation $m: F^\times \times F^\times \to T$ for the inverse of this isomorphism.
 Under this identification we have
 \[ \alpha^\vee(a)=m(a,a^{-1}),\qquad\text{\ and\ }\qquad \beta^\vee(a)=m(1,a) . \]

Let $\wh G_2$ be the dual group of $G_2$ over the complex field obtained via an identification of the roots $\Phi(\wh G_2) = \Phi^\vee(G_2)$ and $\Phi^\vee(\wh G_2) = \Phi(G_2)$.
Denote by $\da\in \Phi(\wh G_2)$ (resp. $\db\in \Phi(G_2)$) the image of $\alpha^\vee$ (resp. $\beta^\vee$) under this identification. 
Then $\wh G_2$ is a complex reductive group of type $G_2$, with simple roots $\da,\db$.
Notice that $\da$ is the long root of $\wh G_2$ and $\db$ is the short root of $\wh G_2$. 
We will denote by $\wh T$ the maximal torus of $\wh G_2$.
The Cartan matrix of $\wh G_2$ is given by
\[
\begin{pmatrix}
 \pair{\da,\da} &\pair{\da,\db} \\ 
 \pair{\db,\da} &\pair{\db,\db} 
 \end{pmatrix}
=
 \begin{pmatrix}
  2&-3\\ 
 -1&2
\end{pmatrix}.
\]
As above, we fix an isomorphism $\wh T\rightarrow \mathbb{C}^\times\times\mathbb{C}^\times $ by 
 \[ t\mapsto ((\da+2\db)(t),(\da+\db)(t)). \]
Note that we have 
\begin{equation}\label{eqn:rootactionontorus}
\da (\wh m(x,y))=x^{-1}y^2, \qquad \mathrm{\ and\ }\qquad  \db(\wh m(x,y))=xy^{-1}.
\end{equation}

 We write $\wh m : \mathbb{C}^\times\times\mathbb{C}^\times \to \wh T$ for the inverse of this isomorphism.
We will denote by $\da^\vee, \db^\vee :\mathbb{C}^\times \rightarrow \wh T$ the coroots of $\da$ and $\db$. 
We have again that
\[ \da^\vee(a)={\wh m}(1,a),\quad \db^\vee(a)={\wh m}(a,a^{-1}) . \] 

Note that under the identification $\Phi(G_2)= \Phi^\vee(\wh G_2)$, we have $\alpha = \da^\vee$ and $\beta = \db^\vee$. Moreover, we have
$(\da + \db)^\vee = 3\da^\vee + \db^\vee$, 
$(\da + 2\db)^\vee = 3\da^\vee + 2\db^\vee$, 
$(\da + 3\db)^\vee = \da^\vee + \db^\vee$ and 
$(2\da + 3\db)^\vee = 2\da^\vee + \db^\vee$. These relations can be checked using the explicit formula for $\hat \gamma^\vee(a),a\in \C^\times$, as given in \cite{Muic}*{p.467}.

\subsection{Chevalley basis}\label{ssec:Chevalley}

Denote by $\mathfrak{g}_2$ the Lie algebra of $G_2$ and $\hat {\mathfrak{g}}_2:=\mathfrak{g}_{2,\mathbb{C}}$ the Lie algebra of $\wh G_2$. For each root $\gamma\in \Phi(G_2)$, denote by $\mathfrak{g}_{\gamma}\subset \mathfrak{g}_2$ the root space of $\gamma$. We fix an element $X_\gamma\in \mathfrak{g}_{\gamma}$ as in \cite{BumpJoyner}. Note that in \cite{BumpJoyner}, the root $\alpha$ is denoted by $\alpha_1$ and $\beta$ is denoted by $\alpha_2$. For each root $ \gamma\in \Phi( G_2)$, denote by $U_\gamma\subset G_2$ the corresponding root space with Lie algebra $\mathfrak{g}_{\gamma}$. We fix an isomorphism $x_\gamma: F\rightarrow U_\gamma$ defined by $x_\gamma(t)=\exp(t X_\gamma)$. Let $U$ be the subgroup of $G_2=G_2(F)$ generated by $U_\gamma$ when $\gamma$ runs over all positive roots of $G_2$.
% For each root $\gamma$, we denote $X_\gamma=dx_{\gamma}(1)\in \mathfrak{g}_\gamma$, where $dx_\gamma$ is the differential of $x_\gamma.$ 
For each positive root $\gamma\in \Phi(G_2)$, denote $\iota_{\gamma}: \SL_2(F)\rightarrow G_2$ the embedding determined by 
$$\iota_\gamma\left(\bpm 1&t\\ 0 &1 \epm \right)=x_\gamma(t), 
\qquad
\iota_\gamma\left( \bpm1& 0 \\ t&1 \epm \right)=x_{-\gamma}(t).$$
For a positive root $\gamma$, denote $h_\gamma(t)=\iota_\gamma(\diag(t,t^{-1}))$, $t\in F^\times$ and $w_\gamma=\iota_\gamma\left(\left(\begin{smallmatrix} &1\\-1& \end{smallmatrix}\right) \right)$. Note that $h_\gamma(t)=\gamma^\vee(t)$ and $w_\gamma$ is a representative of the Weyl element associated with $\gamma$. Moreover, denote by $\GL_2(\gamma)$ the subgroup of $G_2$ generated by $\iota_{\gamma}(F)$ and $T$. Note that $\GL_2(\gamma)$ is isomorphic to $\GL_2(F)$. Indeed, one can extend $\iota_{\gamma}$ to an isomorphism $\GL_2(F)\cong \GL_2(\gamma)$.

For the dual group $\wh G_2$, we will use similar notation as above. For example, for a root $\hat \gamma\in \Phi(\wh G_2)$, we will fix element $X_{\hat\gamma}\in \mathfrak{g}_{\hat \gamma}$ as in \cite{BumpJoyner}, where our $\da$ (resp. $\db$) is $\alpha_2$ (resp. $\alpha_1$) in \cite{BumpJoyner}. We define an embedding  $x_{\hat\gamma}: \mathbb{C}\rightarrow \wh G_2$ by $x_{\hat\gamma}(t)=\exp(t X_{\hat\gamma})$. Similarly, let $\iota_{\hat \gamma}: \SL_2(\mathbb{C})\rightarrow \wh G_2$ be the embedding determined by $\hat \gamma$ and $h_{\hat{\gamma}}(t)=\iota_{\hat{\gamma}}(\diag(t,t^{-1})),t\in \C^\times$. Moreover, denote $H_{\hat{\gamma}}=[X_{\hat{\gamma}},X_{-\hat\gamma}]\in \hat{\mathfrak{g}}_2$ for a positive root $\hat\gamma$. Note that the matrix realization of $H_{\hat \gamma}$ is also given in \cite{BumpJoyner}. For a positive root $\hat \gamma\in \{\hat \alpha, \hat\beta\} $, denote by $\GL_2(\hat \gamma)$ the subgroup of $\wh G_2$ generated $\iota_{\hat \gamma}(\SL_2(\mathbb C))$ and $\wh T$. Then $\GL_2(\hat \gamma)$ is isomorphic to $\GL_2(\mathbb C)$, and $\iota_{\hat \gamma}$ extends to a group homomorphism $\GL_2(\C)\to \GL_2(\hat\gamma)\rightarrow \wh G_2$. We fix embeddings $ \iota_{\db}$ and $\iota_{\da+2\db}$ such that 
\begin{equation}\label{eqn:embeddingoftorus}
%\iota_{\da}\left(\bpm x& 0 \\ 0 &y \epm \right)=\wh m(xy,x), \quad
 \iota_{\db} \left(\bpm x& 0 \\ 0 &y \epm \right)=\wh m(x,y), \quad \iota_{\da+2\db}\left(\bpm x& 0 \\ 0 &y \epm \right)=\wh m(xy^{-1},x) .
\end{equation}

\subsection{Representations of $G_2(F)$}\label{ss:representation}
Let $F$ be a $p$-adic field. We recall some notations from \cite{Muic} on representations of $G_2=G_2(F)$. Write $\nu$ as the character of $F^\times$ defined by the absolute value of $F$, i.e., $\nu(t)=|t|_F$. Let $B=TU$ be the standard Borel subgroup of $G_2$ with maximal torus $T$ and maximal unipotent $U$ as fixed in previous sections. For a simple root $\gamma\in \{\alpha,\beta\}$, let $P_\gamma$ be the standard parabolic subgroup of $G_2$ with Levi subgroup $\GL_2(\gamma)$.

  For a pair of characters $(\chi_1,\chi_2)$ of $F^\times$, we view $\chi_1\otimes \chi_2$ as a character of $T$ under the isomorphism $m^{-1}:T\rightarrow F^\times\times F^\times$. For a pair of complex numbers of $(s_1,s_2)$, denote 
$$I(\nu^{s_1}\chi_1,\nu^{s_2}\chi_2):=\Ind_{B}^{G_2}((\nu^{s_1}\chi_1)\otimes (\nu^{s_2}\chi_2)).$$
When $\Theta_1,\Theta_2$ are unitary and $s_1>s_2>0$, the representation $I(s_1,s_2,\chi_1,\chi_2)$ has a unique quotient, which is denoted by $J(s_1,s_2,\chi_1,\chi_2)$.

For a simple root $\gamma\in \{\alpha,\beta\}$, a representation $\tau$ of $\GL_2(\gamma)\cong \GL_2(F)$ and a complex number $s\in \C$, denote 
$$I_\gamma(s,\pi)=\Ind_{P_\gamma}^{G_2}((\nu^s\circ \det)\otimes \pi).$$
If $\pi$ is tempered and $s>0$, denote by $J_\gamma(s,\pi)$ the unique Langlands quotient of $I_\gamma(s,\pi) $.

For a pair of characters $\chi_1,\chi_2$ of $F^\times$, denote by $\pi(\chi_1,\chi_2)$ the induced representation $\Ind_{B_{\GL_2}}^{\GL_2(F)}(\chi_1,\chi_2)$ of $\GL_2(F)$, where $B_{\GL_2}$ is the upper triangular unipotent subgroup of $\GL_2(F)$. Denote by $\delta(1)$ the Steinberg representation of $\GL_2(F)$.

\iffalse
\todo{Notation issues here!}

The element 
$$h_{\alpha_1}(x_1)h_{\alpha_2}(x_2) \in \dualgroup{T}$$ acts on each simple root by 
\begin{align}
\begin{split}\label{eq: action of torus on simple roots}
    X_{\alpha_1}(r)&\mapsto X_{\alpha_1}(x_1^2x_2^{-1}r),\\
    X_{\alpha_2}(r)&\mapsto X_{\alpha_2}(x_1^{-3}x_2^2r)
    \end{split}
\end{align}
\fi

\subsection{Subregular unipotent orbit for $G_2$}\label{ssec:sub}
%: LLC, KLC, Arthur packets, stable distributions

%This Section is about representations and makes promises 

%\subsection{Subregular unipotent Arthur parameter}

Every unipotent conjugacy class $\mathcal{O}$ in $\dualgroup{G_2}$ determines a $\dualgroup{G_2}$-conjugacy class of algebraic group homomorphisms $\SL_2(\CC) \to \dualgroup{G_2}$ such that $\varphi\left(\begin{smallmatrix} 1 & 1 \\ 0 & 1\end{smallmatrix}\right)\in \mathcal{O}$.
Let $\varphi_\mathrm{sub} : \SL_2(\CC) \to \dualgroup{G_2}$ be one such algebraic group homomorphism determined by the subregular unipotent orbit of $\dualgroup{G_2}$.
Specifically, let  $\varphi_\mathrm{sub} : \SL_2(\CC) \to \dualgroup{G}$ be the homomorphism of algebraic groups corresponding to the  $\mathfrak{sl}(2)$-triple
\[e = X_{\da} - X_{\da+2\db},\quad f = X_{-\da} - X_{-\da-2\db},\quad h=  H_{\da} + H_{\da+2\db}. \] 
Then $\varphi_\mathrm{sub}$ is defined by 
\[
\begin{array}{l}
\varphi_\mathrm{sub}\begin{pmatrix} 1 & x \\ 0 & 1\end{pmatrix} = \exp(x e) 
%= \exp(x (X_{\da} - X_{\da+2\db}))  
= \exp(x X_{\da})\exp(-x X_{\da+2\db}), \\
\varphi_\mathrm{sub} \begin{pmatrix} 1 & 0  \\ y & 1\end{pmatrix} = \exp(y f) 
%= \exp(y (X_{-\da} - X_{-\da-2\db})) 
= \exp(y X_{-\da})\exp(-y X_{-\da-2\db}),
\end{array}
\]
and
\[
\begin{array}{l}
\varphi_\mathrm{sub} \begin{pmatrix} t & 0 \\ 0 & t^{-1} \end{pmatrix} = \exp(t h) = \exp(t (H_{\da} + H_{\da+2\db})).
\end{array}
\]
Now let $\phi_3 : W'_F \to \dualgroup{G_2}$ be the Langlands parameter defined by $\phi_3(w,x) = \varphi_\mathrm{sub}(x)$; this Langlands parameter is of Arthur type for  $\psi_3 : W'_F \times \SL_2(\CC) \to \dualgroup{G_2}$ defined by $\psi_3(w,x,y) = \varphi_\mathrm{sub}(x)$.
The infinitesimal parameter $\lambda_\mathrm{sub} :  W_F \to \dualgroup{G_2}$ for $\phi_3$ is 
\begin{equation}\label{eqn:lambdasub}
\lambda_\mathrm{sub}(w) \ceq \phi_3\left(w, \left(\begin{smallmatrix} \abs{w}^{1/2} & 0 \\ 0 & \abs{w}^{-1/2} \end{smallmatrix}\right)\right) = \varphi_\mathrm{sub}\left(\begin{smallmatrix} \abs{w}^{1/2} & 0 \\ 0 & \abs{w}^{-1/2} \end{smallmatrix}\right).
\end{equation}
This unramified infinitesimal parameter is thus completely determined by
\[ 
\lambda_\mathrm{sub}(\Frob) =  \exp(q^{1/2} (H_{\da} + H_{\da+2\db})) = (2\da^\vee+\db^\vee)\otimes q \in X^*(T)\otimes_\ZZ \CC^\times = \dualgroup{T} .
%=(1,0)
\]
This follows from $(\da+2\db)^\vee=2\db^\vee+3\da^\vee$ and $\frac{1}{2}(\da^\vee+(\da+2\db)^\vee)=2\da^\vee+\db^\vee$; see Section~\ref{ssec:roots}.
We observe that 
\[
\lambda_\mathrm{sub}(\Frob)= h_{\da}(q) h_{\db}(q^2).
\]
\iffalse
\todo{Is there any reason to include this remark?}
\begin{remark}
$\lambda_\mathrm{sub}(\Frob)^{-1}$ is denoted by $\tau_2$ in \cite{Reeder}.
\end{remark}
\fi
\subsection{Moduli space of Langlands parameters}\label{sec:Vsub}

In the remainder of this paper, we replace the L-group $\Lgroup{G}_2$ with the dual group $\dualgroup{G_2}$; this should cause no confusion since $G_2$ is split over $F$.

For any Langlands parameter $\phi : W'_F \to \dualgroup{G_2}$, we refer to  
\[
\begin{array}{rcl}
\lambda_\phi : W_F &\to& \dualgroup{G_2}\\
w &\mapsto& \phi\left(w,\left(\begin{smallmatrix} \abs{w}^{1/2} & 0 \\ 0 & \abs{w}^{-1/2} \end{smallmatrix}\right)\right)
\end{array}
\]
as the {\it infinitesimal parameter for $\phi$}.
As explained in \cite{CFMMX}*{Proposition 4.2.2}, the moduli space of Langlands parameter $\phi$ for which $\lambda_\phi=\lambda_\mathrm{sub}$ is  
\begin{equation}
V_\mathrm{sub} = \{ r_0 X_{\da} + r_1 X_{\da+\db} + r_2 X_{\da+2\db} + r_3 X_{\da+3\db} \tq r_0, r_1, r_2, r_3 \} \iso \mathbb{A}^4.
\end{equation}
Using \cite{CFMMX}*{Section 4.2}, it follows that
\[
V_\mathrm{sub} = \dualgroup{\mathfrak{g}}_2(q),
\]
for the weight filtration of $\dualgroup{\mathfrak{g}}_2$ determined by the action $\Ad(\lambda_\mathrm{sub}(\Frob))$.
The centralizer of $\lambda_\mathrm{sub}$ in $\dualgroup{G}$ is 
$H_\mathrm{sub} = \GL_2(\db)$, as defined in Section~\ref{ssec:Chevalley}. We will identify $V_{\mathrm{sub}}$ with $\BA^4$ (column vectors) by $ r_0 X_{\da} + r_1 X_{\da+\db} + r_2 X_{\da+2\db} + r_3 X_{\da+3\db}\mapsto (r_0,r_1,r_2,r_3)^t$.
The action $H_\mathrm{sub}\times V_\mathrm{sub}  \to V_\mathrm{sub}$ is given by
\[
h.r =
\frac{1}{ad-bc} 
\begin{pmatrix} 
 d^3 &-3cd^2 & -3c^2 d&-c^3\\ 
 -bd^2 &d(ad+2bc) &c(2ad+bc)& ac^2 \\ 
 -b^2d &b(2ad+bc) & a(ad+2bc) & a^2c \\ 
 -b^3 & 3ab^2 & 3a^2b & a^3 
\end{pmatrix}.\bpm r_0 \\ r_1\\ r_2\\ r_3 \epm
\]
where $h=\left(\begin{smallmatrix}  a&b\\ c&d \end{smallmatrix}\right)\in H_{\mathrm{sub}}$ and $r = (r_0,r_1, r_2, r_3)^t$. The above action can be computed using the matrix realization of $G_2$ given in \cite{BumpJoyner}, where the negative signs follows from the fixed choices of Chevalley basis of \cite{BumpJoyner}.  See \cite[(3.5)]{Ch} for a matrix realization of the above action in the form $h^{-1}vh, h\in H_{\mathrm{sub}},v\in V_{\mathrm{sub}}$ based on the commutator relation \cite{Ree}*{p.443, (3.10)}

Recall that the equivariant fundamental group of an $H_\mathrm{sub}$-orbit $C\subset V_\mathrm{sub}$ is defined by
\[
A_C \ceq \pi_0(Z_{H_\mathrm{sub}}(c)) = \pi_1(C,c)_{Z_{H_\mathrm{sub}}(c)},
\]
for the choice of a base point $c\in C$.
Consider the following 4 points in $V_{\mathrm{sub}}$: $$c_0=(0,0,0,0)^t, \qquad c_1=(1,0,0,0)^t, $$ 
$$c_2=(0,1,0,0)^t, \qquad c_3=(1,0,1,0)^t.$$

\begin{lemma}\label{lem:fg}
The action of $ H_{\mathrm{sub}}$ on $V_{\mathrm{sub}}$ has four orbits $C_i$, $i=0,1,2,3$ with representatives $c_i\in C_i$. Moreover, we have $A_{C_3}\cong S_3,$ and $ A_{C_2}=A_{C_1}=A_{C_0}=1,$ where $A_{C_i}$ is the component group of $C_i$ for $i=0,1,2,3$. Here $S_3$ is the symmetric group of $3$ letters.
\end{lemma}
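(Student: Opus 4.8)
The strategy is to identify the $H_{\mathrm{sub}} = \GL_2$ action on $V_{\mathrm{sub}} \cong \BA^4$ with the twisted action on binary cubics $\det^{-1}\otimes\Sym^3$, and then invoke the classical classification of $\GL_2$-orbits on binary cubic forms. Concretely, a point $r = (r_0, r_1, r_2, r_3)^t$ corresponds to the cubic form $r_0 x^3 + r_1 x^2 y + r_2 x y^2 + r_3 y^3$ (up to normalization of the binomial coefficients, which does not affect the orbit structure), and the matrix displayed above is exactly the matrix of the $\Sym^3$ action twisted by $\det^{-1}$. Over an algebraically closed field, two nonzero binary cubics lie in the same $\GL_2$-orbit if and only if they have the same number of distinct roots (counted in $\BP^1$) with the same multiplicity pattern; the possibilities are: identically zero; a triple root; a double root plus a simple root; three distinct simple roots. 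This gives exactly four orbits. I would match the chosen representatives: $c_0 = 0$ is the zero form; $c_1 = x^3$ has a triple root; $c_2 = x^2 y$ has a double root at $[0:1]$ wait — I should be careful with the convention; in any case $c_2$ corresponds to a cubic with a repeated linear factor and one distinct factor; and $c_3 = x^3 + xy^2 = x(x^2+y^2) = x(x-iy)(x+iy)$ has three distinct roots. So $V_{\mathrm{sub}}$ has exactly the four orbits $C_0, C_1, C_2, C_3$ with representatives $c_0, c_1, c_2, c_3$.

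For the component groups, I would compute the stabilizer $Z_{H_{\mathrm{sub}}}(c_i)$ directly for each representative using the explicit matrix action. For $c_0 = 0$ the stabilizer is all of $\GL_2$, which is connected, so $A_{C_0} = 1$. For $c_1 = x^3$ and $c_2$ the stabilizer is a solvable group (an extension of $\Gm$ by $\Ga$, possibly times a central torus factor coming from the $\det^{-1}$ twist), and in each case one checks it is connected, giving $A_{C_1} = A_{C_2} = 1$. For $c_3$, corresponding to a cubic with three distinct roots $\{[1:0],[i:1],[-i:1]\}$ in $\BP^1$ up to the relevant scaling, the stabilizer in $\GL_2$ of the form itself: the subgroup of $\PGL_2$ preserving the unordered triple of points is $S_3$ (the symmetric group permuting three points of $\BP^1$), and one must check how this lifts to $\GL_2$ under the $\det^{-1}\otimes\Sym^3$ action — the scalars $z\cdot I$ act on the cubic by $z^{-2}\cdot z^3 = z$, so no scalar fixes $c_3$ except $z=1$, and the lift of each element of $S_3\subset\PGL_2$ can be normalized uniquely to fix the form. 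Hence $Z_{H_{\mathrm{sub}}}(c_3)\cong S_3$, which is finite, so $A_{C_3} = \pi_0(Z_{H_{\mathrm{sub}}}(c_3)) \cong S_3$.

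I expect the main obstacle to be the bookkeeping in the stabilizer computation for $c_3$: one must verify both that the $\PGL_2$-stabilizer of the triple of roots is genuinely $S_3$ (not a proper subgroup — this uses that the three roots are in "general position", equivalently that no extra projective symmetry appears, which for $x(x-iy)(x+iy)$ must be checked, e.g. via the cross-ratio or the fact that the $\SL_2$-invariant discriminant is nonzero) and that this $S_3$ lifts isomorphically rather than picking up a central extension from the $\GL_2 \to \PGL_2$ map. The twist by $\det^{-1}$ is precisely what makes the scalar matrices act faithfully-enough for this lift to be clean, so I would spell that point out carefully. The connectedness claims for $c_1$ and $c_2$ are routine once the stabilizers are written down as explicit subgroups of $2\times2$ matrices. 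Alternatively, one can avoid the abstract classification and verify everything by hand: show each $c_i$ has an orbit of the expected dimension ($0, 2, 3, 4$) using the matrix action to compute the tangent space to the orbit, show the four orbits are disjoint and cover $\BA^4$ (e.g. by stratifying according to the discriminant and the gcd of the coordinates), and compute the four stabilizers explicitly; this is more elementary and self-contained, and is probably the route I would actually take given the paper's stated preference for self-containedness.
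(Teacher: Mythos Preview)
Your proposal is correct and matches the approach the paper ultimately takes in Section~\ref{ssec:cubicorbits} (see Lemma~\ref{lem:stabilizers}), where the stabilizers are computed in exactly the cubic-form language you describe: the triple-root and double-root cases give connected solvable stabilizers, and for three distinct roots the stabilizer is the permutation group $S_3$ on the three lines, with the center of $\GL_2$ acting by $a\cdot I \mapsto a$ so that no nontrivial scalar survives.

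The paper's \emph{immediate} proof of Lemma~\ref{lem:fg}, however, takes a more bare-hands route for $A_{C_3}$: rather than invoking the $\PGL_2$-stabilizer of a triple, it simply writes out the equation $h\cdot c_3 = c_3$ using the explicit $4\times 4$ matrix action and solves, obtaining the two generators
\[
\begin{pmatrix} -\tfrac{1}{2} & -\tfrac{\sqrt{3}}{2}\\ \tfrac{\sqrt 3}{2} & -\tfrac{1}{2} \end{pmatrix}
\quad\text{and}\quad
\begin{pmatrix} 1 & 0 \\ 0 & -1 \end{pmatrix}
\]
of a discrete group isomorphic to $S_3$. The cases $A_{C_0},A_{C_1},A_{C_2}$ are deferred to Lemma~\ref{lem:stabilizers}. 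Your route via cubics is cleaner conceptually; the paper's matrix solve is more self-contained at that point in the exposition but less illuminating.

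One small simplification: your worry about ``general position'' for the triple of roots is unnecessary. Since $\PGL_2$ acts $3$-transitively on $\BP^1$, every unordered triple of distinct points has stabilizer exactly $S_3$; no cross-ratio check is needed.
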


\begin{proof}
The orbits can be computed directly as in \cite{Ch}*{p.197}. 
See \ref{ssec:cubicorbits} for a different description of the orbits.

Consider the element $c_3=(1,0,1,0)^t\in C_3$. We have that $A_3\cong Z_{H_{\mathrm{sub}}}(c_3)^0$, i.e., the component group of the centralizer of $c_3$ in $H_{\mathrm{sub}}$. Suppose that $h=\left(\begin{smallmatrix} a&b\\ c&d \end{smallmatrix}\right) \in H_{\mathrm{sub}}$ is in the centralizer of $c_3$. By the action of $H_{\mathrm{sub}}$ on $V_{\mathrm{sub}}$, we have
\[
h.x_3 = \frac{1}{ad-bc}\bpm d^3-3c^2d\\ -bd^2+c(2ad+bc)\\ -b^2d+a(ad+2bc)\\ -b^3+3a^2d \epm =\bpm 1\\ 0 \\ 1\\ 0\epm.
\]
Then we can solve that $h.c_3 = c_3$ is in the subgroup of $\GL_2(\mathbb C)$ generated by 
$$\bpm -\frac{1}{2} & -\frac{\sqrt{3}}{2}\\ \frac{\sqrt 3}{2} & -\frac{1}{2} \epm \qquad\mathrm{ and }\qquad \bpm 1 & 0 \\ 0 &-1 \epm.$$
This group is discrete and isomorphic to $S_3$. Thus we get $A_3\cong S_3$. We omit for now the details of the calculation of $A_0,A_1$ and $A_2$, these can be infered from Lemma \ref{lem:stabilizers}.
\end{proof}

The moduli space of Langlands parameters $\phi$ for which $\lambda_\phi$ is $\dualgroup{G_2}$-conjugate to $\lambda_\mathrm{sub}$ is
\begin{equation}\label{eqn:Xsub}
X_\mathrm{sub} = \dualgroup{G_2}\times_{H_\mathrm{sub}} V_\mathrm{sub}
\end{equation}
The equivalence of categories
\begin{equation}\label{eqn:VX}
\Perv_{\dualgroup{G}}(X_\mathrm{sub}) \iso \Perv_{H_\mathrm{sub}}(V_\mathrm{sub})
\end{equation}
matches $\dualgroup{G}$-orbits $\mathfrak{C}_i$ in $X_\mathrm{sub}$ with $H_\mathrm{sub}$-orbits $C_i$ in $V_\mathrm{sub}$.
Under this matching, Lemma~\ref{lem:fg} gives us the equivariant fundamental groups of the $\dualgroup{G}$-orbits $S_i$ in $X_\mathrm{sub}$:
\begin{equation}\label{eqn:AmfC}
\begin{array}{ll} 
A_{\mathfrak{C}_0} =1 &\qquad A_{\mathfrak{C}_1} = 1\\
A_{\mathfrak{C}_2} = 1 &\qquad A_{\mathfrak{C}_3} = S_3.
\end{array}
\end{equation}

\begin{proposition}\label{prop:langlandsparameters}
There are exactly four $\dualgroup{G_2}$-conjugacy classes of Langlands parameters for $G_2$ with infinitesimal parameter  $\dualgroup{G_2}$-conjugate to $\lambda_\mathrm{sub}$; these conjugacy classes are classified by the four $\dualgroup{G}_2$-orbits $\mathfrak{C}_i$ in $X_\mathrm{sub}$, $i=0,1,2,3$. 
Likewise, there are exactly four $H_\mathrm{sub}$-conjugacy classes of Langlands parameters for $G_2$ with infinitesimal parameter equal to $\lambda_\mathrm{sub}$; these conjugacy classes are classified by the four $H_\mathrm{sub}$-orbits $C_i$ in $V_\mathrm{sub}$, $i=0,1,2,3$. 
The four orbit representatives chosen above, $c_0$, $c_1$, $c_2$ and $c_3$ determine, respectively, the following four Langlands parameters for $G_2$:
\[
\begin{array}{rcl}
\phi_{0}(w,x) &=& \lambda_\mathrm{sub}(w) = {\wh m}(|w|,|w|) = (2\da^\vee+\db^\vee)(|w|) , \\
\phi_{1}(w,x) &=& \iota_{\da}(\diag(|w|^{1/2},|w|^{1/2}))\ \iota_{\da}(x), \\
\phi_{2}(w,x) &=& \iota_{\da+2\db}(\diag(|w|^{1/2},|w|^{1/2}))\ \iota_{\da+2\db}(x) , \\
\phi_{3}(w,x) &=&\varphi_{\mathrm{sub}}(x).
\end{array}
\]
 Moreover, the component group $A_{\phi_i}$ is equal to the equivariant fundamental group $A_{C_i}$, where $A_{\phi_i}=\pi_0(Z_{\dualgroup{G}_2}(\phi_i))$. 
\end{proposition}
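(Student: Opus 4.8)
The plan is to read the statement off from Lemma~\ref{lem:fg} together with the description, recalled above, of the moduli space of parameters with a given infinitesimal parameter, reserving the one genuine computation for the explicit identification of the representatives $\phi_i$. For the enumeration of conjugacy classes: any Langlands parameter whose infinitesimal parameter is $\dualgroup{G_2}$-conjugate to $\lambda_\mathrm{sub}$ is $\dualgroup{G_2}$-conjugate to one whose infinitesimal parameter is exactly $\lambda_\mathrm{sub}$; and if $\phi'=g\phi g^{-1}$ with $\phi,\phi'$ both of infinitesimal parameter $\lambda_\mathrm{sub}$, then $g\lambda_\mathrm{sub}g^{-1}=\lambda_\mathrm{sub}$, so $g\in Z_{\dualgroup{G_2}}(\lambda_\mathrm{sub})=H_\mathrm{sub}$. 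Hence $\dualgroup{G_2}$-conjugacy classes of parameters with infinitesimal parameter $\dualgroup{G_2}$-conjugate to $\lambda_\mathrm{sub}$ coincide with $H_\mathrm{sub}$-conjugacy classes of parameters with infinitesimal parameter $\lambda_\mathrm{sub}$, which by \cite{CFMMX}*{Proposition 4.2.2} are exactly the $H_\mathrm{sub}$-orbits on $V_\mathrm{sub}$; Lemma~\ref{lem:fg} says there are four of these, and \eqref{eqn:VX} carries them to the four $\dualgroup{G_2}$-orbits $\mathfrak{C}_i$ on $X_\mathrm{sub}$. This yields the first two assertions.

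Next I would identify the class attached to $c_i$ with the displayed $\phi_i$. Under \cite{CFMMX}*{Proposition 4.2.2}, a parameter with infinitesimal parameter $\lambda_\mathrm{sub}$ is determined up to $H_\mathrm{sub}$-conjugacy by the $H_\mathrm{sub}$-orbit of the nilpotent element obtained from its $\SL_2(\CC)$-factor via Jacobson--Morozov, and that orbit is $C_i$ exactly when the parameter is (conjugate to) the one attached to $c_i$. So it suffices to check, for each $\phi_i$ as displayed, that $\lambda_{\phi_i}=\lambda_\mathrm{sub}$ and that its nilpotent part lies in $C_i$. The nilpotent parts are $0$, $X_{\da}$, $X_{\da+2\db}$, and $e=X_{\da}-X_{\da+2\db}$; the first three are $c_0,c_1,c_2$, and the last lies in $C_3$ because $\diag(1,-1)\in H_\mathrm{sub}$ carries $c_3=X_{\da}+X_{\da+2\db}$ to $e$ under the matrix formula for the $H_\mathrm{sub}$-action. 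That $\lambda_{\phi_i}=\lambda_\mathrm{sub}$ is immediate for $\phi_0$ since $\lambda_\mathrm{sub}(w)=\wh m(|w|,|w|)=(2\da^\vee+\db^\vee)(|w|)$, is precisely \eqref{eqn:lambdasub} for $\phi_3$, and for $i=1,2$ reduces, via $\lambda_{\phi_i}(w)=\iota_\gamma(\diag(|w|^{1/2},|w|^{1/2}))\iota_\gamma(\diag(|w|^{1/2},|w|^{-1/2}))=\iota_\gamma(\diag(|w|,1))$ with $\gamma=\da,\da+2\db$, to the torus identity $\iota_\gamma(\diag(|w|,1))=\wh m(|w|,|w|)$, which one reads off from \eqref{eqn:embeddingoftorus} and the normalisations of Section~\ref{ssec:Chevalley}. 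I expect this unwinding of the root and coroot conventions of Sections~\ref{ssec:roots}--\ref{ssec:Chevalley} to be the only real work in the proof.

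Finally, for the component groups: since $\lambda_{\phi_i}=\lambda_\mathrm{sub}$ and $\phi_i$ is trivial on inertia, an element centralising $\phi_i(\SL_2(\CC))$ centralises $\phi_i(\Frob)$ if and only if it centralises $\lambda_\mathrm{sub}(\Frob)$; hence $Z_{\dualgroup{G_2}}(\phi_i)=Z_{\dualgroup{G_2}}(\lambda_\mathrm{sub})\cap Z_{\dualgroup{G_2}}(\phi_i(\SL_2(\CC)))=Z_{H_\mathrm{sub}}(\phi_i(\SL_2(\CC)))$. By the $\mathfrak{sl}(2)$-theory used in \cite{CFMMX} to compare a parameter's centraliser with the equivariant fundamental group, $Z_{H_\mathrm{sub}}(\phi_i(\SL_2(\CC)))$ is a reductive complement to the unipotent radical of $Z_{H_\mathrm{sub}}(c_i)$, so that $A_{\phi_i}=\pi_0(Z_{\dualgroup{G_2}}(\phi_i))=\pi_0(Z_{H_\mathrm{sub}}(c_i))=A_{C_i}$. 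Together with Lemma~\ref{lem:fg} and \eqref{eqn:AmfC} this gives $A_{\phi_0}=A_{\phi_1}=A_{\phi_2}=1$ and $A_{\phi_3}\cong S_3$.
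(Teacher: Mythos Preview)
Your approach is essentially the same as the paper's, which also reads the classification off the orbit description via \cite{CFMMX}, states the recipe $\phi_i(1,\left(\begin{smallmatrix}1&r\\0&1\end{smallmatrix}\right))=\exp(rc_i)$ and $\phi_i(w,d_w)=\lambda_\mathrm{sub}(w)$, and cites \cite{CFMMX}*{Lemma~4.6.1} for the identification $A_{\phi_i}=A_{C_i}$. Your more explicit treatment of the component-group step is correct and is exactly what lies behind that citation.

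There are, however, two small computational slips in your identification of the nilpotent parts. First, the nilpotent element of $\phi_2$ is $X_{\da+2\db}=(0,0,1,0)^t$, whereas $c_2=(0,1,0,0)^t=X_{\da+\db}$; they are not equal. They do lie in the same $H_\mathrm{sub}$-orbit $C_2$ (for instance, via the cubic model both are a linear form squared times a distinct linear form), so your conclusion stands, but the sentence ``the first three are $c_0,c_1,c_2$'' is not accurate for $i=2$. Second, $\diag(1,-1)$ fixes $c_3$ rather than sending it to $e=X_{\da}-X_{\da+2\db}$: with $a=1,b=c=0,d=-1$ the action matrix reduces to $\diag(1,-1,1,-1)$, which fixes $(1,0,1,0)^t$. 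A diagonal element that actually does the job is $h=\diag(-1,i)$, for which $h\cdot(1,0,1,0)^t=(1,0,-1,0)^t$. Again the conclusion that $e\in C_3$ is correct; only the witnessing element needs adjusting.
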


Note that in the above Proposition, we have 
\[ \iota_{\da+2\db}(\diag(|w|^{1/2},|w|^{1/2}))=\wh m(1,|w|^{1/2})\] and 
\[\iota_{\da}(\diag(|w|^{1/2},|w|^{1/2}))=\wh m(|w|,|w|^{1/2}).\]
\begin{proof}
It is known that the orbits of the $H_{\mathrm{sub}}$ action on $V_{\mathrm{sub}}$ classifies the conjugacy classes of Langlands parameters with infinitetesimal parameter $\lambda_{\mathrm{sub}}$.
The recipe is given as follows. For $c_i\in C_i $, the corresponding Langlands parameter $\phi_i:W_F\times \SL_2(\C)\to \wh G_2(\C)$ is determined by the condition 
$$\phi_i\left(1,\begin{pmatrix}1 & r\\ 0 &1 \end{pmatrix}\right)=\exp(rc_i), r\in \C,$$
and $$\phi_i(w,d_w)=\lambda(w),w\in W_F.$$
Note that the first condition will fix the value of $\phi_i$ on $1\times \SL_2(\C)$, which together with the second condition determines the value of $\phi_i$ on $W_F\times 1$. We can then check that $\phi_i$ determined by $c_i$ has the form as stated in the Proposition. The ``moreover'' part is a standard fact; see for example \cite{CFMMX}*{Lemma~4.6.1}.
\end{proof}

\subsection{Dual moduli space of Langlands parameters}\label{sec:Vsubdual}

Consider the ``dual'' infinitesimal parameter
\begin{equation}\label{eqn:lambdasubdual}
\lambda_\mathrm{sub}^*(w) \ceq \varphi_\mathrm{sub}\left(\begin{smallmatrix} \abs{w}^{-1/2} & 0 \\ 0 & \abs{w}^{1/2} \end{smallmatrix}\right).
\end{equation}
The moduli space of Langlands parameters $\phi : W'_F \to \dualgroup{G}$ for which $\lambda_\phi = \lambda_\mathrm{sub}^*$ is
\begin{equation}\label{eqn:Vsub}
V^*_\mathrm{sub} = \{ s_0 X_{-\da} + s_1 X_{-\da-\db} + s_2 X_{-\da-2\db} + s_3 X_{-\da-3\db} \tq s_0, s_1, s_2, s_3 \}.
\end{equation}
Then, arguing as in Proposition~\ref{prop:langlandsparameters},
\[
V^*_\mathrm{sub} 
=
\dualgroup{\mathfrak{g}}_2(q^{-1}),
\]
for the weight filtration of $\dualgroup{\mathfrak{g}}_2$ determined by the action $\Ad(\lambda_\mathrm{sub}(\Frob))$.
The centralizer of $\lambda_\mathrm{sub}^*$ in $\dualgroup{G}$ is again
$H_\mathrm{sub} = \GL_2(\da)$. We will identify $V_{\mathrm{sub}}^*$ with $\BA^4$ (column vectors) by $ s_0 X_{-\da} + s_1 X_{-\da-\db} + s_2 X_{-\da-2\db} + s_3 X_{-\da-3\db}\mapsto (s_0,s_1,s_2,s_3)^t$.
The action $H_\mathrm{sub}\times V_\mathrm{sub}^*  \to V_\mathrm{sub}^*$ is given by
\[
h.s =
{(ad-bc)} 
\prescript{t}{}{\begin{pmatrix} 
 d^3 &-3cd^2 & -3c^2 d&-c^3\\ 
 -bd^2 &d(ad+2bc) &c(2ad+bc)& ac^2 \\ 
 -b^2d &b(2ad+bc) & a(ad+2bc) & a^2c \\ 
 -b^3 & 3ab^2 & 3a^2b & a^3 
\end{pmatrix}^{-1}}.
\bpm s_0 \\ s_1\\ s_2\\ s_3 \epm
\]
where $h=\left(\begin{smallmatrix}  a&b\\ c&d \end{smallmatrix}\right)\in H_{\mathrm{sub}}$ and $s = (s_0,s_1, s_2, s_3)^t$.

The Killing form for $\dualgroup{\mathfrak{g}}_2$ establishes the duality between $\dualgroup{\mathfrak{g}}_2(q) = V_\mathrm{sub}$ and $\dualgroup{\mathfrak{g}}_2(q^{-1}) = V_\mathrm{sub}^*$:
\[
\KPair{}{} : \dualgroup{\mathfrak{g}}_2(q) \times \dualgroup{\mathfrak{g}}_2(q^{-1}) \to \C.
\]
In the coordinates chosen above, this is given by
\[
\KPair{r}{s} = r_0 s_0 +3 r_1 s_1 + 3r_2 s_2 + r_3 s_3.
\]
We will also need the Lie bracket
\begin{equation}\label{eqn:LieV}
[\ ,\ ] : \dualgroup{\mathfrak{g}}_2(q) \times \dualgroup{\mathfrak{g}}_2(q^{-1}) \to  \Lie H_\mathrm{sub} \ceq \mathfrak{h}_\mathrm{sub}.
\end{equation}

\begin{lemma}\label{lemma:momentmap}
 For $r=(r_0,r_1,r_2,r_3)^t=\sum r_i X_{\da+i\db}\in V$ and $s=\sum s_i X_{-(\da+i\db)}\in V^*$,
 the Lie bracket \eqref{eqn:LieV} is given by
\begin{align*}
[r,s] &=(r_0s_0+2r_1s_1+r_2s_2) H_{\da}+(r_1s_1+2r_2s_2+r_3s_3)(H_{\da}+H_{\db})\\
&~~+(-r_1s_0+2r_2s_1+r_3s_2)X_{\db}+(-r_0s_1+2r_1s_2+r_2s_3)X_{-\db}.
\end{align*}
%\todo{NEED to specify an isomorphism  $\mathfrak{h}_{\mathrm{sub}}\rightarrow M_{2\times 2}$, in particular on the diagonal part. If we take this isomorphism as the differential of $\iota_{\db}$, then the diagonal part is not so clean.}
\end{lemma}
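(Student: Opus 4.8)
The plan is to compute the Lie bracket $[r,s]$ directly using the explicit Chevalley basis of $\widehat{\mathfrak{g}}_2$ fixed in \cite{BumpJoyner}, exploiting the fact that the bracket is bilinear so that it suffices to compute $[X_{\da+i\db}, X_{-(\da+j\db)}]$ for all $i,j \in \{0,1,2,3\}$ and then sum. First I would recall which pairs can possibly give a nonzero bracket: since $(\da+i\db) - (\da+j\db) = (i-j)\db$, the bracket $[X_{\da+i\db}, X_{-(\da+j\db)}]$ lies in the root space $\mathfrak{g}_{(i-j)\db}$ when $i\neq j$, and in the Cartan $\mathfrak{h}$ when $i=j$. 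Because $2\db$ is not a root of $\widehat{G}_2$ (the short roots of $G_2$ differ by $\da + k\db$, not by multiples of $\db$ alone; indeed among $\da, \da+\db, \da+2\db, \da+3\db$ consecutive differences are $\db$ but the only roots that are multiples of $\db$ are $\pm\db$), the only off-diagonal contributions come from $|i-j| = 1$, landing in $\mathfrak{g}_{\pm\db}$. This already explains the shape of the claimed formula: a Cartan part coming from the four diagonal terms $r_i s_i [X_{\da+i\db}, X_{-(\da+i\db)}]$, an $X_{\db}$-part from the pairs $(i, i+1)$, and an $X_{-\db}$-part from the pairs $(i+1, i)$.

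Next I would pin down the relevant structure constants. For the diagonal terms, $[X_{\da+i\db}, X_{-(\da+i\db)}] = H_{\da+i\db}$, the coroot vector, and I would express each $H_{\da+i\db}$ in the basis $\{H_{\da}, H_{\db}\}$ using the coroot identities already recorded in Section~\ref{ssec:roots}: $(\da+\db)^\vee = 3\da^\vee + \db^\vee$, $(\da+2\db)^\vee = 3\da^\vee+2\db^\vee$, $(\da+3\db)^\vee = \da^\vee + \db^\vee$ — but translated to the $H$-normalization appropriate to the $\mathfrak{sl}_2$-triples, which for the short roots $\da+i\db$ of $\widehat G_2$ will involve the factor making $H_{\da+2\db}$ a long coroot. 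Concretely I expect $H_{\da} = H_\da$, $H_{\da+\db} = 2H_\da + H_\db$ (up to the correct normalization), and so forth; the precise coefficients $1, 2, 1$ and $1, 2, 1$ multiplying $H_\da$ and $H_\da + H_\db$ in the statement are exactly these coroot expansions, and I would verify them against \cite{BumpJoyner}'s explicit matrices. For the off-diagonal terms I would use the commutator relations $[X_{\da+i\db}, X_{-(\da+(i+1)\db)}] = N_{i} X_{-\db}$ and $[X_{\da+(i+1)\db}, X_{-(\da+i\db)}] = N_i' X_{\db}$ with Chevalley structure constants $N_i, N_i' = \pm 1, \pm 2, \pm 1$ read off from \cite{BumpJoyner}; the signs and the $2$ in the middle are dictated by the $G_2$ root string through $\db$ (the root strings $\da \to \da+\db \to \da+2\db \to \da+3\db$ have length $4$, producing structure constants up to $\pm 3$ in one normalization, here appearing as the $2$'s after accounting for the chosen basis).

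Then I would simply assemble: collecting the $H_\da$-coefficient gives $r_0 s_0 + 2 r_1 s_1 + r_2 s_2$ (the diagonal coroot expansions contribute $1,2,2,1$ to ``$H_\da$-ish'' directions, and regrouping into $H_\da$ and $H_\da+H_\db$ gives the stated split $r_0s_0+2r_1s_1+r_2s_2$ and $r_1s_1+2r_2s_2+r_3s_3$); collecting the $X_{\db}$-coefficient from the pairs $(1,0),(2,1),(3,2)$ gives $-r_1 s_0 + 2 r_2 s_1 + r_3 s_2$ and similarly $-r_0 s_1 + 2 r_1 s_2 + r_2 s_3$ for $X_{-\db}$, with the signs coming from the antisymmetry $N_i' = -N_i$ built into the Chevalley basis. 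The main obstacle is bookkeeping: getting every sign and every factor of $2$ (or $3$) right under \cite{BumpJoyner}'s specific normalization — in particular confirming that the middle pairs genuinely contribute $\pm 2$ rather than $\pm 3$ or $\pm 1$ — since an error there propagates into the moment-map computations later. I would cross-check the final formula by two independent sanity tests: (i) equivariance, namely $[h\cdot r, h\cdot s]$ should equal $\mathrm{Ad}(h)[r,s]$ for $h \in H_\mathrm{sub}$, using the explicit matrix action on $V_\mathrm{sub}$ and $V^*_\mathrm{sub}$ given above; and (ii) the pairing identity $\KPair{r}{s} = $ (suitable multiple of) the trace of $[r,s]$ acting in the standard way, which forces the coefficients $1,3,3,1$ in $\KPair{}{}$ to be consistent with the coefficients appearing in $[r,s]$.
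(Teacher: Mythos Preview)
Your approach is essentially identical to the paper's: compute the sixteen brackets $[X_{\da+i\db},X_{-(\da+j\db)}]$, observe that only $|i-j|\le 1$ survives, express each diagonal contribution $H_{\da+i\db}$ in the basis $\{H_{\da},H_{\db}\}$, read the six off-diagonal structure constants from \cite{BumpJoyner}, and assemble. The paper does exactly this and nothing more; your proposed equivariance and trace cross-checks are extra but harmless.

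Two of your provisional guesses are off and would be corrected once you actually open \cite{BumpJoyner} as you plan: the coroot expansions are $H_{\da+\db}=3H_{\da}+H_{\db}$, $H_{\da+2\db}=2H_{\da}+3H_{\db}$, $H_{\da+3\db}=H_{\da}+H_{\db}$ (your tentative $2H_{\da}+H_{\db}$ for the first is wrong), and the off-diagonal pairs have the \emph{same} sign rather than $N_i'=-N_i$ --- the constants are $(-1,-1)$, $(2,2)$, $(1,1)$ for the three adjacent pairs, which is exactly what produces the coefficients $-1,2,1$ in both the $X_{\db}$ and $X_{-\db}$ terms. Neither error affects the soundness of the plan.
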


\begin{proof}
Write $r=(r_0,r_1,r_2,r_3)^t=\sum r_i X_{\da+i\db}\in V$ and $s=\sum s_i X_{-(\da+i\db)}\in V^*$, we compute $[r,s]$. 
Note that 
$$H_{\da+\db}=3H_{\da}+H_{\db}, H_{\da+2\db}=2H_{\da}+3H_{\db}, H_{\da+3\db}=H_\da+H_{\db}.$$
The above relations can be checked from the matrix realizations of $H_{\hat \gamma}$ given in \cite{BumpJoyner} and they reflect the facts that $(\da+\db)^\vee=2\da^\vee+\db^\vee, (\da+2\db)^\vee=3\da^\vee+2\db^\vee,$ and $(\da+3\db)^\vee=\da^\vee+\db^\vee.$
Using the Chevalley basis in \cite{BumpJoyner}, we have %\todo{Should not $\{~\}$ be $[~]$?}
\[
\begin{array}{rl rl} 
[ X_\da,X_{-(\da+\db)} ] &=-X_{-\db}, & [ X_{\da+\db},X_{-\da}] &=-X_{\db}, \\
\ [ X_{\da+\db},X_{-(\da+2\db)} ] &=2X_{-\db}, & [ X_{\da+2\db},X_{-(\da+\db)} ] &=2X_{\db}, \\
\ [ X_{\da+2\db},X_{-(\da+3\db)} ] &=X_{-\db}, & [ X_{\da+3\db}, X_{-(\da+2\db)} ] &=X_{\db}.
\end{array}
\]
Accordingly,
\begin{align*}
[r,s]&=r_0s_0H_\da +r_1s_1 H_{\da+\db}+r_2s_2H_{\da+2\db}+r_3s_3H_{\da+3\db}\\
 &~~+(-r_1s_0+2r_2s_1+r_3s_2)X_{\db}+(-r_0s_1+2r_1s_2+r_2s_3)X_{-\db},
%&=(r_0s_0+3r_1s_1+3r_2s_2+r_3s_3) H_{\da}+(r_1s_1+2r_2s_2+r_3s_3)H_{\db}\\
%&~~+(-r_1s_0+2r_2s_1+r_3s_2)X_{\db}+(-r_0s_1+2r_1s_2+r_2s_3)X_{-\db}.\qedhere
\end{align*}
which gives the formula of $[r,s]$.
\end{proof}

\iffalse
Thus the condition $[r,s]=0$ is equivalent to 
\begin{align}\label{eq: [r,s]=0}
\begin{split}
r_0s_0+2r_1s_1+r_2s_2&=0,\\
r_1s_1+2r_2s_2+r_3s_3&=0,\\
-r_1s_0+2r_2s_1+r_3s_2&=0,\\
-r_0s_1+2r_1s_2+r_2s_3&=0.
\end{split}
\end{align}
\fi

\subsection{Langlands classification}\label{ssec:SM}

Recall that the Langlands parameter $\phi_0$ is given by $\phi_0(w,x)=(2\da^\vee+\db^\vee)(|w|)$ and it has image in $\wh T$. Under the local Langlands correspondence for tori, $\phi_0$ corresponds to the character $\nu\otimes 1$ of $T$, see Section \ref{ss:representation} for the notations.  Let $\pi_0$ be the irreducible smooth representation of $G_2(F)$ corresponding to $\phi_0$ under the Langlands correspondence for unramified principle series representations. %Satake isomorphism.
 Then $\pi_0$ is the spherical component of the unramified principal series $I(\nu\otimes 1)$, where a representation is called spherical if there is a nonzero vector fixed by the maximal open compact subgroup $G_2(\mathcal{O}_F)$. Note that we have $w_{\alpha}(\nu\otimes 1)=1\otimes \nu$. By \cite[Proposition 4.3]{Muic}, the representation $I(1\otimes\nu)$ contains exactly two irreducible non-equivalent irreducible subrepresentations $\pi(1)$ and $\pi(1)'$, which are square-integral. Moreover, in the category of Grothendieck groups of representations of $G_2(F)$, we have
\begin{align}\label{eqn:muic1}
\begin{split}
I(1\otimes \nu)&=I(\nu\otimes 1)\\
&= I_{\beta}(1,\pi(1,1))\\
&=I_{\alpha}(1/2,\delta(1))+I_\alpha(1/2,1_{\GL_2})\\
&=I_{\beta}(1/2,\delta(1))+I_\beta(1/2,1_{\GL_2}),
\end{split}
\end{align}
and
\begin{align}\label{eqn:muic}
\begin{split}
I_\alpha(1/2,\delta(1))&=\pi(1)'+J_\alpha(1/2,\delta(1))+J_\beta(1/2,\delta(1)),\\
I_\beta(1/2,\delta(1))&=\pi(1)+\pi(1)'+J_\beta(1/2,\delta(1)),\\
I_\alpha(1/2,1_{\GL_2})&=\pi(1)+J_\beta(1,\pi(1,1))+J_{\beta}(1/2,\delta(1)),\\
I_\beta(1/2,1_{\GL_2})&=J_{\beta}(1,\pi(1,1))+J_{\beta}(1/2,\delta(1))+J_\alpha(1/2,\delta(1)).
\end{split}
\end{align}

\begin{lemma}\label{lem:generic}
With reference to \cite{Muic}, the representation $\pi(1)'$ is generic and the representation $J_\beta(1,\pi(1,1))$ is spherical. %\todo{Bad box here, can we rephrase?}
\end{lemma}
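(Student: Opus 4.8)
The plan is to deduce both assertions formally from Mui\'c's Grothendieck-group identities \eqref{eqn:muic1} and \eqref{eqn:muic}, together with two multiplicity-one principles and two standard functoriality facts. The principles: for any character $\chi$ of $T$ the principal series $\Ind_B^{G_2}\chi$ has a one-dimensional space of Whittaker functionals (only the big Bruhat cell supports a nondegenerate character $\theta$ of $U$), hence a unique generic constituent, occurring with multiplicity one; and for $\chi$ unramified it has a one-dimensional space of $G_2(\mathcal{O}_F)$-fixed vectors, hence a unique spherical constituent, again with multiplicity one. The functoriality facts: for a smooth representation $\sigma$ of $\GL_2(\gamma)\cong\GL_2(F)$ one has $\dim\Hom_U(I_\gamma(s,\sigma),\theta)=\dim\Hom_{U_\gamma}(\sigma,\theta|_{U_\gamma})$ (only the open cell relative to $\GL_2(\gamma)$ survives the nondegenerate twist), and $\dim I_\gamma(s,\sigma)^{G_2(\mathcal{O}_F)}=\dim\sigma^{\GL_2(\mathcal{O}_F)}$ (Iwasawa decomposition). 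Since the Whittaker functor and the functor of $G_2(\mathcal{O}_F)$-invariants are exact on smooth representations, these dimensions are additive in the Grothendieck group, so they may be applied term by term to \eqref{eqn:muic1} and \eqref{eqn:muic}.

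\textbf{Genericity of $\pi(1)'$.} Let $\sigma$ be the unique generic constituent of $I(1\otimes\nu)$. The trivial representation $1_{\GL_2}$ is not generic, so $I_\alpha(1/2,1_{\GL_2})$ and $I_\beta(1/2,1_{\GL_2})$ are not generic; reading \eqref{eqn:muic1}, $\sigma$ must therefore occur (with multiplicity one) in each of $I_\alpha(1/2,\delta(1))$ and $I_\beta(1/2,\delta(1))$, which is consistent since $\delta(1)$ is generic so each of these standard modules has one-dimensional Whittaker space. Now intersect the constituent lists of these two standard modules given in \eqref{eqn:muic}. The representations $J_\alpha(1/2,\delta(1))$, $J_\beta(1/2,\delta(1))$, $J_\beta(1,\pi(1,1))$ are strictly non-tempered (Langlands quotients with nonzero exponent) while $\pi(1),\pi(1)'$ are square-integrable, and $\pi(1)\neq\pi(1)'$; together with the uniqueness of Langlands data this shows $\pi(1),\pi(1)',J_\alpha(1/2,\delta(1)),J_\beta(1/2,\delta(1))$ are pairwise inequivalent, so the intersection of $\{\pi(1)',J_\alpha(1/2,\delta(1)),J_\beta(1/2,\delta(1))\}$ and $\{\pi(1),\pi(1)',J_\beta(1/2,\delta(1))\}$ is $\{\pi(1)',J_\beta(1/2,\delta(1))\}$. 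Finally $J_\beta(1/2,\delta(1))$ occurs in $I_\alpha(1/2,1_{\GL_2})$, which is not generic, so $J_\beta(1/2,\delta(1))$ is not generic; hence $\sigma=\pi(1)'$.

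\textbf{Sphericity of $J_\beta(1,\pi(1,1))$.} Run the same argument with $G_2(\mathcal{O}_F)$-invariants. By hypothesis $\pi_0$ is the unique spherical constituent of $I(\nu\otimes 1)=I(1\otimes\nu)$. The Steinberg representation $\delta(1)$ has no $\GL_2(\mathcal{O}_F)$-fixed vector, so $I_\alpha(1/2,\delta(1))$ and $I_\beta(1/2,\delta(1))$ have no $G_2(\mathcal{O}_F)$-fixed vector; hence by \eqref{eqn:muic} none of $\pi(1),\pi(1)',J_\alpha(1/2,\delta(1)),J_\beta(1/2,\delta(1))$ is spherical. Since $1_{\GL_2}$ is spherical, $I_\alpha(1/2,1_{\GL_2})$ has a one-dimensional space of $G_2(\mathcal{O}_F)$-fixed vectors, so exactly one of its three constituents $\pi(1),J_\beta(1,\pi(1,1)),J_\beta(1/2,\delta(1))$ is spherical; as $\pi(1)$ and $J_\beta(1/2,\delta(1))$ have just been excluded, it is $J_\beta(1,\pi(1,1))$. (In particular $J_\beta(1,\pi(1,1))=\pi_0$, consistent with the role of $\pi_0$.)

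\textbf{Main obstacle.} The analytic inputs — one-dimensionality of the Whittaker and of the spherical spaces of a principal series, the two functoriality formulas, and exactness — are standard, and the Grothendieck-group identities are quoted from \cite{Muic}, so the only point needing genuine care is the distinctness bookkeeping used to pin down the two set-intersections: one must verify, against Mui\'c's normalizations, that $J_\alpha(1/2,\delta(1))$, $J_\beta(1/2,\delta(1))$ and $J_\beta(1,\pi(1,1))$ are pairwise inequivalent and inequivalent to the square-integrable $\pi(1),\pi(1)'$. I would argue this from the tempered/non-tempered dichotomy together with the fact that a standard module is determined up to isomorphism by its Langlands quotient (so an equality among the $J$'s would force an incompatible identification of classes in \eqref{eqn:muic}); this is the step to write out carefully, everything else being formal.
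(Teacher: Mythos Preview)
Your proof is correct and follows essentially the same approach as the paper: both arguments eliminate the Langlands quotients $J_\alpha(1/2,\delta(1))$ and $J_\beta(1/2,\delta(1))$ from contention by observing they occur as constituents of the non-generic (resp.\ non-spherical) induced representations $I_\gamma(1/2,1_{\GL_2})$ (resp.\ $I_\gamma(1/2,\delta(1))$), leaving $\pi(1)'$ (resp.\ $J_\beta(1,\pi(1,1))$) as the only candidate. The paper's version is marginally more direct in the generic part---it works inside $I_\alpha(1/2,\delta(1))$ alone and eliminates both $J_\alpha$ and $J_\beta$ immediately via the last two rows of \eqref{eqn:muic}, rather than first intersecting the constituent sets of $I_\alpha(1/2,\delta(1))$ and $I_\beta(1/2,\delta(1))$---but this is a cosmetic difference, not a substantive one, and your added remarks on exactness and the distinctness bookkeeping are welcome detail that the paper leaves implicit.
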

\begin{proof}
It is well-known that for an irreducible representation $\tau$ of a Levi subgroup $M$ of a $p$-adic group $G$, the induced representation $\Ind_{P}^G(\tau)$ is generic if and only if $\tau$ is generic; moreover, if $\tau$ is generic, then $\Ind_{P}^G(\tau)$ contains a unique generic component. Here $P$ is a parabolic subgroup of $G$ with Levi $M$. This fact together with the above decomposition \eqref{eqn:muic} of various induced representation implies that $\pi(1)'$ is generic. In fact, since $1_{\GL_2}$ is not generic, we know that $I_{\gamma}(1/2,1_{\GL_2})$ is not generic for $\gamma=\alpha,\beta$. Thus neither $J_\alpha(1/2,\delta(1))$ nor $J_{\beta}(1/2,\delta(1))$ is generic from the last two rows of \eqref{eqn:muic}. Since $\delta(1)$ is an irreducible generic representation of $\GL_2(F)$, we see that $I_\alpha(1/2,\delta(1))$ has a unique generic component, which must be $\pi(1)'$ from the decomposition of $I_\alpha(1/2,\delta(1))$. This proves the first part. 

To see that $J_\beta(1,\pi(1,1))$ is spherical, we only have to notice that neither $I_{\alpha}(1/2,\delta(1))$ nor $I_{\beta}(1/2,\delta(1))$ is spherical, and thus any component of $ I_{\gamma}(1/2,\delta)$ is non-spherical. 
Thus, $J_{\beta}(1,\pi(1,1))$ is the unique spherical component of $I(1\otimes \nu)$. 
\end{proof}

%\todo[inline]{To simplify notation, here and below we write $\phi_3$ for $\phi_\mathrm{sub} $.....}

Note that $\pi_0$ has infinitesimal parameter $\lambda_{\mathrm{sub}}$ and thus every irreducible component of $I(1\otimes \nu)$ has infinitesimal parameter $\lambda_{\mathrm{sub}}$.
Using Lemma~\ref{lem:generic} we now have
\[
\begin{array}{rcl}
\pi_0 &=& J_{\beta}(1,\pi(1,1)).
\end{array}
\]
For simplicity of later use, we introduce the following notations for the other four irreducible components of the induced representation $I(1\otimes \nu)$:
\[
\begin{array}{rcl}
\pi_1 &\ceq& J_\alpha(1/2,\delta(1)), \\
\pi_2 &\ceq& J_{\beta}(1/2,\delta(1)), \\
\pi_3 &\ceq& \pi(1)', \\
\pi_3^{\varrho} &\ceq& \pi(1).
\end{array}
\]
Let $G_2[1]$ be the unipotent supercuspidal representation of $G_2(\mathbb{F}_q)$ appearing in \cite[p.460]{Carter}. To the list above, we add 
\[
\begin{array}{rcl}
\pi_3^{\varepsilon} &\ceq& \mathrm{cInd}_{G_2(\mathcal{O}_F)}^{G_2(F)} G_2[1],
\end{array}
\]
as defined in the Introduction. 
From \cite{Muic}, we know that  all of the 6 irreducible representations appearing in this section are unitary. Moreover, they are also unipotent in the sense of Lusztig \cite{Lusztig:Classification1}.
The notation above in this section will be justified in Section \ref{ssec:LLC}.

%\subsection{Endoscopic Classification}\label{ssec:endoscopy}
%\todo[inline]{A section I hope we can write...}

\subsection{Langlands correspondence}\label{ssec:LLC}

In \cite{Kazhdan-Lusztig} Kazhdan-Lusztig proved a local Langlands correspondence for representations with Iwahori-fixed vectors. 
This correspondence was extended to all unipotent representations of unramified adjoint simple algebraic groups over $F$ in \cite{Lusztig:Classification1}. 
More precisely, for such groups $G$, Lusztig proved that the set of isomorphism classes of unipotent representations is in one-to-one correspondence with a pair $(\phi,r)$, where $\phi:W_F\times \SL_2(\C)\to \widehat{G}$ is an unramified Langlands parameter, and $r$ is a representation of the component group $A_{\phi}$. 
Such pairs $(\phi,r)$ are called enhanced Langlands parameters. 
This classification of unipotent representations was extended to more general groups in \cite{Solleveld} recently. 

In our special case when $G=G_2$, all enhanced Langlands parameter with infinitesimal parameter $\lambda_{\mathrm{sub}}$ are given as follows
\[
(\phi_0,\1), \;(\phi_1,\1),\; (\phi_2,\1),\; (\phi_3,\1),\; (\phi_3,\varrho),\; \mathrm{ and }\;  (\phi_3,\varepsilon),
\]
where in each pair $(\phi_i,\1)$, $\1$ denotes the trivial representation of the corresponding component group, $\varrho$ denotes the unique irreducible two-dimensional representation of $S_3\cong A_{\phi_3}=A_{C_3}$ and $\varepsilon$ denotes the non-trivial one-dimensional representation of $S_3$.

 The Local Langlands correspondence of \cite{Lusztig:Classification1} for irreducible unipotent representations with infinitesimal parameter $\lambda_{\mathrm{sub}}$ is explicitly given in the following theorem.

\begin{theorem}[\cite{Kazhdan-Lusztig, Lusztig:Classification1}]\label{thm:LLC} The local Langlands correspondence for representations with subregular infinitesimal parameter is: 
\[ 
\begin{array}{rcl}
\mathrm{enhanced} && \mathrm{irreducible} \\
\mathrm{L-parameter} && \mathrm{admissible\ rep'n} \\
\hline
(\phi_{0},1) &\mapsto& \pi_{0}, \\
(\phi_{1},1) &\mapsto& \pi_1 , \\
(\phi_{2},1) &\mapsto& \pi_2 , \\
(\phi_3,1) &\mapsto& \pi_3 , \\
(\phi_3,\varrho) &\mapsto& \pi_3^{\varrho} , \\
\hline
(\phi_3,\varepsilon) &\mapsto& \pi_3^{\varepsilon} .
\end{array}
\]
The L-packets containing these irreducible admissible representations are
\[
\begin{array}{r c l }
\Pi_{\phi_0}(G_2(F)) &=& \{ \pi_0\} , \\
\Pi_{\phi_1}(G_2(F)) &=& \{ \pi_1\}, \\
\Pi_{\phi_2}(G_2(F)) &=& \{ \pi_2\},  \\
\Pi_{\phi_3}(G_2(F))  &=& \{ \pi_3, \pi_3^\varrho, \pi_3^\varepsilon \}.  
\end{array}
\]
\end{theorem}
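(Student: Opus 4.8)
The plan is to deduce the theorem by combining the classification of unipotent representations of \cite{Kazhdan-Lusztig} and \cite{Lusztig:Classification1} with the representation-theoretic identifications already carried out in Section~\ref{ssec:SM}; the only real content is to match each of the six enhanced L-parameters with subregular infinitesimal parameter to the correct representation. First I would record the bookkeeping: by Proposition~\ref{prop:langlandsparameters} there are exactly four $\dualgroup{G_2}$-conjugacy classes of Langlands parameters $\phi_0,\phi_1,\phi_2,\phi_3$ with infinitesimal parameter $\dualgroup{G_2}$-conjugate to $\lambda_{\mathrm{sub}}$, and by \eqref{eqn:AmfC} their component groups are trivial, trivial, trivial and $S_3$ respectively, so the enhanced L-parameters are exactly the six displayed in Section~\ref{ssec:LLC}. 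By \cite{Lusztig:Classification1} these six enhanced parameters are in bijection with six isomorphism classes of irreducible unipotent representations of $G_2(F)$, all with infinitesimal parameter $\lambda_{\mathrm{sub}}$, and Section~\ref{ssec:SM} exhibits six such representations, namely $\pi_0,\pi_1,\pi_2,\pi_3,\pi_3^\varrho,\pi_3^\varepsilon$; so the problem reduces to a labelling.

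Next I would isolate the supercuspidal representation. The subregular orbit $G_2(a_1)$ of $\dualgroup{G_2}$ carries a cuspidal local system, and in the classification of cuspidal pairs the associated irreducible character of $A_{\mathfrak{C}_3}\cong S_3$ is the sign character $\varepsilon$; under \cite{Lusztig:Classification1} the enhanced parameters supported on cuspidal pairs are precisely those attached to supercuspidal representations, and the finite-group cuspidal unipotent representation $G_2[1]$ of $G_2(\FF_q)$ of \cite{Carter} is attached to this same pair $(G_2(a_1),\varepsilon)$. Since $\phi_3$ is trivial on $W_F$ and $\pi_3^\varepsilon=\operatorname{cInd}_{G_2(\mathcal{O}_F)}^{G_2(F)}G_2[1]$ is the unique supercuspidal unipotent representation of $G_2(F)$ whose Langlands parameter is trivial on $W_F$, this forces $(\phi_3,\varepsilon)\mapsto\pi_3^\varepsilon$.

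Finally I would pin down the five representations with Iwahori-fixed vectors, all of which occur as constituents of $I(1\otimes\nu)$ as analysed in \eqref{eqn:muic1}--\eqref{eqn:muic}. The parameter $\phi_0$ has image in $\wh T$, so by the Langlands correspondence for unramified principal series $(\phi_0,\1)$ corresponds to the spherical constituent of $I(1\otimes\nu)$, which is $\pi_0=J_\beta(1,\pi(1,1))$ by Lemma~\ref{lem:generic}. The parameter $\phi_3$ is bounded on $W_F$, so its L-packet consists of the square-integrable constituents of $I(1\otimes\nu)$, namely $\pi(1)$ and $\pi(1)'$, together with $\pi_3^\varepsilon$; since $\pi(1)'$ is the generic constituent (Lemma~\ref{lem:generic}) and the correspondence sends the generic member of a tempered packet to the trivial character of the component group, $(\phi_3,\1)\mapsto\pi_3=\pi(1)'$, and then $(\phi_3,\varrho)\mapsto\pi_3^\varrho=\pi(1)$ by elimination among the three irreducible characters of $S_3$. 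For the two remaining non-tempered parameters I would use compatibility of the correspondence with parabolic induction and the Langlands classification: $\phi_1$ factors through the long-root subgroup $\iota_{\da}(\SL_2(\CC))$, whose centralizing $\GL_2$ is dual to $\GL_2(\alpha)\subset G_2$, while $\phi_2$ factors through the short-root subgroup $\iota_{\da+2\db}(\SL_2(\CC))$, whose centralizing $\GL_2$ is dual to $\GL_2(\beta)\subset G_2$; matching the Steinberg $\SL_2$-type carried by $\delta(1)$, these are the parameters of the Langlands quotients of $I_\alpha(1/2,\delta(1))$ and $I_\beta(1/2,\delta(1))$, so $(\phi_1,\1)\mapsto\pi_1$ and $(\phi_2,\1)\mapsto\pi_2$. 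The description of the L-packets then follows, since $\Pi_{\phi_i}(G_2(F))$ collects by definition the representations attached to the enhanced parameters $(\phi_i,r)$ as $r$ ranges over $\widehat{A_{\phi_i}}$.

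The step I expect to be the main obstacle is this last one: correctly aligning the long-root and short-root $\SL_2$'s of $\dualgroup{G_2}$ with the parabolic subgroups $P_\alpha$ and $P_\beta$ of $G_2$ (and not the other way around), which requires care with the duality of the root data and with the explicit embeddings $\iota_{\da},\iota_{\db},\iota_{\da+2\db}$ fixed in Sections~\ref{ssec:roots}--\ref{ssec:Chevalley} as well as the conventions of \cite{Muic}. Everything else amounts to assembling results that are already available.
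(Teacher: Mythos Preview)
Your proposal is essentially correct and follows the same overall architecture as the paper's proof: establish the bijection at the level of sets, pin down $(\phi_0,\1)\mapsto\pi_0$ via the unramified correspondence, use genericity (Reeder) and temperedness to sort out the three members of $\Pi_{\phi_3}$, and handle $\phi_1,\phi_2$ by compatibility of the Langlands classification with parabolic induction. Your anticipated obstacle---the root-duality bookkeeping needed to match $\iota_{\da}$ with $P_\alpha$---is exactly what the paper works through carefully for $\phi_1$ (invoking \cite{Kazhdan-Lusztig}*{Theorem 6.2} to identify the standard module $M(\phi_1)$ with $I_\alpha(1/2,\delta(1))$), while $\phi_2$ is then dispatched by elimination rather than a parallel computation.

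The one substantive difference is in the supercuspidal step. You assert that $G_2[1]$ is attached to the cuspidal pair $(G_2(a_1),\varepsilon)$ and that $\pi_3^\varepsilon$ is the unique supercuspidal unipotent with $W_F$-trivial parameter; both statements are true, but the second is close to what is being proved, and the first requires knowing Lusztig's parametrisation of the four cuspidal unipotent characters of $G_2(\FF_q)$ in enough detail to distinguish $G_2[1]$ from $G_2[-1],G_2[\theta],G_2[\theta^2]$. The paper instead argues as follows: $(\phi_3,\varepsilon)$ is the unique cuspidal enhanced parameter with infinitesimal parameter $\lambda_{\mathrm{sub}}$, so $\pi(\phi_3,\varepsilon)=\operatorname{cInd}_{G_2(\mathcal{O}_F)}^{G_2(F)}\sigma$ for one of the four cuspidal unipotents $\sigma$; then the formal degree conjecture of \cite{HII}, proved for these representations in \cite{FOS}, gives an explicit formula for $\dim\sigma$ in terms of $|\gamma(0,\phi_3,\Ad,\psi^0)|$, and a direct computation of this adjoint $\gamma$-factor yields a value matching only the entry for $G_2[1]$ in \cite{Carter}. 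This is more laborious but self-contained; your route is shorter but leans on an identification you would need to cite precisely.
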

\begin{proof}
Temporarily, denote by $\pi(\phi_i,r)$ the representation corresponding to $(\phi_i,r)$ for a Langlands parameter $\phi_i$ and a representation $r$ of the component group. We first notice that  $(\phi_3,\varepsilon)$ is the unique cuspidal data in the sense of \cite{Lusztig:Intersectioncohomology} with infinitesimal parameter $\lambda_{\mathrm{sub}}$, see \cite[p.270]{Lusztig:Intersectioncohomology} and thus $\pi(\phi_3,\varepsilon)$ is a unipotent supercuspidal representation, which must be of the form
$$\mathrm{cInd}_{G_2(\mathcal{O}_F)}^{G_2(F)}(\sigma) $$
for an irreducible cuspidal unipotent representation $\sigma$ of $G_2(\mathbb{F}_q)$. 
There are exactly four irreducible cuspidal unipotent representations of $G_2(\mathbb{F}_q)$ as given in \cite[p.460]{Carter}. 
By \cite{FOS}, we know that the supercuspidal unipotent representation $\pi(\phi_3,\varepsilon)$ satisfies the formal degree conjecture of \cite{HII}, i.e., 
\begin{equation}{\label{eqn:formaldegree}}d(\pi(\phi_3,\varepsilon))=\frac{\dim \varepsilon}{|S_3|}\cdot |\gamma(0,\phi_3,\Ad,\psi^0)|,\end{equation}
where $d(\pi(\phi_3,\varepsilon))$ is the formal degree of $\pi(\phi_3,\varepsilon)$, normalized as in \cite{HII}, $\psi^0$ is a nontrivial unramified additive character of $F$ and
 $$\gamma(s,\phi_3,\Ad,\psi^0)=\epsilon(s,\phi_3,\Ad,\psi^0)\frac{L(1-s,\phi_3,\Ad)}{L(s,\phi_3,\Ad)}$$
is the adjoint $\gamma$-factor of $\phi_3$. On one hand, we have 
$$d(\pi)=\frac{\dim \sigma}{\mu(G_2(\mathcal{O}_F))},$$
where $\mu(G_2(\mathcal{O}_F))$ is the Haar measure of $G_2(\mathcal{O}_F)$, which is 
$$\mu(G_2(\mathcal{O}_F))=q^{-\dim(G_2)}\cdot |G_2(\mathbb{F}_q )|=q^{-14}\cdot q^6(q^6-1)(q^2-1)=q^{-8}(q^6-1)(q^2-1),$$ 
as normalized in \cite{HII}. On the other hand, we can check that 
%\todo{if it is needed, more details can be given}
$$L(s,\phi_3,\Ad)=\frac{1}{(1-q^{-s-2})(1-q^{-s-1})^3},$$
and $\epsilon(s,\phi_3,\Ad)=q^{10(1/2-s)}.$ Thus we can check that 
$$\gamma(0,\phi_3,\Ad)=\frac{q^9}{(q+1)^2(q^2+q+1)}.$$
Then the formal degree conjecture \eqref{eqn:formaldegree} implies that 
$$\dim(\sigma)=\mu(G_2(\mathcal{O}_F))\cdot \frac{\dim \varepsilon}{|S_{\phi_3}|} |\gamma(0,\phi_3,\Ad)|=\frac{q(q^6-1)(q^2-1)}{6(q+1)^2(q^2+q+1)}.$$
If we compare the degree of $\sigma$ with the table given in \cite[p.460]{Carter}, we get that $\sigma=G_2[1]$ and thus $\pi(\phi_3,\varepsilon)=\pi_3^{\varepsilon}$.

By the unramified local Langlands correspondence, we have $\pi_0= \pi(\phi_0,1)$. 
Since all of the $\pi(\phi_i,r)$ have the same infinitesimal parameter as $\pi_0$, we have
\begin{equation}\label{eqn:equalityoftwosets}\{\pi(\phi_1,1),\pi(\phi_2,1),\pi(\phi_3,1),\pi(\phi_3,\varrho)\}=\{\pi_1,\pi_2,\pi_3,\pi_3^{\varrho}\}.\end{equation}

 Reeder \cite{Reedergeneric} showed that the generic representation $\pi_3=\pi(1)'$ (by Lemma \ref{lem:generic}) must correspond to $(\phi_3,1)$, i.e., $\pi_3=\pi(\phi_3,1)$. Since $\phi_3|_{W_F}$ is bounded, we know that $\pi(\phi_3,\varrho)$ is tempered, which implies that $\pi(\phi_3,\varrho)=\pi_3^{\varrho}$. 

Next, notice that $\phi_1(w,x)=\iota_{\da}(\diag(|w|,|w|^{1/2}))\cdot \iota_{\da}(x)$ has image in $\widehat{\GL_2(\alpha)}=\GL_2(\da)$. If we define the Langlands parameter 
\[ \phi_{1,\alpha}:W_F\times \SL_2(\C)\to \widehat{\GL_2(\alpha)}=\GL_2(\da)\] by 
\[ \phi_{1,\alpha}(w,x)=\iota_{\da}(\diag(|w|^{1/2},|w|^{1/2}))\ \iota_{\da}(x),\] 
then $\phi_1$ can be decomposed as the composition of the embedding  $\widehat{\GL_2(\alpha)}\hookrightarrow \widehat{G_2} $ with the map $\phi_{1,\alpha}$. Thus by \cite[Theorem 6.2]{Kazhdan-Lusztig}, we have
$$M(\phi_1)=\Ind_{P_{\alpha}}^{G_2}(\pi^{\alpha}(\phi_{1,\alpha})),$$
where $M(\phi_1)$ is the standard module of $\pi(\phi_1,1)$ and $\pi^{\alpha}(\phi_{1,\alpha}) $ is the representation of $\GL_2(\alpha)$ corresponding to $\phi_{1,\alpha}$ under the local Langlands correspondence for $\GL_2(F)$. We further decompose $\phi_{1,\alpha}$ as $\hat \chi\otimes \Sym^1$, where $\hat \chi$ is a character on $W_F$ defined by $\hat \chi(w)=\iota_{\da}(\diag(|w|^{1/2},|w|^{1/2}))$ and $\Sym^1(x)=\iota_{\da}(x)$. Note that $\hat \chi$ is the dual of the character $|\det|^{1/2}: \GL_2(\alpha)\to \C^\times$. On the other hand, we have $\pi^{\alpha}(\Sym^1)=\delta(1)$. Thus 
$$\pi^{\alpha}(\phi_{1,\alpha})=\pi^{\alpha}(\hat \chi\otimes \Sym^1)=|\det|^{1/2}\otimes\delta(1). $$
Thus we have 
$$ M(\phi_1)=\Ind_{P_{\alpha}}^{G_2(F)}(|\det|^{1/2}\otimes\delta(1))=I_{\alpha}(1/2,\delta(1)). $$
Thus we get $\pi(\phi_1,1)=J_{\alpha}(1/2,\delta(1))=\pi_1$. 

Finally, from \eqref{eqn:equalityoftwosets}, we must have $\pi(\phi_2,\epsilon)=\pi_2$. The last fact can also be checked directly.
\end{proof}

\begin{remark}
The above theorem is well-known for experts. In fact, the local Langlands correspondence for representations with infinitesimal parameters $\lambda_{\mathrm{sub}}$ was studied extensively in the literature. For example, in \cite{Huang-Magaard-Savin} and \cite{GGJ}, the authors could obtain correspondences for certain representations with infinitesimal parameter $\lambda_{\mathrm{sub}}$ by studying the restrictions of minimal representations of an adjoint group of type $D_4$ to $G_2(F)$. See Table~\ref{The admissible representations that appear in this paper.} for an incomplete list of the irreducible representations with infinitesimal parameter $\lambda_\mathrm{sub}$ which have appeared in the literature.
\begin{table}[htp]
\caption{An incomplete list of the irreducible representations in $\Rep(G_2(F))_\mathrm{sub}$ as they appear in related literature}
\begin{center}
\begin{tabular}{|c|c|c|c|c|c|c|c| }
\hline
   & \cite{Muic} &  \cite{GG} \cite{GGJ} & \cite{Reeder} & \cite{Lusztig:Classification1} & \cite{Huang-Magaard-Savin} \\
 \hline
 $\pi_0$ & $J_\beta(1,\pi(1,1))$ & $\pi_{1_v}$ &  &   & $\pi_{1_v}$ \\
$\pi_1$ & $J_\alpha(1/2,\delta(1))$ & $\pi_{r_v}$ &  & & $\pi_{r_v}$\\ 
$\pi_2$ & $J_{\beta}(1/2,\delta(1))$ &  &  &   & \\
$\pi_3$ & $\pi(1)'$ & $\pi'_{1_v}$ & $\mathcal{V}_1 = \mathcal{U}(\tau_2)$ &  & \\
$\pi_3^\varrho$ & $\pi(1)$ & $\pi'_{r_v}$ & $\mathcal{V}_\varrho = \mathcal{U}(\tau_2)'$ &   & \\ 
$\pi_3^\varepsilon$ &   & $\pi'_{\varepsilon_v} = \pi_{\varepsilon_v}$  & $\mathcal{V}_\varepsilon =\mathcal{V}[1] $  & {7.33}  & $\pi_{\varepsilon_v}$ \\
\hline
\end{tabular}
\end{center}
\label{The admissible representations that appear in this paper.}
\end{table}%
\end{remark}

\subsection{Langlands-Vogan correspondence}\label{ssec:LVC}

Consider the Bernstein decomposition of the category $\Rep(G_2(F))$ of smooth representations of $G_2(F)$. The representations that we study appear in two blocks under this decomposition.

First, consider inertial class $[T(F),1]$, where $T$ is a maximal split torus of $G_2$; then the simple summand category
\[
\Rep(G_2(F))_{[T(F),1]}
\]
is the category is unramified principal series representations of $G_2(F)$. 
The representations $\pi_0$, $\pi_1$, $\pi_2$, $\pi_3$ and $\pi_3^\varrho$ all appear in this category.
% but it contains many other irreducible representations. 
Since these representations all share the same cuspidal support, we further restrict to the subcategory 
\[
\Rep(G_2(F))_{(T(F),\nu\otimes 1)}
\] 
of representations with cuspidal support $\nu\otimes 1$; see, for example, \cite{Renard}*{VI.7.2}.
This category has exactly the five simple objects $\pi_0$, $\pi_1$, $\pi_2$, $\pi_3$ and $\pi_3^\varrho$.

Second, consider the inertial class $[G_2(F),\pi_3^\varepsilon]$ and the summand category
\[
\Rep(G_2(F))_{[G_2(F),\pi_3^\varepsilon]},
\]
also appearing in the Bernstein decomposition.
Because $G_2(F)$ admits no unramified characters this coincides with the cuspidal support category
\[
\Rep(G_2(F))_{(G_2(F),\pi_3^\varepsilon)}.
\]
Up to equivalence this category contains exactly one irreducible representation, $\pi_3^\varepsilon$.

The category of representations that this paper treats is
\[
\Rep(G_2(F))_{\mathrm{sub}} 
\ceq 
\Rep(G_2(F))_{(T(F),\nu\otimes 1)}
\oplus
\Rep(G_2(F))_{(G_2(F),\pi_3^\varepsilon)}.
\]

Likewise, as in \cite{CFMMX}*{Section~10.2.4}, the category $\Perv_{\dualgroup{G_2}}(X_\mathrm{sub})$ decomposes into two summand categories
\[
\Perv_{\dualgroup{G_2}}(X_\mathrm{sub})
= 
\Perv_{\dualgroup{G_2}}(X_\mathrm{sub})_{(\dualgroup{T},\1)} \oplus \Perv_{\dualgroup{G_2}}(X_\mathrm{sub})_{(\dualgroup{G_2},\mathcal{E})},
\]
where, using Lemma~\ref{lem:fg}, the equivalence \eqref{eqn:VX} and \cite{BBD}*{Th\'eor\`eme 3.4.1}, the former category has exactly five simple objects, while the later has one, $\IC(\mathcal{E}_{\mathcal{C}_3})$. 
To enumerate these simple objects, we use the notation
\begin{equation}%\label{eqn:IC}\todo{this label is defined twice but is never used}
\IC(\mathcal{L}_C) = j_{!*}\mathcal{L}_C[\dim C]
\end{equation}
where $\mathcal{L}_{C}$ is an equivariant local system on $C$ and $C\subseteq V$ is an $H$-orbit and $j: C\hookrightarrow V$ is the inclusion. 
Then the simple objects in $\Perv_{\dualgroup{G_2}}(X_\mathrm{sub})_{(\dualgroup{T},\1)}$ are $\IC(\1_{\mathfrak{C}_0})$, $\IC(\1_{\mathfrak{C}_1})$, $\IC(\1_{\mathfrak{C}_2})$, $\IC(\1_{\mathfrak{C}_3})$ and $\IC(\mathcal{R}_{\mathfrak{C}_3})$, where $\1_{\mathfrak{C}_i}$ refers to the constant sheaf on $\mathfrak{C}_i$ for $i=0,1, 2, 3$ while $\mathcal{R}_{\mathfrak{C}_3}$ refers to the local system on $\mathfrak{C}_3$ for the irreducible $2$-dimensional representation of $A_{\mathfrak{C}_3} = S_3$ \eqref{eqn:AmfC}.
On the other hand, $\Perv_{\dualgroup{G_2}}(X_\mathrm{sub})_{(\dualgroup{G_2},\mathcal{E})}$ has only one simple object, $\IC(\mathcal{E}_{\mathfrak{C}_3})$, where $\mathcal{E}_{\mathfrak{C}_3}$ refers to the local system on $\mathfrak{C}_3$ for the sign representation of $A_{\mathfrak{C}_3} = S_3$. 

In the proposition below, we use equivalence \eqref{eqn:VX} and write $\1_{C_i}$ for the constant sheaf on $C_i$ for $i=0,1, 2, 3$ while $\mathcal{R}_{C_3}$ refers to the local system on $C_3$ for the irreducible $2$-dimensional representation of $A_{C_3} = S_3$ \eqref{eqn:AmfC} and $\mathcal{E}_{C_3}$ refers to the local system on $C_3$ for the sign representation of $A_{C_3} = S_3$.
See Section~\ref{ssec:cubicorbits}, especially Proposition~\ref{prop:IC} %\todo{Missing Reference}, 
for a detailed description of the simple objects in $\Perv_{H_\mathrm{sub}}(V_\mathrm{sub})$.

\begin{proposition}[Langlands-Vogan correspondence for $\Rep(G_2(F))_{\mathrm{sub}}$]\label{prop:LVC}
The local Langlands correspondence, Theorem~\ref{thm:LLC}, together with the equivalence \eqref{eqn:VX}, establishes the following bijection between the simple objects in these categories:
\[
\begin{array}{ccccc}
\Rep(G_2(F))  && \Perv_{\dualgroup{G_2}}(X_\mathrm{sub}) && \Perv_{H_\mathrm{sub}}(V_\mathrm{sub}) \\
\hline
\pi_0 &\leftrightarrow& \IC(\1_{\mathfrak{C}_0}) &\leftrightarrow& \IC(\1_{C_0}) \\
\pi_1 &\leftrightarrow& \IC(\1_{\mathfrak{C}_1}) &\leftrightarrow& \IC(\1_{C_1}) \\
\pi_2 &\leftrightarrow& \IC(\1_{\mathfrak{C}_2}) &\leftrightarrow& \IC(\1_{C_2}) \\
\pi_3 &\leftrightarrow& \IC(\1_{\mathfrak{C}_3}) &\leftrightarrow& \IC(\1_{C_3}) \\
\pi_3^\varrho &\leftrightarrow& \IC(\mathcal{R}_{\mathfrak{C}_3}) &\leftrightarrow& \IC(\mathcal{R}_{C_3}) \\
\hline
\pi_3^\varepsilon &\leftrightarrow& \IC(\mathcal{E}_{\mathfrak{C}_3}) &\leftrightarrow& \IC(\mathcal{E}_{C_3}) 
\end{array}
\]
\iffalse
\[
\begin{array}{ccc}
\Rep(G_2(F))_{(T(F),\nu\otimes 1)}  && \Perv_{\dualgroup{G_2}}(X_\mathrm{sub})_{(\dualgroup{T},\1)} \\
\hline
\pi_0 &\leftrightarrow& \IC({\mathfrak{C}_0},\1) \\
\pi_1 &\leftrightarrow& \IC({\mathfrak{C}_1},\1) \\
\pi_2 &\leftrightarrow& \IC({\mathfrak{C}_2},\1) \\
\pi_3 &\leftrightarrow& \IC({\mathfrak{C}_3},\1) \\
\pi_3^\varrho &\leftrightarrow& \IC({\mathfrak{C}_3},\mathcal{R}) \\
\end{array}
\]
\[
\begin{array}{ccc}
\Rep(G_2(F))_{(G_2(F),\pi_3^\varepsilon)}  && \Perv_{\dualgroup{G}_2}(X_\mathrm{sub})_{(\dualgroup{G}_2,\mathcal{E})} \\
\hline
\pi_3^\varepsilon &\leftrightarrow& \IC({\mathfrak{C}_3},\mathcal{E})
\end{array}
\]
\fi
\end{proposition}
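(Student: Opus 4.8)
The plan is to combine three ingredients already in place: the explicit local Langlands correspondence of Theorem~\ref{thm:LLC}, which attaches to each of the six simple objects of $\Rep(G_2(F))_\mathrm{sub}$ an enhanced Langlands parameter $(\phi_i,r)$; the Langlands--Vogan dictionary of \cite{CFMMX} (going back to \cite{Vogan:Langlands}) between enhanced parameters with infinitesimal parameter $\lambda_\mathrm{sub}$ and simple equivariant perverse sheaves; and the induction equivalence \eqref{eqn:VX}. Since $\Rep(G_2(F))_\mathrm{sub}$ has exactly six simple objects by construction, and both $\Perv_{\dualgroup{G_2}}(X_\mathrm{sub})$ and $\Perv_{H_\mathrm{sub}}(V_\mathrm{sub})$ have exactly six simple objects by \cite{BBD}*{Th\'eor\`eme 3.4.1} together with the fundamental group computation of Lemma~\ref{lem:fg}, the content of the proposition is to match these three lists of six objects correctly; the proof is a bookkeeping argument once the two dictionaries are in hand.

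First I would dispose of the rightmost column. The equivalence \eqref{eqn:VX} is the induction equivalence arising from $X_\mathrm{sub} = \dualgroup{G_2}\times_{H_\mathrm{sub}} V_\mathrm{sub}$ of \eqref{eqn:Xsub}; it carries the $\dualgroup{G_2}$-orbit $\mathfrak{C}_i$ to the $H_\mathrm{sub}$-orbit $C_i$ and, by Lemma~\ref{lem:fg} and \eqref{eqn:AmfC}, identifies the equivariant fundamental groups $A_{\mathfrak{C}_i}$ and $A_{C_i}$. Hence the equivariant local system on $C_i$ attached to an irreducible representation of $A_{C_i}$ is carried to the one on $\mathfrak{C}_i$ attached to the same representation, and $\IC(\mathcal{L}_{C_i})$ to $\IC(\mathcal{L}_{\mathfrak{C}_i})$. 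This gives at once $\IC(\1_{C_i}) \leftrightarrow \IC(\1_{\mathfrak{C}_i})$ for $i=0,1,2,3$, together with $\IC(\mathcal{R}_{C_3}) \leftrightarrow \IC(\mathcal{R}_{\mathfrak{C}_3})$ and $\IC(\mathcal{E}_{C_3}) \leftrightarrow \IC(\mathcal{E}_{\mathfrak{C}_3})$, i.e.\ the rightmost bijection of the table.

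Next I would establish the leftmost column. By Proposition~\ref{prop:langlandsparameters} the orbits $\mathfrak{C}_i$ parametrize the $\dualgroup{G_2}$-conjugacy classes of Langlands parameters with infinitesimal parameter $\dualgroup{G_2}$-conjugate to $\lambda_\mathrm{sub}$, with $\mathfrak{C}_i \leftrightarrow \phi_i$ and $A_{\mathfrak{C}_i} = A_{\phi_i}$. The dictionary of \cite{CFMMX} then sends the enhanced parameter $(\phi_i,r)$ to the simple perverse sheaf $\IC(\mathcal{L}_{\mathfrak{C}_i})$, where $\mathcal{L}_{\mathfrak{C}_i}$ is the equivariant local system on $\mathfrak{C}_i$ for the irreducible representation $r$ of $A_{\phi_i} = A_{\mathfrak{C}_i}$, with $(\phi_i,\1)$ going to the constant sheaf $\1_{\mathfrak{C}_i}$. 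Reading Theorem~\ref{thm:LLC} off against this, the pairs $(\phi_0,\1),(\phi_1,\1),(\phi_2,\1),(\phi_3,\1),(\phi_3,\varrho),(\phi_3,\varepsilon)$ --- which correspond to $\pi_0,\pi_1,\pi_2,\pi_3,\pi_3^\varrho,\pi_3^\varepsilon$ --- are sent respectively to $\IC(\1_{\mathfrak{C}_0}),\IC(\1_{\mathfrak{C}_1}),\IC(\1_{\mathfrak{C}_2}),\IC(\1_{\mathfrak{C}_3}),\IC(\mathcal{R}_{\mathfrak{C}_3}),\IC(\mathcal{E}_{\mathfrak{C}_3})$. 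Composing with the rightmost column yields the full table.

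The step I expect to require the most care --- not so much difficult as easy to get wrong --- is the compatibility of the two parametrizations of enhanced parameters: Lusztig's, underlying Theorem~\ref{thm:LLC}, and the geometric one by perverse sheaves used in \cite{CFMMX}. These agree on underlying Langlands parameters, but a priori the enhancement could be twisted by a character of $A_{\mathfrak{C}_3}=S_3$; ruling this out amounts exactly to the correct identification of $\1_{\mathfrak{C}_3}$, $\mathcal{R}_{\mathfrak{C}_3}$ and $\mathcal{E}_{\mathfrak{C}_3}$ with the trivial, two-dimensional and sign representations of $S_3$. I would pin this down using the anchors already established in the proof of Theorem~\ref{thm:LLC}: the spherical $\pi_0$ is $(\phi_0,\1)$ by the unramified correspondence, hence goes to a constant sheaf; the supercuspidal $\pi_3^\varepsilon$ is the unique cuspidal pair with infinitesimal parameter $\lambda_\mathrm{sub}$, hence corresponds to $(\mathfrak{C}_3,\mathcal{E})$, i.e.\ to $\IC(\mathcal{E}_{\mathfrak{C}_3})$ for the sign representation of $S_3$; and Reeder's genericity result \cite{Reedergeneric} forces the generic $\pi_3$ to $(\phi_3,\1)$, hence to $\IC(\1_{\mathfrak{C}_3})$, leaving $\pi_3^\varrho \leftrightarrow \IC(\mathcal{R}_{\mathfrak{C}_3})$ by elimination. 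With the three characters of $S_3$ thus matched, the table is forced.
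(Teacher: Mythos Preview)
Your proposal is correct and is precisely the argument the paper intends: the proposition is stated without a separate proof because the table is meant to be read off directly from Theorem~\ref{thm:LLC} together with the induction equivalence \eqref{eqn:VX}, and your bookkeeping makes this explicit. Your final paragraph on compatibility of the two labellings of enhanced parameters is a useful sanity check, though in the paper this is already absorbed into the proof of Theorem~\ref{thm:LLC} (via the unramified correspondence for $\pi_0$, the cuspidal-pair identification for $\pi_3^\varepsilon$, and Reeder's result for $\pi_3$), so no additional argument is needed here.
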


\begin{remark}\label{remark:CLLC}

It is natural to ask if the categories $\Rep(G_2(F))_\mathrm{sub}$ and $\Perv_{\dualgroup{G_2}}(X_\mathrm{sub})$ are equivalent. 
If they are, this would be a categorical form of the local Langlands-Vogan correspondence.
% and this a categorical form of the Langlands correspondence.
We are actively exploring this question.
% which is related \cite{Soergel}, \cite{BGS}.
Here is one possible strategy to check.
First, observe that $\Rep(\G_2(F))_{[T(F),1]}$ is also the category of representations with Iwahori-fixed vectors.
This category is equivalent to the category of $H$-modules, where $H$ is the Iwahori-Hecke algebra for $G_2(F)$ (with equal parameters).
Under the equivalence
\[
\Rep(G_2(F))_{[T(F),1]}
\iso 
H\mathrm{-mod} 
\]
the subcategory $\Rep(G_2(F))_{(T(F), \nu\otimes 1)}$ is equivalent to the category $H\mathrm{-mod}_I$ of $H$-modules that are annihilated by some power of an ideal $I\triangleleft Z(H)$ determined by $\nu\otimes 1$ according to the recipe of \cite{Lusztig:Cuspidal2}*{}:
\[
\Rep(G_2(F))_{(T(F),\nu\otimes 1)}
\iso 
H\mathrm{-mod}_I.
\]
On the other hand, it is not difficult to find a progenerator 
%$\Ind_{V_\mathrm{sub}\cap \Lie\dualgroup{B}}^{V_\mathrm{sub}}\1_0$
 for $\Perv_{\dualgroup{G_2}}(X_\mathrm{sub})_{(\dualgroup{T},\1)}$, so this category should also admit a module-category description.
Comparing $\Rep(G_2(F))_{(G_2(F),\pi_3^\varepsilon)}$ and 
$\Perv_{\dualgroup{G_2}}(X_\mathrm{sub})_{(\dualgroup{G_2},\mathcal{E})}$ should be easier, since these categories admit natural progenerators, namely $\pi_3^\varepsilon$ and $\IC(\mathcal{E}_{C_3})$, respectively.
\end{remark}

\subsection{Kazhdan-Lusztig Conjecture}\label{ssec:KL}

The Langlands-Vogan correspondence defines a pairing \cite{Vogan:Langlands}
\[
\langle\ ,\ \rangle : K\Rep(G_2(F))_{\mathrm{sub}}\times K\Perv_{\dualgroup{G_2}}(X_\mathrm{sub}) 
\to 
\ZZ
\]
between the Grothendieck groups of these two categories that matches irreducible representations $\Rep(G_2(F))$ with simple perverse sheaves only when $\pi$ corresponds to $\IC(\mathfrak{C},\mathcal{L})\ceq \IC(\mathcal{L}_{\mathfrak{C}})$ under the Langlands-Vogan correspondence:
\[
\langle \pi(\phi,r),\IC(\mathfrak{C}_\phi,\mathcal{L}_r) \rangle = (-1)^{\dim \mathfrak{C}_\phi}
\]
and $\langle \pi,\IC(\mathfrak{C},\mathcal{L}) \rangle=0$ otherwise.
Note that this may be expressed in the form
\[
\langle \pi,\mathcal{L}^\sharp \rangle = 1
\]
where $\mathcal{L}^\sharp \ceq \IC(\mathfrak{C},\mathcal{L})[-\dim \mathfrak{C}]$.

The Kazhdan-Lusztig conjecture is the claim that standard modules and standard sheaves are also dual under this pairing. A standard sheaf on $X_\mathrm{sub}$ is the extension by zero $\mathcal{L}^! = (j_{\mathfrak{C}})_!\mathcal{L}$ of an equivariant local system $\mathcal{L}$ on an orbit $\mathfrak{C}\subseteq X_\mathrm{sub}$, where $j_{\mathfrak{C}} : \mathfrak{C} \hookrightarrow X_\mathrm{sub}$ is inclusion.

\begin{proposition}[Kazhdan-Lusztig conjecture for $\Rep(G_2(F))_\mathrm{sub}$]\label{prop:KL}
With reference to the pairing above, let $M$ be a standard module in $\Rep(G_2(F))_\mathrm{sub}$ and let $\mathcal{L}^!$ be a standard sheaf on $X_\mathrm{sub}$. Then
\[
\langle M ,\mathcal{L}^! \rangle = 1
\]
if, for some irreducible admissible $\pi$, $M$ is the standard module for $\pi$ and $\pi$ corresponds to $\IC(\mathcal{C},\mathcal{L})$ under the Langlands-Vogan correspondence; otherwise, $\langle M ,\mathcal{L}^! \rangle=0$.
\end{proposition}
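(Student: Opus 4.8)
The plan is to unwind the definition of the pairing $\langle\ ,\ \rangle$ and to treat the two Bernstein blocks of $\Rep(G_2(F))_\mathrm{sub}$ separately. By construction $\langle\ ,\ \rangle$ is the bilinear form on the two Grothendieck groups for which the classes of the irreducible representations and of the shifted simple perverse sheaves $\mathcal{M}^\sharp = \IC(\mathfrak{C},\mathcal{M})[-\dim\mathfrak{C}]$ are dual bases. Writing a standard module in the basis of irreducibles, $[M] = \sum_{\pi'} m_{\pi',M}\,[\pi']$, and a standard sheaf in the basis of shifted simple perverse sheaves, $[\mathcal{L}^!] = \sum_{\mathcal{M}} n_{\mathcal{M},\mathcal{L}}\,[\mathcal{M}^\sharp]$, bilinearity reduces the statement to $\langle M, \mathcal{L}^!\rangle = \sum_{\pi'} m_{\pi',M}\, n_{\mathcal{P}(\pi'),\mathcal{L}}$, where $\mathcal{P}$ is the Langlands--Vogan bijection of Proposition~\ref{prop:LVC}. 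Since both $(m_{\pi',M})$ and $(n_{\mathcal{M},\mathcal{L}})$ are unitriangular for the closure order on orbits, the proposition amounts to the assertion that these two transition matrices are transpose-inverse to one another (up to the usual signs) --- the Kazhdan--Lusztig identity, in the form used by Vogan; equivalently, this is Proposition~\ref{prop:KazhdanLusztig}.

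I would first dispose of the supercuspidal block $\Rep(G_2(F))_{(G_2(F),\pi_3^\varepsilon)}$, where the statement is immediate: its only simple object is $\pi_3^\varepsilon$, which is supercuspidal and hence is its own standard module, and the only standard sheaf in the matching block is $\mathcal{E}_{C_3}^!$. Expanding $\mathcal{E}_{C_3}^!$ in shifted simple perverse sheaves, every composition factor other than $\IC(\mathcal{E}_{C_3})$ is supported on one of $C_0, C_1, C_2$, each of which carries only the trivial equivariant local system (Lemma~\ref{lem:fg}) and so pairs to $0$ against $\pi_3^\varepsilon$; hence $\langle \pi_3^\varepsilon, \mathcal{E}_{C_3}^!\rangle = \langle \pi_3^\varepsilon, \IC(\mathcal{E}_{C_3})[-\dim C_3]\rangle = 1$, and the other pairings in this block vanish.

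For the Iwahori block $\Rep(G_2(F))_{(T(F),\nu\otimes 1)}$ the plan is to reduce to the geometry of the affine Hecke algebra. By the Kazhdan--Lusztig classification \cite{Kazhdan-Lusztig} this block is equivalent to a block of modules over the Iwahori--Hecke algebra of $G_2(F)$ at a fixed central character, and its standard and simple modules are governed by the $H_\mathrm{sub}$-equivariant perverse sheaves on $V_\mathrm{sub}$; \cite{Lusztig:Cuspidal2}*{Corollary~10.7} then supplies exactly the Kazhdan--Lusztig identity above, namely that the transition matrix from standard modules to simples and the transition matrix from standard sheaves $\mathcal{L}^!$ to simple perverse sheaves (the latter read off the stalks of the intersection cohomology complexes) are transpose-inverse. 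What remains is bookkeeping: to check that the parametrization of simples produced by \cite{Lusztig:Cuspidal2} matches the Langlands--Vogan bijection $\mathcal{P}$ of Proposition~\ref{prop:LVC}, and that the standard modules $I_\gamma(s,\pi)$ and $I(\nu^{s_1}\chi_1,\nu^{s_2}\chi_2)$ of Section~\ref{ssec:SM} are the ones appearing on the geometric side; this is done by comparing cuspidal supports and using Reeder's identification \cite{Reedergeneric} of the generic member of the L-packet, just as in the proof of Theorem~\ref{thm:LLC}.

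The main obstacle I anticipate is exactly this last compatibility --- aligning the normalizations of \cite{Kazhdan-Lusztig}, \cite{Lusztig:Cuspidal2} and the geometry of $V_\mathrm{sub}$ developed in Section~\ref{sec:cubics} --- rather than any single computation. As a self-contained alternative and a cross-check, one can bypass \cite{Lusztig:Cuspidal2} altogether: read the matrix $(m_{\pi',M})$ off the character identities \eqref{eqn:muic1}--\eqref{eqn:muic} of \cite{Muic}, read the matrix $(n_{\mathcal{M},\mathcal{L}})$ off the explicit stalks of the six simple perverse sheaves computed in Section~\ref{sec:cubics}, and verify by direct matrix multiplication that $\langle M, \mathcal{L}^!\rangle$ is the identity matrix in these bases.
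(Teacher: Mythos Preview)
Your proposal is correct and in fact contains both approaches the paper uses, but with the priorities reversed. The paper's actual proof is precisely your ``self-contained alternative and cross-check'': it reads the multiplicity matrix $(m_{\pi',M})$ off Mui\'c's decompositions \eqref{eqn:muic1}--\eqref{eqn:muic} (recorded in Table~\ref{table:repmult}), reads the geometric multiplicity matrix $(n_{\mathcal{M},\mathcal{L}})$ off the stalks of the six simple perverse sheaves computed in Theorem~\ref{thm:stalks} (Table~\ref{table:geomult}), and simply observes that the two tables are transposes of one another. Your primary route via \cite{Lusztig:Cuspidal2}*{Corollary~10.7} appears in the paper only as a remark following the proof, attributed to Lusztig, with the caveat that it requires ``some work'' --- exactly the compatibility bookkeeping you flagged as the main obstacle. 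For the supercuspidal block your argument is fine but slightly roundabout: in fact $\IC(\mathcal{E}_{C_3})$ is clean (Table~\ref{table:stalks}), so $\mathcal{E}_{C_3}^!=\mathcal{E}_{C_3}^\sharp$ in the Grothendieck group and there are no boundary composition factors to discard.
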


\begin{proof}
In Section~\ref{ssec:SM} we found the standard modules for the irreducible representations in $\Rep(G_2(F))_\mathrm{sub}$: the standard module for $\pi_0$ is $I_{\beta}(1,\pi(1,1))$; the standard module for $\pi_1$ is $I_{\alpha}(1/2,\delta(1))$; the standard module for $\pi_2$ is $I_{\beta}(1/2,\delta(1))$; while $\pi_3$, $\pi_3^\varrho$ and $\pi_3^\varepsilon$ are their own standard modules. Using \eqref{eqn:muic1} and \eqref{eqn:muic}, we find the multiplicity matrix for representations with infinitesimal parameter $\lambda_\mathrm{sub}$ in Table~\ref{table:repmult}.

On the other hand, in Theorem~\ref{thm:stalks} we find the geometric multiplicity matrix for the simple perverse sheaves on $V_\mathrm{sub}$; see especially Table~\ref{table:geomult}.
Since Table~\ref{table:geomult} is the transpose of Table~\ref{table:repmult}, this verifies the Kazhdan-Lusztig correspondence for $\Rep(G_2(F))_\mathrm{sub}$; see also \cite{CFMMX}*{}.
\end{proof}

\begin{remark}
We are grateful to George Lusztig for pointing out to us that Proposition~\ref{prop:KL} follows, with some work, from \cite{Lusztig:Cuspidal2}*{Corollary 10.7}.
% this approach required passing from graded Hecke algebras to affine Hecke algebras.
\end{remark}

\begin{table}[htp]
\caption{Multiplicity matrix}
\begin{center}
$
\begin{array}{|l|ccccc|c|}
\hline
\mathrm{Standard modules} & \pi_0 & \pi_1 & \pi_2 & \pi_3 & \pi_3^\varrho &\pi_3^\varepsilon \\
     \hline
   M_0 = I_{\beta}(1,\pi(1,1))  &1&1&2&1&1&0 \\
    % \hline
    M_1 = I_{\alpha}(1/2,\delta(1))  &0&1&1&1&0&0\\
    % \hline
    M_2= I_{\beta}(1/2,\delta(1)) &0&0&1&1&1&0\\
   %  \hline
    M_3= \pi_3&0&0&0&1&0&0\\
    M_3^\varrho = \pi_3^\varrho&0&0&0&0&1&0\\
     \hline
    M_3^\varepsilon = \pi_3^\varepsilon & 0 & 0 & 0 & 0 & 0 &1\\
     \hline
\end{array}
$
\end{center}
\label{table:repmult}
\end{table}%

\subsection{Main result}\label{ssec:main}

In this section we describe our progress toward proving an adaptation to $G_2(F)$ of the main local result \cite{Arthur:book}*{Theorem~1.5.1} of Arthur's endoscopic classification.

\begin{lemma}\label{lem:Atype}
\begin{enumerate}
\item The Langlands parameters appearing in Proposition~\ref{prop:langlandsparameters} are all of Arthur type. In fact, if we define
\begin{align*}
\psi_0(w,x,y)&=\varphi_{\mathrm{sub}}(y),\\
\psi_1(w,x,y)&=\iota_{\da}(x)~\iota_{\da+2\db}(y),\\
\psi_2(w,x,y)&=\iota_{\da}(y)~\iota_{\da+2\db}(x),\\
\psi_3(w,x,y)&=\varphi_{\mathrm{sub}}(x),
\end{align*}
then each $\psi_i$ is an Arthur parameter and
$$\psi_i(w,x,d_w)=\phi_i(w,x), i=0,1,2,3.$$
\item Let $A_{\psi_i}:=Z_{\dualgroup{G}_2}(\psi_i)/Z_{\dualgroup{G}_2}(\psi_i)^0$ be the component group of $\psi_i$, then we have $A_{\psi_0}=A_{\psi_3}=S_3$ and $A_{\psi_1}=A_{\psi_2}\cong S_2$, where $S_2$ is the order 2 multiplicative group.
\item The image of $s_{\psi_i}\ceq\psi_i(1,1,-1)\in Z_{\dualgroup{G}_2}(\psi_i)$ in $A_{\psi_i}$ is given by: $s_{\psi_0} =1\in S_3$, $s_{\psi_1}=-1\in S_2$, $s_{\psi_2}=-1\in S_2$ and $s_{\psi_3}=1\in S_3$. 
\end{enumerate}
\end{lemma}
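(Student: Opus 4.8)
The plan is to verify the three parts by direct computation in $\dualgroup{G_2}$, using the coordinates on $\wh T$, the root actions \eqref{eqn:rootactionontorus}, the embeddings \eqref{eqn:embeddingoftorus}, and the orbit and centralizer data already recorded in Lemma~\ref{lem:fg} and Proposition~\ref{prop:langlandsparameters}. For part~(1), one first checks that each $\psi_i$ is a homomorphism of $W_F\times\SL_2(\C)\times\SL_2(\C)$ with trivial, hence bounded, restriction to $W_F$: for $\psi_0$ and $\psi_3$ this is clear since they factor through $\varphi_\mathrm{sub}$, and for $\psi_1,\psi_2$ the only point is that $\iota_{\da}(\SL_2(\C))$ and $\iota_{\da+2\db}(\SL_2(\C))$ commute, which holds because $\da$ and $\da+2\db$ are orthogonal and none of $\pm\da\pm(\da+2\db)$ is a root of $\dualgroup{G_2}$, so the relevant Chevalley commutators vanish and the two maximal tori both lie in $\wh T$. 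One then verifies $\psi_i(w,x,d_w)=\phi_i(w,x)$ with $d_w=\diag(|w|^{1/2},|w|^{-1/2})$: for $\psi_0$ this is exactly \eqref{eqn:lambdasub}, for $\psi_3$ it is tautological, and for $\psi_1,\psi_2$ one uses \eqref{eqn:embeddingoftorus}, the identity $\da^\vee(a)=\wh m(1,a)$, and the two displayed formulas following Proposition~\ref{prop:langlandsparameters} to obtain $\iota_{\da+2\db}(d_w)=\wh m(|w|,|w|^{1/2})=\iota_{\da}(\diag(|w|^{1/2},|w|^{1/2}))$ and $\iota_{\da}(d_w)=\wh m(1,|w|^{1/2})=\iota_{\da+2\db}(\diag(|w|^{1/2},|w|^{1/2}))$; since $\da$ kills $\wh m(|w|,|w|^{1/2})$, that torus element centralizes $\iota_{\da}(\SL_2(\C))$, which lets one reorder the factors of $\psi_1(w,x,d_w)$ to match $\phi_1$, while the case of $\psi_2$ needs no reordering.

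For part~(2), the key observation is that the images coincide in pairs. Since $\Im(\psi_0)=\Im(\psi_3)=\varphi_\mathrm{sub}(\SL_2(\C))=\Im(\phi_3)$, we get $Z_{\dualgroup{G_2}}(\psi_0)=Z_{\dualgroup{G_2}}(\psi_3)=Z_{\dualgroup{G_2}}(\phi_3)$, so $A_{\psi_0}=A_{\psi_3}=A_{\phi_3}=A_{C_3}\cong S_3$ by Proposition~\ref{prop:langlandsparameters} and Lemma~\ref{lem:fg}. Likewise $\Im(\psi_1)=\Im(\psi_2)=\iota_{\da}(\SL_2(\C))\,\iota_{\da+2\db}(\SL_2(\C))$, and I would show that this subgroup contains $\wh T$: the cocharacters $\da^\vee(a)=\wh m(1,a)$ and $(\da+2\db)^\vee(a)=\wh m(a^2,a)$ already generate $\wh T$ because $\C^\times$ is divisible. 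Hence it is the connected reductive subgroup of $\dualgroup{G_2}$ of maximal rank attached to the closed subsystem $\{\pm\da,\pm(\da+2\db)\}$ of type $A_1\times A_1$, and its centralizer consists of those $t\in\wh T$ with $\da(t)=(\da+2\db)(t)=1$. Using \eqref{eqn:rootactionontorus}, $\da(\wh m(x,y))=x^{-1}y^2$ and $(\da+2\db)(\wh m(x,y))=\da(\wh m(x,y))\,\db(\wh m(x,y))^2=x$, so this centralizer is $\{1,\wh m(1,-1)\}\cong S_2$; being finite, it is $A_{\psi_1}=A_{\psi_2}$.

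For part~(3), I would evaluate $s_{\psi_i}=\psi_i(1,1,-I_2)$ directly in $\dualgroup{G_2}$. We have $s_{\psi_3}=\varphi_\mathrm{sub}(I_2)=1$, trivial in $A_{\psi_3}$; and $s_{\psi_0}=\varphi_\mathrm{sub}(-I_2)$ is the value at $-1$ of the cocharacter $t\mapsto\varphi_\mathrm{sub}(\diag(t,t^{-1}))$, which by the computation of $\lambda_\mathrm{sub}(\Frob)$ in Section~\ref{ssec:sub} (where $h=H_{\da}+H_{\da+2\db}$ is twice the coweight $2\da^\vee+\db^\vee$) equals $t\mapsto(2\da^\vee+\db^\vee)(t^2)$, so $s_{\psi_0}=(2\da^\vee+\db^\vee)(1)=1$ is again trivial in $A_{\psi_0}$. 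Finally $s_{\psi_1}=\iota_{\da+2\db}(-I_2)=\wh m(1,-1)$ by \eqref{eqn:embeddingoftorus}, and $s_{\psi_2}=\iota_{\da}(-I_2)=\da^\vee(-1)=\wh m(1,-1)$; by part~(2) this is the nontrivial element of the two-element group $A_{\psi_1}=A_{\psi_2}$, so $s_{\psi_1}=s_{\psi_2}=-1$ in $S_2$. I expect the only genuinely delicate step to be the verification in part~(2) that $\iota_{\da}(\SL_2(\C))\,\iota_{\da+2\db}(\SL_2(\C))$ is the \emph{full} maximal-rank subgroup attached to $\{\pm\da,\pm(\da+2\db)\}$, so that its centralizer really reduces to the common kernel of $\da$ and $\da+2\db$ on $\wh T$; everything else is bookkeeping with the Chevalley conventions of Sections~\ref{ssec:roots}--\ref{ssec:Chevalley}, where the main thing to watch is which factor of each $\iota_\gamma$ carries the torus contribution and the coordinate squaring in \eqref{eqn:embeddingoftorus}.
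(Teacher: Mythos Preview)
Your proposal is correct and follows essentially the same approach as the paper's proof, which is just ``direct calculation'' with the final answers recorded; you simply fill in the details the paper omits. In particular, your identification of $Z_{\dualgroup{G_2}}(\psi_1)=Z_{\dualgroup{G_2}}(\psi_2)=\{\wh m(1,1),\wh m(1,-1)\}$ via the kernels of $\da$ and $\da+2\db$ on $\wh T$, and your evaluations $s_{\psi_0}=s_{\psi_3}=\wh m(1,1)$, $s_{\psi_1}=s_{\psi_2}=\wh m(1,-1)$, match the paper verbatim; the step you flag as ``delicate'' (that the product of the two $\SL_2$'s contains $\wh T$) is already handled by your divisibility argument, so there is no gap.
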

\begin{proof}
A direct calculation shows that each $\psi_i$ is an Arthur parameter and satisfies $\psi_i(w,x,d_w)=\phi_i(w,x)$; in particular, note that each $\phi_i$ is trivial on $W_F$, hence bounded on $W_F$.
The computations of $A_{\psi_0}$ and $A_{\psi_3}$ are the same as the computation of $A_{\phi_3}$. Using \eqref{eqn:rootactionontorus}, we can check that
$$A_{\psi_1}=A_{\psi_2}=\{\wh m(1,1), \wh m(1,-1)\}.$$
Thus $A_{\psi_1}=A_{\psi_2}\cong S_2$. Finally, notice that $s_{\psi_0}=s_{\psi_3}=\wh m(1,1)$ and $s_{\psi_1}=s_{\psi_2}=\wh m(1,-1)$, we get the third part of the lemma.
\end{proof}
Notice that 
\[
\psi_3(w,x,y)=\psi_0(w,y,x)
\qquad \mathrm{and}\qquad  \psi_1(w,x,y)=\psi_2(w,y,x).\qedhere
\]

Recall that we write ${\wh A_\psi}$ for the set of equivalence classes of irreducible representations of $A_\psi$ and that we identify these representations with their characters.

\begin{theorem}[See also Theorem~\ref{intromaintheorem}]\label{thm:main}
For every Arthur parameter $\psi$ with subregular infinitesimal parameter there is a finite set $\Pi_{\psi}(G_2(F))$ of irreducible admissible representations in $\Rep(G_2(F))_\mathrm{sub}$ and a function
\[
\begin{array}{rcl}
\langle \ , \ \rangle_\psi : \Pi_{\psi}(G_2(F)) &\to& {\widehat A_\psi} \\
\pi &\mapsto& \langle \ , \pi\rangle_\psi,
\end{array}
\]
defined using a microlocal study of the moduli space of Langlands parameters $X_\mathrm{sub}$, such that
\begin{enumerate} 
%\labitem{(a)}{main:irreducible} the representation $\langle \ , \pi \rangle_\psi$ is irreducible;
\labitem{(a)}{main:tempered} if $\psi$ is trivial on $\SL_2(\C)$ then all the representations in $\Pi_{\psi}(G_2(F))$ are tempered and $\langle \ , \ \rangle_\psi$ is bijective;
\labitem{(b)}{main:nontempered} if $\psi$ is not trivial on $\SL_2(\C)$ then $\Pi_\psi(G_2(F))$ contains non-tempered representations and $\langle \ , \ \rangle_\psi$ need not be bijective;
\labitem{(c)}{main:spherical} if $\pi$ is spherical then $\langle \ , \pi \rangle_\psi = \1$, the trivial representation of $A_\psi$.
\end{enumerate}
The finite sets $\Pi_{\psi}(G_2(F))$ are ABV-packets, as defined in \cite{CFMMX}*{}; they are listed explicitly in Table~\ref{table:ALpackets}.
The functions $\langle \ , \ \rangle_\psi : \Pi_{\psi}(G_2(F)) \to {\widehat A_\psi}$ are displayed in Table~\ref{table:nevs}, where $\1$ is the trivial representation, $\varrho$ denotes the reflection representation of $A_{\psi_0}$ and $A_{\psi_3}$, $\varepsilon$ denotes the sign representation of $A_{\psi_0}$ and $A_{\psi_3}$, and $\tau$ denotes the non-trivial character of $A_{\psi_1}$ and $A_{\psi_2}$. 
\end{theorem}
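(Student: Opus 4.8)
The plan is to transport everything to the dual side via the Langlands--Vogan correspondence $\mathcal{P}$ of Proposition~\ref{prop:LVC} and to read off the packets and the pairing from a microlocal computation on the moduli space $X_\mathrm{sub}$. First I would use the equivalence \eqref{eqn:VX} together with the explicit action of $H_\mathrm{sub}=\GL_2(\db)$ recorded in Section~\ref{sec:Vsub} to identify $\Perv_{\dualgroup{G_2}}(X_\mathrm{sub})$ with the category $\Perv_{\GL_2}(\det\!\,^{-1}\otimes\Sym^3)$ of $\GL_2$-equivariant perverse sheaves on the space of homogeneous binary cubics, under which the orbits $C_0,\dots,C_3$ of Lemma~\ref{lem:fg} correspond to the zero cubic, the cube of a linear form, the shape $\ell^2m$, and the product of three distinct linear forms. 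By Proposition~\ref{prop:langlandsparameters} every Langlands parameter with infinitesimal parameter $\lambda_\mathrm{sub}$ is conjugate to one of $\phi_0,\dots,\phi_3$, and since an Arthur parameter $\psi$ is recovered from $\phi_\psi$ by the Jacobson--Morozov decomposition of $\phi_\psi|_{W_F}$, the Arthur parameters with infinitesimal parameter $\lambda_\mathrm{sub}$ are, up to conjugacy, exactly the four $\psi_i$ of Lemma~\ref{lem:Atype}. Following \cite{CFMMX}, each $\psi_i$ singles out a point $(x_{\psi_i},\xi_{\psi_i})$ of the regular conormal variety $\Lambda^\mathrm{reg}$: the first coordinate is the base point in $V_\mathrm{sub}$ of the orbit attached to $\phi_{\psi_i}$ by Proposition~\ref{prop:langlandsparameters}, and the second coordinate $\xi_{\psi_i}\in\dualgroup{\mathfrak{g}}_2(q^{-1})=V^*_\mathrm{sub}$ is obtained by applying the differential of $\psi_i$ to a lowering operator of the Arthur $\SL_2(\C)$; the equivariant fundamental group of the conormal orbit through $(x_{\psi_i},\xi_{\psi_i})$ is $A_{\psi_i}$, in agreement with Lemma~\ref{lem:Atype}(2). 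Restricting the microlocal vanishing cycles functor $\Evs$ of \cite{CFMMX} to the corresponding component of $\Lambda^\mathrm{reg}$ then gives $\Evs_{\psi_i}:\Perv_{\dualgroup{G_2}}(X_\mathrm{sub})\to\widehat{A_{\psi_i}}$.

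Having set this up, I would define
\[
\Pi_{\psi}(G_2(F)) \ceq \{\pi : \Evs_{\psi}\mathcal{P}(\pi)\ne 0\},
\qquad
\langle\ ,\pi\rangle_\psi \ceq \Evs_{\psi}\mathcal{P}(\pi),
\]
the latter read as a character of $A_\psi$; by construction these are the ABV-packets of \cite{CFMMX} and their associated function. What then remains is a finite computation: evaluate $\Evs$ on each of the six simple perverse sheaves listed in Proposition~\ref{prop:LVC} and restrict to each of the four conormal components. This is precisely the self-contained microlocal study of binary cubics to be carried out in Section~\ref{sec:cubics}; its output (see Theorem~\ref{thm:stalks}), combined with the dictionary of Proposition~\ref{prop:LVC}, yields Table~\ref{table:ALpackets} and Table~\ref{table:nevs} by inspection. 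The bookkeeping is halved by the relations $\psi_3(w,x,y)=\psi_0(w,y,x)$ and $\psi_1(w,x,y)=\psi_2(w,y,x)$, since by Theorem~\ref{thm:aubertinvolution} the Fourier transform $\Ft$ interchanges $(\psi_0,\psi_3)$ and $(\psi_1,\psi_2)$ and corresponds to the Aubert involution on $\Rep(G_2(F))_\mathrm{sub}$, so that only the cases $\psi_3$ and $\psi_1$ need be treated directly.

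To verify \ref{main:tempered} I note that among $\psi_0,\dots,\psi_3$ only $\psi_3$ is trivial on the Arthur $\SL_2(\C)$, and then $\phi_{\psi_3}=\phi_3$ is trivial on $W_F$ and hence tempered, $\xi_{\psi_3}=0$, and $x_{\psi_3}$ lies in the open orbit $C_3$; so the relevant component of $\Lambda^\mathrm{reg}$ is the zero section over $C_3$, on which $\Evs$ reduces up to shift to restriction to $C_3$, and therefore $\Evs_{\psi_3}\mathcal{F}\ne 0$ exactly for the three simple sheaves with full support, $\IC(\1_{C_3})$, $\IC(\mathcal{R}_{C_3})$ and $\IC(\mathcal{E}_{C_3})$. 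Under $\mathcal{P}$ these are $\pi_3$, $\pi_3^\varrho$ and $\pi_3^\varepsilon$, so $\Pi_{\psi_3}(G_2(F))$ equals the $L$-packet of Theorem~\ref{thm:LLC}, whose members are discrete series and in particular tempered, while $\langle\ ,\ \rangle_{\psi_3}$ is the bijection $\pi_3\mapsto\1$, $\pi_3^\varrho\mapsto\varrho$, $\pi_3^\varepsilon\mapsto\varepsilon$ onto $\widehat{S_3}$. For \ref{main:nontempered}, the packets $\Pi_{\psi_0}(G_2(F))$, $\Pi_{\psi_1}(G_2(F))$ and $\Pi_{\psi_2}(G_2(F))$ each contain one of $\pi_0=J_\beta(1,\pi(1,1))$, $\pi_1=J_\alpha(1/2,\delta(1))$, $\pi_2=J_\beta(1/2,\delta(1))$, which by Section~\ref{ssec:SM} and Proposition~\ref{prop:KL} is a proper Langlands quotient of a reducible standard module, hence non-tempered; and since $|\Pi_{\psi_1}(G_2(F))|=|\Pi_{\psi_2}(G_2(F))|=3>2=|\widehat{A_{\psi_1}}|$, the function $\langle\ ,\ \rangle_\psi$ is not bijective for $\psi_1$ or $\psi_2$. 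For \ref{main:spherical}, Lemma~\ref{lem:generic} and its proof identify $\pi_0$ as the unique spherical object of $\Rep(G_2(F))_\mathrm{sub}$, and the tables show it lies only in $\Pi_{\psi_0}(G_2(F))$; under $\mathcal{P}$ it corresponds to $\IC(\1_{C_0})$, the skyscraper at the closed orbit $C_0=\{0\}$, and $\RPhi_s$ of a skyscraper at a point of $s^{-1}(0)$ is again that skyscraper because the corresponding nearby cycles vanish, so $\Evs_{\psi_0}\mathcal{P}(\pi_0)$ is the trivial rank-one local system and $\langle\ ,\pi_0\rangle_{\psi_0}=\1$.

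The hard part will be the geometric input invoked from Section~\ref{sec:cubics}, namely the computation of the vanishing cycles, the stalks and --- above all --- the monodromy, i.e.\ the $A_{C_3}=S_3$-equivariant local system structure on the conormal bundle, of the intersection cohomology sheaves of the orbit closures in the space of homogeneous binary cubics. The representation-theoretic ingredients (the Langlands classification of the six members, their temperedness and sphericity) are routine given \cite{Muic} and Theorem~\ref{thm:LLC}, and the mechanism that converts $\Evs$ into packets and into the pairing is taken wholesale from \cite{CFMMX}; the genuinely new work, and the only real difficulty, is the microlocal geometry of binary cubics.
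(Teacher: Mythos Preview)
Your approach is essentially the paper's, but there is one real slip: you define $\langle\ ,\pi\rangle_\psi$ as $\Evs_\psi\mathcal{P}(\pi)$, whereas the paper uses the \emph{normalised} functor $\NEvs_C\mathcal{F}=(\Evs_C\IC(\1_C))^\vee\otimes\Evs_C\mathcal{F}$ (Section~\ref{ssec:NEvs} and equation~\eqref{eqn:tracenevs}). The two agree on supports, hence give the same packets $\Pi_\psi(G_2(F))$, but not on values: for instance $\Evs_{C_1}\IC(\1_{C_1})=\mathcal{T}_{\Lambda_1^\mathrm{reg}}$ while $\NEvs_{C_1}\IC(\1_{C_1})=\1_{\Lambda_1^\mathrm{reg}}$, so your $\langle\ ,\pi_1\rangle_{\psi_1}$ would be $\tau$ rather than $\1$. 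Your definition therefore produces Table~\ref{table:Evs} instead of Table~\ref{table:nevs}, contradicting the statement. Properties \ref{main:tempered}--\ref{main:spherical} happen to survive this twist (the only spherical $\pi$ is $\pi_0$, sitting over $C_0$ where $\Evs$ and $\NEvs$ coincide), but the explicit table does not.

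Two smaller points. The geometric input you want is Theorem~\ref{thm:NEvs} (or Theorem~\ref{thm:Evs}), not Theorem~\ref{thm:stalks}; the latter computes stalks and the geometric multiplicity matrix, which feed Proposition~\ref{prop:KL}, not the $\Evs$ table. And your Aubert/Fourier shortcut to halve the bookkeeping is a valid observation, but the paper does not use it here: it simply reads all four columns off Table~\ref{table:NEvs}. Invoking Theorem~\ref{thm:aubertinvolution} at this point would in any case be a forward reference, since that result is deduced from the $\Ft$ computations in Section~\ref{ssec:Ft}.
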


\begin{table}[htp]
\caption{The functions $\langle \ , \ \rangle_\psi : \Pi_{\psi}(G_2(F)) \to \Rep(A_\psi)$ appearing in Theorem~\ref{thm:main}}
\begin{center}
$
\begin{array}{| c || cccc |}
\hline
\Rep(G_2(F))_\mathrm{sub} & \Rep(A_{\psi_0})  &  \Rep(A_{\psi_1}) &  \Rep(A_{\psi_2}) &  \Rep(A_{\psi_3})  \\
\hline\hline
\pi_0 & \1 & 0 & 0 & 0 \\
\pi_1  & \varrho & \1 & 0 & 0 \\
\pi_2 & 0 & \tau & \1 & 0 \\
\pi_3 & 0 & 0 & 0 & \1 \\
\pi_3^\varrho &  0 & 0 & \tau & \varrho \\
\hline
\pi_3^\varepsilon & \varepsilon & \1  & \tau & \varepsilon \\
\hline
\end{array}
$
\end{center}
\label{table:nevs}
\end{table}%

\begin{table}[htp]
\caption{L-packets and A-packets for all admissible representations of $G_2(F)$ with subregular infinitesimal parameter}
\begin{center}
$
\begin{array}{l c l }
\mathrm{A-packet} &\hskip1cm& \mathrm{L-packet} \\ \hline
\Pi_{\psi_0}(G_2(F))  =  \{ \pi_0,  \pi_1, \pi_3^\varepsilon \} &\hskip1cm&
\Pi_{\phi_0}(G_2(F)) = \{ \pi_0\} \\
\Pi_{\psi_1}(G_2(F)) =  \{ \pi_1, \pi_2, \pi_3^\varepsilon \} &\hskip1cm&
\Pi_{\phi_1}(G_2(F)) = \{ \pi_1\} \\
\Pi_{\psi_2}(G_2(F)) = \{ \pi_2, \pi_3^\varrho, \pi_3^\varepsilon \} &\hskip1cm& 
\Pi_{\phi_2}(G_2(F)) = \{ \pi_2\}  \\
\Pi_{\psi_3}(G_2(F)) = \{ \pi_3, \pi_3^\varrho, \pi_3^\varepsilon \} &\hskip1cm& 
\Pi_{\phi_3}(G_2(F))  = \{ \pi_3, \pi_3^\varrho, \pi_3^\varepsilon \} 
\end{array}
$
\end{center}
\label{table:ALpackets}
\end{table}%

\begin{proof}
The function $\langle \ , \ \rangle_\psi : \Pi_{\psi}(G_2(F)) \to \Rep(A_\psi)$ is constructed as follows. 
First, observe that
\[
\Big( \Rep(G_2(F))_\mathrm{sub} \Big)^\mathrm{simple}_{/\mathrm{iso}}
= 
\mathop{\cup}\limits_{\psi} \Pi_{\psi}(G_2(F)),
%=\Pi_{\psi_0}(G_2(F)) \cup \Pi_{\psi_1}(G_2(F)) \cup \Pi_{\psi_2}(G_2(F)) \cup \Pi_{\psi_3}(G_2(F)).
\]
since all Langlands parameters with subregular infinitesimal parameter are of Arthur type, by Lemma~\ref{lem:Atype}.
The Langlands-Vogan correspondence, Proposition~\ref{prop:LVC}, gives a bijection
\[
 \Big( \Rep(G_2(F))_\mathrm{sub} \Big)^\mathrm{simple}_{/\mathrm{iso}}
\to
\left( \Perv_{\dualgroup{G_2}}(X_\mathrm{sub}) \right)^\mathrm{simple}_{/\mathrm{iso}}.
\]
When combined with the categorical equivalence 
\[
\Perv_{\dualgroup{G_2}}(X_\mathrm{sub}) 
\iso
\Perv_{H_\mathrm{sub}}(V_\mathrm{sub}), 
\] 
this gives a bijection
\[
\mathcal{P} : \left( \Rep(G_2(F))_\mathrm{sub} \right)^\mathrm{simple}_{/\mathrm{iso}}
\to
\left( \Perv_{H_\mathrm{sub}}(V_\mathrm{sub}) \right)^\mathrm{simple}_{/\mathrm{iso}}.
\]

Next, recall the "microlocal vanishing cycles" functor
\[
\NEvs: \Perv_{H_\mathrm{sub}}(V_\mathrm{sub}) \to \Loc_{H_\mathrm{sub}}(T^*_{H_\mathrm{sub}}V_\mathrm{sub}^\mathrm{reg}),
\]
introduced in \cite{CFMMX}*{Section~7.10}.
Since the components of $T^*_{H_\mathrm{sub}}V_\mathrm{sub}^\mathrm{reg}$ are the conormal bundles $T^*_{C}V_\mathrm{sub}^\mathrm{reg}$ as $C$ ranges over $H_\mathrm{sub}$-orbits $C\subset V_\mathrm{sub}$, we have
\[
\Loc_{H_\mathrm{sub}}(T^*_{H_\mathrm{sub}}V_\mathrm{sub}^\mathrm{reg})
\iso
\mathop{\oplus}\limits_{C} \Loc_{H_\mathrm{sub}}(T^*_{C}V_\mathrm{sub}^\mathrm{reg}).
\]
By Lemma~\ref{lem:Atype}, each $H_\mathrm{sub}$-orbit $C\subset V_\mathrm{sub}$ is of Arthur type and the collection of these orbits is indexed by the Arthur parameters $\psi_0, \ldots, \psi_3$ defined above, so we write $C=C_\psi$.
By \cite{CFMMX}*{Proposition~6.7.1} and Lemma~\ref{lem:generic}, the equivariant fundamental group of $T^*_{C_\psi}V_\mathrm{sub}^\mathrm{reg}$ is $A_\psi$.
Thus,
\[
\mathop{\oplus}\limits_{C} \Loc_{H_\mathrm{sub}}(T^*_{C}V_\mathrm{sub}^\mathrm{reg})
\iso
\mathop{\oplus}\limits_{\psi} \Rep(A_\psi).
\]
Thus, $\NEvs$ defines a function
\[
\left( \Perv_{H_\mathrm{sub}}(V_\mathrm{sub})  \right)^\mathrm{simple}_{/\mathrm{iso}} \to \prod_{\psi} \Rep(A_\psi)^\mathrm{simple}_{/\mathrm{iso}}.
\]
When combined with the first paragraph in this proof, this defines
\[
\operatorname{nevs} : \Big( \Rep(G_2(F))_\mathrm{sub} \Big)^\mathrm{simple}_{/\mathrm{iso}} \to \prod_{\psi} \Rep(A_\psi)^\mathrm{simple}_{/\mathrm{iso}}.
\]
Now, for each $\psi$, compose this with the projection to $\Rep(A_\psi)^\mathrm{simple}_{/\mathrm{iso}}$ to define
\[
\operatorname{nevs}_\psi  : 
\Big( \Rep(G_2(F))_\mathrm{sub} \Big)^\mathrm{simple}_{/\mathrm{iso}} \to \Rep(A_\psi)^\mathrm{simple}_{/\mathrm{iso}}.
\]
Define 
\begin{equation}\label{eqn:Pipsi}
\Pi_\psi(G_2(F)) \ceq \operatorname{supp} \operatorname{nevs}_\psi
\end{equation}
and define $\langle \ , \ \rangle_\psi : \Pi_\psi(G_2(F)) \to {\widehat A_\psi}$ by
\begin{equation}\label{eqn:tracenevs}
\langle a , \pi \rangle_\psi \ceq  \trace_{a } \NEvs_\psi \mathcal{P}(\pi).
\end{equation}

We calculate the functor $\NEvs$ on simple objects in Theorem~\ref{thm:NEvs}; when combined with Proposition~\ref{prop:cubics}, the results are presented in Table~\ref{table:NEvs}.
Now Items~\ref{main:tempered}, \ref{main:nontempered} and \ref{main:spherical} follow by inspection of the table, in the last case using Lemma~\ref{lem:generic} to see that $\pi_0$ is spherical.
\end{proof}

\begin{remark}
Using the definition of $\langle \ , \ \rangle_\psi$ in \eqref{eqn:Pipsi} above, the following statement is readily verified, case-by-case:
\begin{equation}
\langle a , \pi \rangle_\psi = (-1)^{\dim C_\psi} (-1)^{\dim \operatorname{supp} \mathcal{P}(\pi)} \trace_{a a_\psi} \NEvs_\psi \mathcal{P}(\pi).
\end{equation}
This establishes \cite{CFMMX}*{Conjecture 1, (b) and (c)}; see especially \cite{CFMMX}*{(110)}.
\end{remark}

\begin{remark}\label{remark:forthcoming}
In the forthcoming papers \cite{CFZ:unipotent} and \cite{CFZ:endoscopic}, we extend and strengthen Theorem~\ref{thm:main} by defining $\langle \ , \ \rangle_\phi : \Pi_{\psi}(G_2(F)) \to {\widehat A_\phi}$ for every unramified Langlands parameter $\phi$ and by showing that it is compatible with the theory of endoscopy for $A_2 = \PGL_3$ and $D_2 = \SO_4$ and twisted endoscopy for $\PGSpin_8$.
This gives a construction of Arthur packets of all unipotent representations of $G_2(F)$.
\end{remark}

\iffalse
Using Table~\ref{table:nevs}, the distributions appearing in Item~\ref{main:stable} are given in the Introduction.
\[
\begin{array}{l}
\Theta_{\psi_3} = \Theta_{\pi_3} + 2\Theta_{\pi_3^\varrho} + \Theta_{\pi_3^\varepsilon} \\
\Theta_{\psi_1} = \Theta_{\pi_1} - \Theta_{\pi_2} + \Theta_{\pi_3^\varepsilon} \\
\Theta_{\psi_2} = \Theta_{\pi_2} - \Theta_{\pi_3^\varrho} + \Theta_{\pi_3^\varepsilon} \\
\Theta_{\psi_0} = \Theta_{\pi_0} + 2\Theta_{\pi_1} + \Theta_{\pi_3^\varepsilon}.
\end{array}
\]
\fi

\begin{remark}
Observe that, by construction, the function $\langle\ ,\ \rangle_\psi$ is the shadow of a functor $\Rep(G_2(G))_\mathrm{sub} \to \Loc_{\dualgroup{G_2}}(T^*_{\dualgroup{G_2}}X_\mathrm{sub}^\mathrm{reg})$ if the suggestion of Remark~\ref{remark:CLLC} is correct.
\end{remark}

\begin{remark}
Nothing more general can be said about the case treated in Theorem~\ref{thm:main}, Item~\ref{main:nontempered}.
In our case each $\Pi_\psi(G_2(F))$ does contain a tempered representation, this is not to be expected more generally. 
Also, in our case, $\langle \ ,\ \rangle_\psi : \Pi_\psi(G_2(F)) \to {\widehat A_\psi}$ is always surjective but fails to be a bijection for $\psi_1$ and $\psi_2$.
It is not difficult to find examples that show that $\langle \ ,\ \rangle_\psi : \Pi_\psi(G_2(F)) \to {\widehat A_\psi}$ need not be surjective, too.
For instance, for the split connected reductive group $\SO_5$ there is an Arthur parameter, described in \cite{CFMMX}*{Section~15.1.1} as $\psi_2$, for which $\langle \ ,\ \rangle_{\psi_2} : \Pi_{\psi_2}(\SO_5(F)) \to {\widehat A_{\psi_2}}$ is not surjective, and $\Pi_{\psi_2}(\SO_5(F))$ contains no tempered representation. 
\end{remark}

\subsection{Stable distributions}\label{ssec:stable}

We now repeat Theorem~\ref{introstabletheorem}.
Recall that we write $\Theta_{\pi}$ for the Harish-Chandra distribution character determined by an admissible representation $\pi$.% \todo{sentence?}

\begin{theorem}[See also Theorem~\ref{introstabletheorem}]\label{thm:stable}
For every Arthur parameter $\psi$ with subregular infinitesimal parameter, consider the invariant distribution 
\begin{equation}\label{eqn:Thetapsi}
\Theta_{\psi} \ceq \sum_{\pi \in\Pi_{\psi}(G_2(F))} \langle a_\psi , \pi\rangle_\psi \ \Theta_{\pi},
\end{equation}
where $a_\psi$ is the image of $s_\psi \ceq \psi(1,-1)$ in $A_\psi$.
Suppose $\Theta_{\psi}$ is stable when $\psi$ is tempered.
Then the distributions $\Theta_\psi$ are stable for all Arthur parameters $\psi$.
The stable distributions $\Theta_\psi$ are displayed in Table~\ref{table:stable}.
\end{theorem}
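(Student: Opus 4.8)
The plan is to reduce the general case to the single stable distribution granted by the hypothesis, using that the stable invariant distributions on $G_2(F)$ form a subgroup of the space of invariant distributions. First I would observe that, among the four Arthur parameters with subregular infinitesimal parameter listed in Lemma~\ref{lem:Atype}, only $\psi_3$ is tempered: by Lemma~\ref{lem:Atype} and Proposition~\ref{prop:langlandsparameters} the Langlands parameters $\phi_{\psi_0},\phi_{\psi_1},\phi_{\psi_2}$ are unbounded on $W_F$ while $\phi_{\psi_3}$ is trivial there. So the hypothesis amounts to the stability of $\Theta_{\psi_3}=\Theta_{\pi_3}+2\Theta_{\pi_3^\varrho}+\Theta_{\pi_3^\varepsilon}$, and what remains is to prove that $\Theta_{\psi_0}$, $\Theta_{\psi_1}$ and $\Theta_{\psi_2}$ are stable.

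Next I would manufacture stable distributions from parabolic induction. The standard modules of Section~\ref{ssec:SM}, namely $M_0=I_\beta(1,\pi(1,1))$, $M_1=I_\alpha(1/2,\delta(1))$ and $M_2=I_\beta(1/2,\delta(1))$, are parabolically induced from irreducible representations of the Levi subgroups $\GL_2(\alpha)$ and $\GL_2(\beta)$, each isomorphic to $\GL_2(F)$. Every invariant distribution on $\GL_2(F)$ is stable (stable conjugacy there coincides with conjugacy), and parabolic induction carries stable distributions to stable distributions; hence $\Theta_{M_0}$, $\Theta_{M_1}$, $\Theta_{M_2}$ are stable. By the decompositions \eqref{eqn:muic1}, \eqref{eqn:muic}, equivalently the rows of Table~\ref{table:repmult},
\[
\Theta_{M_0}=\Theta_{\pi_0}+\Theta_{\pi_1}+2\Theta_{\pi_2}+\Theta_{\pi_3}+\Theta_{\pi_3^\varrho},
\]
\[
\Theta_{M_1}=\Theta_{\pi_1}+\Theta_{\pi_2}+\Theta_{\pi_3},
\qquad
\Theta_{M_2}=\Theta_{\pi_2}+\Theta_{\pi_3}+\Theta_{\pi_3^\varrho}.
\]

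Finally I would solve the resulting small linear system. Using Table~\ref{table:nevs} and the elements $a_{\psi_i}$ of Lemma~\ref{lem:Atype} to write out $\Theta_\psi=\sum_\pi\langle a_\psi,\pi\rangle_\psi\,\Theta_\pi$, one gets $\Theta_{\psi_1}=\Theta_{\pi_1}-\Theta_{\pi_2}+\Theta_{\pi_3^\varepsilon}$, $\Theta_{\psi_2}=\Theta_{\pi_2}-\Theta_{\pi_3^\varrho}-\Theta_{\pi_3^\varepsilon}$ and $\Theta_{\psi_0}=\Theta_{\pi_0}+2\Theta_{\pi_1}+\Theta_{\pi_3^\varepsilon}$, and a direct check gives
\[
\Theta_{\psi_1}=\Theta_{\psi_3}+\Theta_{M_1}-2\,\Theta_{M_2},
\qquad
\Theta_{\psi_2}=-\,\Theta_{\psi_3}+\Theta_{M_2},
\]
\[
\Theta_{\psi_0}=\Theta_{\psi_3}+\Theta_{M_0}+\Theta_{M_1}-3\,\Theta_{M_2}.
\]
As each right-hand side is a $\ZZ$-linear combination of the stable distributions $\Theta_{\psi_3}$, $\Theta_{M_0}$, $\Theta_{M_1}$, $\Theta_{M_2}$, the distributions $\Theta_{\psi_0}$, $\Theta_{\psi_1}$, $\Theta_{\psi_2}$ are stable, and their expansions in the $\Theta_{\pi}$ give Table~\ref{table:stable}. (The identity $\Theta_{\psi_2}+\Theta_{\psi_3}=\Theta_{M_2}$ and its companions are also shadows of the Aubert involution through Theorem~\ref{thm:aubertinvolution}, but that observation is not needed.)

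I expect the only real obstacle to be the hypothesis itself. The supercuspidal character $\Theta_{\pi_3^\varepsilon}$ appears in no character of a proper parabolically induced representation, so no stable distribution involving $\pi_3^\varepsilon$ is accessible from induced modules, and the tempered parameter $\psi_3$ is precisely where that gap must be closed; everything else is the two standard facts recalled above — stability of characters on $\GL_2(F)$ and stability under parabolic induction — together with the bookkeeping in Tables~\ref{table:repmult} and \ref{table:nevs}.
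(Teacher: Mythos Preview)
Your proposal is correct and follows essentially the same approach as the paper: identify $\psi_3$ as the unique tempered parameter, use Table~\ref{table:nevs} and Lemma~\ref{lem:Atype} to write out each $\Theta_{\psi_i}$ explicitly, invoke the stability of the standard-module characters $\Theta_{M_0},\Theta_{M_1},\Theta_{M_2}$ (the paper cites \cite{Arthur:Character} for this, while you argue it directly from stability on $\GL_2(F)$ plus preservation under parabolic induction), and then express each $\Theta_{\psi_i}$ as an integer combination of $\Theta_{M_0},\Theta_{M_1},\Theta_{M_2},\Theta_{\psi_3}$. The paper records precisely your three identities in the matrix form
\[
\begin{pmatrix}\Theta_{\psi_0}\\ \Theta_{\psi_1}\\ \Theta_{\psi_2}\\ \Theta_{\psi_3}\end{pmatrix}
=
\begin{pmatrix}1&1&-3&1\\ 0&1&-2&1\\ 0&0&1&-1\\ 0&0&0&1\end{pmatrix}
\begin{pmatrix}\Theta_{M_0}\\ \Theta_{M_1}\\ \Theta_{M_2}\\ \Theta_{\psi_3}\end{pmatrix}.
\]
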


\begin{table}[htp]
\caption{Distributions attached to Arthur parameters with subregular infinitesimal parameter}
\begin{center}
$
\begin{array}{l}
\Theta_{\psi_0} = \Theta_{\pi_0} + 2\Theta_{\pi_1} + \Theta_{\pi_3^\varepsilon} , \\
\Theta_{\psi_1} = \Theta_{\pi_1} - \Theta_{\pi_2} + \Theta_{\pi_3^\varepsilon} , \\
\Theta_{\psi_2} = \Theta_{\pi_2} - \Theta_{\pi_3^\varrho} - \Theta_{\pi_3^\varepsilon} , \\
\Theta_{\psi_3} = \Theta_{\pi_3} + 2\Theta_{\pi_3^\varrho} + \Theta_{\pi_3^\varepsilon} .
\end{array}
$
\end{center}
\label{table:stable}
\end{table}%

\begin{proof}
From \eqref{eqn:tracenevs} we see that
\[
\langle s_\psi , \pi \rangle_\psi = \trace_{s_\psi} \NEvs_{C_\psi} \mathcal{P}(\pi),
\]
where $\mathcal{P}: \pi\mapsto \mathcal{P}(\pi)$ is given in Proposition~\ref{prop:LVC}.
In Lemma~\ref{lem:Atype} we found $s_{\psi_0} =1 = s_{\psi_3}$ and  $s_{\psi_1} =-1 = s_{\psi_2}$. 
Now Table~\ref{table:nevs} gives Table~\ref{table:stable}.%\todo{Sentence?}

From Section~\ref{ssec:KL}, recall that $M_i$ denotes the standard module for $\pi_i$ for $i=0,1,2$; the remaining three representations, $\pi_3$, $\pi_3^\varrho$ and $\pi_3^\varepsilon$, are their own standard modules. %denoted by $M_3$, $M_3^\varrho$ and $M_3^\varepsilon$, respectively.
 Since $M_0$, $M_1$ and $M_2$ are obtained by parabolic induction from representations of $\GL_2(F)$, they are stable standard modules. 
Write $\Theta_{M_0}$, $\Theta_{M_1}$ and $\Theta_{M_2}$ for the Harish-Chandra distribution characters attached to these representations. 
Then these distributions are stable, by \cite{Arthur:Character}. 
The distributions $\Theta_{\psi_i}$, for $i=0, 1, 2$ are expressed in terms of these four stable distributions, $\Theta_{M_0}$, $\Theta_{M_1}$, $\Theta_{M_2}$ and $\Theta_{\psi_3}$, as follows:
\iffalse
\[
\begin{array}{rcl}
\Theta_{\psi_0} &=&  \Theta_{M_0} + \Theta_{M_1} -3 \Theta_{M_2}+  \Theta_{\psi_3} \\
\Theta_{\psi_1} &=&  0\Theta_{M_0} +  \Theta_{M_1} -2 \Theta_{M_2}+  \Theta_{\psi_3} \\
\Theta_{\psi_2} &=&  0\Theta_{M_0} +  0\Theta_{M_1} + \Theta_{M_2}-  \Theta_{\psi_3} \\
\Theta_{\psi_3} &=&  0\Theta_{M_0} +  0\Theta_{M_1} + 0\Theta_{M_2} +  \Theta_{\psi_3} \\
\end{array}
\]
\fi
\[
\begin{pmatrix}
\Theta_{\psi_0} \\
\Theta_{\psi_1} \\
\Theta_{\psi_2} \\
\Theta_{\psi_3} \\
\end{pmatrix}
=
\begin{pmatrix}
1 & 1 & -3 & 1 \\
0 & 1 & -2 & 1 \\
0 & 0 & 1 & -1 \\
0 & 0 & 0 & 1
\end{pmatrix}
\begin{pmatrix}
\Theta_{M_0} \\
\Theta_{M_1} \\
\Theta_{M_2} \\
\Theta_{\psi_3} \\
\end{pmatrix}
\]
These identities follow from the multiplicity matrix of Section~\ref{ssec:KL} and the explicit form for $\Theta_{\psi_i}$ listed in Table~\ref{table:stable}, derived from the definition stated above.
Note that $\psi_3$ is the unique tempered Arthur parameter; therefore, the statement of Theorem~\ref{thm:stable} assumes $\Theta_{\psi_3}$ is stable.
This concludes the proof that $\Theta_{\psi_0}$, $\Theta_{\psi_1}$, $\Theta_{\psi_2}$ are stable.\end{proof}

\begin{remark}\label{remark:basis}
As mentioned in the Introduction, we expect to prove that $\Theta_{\psi_3}$ is stable.
Moreover, we expect that $\Theta_{\psi_0}$, $\Theta_{\psi_1}$, $\Theta_{\psi_2}$ and $\Theta_{\psi_3}$ is a basis for the intersection of the space of stable distributions on $G_2(F)$ and the space of invariant distributions spanned by the $\Theta_\pi$ as $\pi$ ranges over irreducible representations in $\Rep(G_2(F))_\mathrm{sub}$ up to equivalence.
\end{remark}

\subsection{Aubert involution and the Fourier transform of perverse sheaves}\label{ssec:Aubert}

\begin{proposition}[\cite{Muic}]\label{prop:Aubert}
The Aubert involution on the Grothendieck group $K\Rep(G_2(F))_\mathrm{sub}$ is given by
\[
\begin{array}{rcl}
\pi_0 	&\mapsto& +\pi_3, \\
\pi_1		&\mapsto& +\pi_3^\varrho, \\
\pi_2		&\mapsto& +\pi_2, \\
\pi_3	&\mapsto& +\pi_0, \\
\pi_3^\varrho 	&\mapsto& +\pi_1, \\
\pi_3^\varepsilon	&\mapsto& +\pi_3^\varepsilon .
\end{array}
\]
\end{proposition}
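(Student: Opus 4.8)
The plan is to read the table off from the defining properties of the Aubert involution $D_{G_2}$ on the Grothendieck group $K\Rep(G_2(F))$, combined with the decomposition data recalled in \eqref{eqn:muic1}--\eqref{eqn:muic} from \cite{Muic} (equivalently the multiplicity matrix of Table~\ref{table:repmult}). I would use the following facts about $D_{G_2}$, all due to Aubert: it is the alternating sum over standard parabolics $P = MN$ of $\Ind_P^{G_2}$ composed with the Jacquet functor along $P$, weighted by $(-1)^{\dim A_M - \dim A_{G_2}}$, where $A_M$ denotes the maximal split central torus of $M$; it is an involution; it preserves cuspidal support, hence restricts to $K\Rep(G_2(F))_\mathrm{sub}$, fixing the line spanned by $\pi_3^\varepsilon$ and preserving the span of $\pi_0,\pi_1,\pi_2,\pi_3,\pi_3^\varrho$; it sends each irreducible to a sign times an irreducible; and it satisfies $D_{G_2}\circ\Ind_P^{G_2} = (-1)^{\dim A_M - \dim A_{G_2}}\,\Ind_P^{G_2}\circ D_M$ on Grothendieck groups, together with $D_M(\chi\otimes\sigma) = \chi\otimes D_M(\sigma)$ for any character $\chi$ of $M$.

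First I would dispose of $\pi_3^\varepsilon$: it is supercuspidal, so only the $P = G_2$ term of $D_{G_2}$ contributes and $D_{G_2}(\pi_3^\varepsilon) = \pi_3^\varepsilon$. For the principal series block I would compute $D_{G_2}$ on the three nontrivial standard modules $M_0 = I_\beta(1,\pi(1,1))$, $M_1 = I_\alpha(1/2,\delta(1))$, $M_2 = I_\beta(1/2,\delta(1))$, the remaining three representations being their own standard modules. Applying the induction compatibility to the Levi subgroups $\GL_2(\alpha)$ and $\GL_2(\beta)$ --- whose split centres are one-dimensional, so the weight $(-1)^{\dim A_M - \dim A_{G_2}}$ equals $-1$ --- and inserting the classical values of the Aubert (Zelevinsky) involution on $\GL_2(F)$, namely $D_{\GL_2}(\delta(1)) = -1_{\GL_2}$ and $D_{\GL_2}(\pi(1,1)) = -\pi(1,1)$, the two minus signs cancel and one obtains $D_{G_2}(M_0) = I_\beta(1,\pi(1,1)) = M_0$, $D_{G_2}(M_1) = I_\alpha(1/2,1_{\GL_2})$ and $D_{G_2}(M_2) = I_\beta(1/2,1_{\GL_2})$, each with a $+$ sign. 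Using \eqref{eqn:muic1} and \eqref{eqn:muic} I would then rewrite the four representations $M_0$, $I_\alpha(1/2,1_{\GL_2})$, $I_\beta(1/2,1_{\GL_2})$ and $I_\beta(1,\pi(1,1))$ as $\ZZ$-combinations of $\pi_0,\dots,\pi_3^\varrho$, and exploit that $D_{G_2}$ acts on this five-element basis by a signed permutation: the identity $D_{G_2}(M_0) = M_0$ forces every sign to be $+1$ and $D_{G_2}(\pi_2) = \pi_2$, after which the identities for $M_1$ and $M_2$ determine the permutation to be $(\pi_0\ \pi_3)(\pi_1\ \pi_3^\varrho)$. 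This yields precisely the table, and $D_{G_2}^2 = \mathrm{id}$ provides a consistency check.

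The one real difficulty is keeping track of signs: one must use the induction compatibility with Aubert's sign $(-1)^{\dim A_M - \dim A_{G_2}}$ and feed in the $\GL_2$-level values $D_{\GL_2}(\delta(1)) = -1_{\GL_2}$, $D_{\GL_2}(\pi(1,1)) = -\pi(1,1)$ with their correct signs, and it is the cancellation between these two independent sign contributions that makes the displayed bijection carry the $+$ signs; the supercuspidal computation $D_{G_2}(\pi_3^\varepsilon) = +\pi_3^\varepsilon$ anchors the overall normalization. Alternatively --- and this is presumably how \cite{Muic} proceeds --- one can bypass the general formalism altogether and read the involution off the explicit Jacquet-module computations for the six representations in \cite{Muic}, which determine both the bijection and the signs without ambiguity.
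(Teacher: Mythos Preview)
The paper does not supply its own proof of this proposition: it is stated with attribution to \cite{Muic} and no argument is given. Your proposal is therefore not competing with anything in the text, and what you wrote is a correct and self-contained derivation. The computation $D_{G_2}(M_1)=I_\alpha(1/2,1_{\GL_2})$, $D_{G_2}(M_2)=I_\beta(1/2,1_{\GL_2})$, $D_{G_2}(M_0)=M_0$ via the induction compatibility, followed by solving the resulting linear constraints against the decompositions \eqref{eqn:muic}, does pin down the signed permutation uniquely; in particular the step where $D_{G_2}(M_0)=M_0$ forces $D_{G_2}(\pi_2)=\pi_2$ (since $\pi_2$ is the only constituent with multiplicity $2$) is clean and makes the rest fall out.

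One small caution worth making explicit: the signs you quote, $D_{\GL_2}(\delta(1))=-1_{\GL_2}$ and $D_{\GL_2}(\pi(1,1))=-\pi(1,1)$, and the compatibility sign $(-1)^{\dim A_M-\dim A_{G_2}}$, are convention-dependent. They are mutually consistent in Aubert's normalisation $D_G=\sum_{\Phi\subseteq\Delta}(-1)^{|\Delta\setminus\Phi|}\,i_{P_\Phi}^G r_{P_\Phi}^G$, but with the other common choice $(-1)^{|\Phi|}$ both the Levi values and the induction sign flip, and the final answer is of course the same. Since the proposition records the \emph{positive} involution (each $\pi\mapsto +\pi'$), it would strengthen the write-up to state once which normalisation you are using, so that the reader can verify the two minus signs really do cancel rather than reinforce.
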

%\todo{This is proved in \cite{Muic}. Check these signs}
Consequently the Aubert involution acts on Arthur packets according to
\[
\begin{array}{rcl}
\Pi_{\psi_0}(G_2(F))  =  \{ \pi_0,  \pi_1, \pi_3^\varepsilon \}   
	&\mapsto& \Pi_{\psi_3}(G_2(F)) = \{ \pi_3, \pi_3^\varrho, \pi_3^\varepsilon \} \\
\Pi_{\psi_1}(G_2(F)) =  \{ \pi_1, \pi_2, \pi_3^\varepsilon \} 
	&\mapsto& \Pi_{\psi_2}(G_2(F)) = \{ \pi_2, \pi_3^\varrho, \pi_3^\varepsilon \} . 
\end{array}
\]
This is compatible with the observations 
\[ \psi_2(w,x,y) = \psi_1(w,y,x) \quad\mathrm{ and }\quad \psi_3(w,x,y) = \psi_0(w,y,x). \]

\begin{theorem}[See also Theorem~\ref{introstabletheorem}]\label{thm:Ft}%\todo{this label appears twice, I think this one should be dropped}
Under the Langlands-Vogan correspondence, Proposition~\ref{prop:LVC}, the Aubert involution on $\Rep(G_2(F))_\mathrm{sub}$ is given by the Fourier transform on $\Perv_{\dualgroup{G_2}}(X_\mathrm{sub})$.
\end{theorem}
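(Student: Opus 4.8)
The plan is to reduce Theorem~\ref{thm:Ft} to a computation of the Fourier transform $\Ft$ on the six simple objects of $\Perv_{\dualgroup{G_2}}(X_\mathrm{sub})$ and to check that the induced permutation agrees with the one attached, via Proposition~\ref{prop:Aubert} and the Langlands--Vogan correspondence (Proposition~\ref{prop:LVC}), to the Aubert involution $D_{G_2}$. Transporting everything to $\Perv_{H_\mathrm{sub}}(V_\mathrm{sub})$ through the equivalence \eqref{eqn:VX}, the functor $\Ft$ becomes the Fourier--Deligne transform $\Perv_{H_\mathrm{sub}}(V_\mathrm{sub})\to\Perv_{H_\mathrm{sub}}(V_\mathrm{sub}^*)$ followed by the $H_\mathrm{sub}$-equivariant identification $V_\mathrm{sub}^*\cong V_\mathrm{sub}$. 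First I would record the general features: the Fourier--Deligne transform is $t$-exact up to shift and invertible, so it preserves $\Perv_{H_\mathrm{sub}}$; since $-1\in H_\mathrm{sub}$ acts on $V_\mathrm{sub}$ by $v\mapsto -v$ (a direct check from the explicit $H_\mathrm{sub}$-action), the square $\Ft^2$ is naturally isomorphic to the identity on equivariant objects; and $\Ft$ respects the block decomposition $\Perv_{\dualgroup{G_2}}(X_\mathrm{sub})=\Perv_{\dualgroup{G_2}}(X_\mathrm{sub})_{(\dualgroup{T},\1)}\oplus\Perv_{\dualgroup{G_2}}(X_\mathrm{sub})_{(\dualgroup{G_2},\mathcal{E})}$. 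Consequently $\Ft$ is an involutive auto-equivalence that permutes the six simple objects and fixes the block with one simple object; in particular $\Ft\,\IC(\mathcal{E}_{C_3})=\IC(\mathcal{E}_{C_3})$, which matches $D_{G_2}\pi_3^\varepsilon=\pi_3^\varepsilon$.

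Next I would treat the two ``extreme'' simple objects. As $C_3$ is open and dense in the smooth variety $V_\mathrm{sub}\cong\BA^4$, we have $\IC(\1_{C_3})=\1_{V_\mathrm{sub}}[4]$, and the Fourier transform of the constant sheaf on a vector space is, up to shift, the skyscraper sheaf at the origin, which is $\IC(\1_{C_0})$; hence $\Ft$ interchanges $\IC(\1_{C_0})$ and $\IC(\1_{C_3})$, matching $D_{G_2}\colon\pi_0\leftrightarrow\pi_3$ under Proposition~\ref{prop:LVC}. It remains to identify $\Ft$ on the three simple objects $\IC(\1_{C_1})$, $\IC(\1_{C_2})$ and $\IC(\mathcal{R}_{C_3})$, on which it restricts to an involution. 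The key claim is that $\Ft\,\IC(\1_{C_1})$ has full support $V_\mathrm{sub}^*$: the involution of $T^*V_\mathrm{sub}=V_\mathrm{sub}\times V_\mathrm{sub}^*$ swapping the two factors, which realises $\Ft$ microlocally, sends the conormal variety of $C_0$ (the vertex of the cone $\overline{C_1}$), namely $\{0\}\times V_\mathrm{sub}^*$, to the zero section of $T^*V_\mathrm{sub}^*$; since this conormal variety occurs in the characteristic cycle of $\IC(\1_{C_1})$ --- equivalently, $\IC(\1_{C_1})$ is not lisse along $C_0$ --- the zero section occurs in the characteristic cycle of $\Ft\,\IC(\1_{C_1})$, forcing full support. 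The only full-support simple objects are $\IC(\1_{C_3})$, $\IC(\mathcal{E}_{C_3})$ and $\IC(\mathcal{R}_{C_3})$ (the irreducible local systems on $C_3$ being indexed by $\widehat{A_{C_3}}=\widehat{S_3}$), and the first two are already accounted for; therefore $\Ft\,\IC(\1_{C_1})=\IC(\mathcal{R}_{C_3})$, whence $\Ft\,\IC(\mathcal{R}_{C_3})=\IC(\1_{C_1})$ and $\Ft\,\IC(\1_{C_2})=\IC(\1_{C_2})$ by the involution property. Under Proposition~\ref{prop:LVC} this is exactly the permutation $\pi_1\leftrightarrow\pi_3^\varrho$, $\pi_2\mapsto\pi_2$ effected by $D_{G_2}$, so $\Ft$ and $D_{G_2}$ induce the same permutation of simple objects, which is the assertion of the theorem.

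The main obstacle is the characteristic-cycle input in the previous paragraph: proving that the conormal variety of the vertex genuinely appears in the characteristic cycle of $\IC(\1_{C_1})$ (equivalently, that $\IC(\1_{C_1})$ is microlocally singular at the origin, and dually that the discriminant-cone stratum $\overline{C_2}$ is microlocally self-dual). This is precisely the sort of information produced by the microlocal study of binary cubics in Section~\ref{sec:cubics}: the stalk and costalk computations behind Theorem~\ref{thm:stalks} and Table~\ref{table:geomult} determine all the characteristic cycles, and the explicit description of $\Evs$ in Theorem~\ref{thm:NEvs}, together with the compatibility of $\Ft$ with $\Evs$ (and the involution $\psi\mapsto\hat\psi$ on Arthur parameters, cf.\ $\psi_3(w,x,y)=\psi_0(w,y,x)$ and $\psi_1(w,x,y)=\psi_2(w,y,x)$), pins the permutation down directly --- here one must take care that the compatibility of $\Ft$ with $\Evs$ carries a quadratic twist on the relevant component groups. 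As an alternative that bypasses the microlocal geometry, one may first establish the Grothendieck-group identity of Theorem~\ref{thm:aubertinvolution} and then note that, because $\Ft$ is $t$-exact up to shift (hence sends simple objects to simple objects) while the Aubert involution acts on the basis of irreducibles in $K\Rep(G_2(F))_\mathrm{sub}$ by an honest permutation with all signs $+1$ (Proposition~\ref{prop:Aubert}), two endomorphisms of the Grothendieck group that agree and are each a permutation matrix of the basis must induce the same permutation of simple objects.
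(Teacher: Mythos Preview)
Your argument is correct and reaches the same conclusion as the paper, but by a genuinely different route. The paper simply invokes Theorem~\ref{thm:geoFt}, which is proved by computing $\Ft$ explicitly on all six simple objects via vanishing-cycle calculations (Sections~\ref{ssec:covC0}--\ref{sssec:covC3''}); comparison with the Aubert table (Proposition~\ref{prop:Aubert}) then finishes immediately. You instead exploit the formal structure: $\Ft^2\cong\id$ on equivariant objects (since $-I\in\GL_2$ acts by $-1$ under $\det^{-1}\otimes\Sym^3$), $\Ft$ permutes the two Bernstein-type blocks (forced, as they have $5$ and $1$ simples), and the constant/skyscraper pair is handled by the standard Fourier identity. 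This reduces everything to locating $\Ft\,\IC(\1_{C_1})$ among $\{\IC(\1_{C_1}),\IC(\1_{C_2}),\IC(\mathcal{R}_{C_3})\}$, which you settle by a characteristic-cycle argument.

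Two cautions. First, your phrase ``$\IC(\1_{C_1})$ is not lisse along $C_0$'' is imprecise as justification: Table~\ref{table:stalks} shows the stalks of $\IC(\1_{C_1})$ on $\overline{C_1}$ are constant of rank~$1$ in degree~$-2$, so constancy of stalks alone does not force $T^*_{C_0}V$ into the singular support. What you actually need is $\Evs_{C_0}\IC(\1_{C_1})\ne 0$, which is exactly the content of the entry $\mathcal{R}_{\Lambda_0^\mathrm{reg}}$ in Table~\ref{table:Evs}; you correctly flag this as the real input and point to Section~\ref{sec:cubics}. So your proof is more economical in organisation---one microlocal fact instead of six Fourier computations---but it still rests on the same geometric analysis, specifically the calculation of Section~\ref{sssec:Evs01}. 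Second, your proposed alternative via Theorem~\ref{thm:aubertinvolution} is circular as stated: in the paper that theorem is not proved independently but is the Grothendieck-group shadow of Theorem~\ref{thm:Ft} itself, so one cannot appeal to it without first establishing $\Ft$ on simples by some other means.
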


\begin{proof}
This is a direct consequence of 
%Section~\ref{ssec:LVC} and 
 Theorem~\ref{thm:geoFt}.
\end{proof}

%\subsection{Twisted endoscopy}

\iffalse
\subsection{Endoscopic transfer of A-packets}

Identify the elliptic endoscopic groups for $G_2$. We can call them $A_2=PSL(3)$ and $D_2=PSO(4)$.
Which Langlands parameters factor through the duals of these groups?
For each such Langlands parameter, we will calculate the L- and A-packets for these two groups, and then study the endoscopic/geometric restriction from $G_2$            
\[
\begin{tikzcd}
& D_4^{E} & \\
& G_2 \arrow[dashed]{u} & \\
A_2 \arrow[dashed]{ur} && \arrow[dashed]{ul}  D_2 
\end{tikzcd}
\]
\fi

\section{Equivariant perverse sheaves on $\det^{-1} \otimes \Sym^3$}\label{sec:cubics}

In this Section we study the simple objects in the category 
\[
\Perv_{\GL_2}(\det\!\,^{-1} \otimes \Sym^3)
\] 
of $\GL_2$-equivariant perverse sheaves on the affine variety space $P_3[x,y]$ of homogeneous cubic polynomials in two variables $x,y$ for the action 
\[
\det\!\,^{-1} \otimes \Sym^3 : \GL_2 \to \Aut(P_3[x,y])
\]
defined as follows: 
 for $h\in \GL_2$ and $r\in P_3[x,y]$, 
\begin{equation}\label{eqn:action}
\begin{array}{rcl}
&& \hskip-8mm ((\det\!\,^{-1} \otimes \Sym^3)(h).r)(x,y)\\
&\ceq& \det(h)^{-1}r((x,y)h)\\
&=&\det(h)^{-1}r(ax+cy,bx+dy), \qquad h=\left(\begin{smallmatrix} a&b\\ c&d \end{smallmatrix}\right). 
\end{array}
\end{equation}
Specifically, we find the geometric multiplicity matrix for this category, calculate the Fourier transform and the microlocal vanishing cycles of its simple objects.

Although this may be of independent interest, the point of departure for our application of this study of $\Perv_{\GL_2}(\det\!\,^{-1} \otimes \Sym^3)$ is given by Proposition~\ref{prop:cubics}.

\begin{proposition}\label{prop:cubics}
To $r=(r_0,r_1,r_2,r_3)^t\in V_{\mathrm{sub}}$, we associate the polynomial $r(x,y)\in P_3[x,y]$ given by
 \[ r(x,y) = r_0y^3-3r_1y^2x-3r_2yx^2-r_3x^3. \]
The map $V_\mathrm{sub} \to P_3[x,y]$ given by $r\mapsto r(x,y)$ defines an equivalence of representations of $H_\mathrm{sub}$ on $V_\mathrm{sub}$ as given in Section~\ref{sec:Vsub} with the representation $\det^{-1}\otimes \Sym^3$ of $\GL_2$ on $P_3[x,y]$ given here.
\end{proposition}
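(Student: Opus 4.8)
The plan is to verify directly that the linear isomorphism $\Phi\colon V_{\mathrm{sub}}\to P_3[x,y]$ given by $\Phi(r)(x,y)=r_0y^3-3r_1y^2x-3r_2yx^2-r_3x^3$ intertwines the $H_{\mathrm{sub}}$-action on $V_{\mathrm{sub}}$ recorded in Section~\ref{sec:Vsub} with the action $\det^{-1}\otimes\Sym^3$; that is, to check
\[
\Phi(h.r)(x,y)=\det(h)^{-1}\,\Phi(r)(ax+cy,\,bx+dy),\qquad h=\bpm a&b\\ c&d\epm\in H_{\mathrm{sub}}.
\]
Since both sides are $\CC$-linear in $r$, it suffices to check this on the four basis vectors $X_{\da},X_{\da+\db},X_{\da+2\db},X_{\da+3\db}$ of $V_{\mathrm{sub}}$, which $\Phi$ sends to $y^3,\,-3y^2x,\,-3yx^2,\,-x^3$ respectively. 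Equivalently, I would check that the $k$-th column of the $4\times4$ matrix displayed in Section~\ref{sec:Vsub} is the coordinate vector, in the ordered basis $(y^3,-3y^2x,-3yx^2,-x^3)$ of $P_3[x,y]$, of $\det(h)^{-1}$ times the expansion of $(bx+dy)^{k}(ax+cy)^{3-k}$, weighted by the normalization $-3,-3,-1$ built into $\Phi$.

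Carrying this out for the first basis vector, expanding $(ad-bc)^{-1}(bx+dy)^3=(ad-bc)^{-1}\bigl(d^3y^3+3bd^2\,xy^2+3b^2d\,x^2y+b^3x^3\bigr)$ and rewriting the right-hand side as $(ad-bc)^{-1}\bigl(d^3\cdot y^3+(-bd^2)(-3xy^2)+(-b^2d)(-3x^2y)+(-b^3)(-x^3)\bigr)$ produces exactly the first column $(d^3,-bd^2,-b^2d,-b^3)^t$ of the matrix, together with the prefactor $\tfrac{1}{ad-bc}=\det(h)^{-1}$. The other three columns come out of the analogous expansions of $-3(ad-bc)^{-1}(bx+dy)^2(ax+cy)$, $-3(ad-bc)^{-1}(bx+dy)(ax+cy)^2$ and $-(ad-bc)^{-1}(ax+cy)^3$, after which one simply matches coefficients; the factors $-3$ in $\Phi$ and the overall $\det(h)^{-1}$ are precisely what is needed to recover entries such as $-3cd^2$, $d(ad+2bc)$, $3ab^2$ and $a^3$. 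I do not expect a genuine obstacle here; the only point requiring care is that the matrix in Section~\ref{sec:Vsub} was computed using the Chevalley-basis sign conventions of \cite{BumpJoyner}, so the three signs in the definition of $\Phi$ are dictated by — and should be double-checked against — those conventions rather than chosen freely. All of this is a bounded, elementary computation.

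As a cross-check, and a cleaner way to organize the argument, I would also note the representation-theoretic picture: in $\dualgroup{G_2}$ the $\db$-string through $\da$ is $\da,\da+\db,\da+2\db,\da+3\db$, of length four because $\da$ is long and $\db$ short, so $V_{\mathrm{sub}}=\bigoplus_{i=0}^{3}\CC X_{\da+i\db}$ is an irreducible module for $\SL_2(\db)\subset H_{\mathrm{sub}}=\GL_2(\db)$, hence isomorphic to $\Sym^3$ of the standard representation twisted by a power of $\det$. That power is pinned down by a single weight computation: using \eqref{eqn:rootactionontorus} and \eqref{eqn:embeddingoftorus}, $\iota_{\db}(\diag(s,t))=\wh m(s,t)$ acts on $X_{\da}$ through $\da(\wh m(s,t))=s^{-1}t^2$, which is exactly how $\diag(s,t)\in\GL_2$ acts on $y^3\in P_3[x,y]$ under $\det^{-1}\otimes\Sym^3$. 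This identifies the twist as $\det^{-1}$ and the highest-weight line $\CC X_{\da}$ with $\CC y^3$; then $\Phi$ is the unique $\SL_2(\db)$-equivariant extension, with the scalars $-3,-3,-1$ being exactly those that match the integral matrix of \cite{BumpJoyner}. Either route establishes the claimed equivalence of representations.
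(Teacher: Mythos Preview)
The paper omits the proof entirely, stating only that it ``is elementary and will be omitted.'' Your proposal is correct and supplies precisely the kind of direct verification the authors have in mind: the column-by-column check against the explicit $4\times 4$ matrix in Section~\ref{sec:Vsub} is straightforward (your computation for the first column is accurate, and the remaining three are analogous), and your representation-theoretic cross-check via the $\db$-string and the torus weight $\da(\wh m(s,t))=s^{-1}t^2$ correctly pins down the $\det^{-1}$ twist. Your caveat about the Bump--Joyner sign conventions is also well placed, since the specific coefficients $-3,-3,-1$ in $\Phi$ are exactly what make the identification compatible with that choice of Chevalley basis.
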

The proof of Proposition~\ref{prop:cubics} is elementary and will be omitted. 

Note that Proposition~\ref{prop:cubics} says that Langlands parameters for $G_2$ with infinitesimal parameter $\lambda_\mathrm{sub}$ may be interpreted as homogeneous cubics in two variables.  

The main geometric results of this article are Theorems~\ref{thm:stalks}, \ref{thm:Evs}, \ref{thm:geoFt} and \ref{thm:NEvs}.
These results are used to prove the main arithmetic results of this article in Section~\ref{sec:main}.

\subsection{Simple equivariant perverse sheaves}\label{ssec:cubicorbits}

In this section we enumerate all six simple $\GL_2$-equivariant perverse sheaves on $\det^{-1} \otimes \Sym^3$ 

Recall from \cite{BBD}*{Th\'eor\`eme 3.4.1} that, for a connected group $H$ acting on a connected variety $V$, simple equivariant perverse sheaves all take the form of the perverse extension
\begin{equation}%\label{eqn:IC}
\IC(\mathcal{L}_C) = j_{!*}\mathcal{L}_C[\dim C]
\end{equation}
of a simple equivariant local system  $\mathcal{L}_{C}$ on $C$, where $C\subseteq V$ is an $H$-orbit and where $j: C\hookrightarrow V$ is the inclusion. 
Thus, to find all simple $\GL_2$-equivariant perverse sheaves on $P_3[x,y]$ we must find all $\GL_2$-orbits $C\subset P_3[x,y]$ and then find all simple $\GL_2$-equivariant local systems in each such $C$. Recall that such $\mathcal{T}_C$ may be viewed as an irreducible representation of the equivariant fundamental group 
\[
A_C \ceq \pi_0(Z_{\GL_2}(c)) = \pi_1(C,c)_{Z_{\GL_2}(c)},
\]
for a choice of $c\in C$.

Let $P_1[x,y]$ be the vector space of linear polynomials in $x$ and $y$.
Let $P_1[x,y]_\times$ be the monoid of non-zero linear polynomials in $x$ and $y$ and define $P_1[x,y]_\times \to \mathbb{P}^1$ by $u_1y-u_2x \mapsto [u_1:u_2]$; this gives $\mathbb{P}(P_1[x,y]) \ceq P_1[x,y]_\times\!/\mathbb{G}_m \iso \mathbb{P}^1$.
This isomorphism is equivariant for the action $h: [u_1:u_2] \mapsto [u_1:u_2] h^{-1}$ of $\GL_2$ on $\mathbb{P}^1$ and the action $h: r(x,y) \mapsto \det(h)^{-1} r((x,y)h)$ of $\GL_2$ on $P_1[x,y]$, which is the restriction of the action of $\GL_2$ on $P_3[x,y]$ appearing in \eqref{eqn:action}.
Henceforth, we identify $\mathbb{P}^1$ with $\mathbb{P}(P_1[x,y])$ using this isomorphism and these actions.
In particular, we will write $[u] \in \mathbb{P}^1$ for $u(x,y)= u_1y-u_2x\in P_1[x,y]_\times$ or $[u]=[u_1:u_2]\in \BP^1$ for $u=(u_1,u_2)\in \mathbb{A}^2_\times\ceq \mathbb{A}^2\setminus \{(0,0)\}$, depending on the context, and fervently hope this will not cause confusion.

%We may now enumerat all $\GL_2$-orbits in $P_3[x,y]$.
The $\GL_2$ action $\det^{-1}\otimes\Sym^3$ on $P_3[x,y]$ has the following 4 orbits:
\begin{enumerate}
    \item $C_0=\wpair{0};$
    \item $C_1=\wpair{u^3 \tq u \in P_1[x,y], p\ne 0};$
    \item $C_2=\wpair{u^2u'  \tq u,u'\in P_1[x,y],\mathrm{\ and\ } u,u' \mathrm{\ are\ linearly\ independent} };$
    \item $C_3=\wpair{uu'u''  \tq u,u',u''\in P_1[x,y], \mathrm{\ and\ } u,u',u'' \mathrm{\ are\ linearly\ independent}}.$
\end{enumerate}
Under the identification of the action of $H_{\mathrm{sub}}$ on $V_{\mathrm{sub}}$ with $\det^{-1}\otimes \Sym^3$ given by Proposition \ref{prop:cubics}, these 4 orbits match those introduced in Lemma \ref{lem:fg}, so we use the same notation here.

We note that 
\begin{align*}
 \ov C_1 &= C_1 \cup C_0, \\
 \ov C_2 &= C_2 \cup C_1 \cup C_0 , \\
  \ov C_3 &= C_3 \cup C_2 \cup C_1 \cup C_0,
 \end{align*}
\iffalse
so the closure relation on on the $\GL_2$-orbits in $P_3(x,y)$ is given by
\[
\begin{tikzcd}
C_3\\
C_{2} \arrow{u} \\
C_{1} \arrow{u} \\
C_0 \arrow{u}
\end{tikzcd}
\]
\fi

We now introduce algebraic descriptions of these orbits by introducing auxilliary quantities which distinguish these orbits.
Given
\[ r(x,y) = r_0y^3-3r_1y^2x-3r_2yx^2-r_3x^3\in P_3[x,y] \]
 we may introduce the auxiliary element of  $P_2[x,y]$
 \begin{align*}
  \Delta_{r}(x,y) &\ceq  \frac{1}{4}{\rm det}({\rm Hess}(r) )\\
  &=  -9(r_2r_0+r_1^2)y^2   -9(r_0r_3+r_1r_2) xy  + 9(r_1r_3-r_2^2)x^2.
  \end{align*}
We set 
\[ d_0(r) \ceq -9(r_2r_0+r_1^2),\; d_1(r) \ceq -9(r_0r_3+r_1r_2),\; d_2(r) \ceq 9(r_1r_3-r_2^2) \]
so that
\[
 \Delta_{r}(x,y)   =  d_0 y^2 + d_1xy + d_2 x^2.
\]
The discriminant of the polynomial $r(x,y)$ is then given by
 \[ 
 D_{r} =  d_1^2 - 4 d_0d_2 = 81(r_0r_3+r_1r_2)^2 + 4*81(r_2r_0+r_1^2)(r_1r_3-r_2^2). \]
With these equations we now have
\begin{lemma}
\begin{equation}
\ov C_2 =  \{ r(x,y) \in P_3[x,y] \tq D_{r} = 0 \}  
\end{equation}
and
\begin{equation}
\ov C_1 = \{ r(x,y)\in P_3[x,y] \tq \Delta_{r}(x,y) = 0 \}.
\end{equation}
\end{lemma}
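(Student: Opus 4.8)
The plan is to identify both loci as closed $\GL_2$-invariant subvarieties of $P_3[x,y]\iso\BA^4$ and then pin them down by testing the four orbit representatives $c_0,c_1,c_2,c_3$ of Lemma~\ref{lem:fg}.

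First I would record the covariance of the Hessian. For $h\in\GL_2$ write $\tilde r$ for the cubic $(\det\!\,^{-1}\otimes\Sym^3)(h).r$. A chain-rule computation from \eqref{eqn:action} gives
\[
\mathrm{Hess}(\tilde r)(x,y) = \det(h)^{-1}\, h\,\bigl(\mathrm{Hess}(r)\bigr)\!\bigl((x,y)h\bigr)\, h^t ,
\]
whence, taking determinants,
\[
\Delta_{\tilde r}(x,y) = \Delta_r\bigl((x,y)h\bigr), \qquad D_{\tilde r} = \det(h)^2\, D_r .
\]
In particular the triple $\bigl(d_0(\tilde r),d_1(\tilde r),d_2(\tilde r)\bigr)$ is obtained from $\bigl(d_0(r),d_1(r),d_2(r)\bigr)$ by an invertible linear map (the $\Sym^2$ action of $h$ on $P_2[x,y]$), so the conditions $d_0=d_1=d_2=0$ and the condition $D_r=0$ each cut out a $\GL_2$-invariant subset of $P_3[x,y]$; and each is Zariski closed, being the zero locus of polynomials on $\BA^4$.

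A closed $\GL_2$-invariant subset of $P_3[x,y]$ is a union of $\GL_2$-orbit closures, and the closure order on the four orbits is the total order $C_0<C_1<C_2<C_3$ recorded above ($\ov C_1 = C_1\cup C_0$, etc.). Hence such a subset equals $\ov C_i$ for the largest $i$ with $c_i$ lying in it (or is empty). It therefore suffices to evaluate $d_0,d_1,d_2$ and $D_r$ at $c_0=(0,0,0,0)^t$, $c_1=(1,0,0,0)^t$, $c_2=(0,1,0,0)^t$ and $c_3=(1,0,1,0)^t$ using the explicit formulas preceding the lemma. One finds: at $c_0$ and $c_1$ all of $d_0,d_1,d_2$ and $D_r$ vanish; at $c_2$ one has $d_0=-9\ne 0$ while $D_r = d_1^2-4d_0d_2 = 0$; and at $c_3$ one has $D_r = -4d_0d_2 = -324\ne 0$. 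Consequently the locus $D_r=0$ contains $c_0,c_1,c_2$ but not $c_3$, so it is $\ov C_2$; and the locus $\Delta_r(x,y)=0$ contains $c_0,c_1$ but not $c_2$, so it is $\ov C_1$.

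I do not expect a genuine obstacle here: the only steps needing care are the Hessian covariance computation — which is exactly what makes the two loci $\GL_2$-stable, and hence unions of orbit closures — and the small bookkeeping of which representative sits in which locus. As an alternative to testing representatives, one could invoke the classical facts that a nonzero binary cubic has identically vanishing Hessian precisely when it is the cube of a linear form, and vanishing discriminant precisely when it has a repeated linear factor; combined with the descriptions of $C_1,C_2,C_3$ in Section~\ref{ssec:cubicorbits}, this yields the same two identities.
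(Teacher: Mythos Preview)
Your argument is correct. The covariance computation is right (up to the harmless ambiguity of $h$ versus $h^t$ in the Hessian transformation, which does not affect determinants), the zero loci are therefore closed $\GL_2$-invariant, and since the orbit closures form a chain the evaluations at $c_0,\dots,c_3$ pin the loci down; your numerical checks are accurate.

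The paper takes a different route. Rather than invoking equivariance and testing representatives, it proves the statement pointwise by relating the Hessian, gradient, and value of $r$ through the Euler-type identities
\[
\mathrm{Grad}(r)_{(x,y)}=\tfrac12\,(x\ y)\,\mathrm{Hess}(r)_{(x,y)},\qquad r(x,y)=\tfrac13\,\mathrm{Grad}(r)_{(x,y)}\,(x\ y)^t,
\]
together with the symmetry $(x\ y)\,\mathrm{Hess}(r)_{(a,b)}=(a\ b)\,\mathrm{Hess}(r)_{(x,y)}$. From these it reads off that $r$ has a double point at $[a:b]$ iff $\mathrm{Hess}(r)_{(a,b)}$ is singular with kernel $[a:b]$, equivalently iff $\Delta_r$ has a double root there, whence $D_r=0$ characterizes $\ov C_2$; and that $\mathrm{Hess}(r)_{(a,b)}=0$ for some $[a:b]$ iff $\Delta_r\equiv 0$, characterizing $\ov C_1$. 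In effect the paper proves the ``classical facts'' you allude to at the end of your proposal. Your approach is slicker in that it avoids tracking root multiplicities, at the cost of a covariance computation and the prior knowledge of the orbit stratification; the paper's approach is self-contained and explains \emph{why} $\Delta_r$ and $D_r$ detect triple and double factors, which is useful later when these same quantities reappear in the descriptions of $\wt C_1$ and $\wt C_2$.
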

\begin{proof}
That the discriminant, $D_r$, vanishes precisely when a polynomial has a repeated root is well know for degree three polynomials in one variable
as is the fact that the second derivative vanishes when a polynomial has a triple root.

We may understand this lemma as a generalization of these claims. The proof is as follows.
We first note that
\[ {\rm Grad}(r)_{(x,y)}=  \frac{1}{2}\begin{pmatrix} x & y \end{pmatrix} {\rm Hess}(r)_{(x,y)} \qquad r(x,y) = \frac{1}{3}{\rm Grad}(r)_{(x,y)}\begin{pmatrix} x & y \end{pmatrix}^t. \]
From which we may conclude that $r$ has a double point, at say $[a:b]$, if and only if ${\rm Grad}(r)_{(a,b)} = 0$ and a triple point at $[a:b]$ if and only if  ${\rm Hess}(r)_{(a,b)} = 0$.

It follows from this that $r$ has a double at $[a:b]$ if and only if ${\rm Hess}(r)_{(a,b)}$ has kernel precisely $[a:b]$ which occurs if and only if $\Delta_r$ has a double root at $[a:b]$.
Consequently $r$ has a double point somewhere if and only if $\Delta_r$ has a double root which occurs if and only if the discriminant, $D_r$, of $\Delta_r$ is zero.

Now, exploiting the symmetry
\[ \begin{pmatrix} x & y\end{pmatrix} {\rm Hess}(r)_{(a,b)} = \begin{pmatrix} a & b \end{pmatrix} {\rm Hess}(r)_{(x,y)} \]
we can conclude that  $ {\rm Hess}(r)_{(a,b)} $ is identically zero for any $[a:b]$ precisely when the polynomial ${\rm det}({\rm Hess}(r) )$ is identically zero.
\end{proof}
The maps $P_3[x,y] \rightarrow P_2[x,y]$ and $P_3[x,y] \rightarrow \mathbb{C}$ given by $r \mapsto \Delta_r$ and $r \mapsto D_{r}$ are equivariant for the action $\Sym^2$ on $P_2[x,y]$ and $\det^2$ on $\mathbb{A}^1$.

\iffalse\todo[inline]{It is correct that if we let $\GL_2$ act on $P_2[x,y]$ by $\Sym^2$, then the map $P_3\to P_2, r\mapsto \Delta_r$ is equivariant.}

\todo[inline]{What Andrew Thinks (verify this, then write up properly):\\
The usual maps should be $PSL_2$ equivariant on the roots of these polynomials suggesting the usual map would be $SL_2$ equivariant.\\
The usual map would not be $\GL_2$ equivariant.\\
We need to understand how the diagonal matrix $(a,a)$ acts.\\
It acts as $a$ on $P_3$ (under our twisted action) so to be  equivariant the action on $P_2$ must be by $a^2$.\\
So the action of $\GL_2$ on $P_2$ should be just $\Sym^2$.\\
Considering the further map to $\mathbb{A}^1$ we have that to be equivariant the action must be by $a^4$.\\
This gives $\det^2$.
}
\fi

From the above descriptions we obtain the following
\begin{lemma}\label{lem:stabilizers}
The stabilizers for elements in the various orbits are the following:
\begin{enumerate}
\item The stabilizer of $p^3\in C_1$ is isomorphic to the group 
\[ 
\left\{ \left( \begin{smallmatrix} a & b  \\ 0 & a^2 \end{smallmatrix}\right) | b\in \mathbb{A}^1,d\in\mathbb{A}^1_\times \right\} 
\]
by selecting any basis for $P_1[x,y]$ in which $p$ is the first basis vector.
\item The stabilizer of $p^2q\in C_2$ is isomorphic to the group 
\[ \left\{ \left(\begin{smallmatrix} 1 & 0 \\ 0 & d \end{smallmatrix} \right) | d\in\mathbb{A}^1_\times \right\} 
\]
by selecting the basis $p,q$ for $P_1[x,y]$.
\item The stabilizer of $pqr \in C_3$ is isomorphic to the group $S_3$ as the permutation group on the lines $[p],[q],[r] \in \mathbb{P}(P_1[x,y])$.
\end{enumerate}
From which it follows
\[
\begin{array}{ll} 
A_{C_0} =1 &\qquad A_{C_1} = 1\\
A_{C_2} = 1 &\qquad A_{C_3} = S_3.
\end{array}
\]
\end{lemma}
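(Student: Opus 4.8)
The plan is to read each stabilizer directly off the factored normal forms of the orbit representatives listed just above. For the action \eqref{eqn:action}, an element $h\in\GL_2$ lies in $Z_{\GL_2}(r)$ if and only if $r((x,y)h)=\det(h)\,r(x,y)$. Introduce the linear action $\sigma(h)$ of $\GL_2$ on linear forms by $(\sigma(h)u)(x,y)\ceq u((x,y)h)$; writing $\sigma(h)$ in the coordinate basis $\{x,y\}$ shows $\det\sigma(h)=\det(h)$, and $\sigma\colon\GL_2\simto\GL(P_1[x,y])$ is an isomorphism. Since $(u_1u_2u_3)((x,y)h)=(\sigma(h)u_1)(\sigma(h)u_2)(\sigma(h)u_3)$, the condition on $r=u_1u_2u_3$ becomes $(\sigma(h)u_1)(\sigma(h)u_2)(\sigma(h)u_3)=\det(h)\,u_1u_2u_3$; in particular $\sigma(h)$ permutes the roots $[u_i]\in\mathbb{P}(P_1[x,y])$ preserving multiplicities, after which a single scalar identity remains.

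I would then treat the three orbits in turn. For $r=p^3\in C_1$, choosing the basis $p,q$ of $P_1[x,y]$ forces $\sigma(h)p=\eta p$ with $\eta^3=\det(h)$; writing $\sigma(h)=\left(\begin{smallmatrix}\eta&\beta\\0&\delta\end{smallmatrix}\right)$ in this basis and using $\eta\delta=\det\sigma(h)=\det(h)=\eta^3$ gives $\delta=\eta^2$, so the stabilizer is exactly $\left\{\left(\begin{smallmatrix}\eta&\beta\\0&\eta^2\end{smallmatrix}\right)\right\}\cong\mathbb{G}_a\rtimes\mathbb{G}_m$, which is connected. For $r=p^2q\in C_2$, the double line $[p]$ and the simple line $[q]$ cannot be interchanged, so $\sigma(h)=\diag(\mu,\nu)$ in the basis $p,q$, and the scalar identity $\mu^2\nu=\det(h)=\mu\nu$ gives $\mu=1$, leaving the connected group $\left\{\diag(1,\nu)\right\}\cong\mathbb{G}_m$. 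For $r=pq\ell\in C_3$ with $p,q,\ell$ pairwise non-proportional, recording the induced permutation of $\{[p],[q],[\ell]\}$ gives a homomorphism $Z_{\GL_2}(r)\to S_3$; it is injective because an element of $\PGL_2$ fixing three distinct points of $\mathbb{P}^1$ is trivial, so such an $h$ has $\sigma(h)=\lambda I$ with $\lambda^3=\det(h)=\lambda^2$, hence $\lambda=1$. For surjectivity, each $\tau\in S_3$ is realised by a unique element of $\PGL_2$; lifting it to some $h_0\in\GL_2$ one gets $h_0.r=c\,r$ for a scalar $c\in\mathbb{G}_m$, and replacing the lift by $\lambda\,\sigma(h_0)$ rescales $\det$ by $\lambda^2$ and each of the three leading coefficients by $\lambda$, hence $c$ by $\lambda$, so $c$ can be normalised to $1$. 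Therefore $Z_{\GL_2}(r)\cong S_3$. Finally $Z_{\GL_2}(0)=\GL_2$ is connected, and taking $\pi_0$ of the four stabilizers yields $A_{C_0}=A_{C_1}=A_{C_2}=1$ and $A_{C_3}=S_3$, consistently with Lemma~\ref{lem:fg}.

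I expect the only non-routine point to be the surjectivity of $Z_{\GL_2}(r)\to S_3$ in the $C_3$ case, namely actually producing $\GL_2$-elements that realise each permutation of the three roots while absorbing the $\det^{-1}$ twist; this is exactly the bookkeeping of how $\det(h)$ and the three leading coefficients of the $\sigma(h)u_i$ scale when one rescales a lift, recorded in the computation above. Everything else is elementary linear algebra in bases adapted to the given factorisation, so I would keep those steps brief. One could instead invoke Lemma~\ref{lem:fg} together with Proposition~\ref{prop:cubics} for the component-group statement alone, but the explicit stabilizer subgroups are what is needed in the sequel.
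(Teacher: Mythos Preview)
Your proof is correct and follows essentially the same approach as the paper: observe that a stabilizing $h$ must permute the linear factors of the cubic (preserving multiplicities), reduce to a scalar identity coming from the $\det^{-1}$ twist, and read off the answer in a basis adapted to the factorisation. The paper is terser in the $C_3$ case, simply asserting that the stabilizer of the unordered triple $\{[p],[q],[r]\}$ in $\GL_2$ is $S_3$ crossed with the center and then killing the center via $aI\cdot(pqr)=a\,pqr$; your explicit injectivity/surjectivity argument via $\PGL_2$ and rescaling a lift is the same content unpacked.
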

\begin{proof}
We first treat the case $p^3\in C_1$. If $h\in \GL_2$ stabilizes $p^3$ it stabilizes the line $[p]$ and hence the stabilizer is contained in the Borel subgroup, $B$, for this line.
If the eigenvalue for $h \in B$ acting on the line $[p]$ is $a$, then $h \cdot p^3 = \frac{a^3}{\det h} p^3$ from which the result follows.

For the case $p^2q\in C_2$.  If $h\in \GL_2$ stabilizes $p^2q$ it stabilizes both the line $[p]$ and $[q]$ and hence the stabilizer is contained in the torus for these two lines.
If the eigenvalue for $h \in B$ on these lines are respectively $a$ and $d$ then $h \cdot p^2q =a p^2q $ from which the result follows.

for the case  of $pqr \in C_3$.  If $h\in \GL_2$ stabilizes $pqr$ it stabilizes the collection of lines $\{ [p], [q], [r] \}$. The stabilizer of this collection is the permutation group crossed with the center of $\GL_2$. By noting that for $h = a I$ we have $h\cdot pqr = a pqr$ we conclude the result.

The claim about equivariant fundamental groups now follows by considering component groups of the stabilizers.
\end{proof}

Now we pick the following orbit representatives which correspond to $c_i$ in Lemma \ref{lem:fg} to denote these
\[ \begin{array}{ll}
c_0(x,y)=0\in C_0, & c_1(x,y) = y^3 \in C_1, \\
c_2(x,y) =-3 x y^2\in C_2, & c_3(x,y) = y(y^2-3x^2)\in C_3.
\end{array}
\]
%\todo{Actually, not the last one. Change this or $c_3$? $c_3$ should be $y^3-3yx^2.$}

In summary, we have now proved
\begin{proposition}\label{prop:IC}
The simple objects in $\Perv_{\GL_2}(P_3[x,y])$ are
\[
\IC(\1_{C_0}),\ \IC(\1_{C_1}),\ \IC(\1_{C_2}),\ \IC(\1_{C_3}),\ \IC(\mathcal{R}_{C_3}),\ \IC(\mathcal{E}_{C_3}).
\] 
where:
\begin{enumerate}
\item $\1_{C_i}$ is the local system on $C_0$ corresponding to the trivial representation of $A_{C_i}$ for $i=0,1,2,3$ and where 
\item $\mathcal{R}_{C_3}$ is the simple $\GL_2$-equivariant local system on $C_3$ corresponding to the irreducible 2-dimensional (reflection) representation $\varrho$ of $S_3$  and where 
\item $\mathcal{E}_{C_3}$ is the simple $\GL_2$-equivariant local system on $C_3$ corresponding to the non-trivial irreducible 1-dimensional (sign) representation $\varepsilon$ of $S_3$.
\end{enumerate}
\end{proposition}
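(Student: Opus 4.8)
The plan is to combine the structure theorem \cite{BBD}*{Th\'eor\`eme 3.4.1} with the orbit and stabilizer computations already carried out in Lemma~\ref{lem:fg} and Lemma~\ref{lem:stabilizers}. Since $\GL_2$ is connected and $P_3[x,y]\cong \mathbb{A}^4$ is connected (indeed irreducible), that theorem tells us that every simple object of $\Perv_{\GL_2}(P_3[x,y])$ is of the form $\IC(\mathcal{L}_C)=j_{!*}\mathcal{L}_C[\dim C]$ for a $\GL_2$-orbit $C\subseteq P_3[x,y]$ and a simple $\GL_2$-equivariant local system $\mathcal{L}_C$ on $C$, and that two such data yield isomorphic perverse sheaves exactly when the orbits coincide and the local systems are isomorphic. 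So the enumeration reduces to (i) listing the orbits and (ii) for each orbit, listing the simple equivariant local systems on it.

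For step (i), the factorization-into-linear-forms analysis preceding the proposition already exhibits the four orbits $C_0,C_1,C_2,C_3$; under the equivalence of representations of Proposition~\ref{prop:cubics} these are precisely the orbits of Lemma~\ref{lem:fg}, so no further work is needed here.

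For step (ii), I would invoke the standard dictionary: for an orbit $C\cong \GL_2/Z_{\GL_2}(c)$, the category of $\GL_2$-equivariant local systems on $C$ is equivalent to the category of finite-dimensional representations of the equivariant fundamental group $A_C=\pi_0(Z_{\GL_2}(c))$, with simple objects corresponding to irreducible representations of $A_C$. By Lemma~\ref{lem:stabilizers} we have $A_{C_0}=A_{C_1}=A_{C_2}=1$, each contributing a single simple local system, the constant sheaf $\1_{C_i}$; and $A_{C_3}\cong S_3$, realized concretely as the group of permutations of the three lines $[p],[q],[r]\in\mathbb{P}(P_1[x,y])$ determined by $c_3$. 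The symmetric group $S_3$ has exactly three irreducible representations — the trivial representation, the sign character $\varepsilon$, and the $2$-dimensional reflection representation $\varrho$ — producing exactly the three simple local systems $\1_{C_3}$, $\mathcal{E}_{C_3}$, $\mathcal{R}_{C_3}$ on $C_3$. Assembling (i) and (ii) yields precisely the six simple objects listed.

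I do not anticipate a genuine obstacle: all the geometric input (the orbit stratification, the point stabilizers, and the isomorphism $A_{C_3}\cong S_3$) has already been established in the preceding lemmas, and what remains is the elementary character theory of $S_3$ together with the homogeneous-space description of equivariant local systems. The only point that deserves an explicit word is that this last description uses the connectedness of $\GL_2$ — so that each orbit is connected, $A_C$ is literally $\pi_0$ of a point stabilizer, and every irreducible representation of $A_C$ does arise from an equivariant local system — which holds in the situation at hand.
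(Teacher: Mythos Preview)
Your proposal is correct and follows essentially the same approach as the paper: the argument preceding the proposition invokes \cite{BBD}*{Th\'eor\`eme 3.4.1}, identifies simple equivariant local systems on an orbit with irreducible representations of $A_C$, lists the four orbits, and applies Lemma~\ref{lem:stabilizers} to obtain the component groups, after which the enumeration of irreducible $S_3$-representations yields the six simple objects.
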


\subsection{Conormal bundle}\label{ssec:conormal}

We fix an isomorphism $P_3[x,y]^* \iso P_3[x,y]$ according to the pairing $\KPair{}{} : P_3[x,y] \times P_3[x,y] \to \BA^1$ defined by
\begin{equation}\label{eqn:}
\begin{array}{rcl}
&& \hskip-8mm \KPair{r_0y^3 -3 r_1y^2x - 3r_2yx^2 - r_3x^3}{s_0y^3 -3 s_1y^2x - 3s_2yx^2 - r_3x^3} \\
&=& r_0 s_0 +3 r_1 s_1 + 3r_2 s_2 + r_3 s_3 .
\end{array}
\end{equation}
We will often write $s(x,y)\in P_3[x,y]^*$ for
\[
s(x,y) = s_0y^3 -3 s_1y^2x - 3s_2yx^2 - r_3x^3,
\]
making use of this isomorphism.

\iffalse\todo[inline]{Describe the action of $\GL_2$ on $P_3[x,y]^* \iso P_3[x,y]$!\\
Andrew thinks this should be
\[(h.f)(x,y) = \det(h) f((x,y)(h^t)^{-1}) \]
(check what follows)\\
Firstly this agrees with action on lines, if $r([a:b]) = 0$ then $h.r(  h.[a:b] ) = 0 $, with the action on $[a:b]$ being $[a:b]h^{-1}$.
So we need that for $s=(ax+by)^3$ the action is up to determinant by transpose inverse on $(a,b)$.\\
We check action of diagonal $(a,a)$ and make killing form be invariant.
It acts on $r$ by $a$, so must act on $s$ by $a^{-1}$.\\
Under the transpose inverse action it would act by $a^{-3}$, so we need to scale up by $\det(h)$. Yes, Andrew's guess is correct!
}
\fi
Consider the action of $\GL_2$ on $P_3[x,y]^*$ as follows. For $h\in \GL_2$ and $s\in P_3[x,y]^*$, define $h.s\in P_3[x,y]$ by
\begin{equation}\label{eqn:HactiononV*}
 (h.s)(x,y)=\det(h)f((x,y)(\,^th^{-1}).
\end{equation}
With this action, we can check that the pair $\KPair{}{}$ on $P_3[x,y]\times P_3[x,y]^*$ is $\GL_2$-invariant.
\iffalse\todo[inline]{Fix this!}\fi

%
The pairing $\KPair{}{}$ admits the following interpretation which is helpful calculations.

\begin{lemma}\label{lem:KPair}
For
\begin{align*}
   r= r(x,y) &= r_0y^3 -3 r_1y^2x - 3r_2yx^2 - r_3x^3 \in P_3[x,y],\\
   s= s(x,y) &= (v_1y+v_2x)(v_3y+v_4x)(v_5y+v_6x)\in P_3[x,y]^*,
\end{align*}
the pairing $\KPair{r}{s}$ is given by the formula
\[
\KPair{r}{s} = \frac{1}{6}
\begin{pmatrix} v_1 & v_2 \end{pmatrix} 
\bpm \frac{\partial^2r}{\partial y^2} & \frac{\partial ^2 r}{\partial x \partial y}\\ \frac{\partial^2 r}{\partial y\partial x}& \frac{\partial^2 r}{\partial x^2} \epm_{y=v_3,x=v_4} 
\begin{pmatrix} v_5 \\ v_6 \end{pmatrix} 
\]
\end{lemma}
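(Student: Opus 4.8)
The plan is to regard both sides of the claimed formula as functions of the triple of vectors $u=(v_1,v_2)$, $w=(v_3,v_4)$, $z=(v_5,v_6)$ in $\BA^2$, and to prove that each side is a \emph{symmetric trilinear} form in $(u,w,z)$ with the \emph{same} restriction to the diagonal $u=w=z$. Since $6$ is invertible in the coefficient field, a symmetric trilinear form $T$ is recovered from its diagonal cubic $f(\xi)\ceq T(\xi,\xi,\xi)$ by the polarization identity $6\,T(u,w,z)=f(u+w+z)-f(u+w)-f(u+z)-f(w+z)+f(u)+f(w)+f(z)$; so once the two diagonals are shown to agree, the two forms agree everywhere, which is exactly the lemma.

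For the left-hand side, I would expand $s(x,y)=(v_1y+v_2x)(v_3y+v_4x)(v_5y+v_6x)$ and write the result in the coordinates $s_0y^3-3s_1y^2x-3s_2yx^2-s_3x^3$. Each coefficient $s_\ell$ is then, up to the fixed sign, an elementary sum that is multilinear of multidegree $(1,1,1)$ in $u,w,z$ and symmetric under permuting the three linear factors of $s$; hence $\KPair{r}{s}=r_0s_0+3r_1s_1+3r_2s_2+r_3s_3$ is symmetric and trilinear in $(u,w,z)$. For the right-hand side, write $B(u,w,z)$ for the displayed expression. Because $r$ is a homogeneous cubic, the entries of $\mathrm{Hess}(r)$ are linear forms in $(x,y)$, so $B$ is linear in $w$, and it is visibly linear in $u$ and in $z$. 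Expanding those linear entries in terms of the (constant, totally symmetric) third partial derivatives $c_{ijk}$ of $r$ gives $B(u,w,z)=\tfrac16\sum_{i,j,k}c_{ijk}\,u_i w_k z_j$, which is manifestly invariant under every permutation of $(u,w,z)$.

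It then remains to compare diagonals. Taking $u=w=z=(a,b)$, the identities $\mathrm{Grad}(r)_{(x,y)}=\tfrac12(x\ y)\,\mathrm{Hess}(r)_{(x,y)}$ and $r(x,y)=\tfrac13\,\mathrm{Grad}(r)_{(x,y)}\,(x\ y)^t$ (both appearing in Section~\ref{ssec:cubicorbits}) combine to give $B\big((a,b),(a,b),(a,b)\big)=\tfrac16\,(a\ b)\,\mathrm{Hess}(r)_{(y=a,\,x=b)}\,(a\ b)^t = r$ evaluated at $y=a,\ x=b$. On the other side, $(ay+bx)^3=a^3y^3+3a^2by^2x+3ab^2yx^2+b^3x^3$, so $s_0=a^3$, $s_1=-a^2b$, $s_2=-ab^2$, $s_3=-b^3$, whence $\KPair{r}{(ay+bx)^3}=r_0a^3-3r_1a^2b-3r_2ab^2-r_3b^3$, which is again $r$ evaluated at $y=a,\ x=b$. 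The two diagonal cubics coincide, so the two symmetric trilinear forms coincide, and the lemma follows.

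I do not expect a genuine obstacle in this argument; the one point requiring care is keeping conventions consistent — the ordering of the two variables in the displayed Hessian matrix (first slot $y$, second slot $x$) together with the signs $-3r_1,-3r_2$ built into the chosen coordinates on $P_3[x,y]$ and $P_3[x,y]^*$. As an alternative to the symmetric-trilinear argument one could instead expand both sides directly as polynomials in $v_1,\dots,v_6$ and match coefficients, but the argument above is cleaner and exhibits the structural meaning of the formula, namely that $\KPair{r}{\,\cdot\,}$ applied to a product of three linear forms is the polarization of the cubic $r$.
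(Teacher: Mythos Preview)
Your argument is correct. The paper itself says only that the lemma ``follows from a direct computation and the details are omitted''; the intended proof is a brute-force expansion of $s_0,\dots,s_3$ in terms of $v_1,\dots,v_6$ followed by a direct evaluation of the Hessian expression and a term-by-term match. Your route is genuinely different: you recognise that both sides are symmetric trilinear in the three vectors $(v_1,v_2),(v_3,v_4),(v_5,v_6)$, verify this structurally (via total symmetry of third partials on the right, and via the obvious factor symmetry of $s$ on the left), and then reduce to the diagonal, where both sides collapse to $r$ evaluated at $y=a,\,x=b$ by Euler's identity. Polarization over a field of characteristic $0$ finishes it. This is cleaner than the direct expansion and, as you note, it explains \emph{why} the formula holds: the Hessian expression is nothing but the polarization of the cubic $r$, and the pairing $\KPair{r}{\,\cdot\,}$ restricted to split cubics is the same polarization. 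The only delicate point is exactly the one you flag --- keeping the $(y,x)$ ordering in the Hessian consistent with the coefficient convention $(v_1,v_2)\leftrightarrow v_1y+v_2x$ --- and your diagonal check handles it correctly.
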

This follows from a direct computation and the details are omitted.\\
\iffalse
\begin{proof}
This follows from the observations
\begin{align*}
s_0 &= c_1c_2c_3\\
s_1 &= -\frac{1}{3}(c_1d_2c_3 + d_1c_2c_3 +c_1c_2d_3)\\
s_2 &= -\frac{1}{3}(d_1d_2c_3 +c_1d_2d_3 + d_1c_2d_3 )\\
s_3 & =-d_1d_2d_3
\end{align*}
and the calculations
\begin{align*}
&\begin{pmatrix} c_1 & d_1 \end{pmatrix} {\rm Hess}(r)_{(c_2,d_2)} \begin{pmatrix} c_3 \\ d_3 \end{pmatrix} \\
&= 
\begin{pmatrix} c_1 & d_1 \end{pmatrix} 
\begin{pmatrix}  
6r_0y-6r_1x      &  -6r_1y - 6r_2x \\
 -6r_1y- 6r_2x & -6r_2y -6r_3x 
\end{pmatrix}_{(c_2,d_2)}
\begin{pmatrix} c_3 \\ d_3 \end{pmatrix}\\
&= 
\begin{pmatrix} c_1 & d_1 \end{pmatrix} 
\begin{pmatrix}  
6r_0c_2-6r_1d_2      &  -6r_1c_2 - 6r_2d_2 \\
 -6r_1c_2 - 6r_2d_2 & -6r_2c_2 -6r_3d_2 
\end{pmatrix}_{(c_2,d_2)}
\begin{pmatrix} c_3 \\ d_3 \end{pmatrix} \\
%&= \begin{pmatrix} c_1(6r_0c_2-6r_1d_2) + d_1( -6r_1c_2 - 6r_2d_2) & c_1( -6r_1c_2 - 6r_2d_2 ) + d_1(-6r_2c_2 -6r_3d_2) \end{pmatrix} \begin{pmatrix} c_3 \\ d_3 \end{pmatrix} \\
%&= c_3(c_1(6r_0c_2-6r_1d_2) + d_1( -6r_1c_2 - 6r_2d_2)) + d_3(c_1( -6r_1c_2 - 6r_2d_2 ) + d_1(-6r_2c_2 -6r_3d_2)) \\
%&=6( (c_1c_2c_3)r_0  + (-c_1d_2c_3 - d_1c_2c_3 -c_1c_2d_3)r_1 + (-d_1d_2c_3 -c_1d_2d_3 - d_1c_2d_3 )r_2 + (-d_1d_2d_3 )r_3 \\
&= 6(r_0s_0 + 3r_1s_1 + 3r_2s_2 + r_3s_3) \qedhere
\end{align*}
\end{proof}
\fi
%
The conormal variety $\Lambda = T^*_{\GL_2}(P_3[x,y])$ is defined by 
\[
\Lambda \ceq \{ (r,s)\in T^*(P_3[x,y]) = P_3[x,y]\times P_3[x,y]^* \tq [r,s]=0\},
\]
where $[\ ,\ ] : T^*(P_3[x,y])  \to \mathfrak{gl}_2$ is the moment map which, in this case, is given by
\begin{equation}\label{eqn:Lie}
[r,s] = 
\begin{pmatrix} 
r_0s_0+2r_1s_1+r_2s_2 & -r_1s_0+2r_2s_1+r_3s_2 \\
-r_0s_1+2r_1s_2+r_2s_3  & r_1s_1+2r_2s_2+r_3s_3
\end{pmatrix},
\end{equation}
 see Lemma \ref{lemma:momentmap}. We note that the trace of $[r,s]$ is $\KPair{r}{s}$.
To simplify notation slightly, we set
\[
\Lambda_{i} \ceq \{ (r,s)\in  T^*(P_3[x,y]) \tq r\in C_i, \, [r,s]=0 \}
\]

\begin{lemma}
For
\begin{align*}
   r= r(x,y) &= r_0y^3 -3 r_1y^2x - 3r_2yx^2 - r_3x^3 \in P_3[x,y]\\
   s= s(x,y) &= s_0y^3-3 s_1y^2x - 3s_2yx^2 - s_3x^3 \in P_3[x,y]^*,
\end{align*}
we have %\todo{Andrew: I think these are backwords, swap role of $x$ and $y$, need to double check, also should swap out $c_i,d_i$ for something else. }
\[
\begin{array}{rcl}
r_0s_0+2r_1s_1+r_2s_2&=& \frac{1}{3} \KPair{ \left(y  \frac{\partial r}{\partial y} \right)}{s}, \\
r_1s_1+2r_2s_2+r_3s_3&=& \frac{1}{3} \KPair{ \left(x  \frac{\partial r}{\partial x} \right)}{s},\\
-r_1s_0+2r_2s_1+r_3s_2&=&\frac{1}{3} \KPair{ \left(y  \frac{\partial r}{\partial x} \right)}{s}, \\
-r_0s_1+2r_1s_2+r_2s_3&=&\frac{1}{3} \KPair{ \left(x  \frac{\partial r}{\partial y} \right)}{s}.
\end{array}
\]
and consequently the ideal generated by these equations is $\GL_2$ stable.
\end{lemma}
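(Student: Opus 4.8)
The plan is to establish the four identities by a direct expansion in coordinates, and then to read off the $\GL_2$-stability of the ideal from the invariance of the pairing $\KPair{}{}$ together with the fact that the four differential operators involved span a $\GL_2$-stable space of operators.

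For the identities, I would first record the partial derivatives. From $r(x,y) = r_0y^3 - 3r_1y^2x - 3r_2yx^2 - r_3x^3$ one has $\partial r/\partial y = 3r_0y^2 - 6r_1yx - 3r_2x^2$ and $\partial r/\partial x = -3r_1y^2 - 6r_2yx - 3r_3x^2$; hence, representing a cubic $a_0y^3 - 3a_1y^2x - 3a_2yx^2 - a_3x^3$ by its coordinate vector $(a_0,a_1,a_2,a_3)$, the four cubics $y\,\partial_y r$, $x\,\partial_x r$, $y\,\partial_x r$, $x\,\partial_y r$ have coordinate vectors $(3r_0,2r_1,r_2,0)$, $(0,r_1,2r_2,3r_3)$, $(-3r_1,2r_2,r_3,0)$ and $(0,-r_0,2r_1,3r_2)$. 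Pairing each of these with $s=(s_0,s_1,s_2,s_3)$ via $\KPair{a}{s} = a_0s_0+3a_1s_1+3a_2s_2+a_3s_3$ yields $3(r_0s_0+2r_1s_1+r_2s_2)$, $3(r_1s_1+2r_2s_2+r_3s_3)$, $3(-r_1s_0+2r_2s_1+r_3s_2)$ and $3(-r_0s_1+2r_1s_2+r_2s_3)$; dividing by $3$ gives exactly the four displayed formulas. Comparing with \eqref{eqn:Lie}, this also identifies the four expressions with the four matrix entries of $[r,s]$.

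For the last assertion I would argue as follows. Let $D_1 = y\,\partial_y$, $D_2 = x\,\partial_x$, $D_3 = y\,\partial_x$, $D_4 = x\,\partial_y$; these span the four-dimensional space $\mathcal D$ of linear vector fields on $\mathbb A^2$. For $h\in\GL_2$ acting on $P_3[x,y]$ by \eqref{eqn:action}, a short computation with the substitution $r(x,y)\mapsto \det(h)^{-1}r((x,y)h)$ shows that $D_k(h.r) = h.(D_k^h r)$ for all $r$, where $D_k^h = \sum_j \gamma_{kj}(h) D_j \in \mathcal D$ and the scalars $\gamma_{kj}(h)$ depend only on $h$ (they form the matrix of conjugation of a linear vector field by $h$). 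Since the pairing $\KPair{}{}$ is $\GL_2$-invariant for the actions \eqref{eqn:action} on $P_3[x,y]$ and \eqref{eqn:HactiononV*} on $P_3[x,y]^*$ (recorded in the text just before the lemma), the functions $f_k(r,s) \ceq \tfrac13\KPair{D_k r}{s}$ satisfy
\[
f_k(h.r,h.s) = \tfrac13\KPair{D_k(h.r)}{h.s} = \tfrac13\KPair{h.(D_k^h r)}{h.s} = \tfrac13\KPair{D_k^h r}{s} = \sum_j \gamma_{kj}(h)\, f_j(r,s).
\]
Hence the linear span of $f_1,\dots,f_4$ inside $\mathcal O(T^*(P_3[x,y]))$ is a $\GL_2$-subrepresentation, and therefore the ideal it generates is $\GL_2$-stable; equivalently, this is the $\GL_2$-equivariance of the moment map $[\,\cdot\,,\,\cdot\,]$ of \eqref{eqn:Lie}, which is also visible from its description as a Lie bracket in Lemma~\ref{lemma:momentmap} transported along Proposition~\ref{prop:cubics}. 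The only point requiring a little care is precisely this equivariance bookkeeping: one must check by hand that each $D_k$ conjugates back into $\mathcal D$ under the twisted action \eqref{eqn:action}, and that $\KPair{}{}$ is genuinely invariant for \eqref{eqn:HactiononV*}; everything else is a routine expansion.
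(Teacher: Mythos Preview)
Your proof is correct and follows exactly the approach the paper intends: the paper says only ``This follows a simple calculation and we omit the proof,'' and your explicit computation of the coordinate vectors of $y\partial_y r$, $x\partial_x r$, $y\partial_x r$, $x\partial_y r$ and their pairings with $s$ is precisely that calculation. Your argument for $\GL_2$-stability via the invariance of $\KPair{}{}$ and the fact that the four first-order operators span a $\GL_2$-stable space (equivalently, equivariance of the moment map \eqref{eqn:Lie}) is also correct and fills in what the paper leaves implicit.
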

This follows a simple calculation and we omit the proof. An analogous result holds if we interchange the role of $r$ and $s$.

The next result, Lemma~\ref{lem:conormal}, will be used repeatedly in calculations appearing in Sections~\ref{ssec:covC0} through \ref{ssec:covC0}, which gives the proof of the main result of Section~\ref{sec:cubics}, Theorem~\ref{thm:Evs}.
In order to state it, review how we pass between non-zero linear polynomials and elements of $\mathbb{P}^1$ in Section~\ref{ssec:cubicorbits} where, to $u(x,y) = u_1y-u_2x\in P_1[x,y]_\times$ we attached $[u]=[u_1:u_2]\in \mathbb{P}^1$ and write $u\vert r$ to mean $u(x,y)$ divides $r(x,y)$ in $P_3[x,y]$. 
Now we consider the dual construction in $P_3[x,y]^*$ and for $v(x,y) = v_1y-v_2x\in P_1[x,y]_\times^*$ we attached $[v]=[v_1:v_2]\in \mathbb{P}^1$ and write $v\vert s$ to mean $v(x,y)$ divides $s(x,y)$ in $P_3[x,y]^*$. 
In Lemma~\ref{lem:conormal} we see the condition $u\perp v$, which means $u_1v_1+u_2v_2=0$.

\begin{lemma}\label{lem:conormal}
\begin{enumerate}
\item If $r=uu'u'' \in C_3$ then we have $[r,s] = 0$ if and only if $s=0$.
\item If $r=u^2u' \in C_2$ then we have $[r,s]=0$ if and only if $s=v^3$ where $u\perp v$.
\item If $r=u^3 \in C_1$ then we have $[r,s] = 0$ if and only if $s=v^2v'$ where $u \perp v$ and $v'\in P_1[x,y]^*$.
\end{enumerate}
It follows that
\[
\begin{array}{rcl}
\Lambda_1 &\iso& \{ ([u], r, s, [v]) \in \BP^1\times C_1\times P_3[x,y]^*\times \BP^1\tq u^3 \vert r,\ u\perp v,\ v^2\vert s \} \\
\Lambda_2 &\iso& \{ ([u], r, s, [v]) \in \BP^1\times C_2\times P_3[x,y]^*\times \BP^1\tq u^2 \vert r,\ u\perp v,\ v^3\vert s \} \\
\Lambda_3 &\iso& \{ ( r, s ) \in  C_3\times P_3[x,y]^*\tq  s = 0 \}.
\end{array}
\]
\end{lemma}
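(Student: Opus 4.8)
The plan is to identify, for each $r$, the fibre $\Lambda_r \ceq \{s\in P_3[x,y]^* \tq [r,s]=0\}$ with the annihilator under $\KPair{}{}$ of the \emph{polar space}
\[
\mathcal{P}(r)\ceq \operatorname{span}_{\CC}\left\{\, x\tfrac{\partial r}{\partial x},\; y\tfrac{\partial r}{\partial x},\; x\tfrac{\partial r}{\partial y},\; y\tfrac{\partial r}{\partial y}\,\right\}\subseteq P_3[x,y].
\]
By the moment-map formula \eqref{eqn:Lie} together with the (unlabelled) lemma immediately preceding the statement, which expresses the four entries of $[r,s]$ as $\tfrac13\KPair{y\partial_y r}{s}$, $\tfrac13\KPair{y\partial_x r}{s}$, $\tfrac13\KPair{x\partial_y r}{s}$, $\tfrac13\KPair{x\partial_x r}{s}$, the condition $[r,s]=0$ is precisely $s\in\mathcal{P}(r)^\perp$. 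A short computation with the infinitesimal $\mathfrak{gl}_2$-action on $P_3[x,y]$ coming from $\det^{-1}\otimes\Sym^3$, using Euler's relation $x\partial_x r+y\partial_y r=3r$, shows $\mathcal{P}(r)=T_rC_r$; hence $\Lambda_r$ is the conormal space to the $\GL_2$-orbit of $r$, so $\dim\Lambda_r=\dim Z_{\GL_2}(r)$, which by Lemma~\ref{lem:stabilizers} equals $0$, $1$, $2$ for $r\in C_3$, $C_2$, $C_1$ respectively. This already gives (1): $\Lambda_r=0$, i.e. $s=0$.

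For (2) and (3) it then suffices to exhibit inside $\mathcal{P}(r)^\perp$ a subspace of the correct dimension. First I compute $\mathcal{P}(r)$ from the factorisation of $r$: if $r=u^3$ then $\partial_x r,\partial_y r$ are both scalar multiples of $u^2$, so every generator of $\mathcal{P}(r)$ lies in $u^2\!\cdot\! P_1[x,y]$, and since $\dim\mathcal{P}(r)=2=\dim(u^2 P_1[x,y])$ we get $\mathcal{P}(u^3)=u^2\!\cdot\! P_1[x,y]$; likewise, if $r=u^2u'$ then both partials are divisible by $u$, so $\mathcal{P}(r)\subseteq u\!\cdot\! P_2[x,y]$, and equality follows from $\dim\mathcal{P}(r)=3=\dim(u P_2[x,y])$. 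Next I evaluate the annihilators using Lemma~\ref{lem:KPair}: that lemma gives, for any cubic $r'$ and any $v$, that $\KPair{r'}{v^3}=\pm\,r'$ evaluated at the unique point $[u]\in\BP^1$ with $u\perp v$, so $\KPair{r'}{v^3}=0$ iff $u\mid r'$. Consequently, if $u\perp v$ then $\KPair{u^k p}{v^3}=0$ for all $p$, so $v^3\in\mathcal{P}(u^2u')^\perp$, which is one-dimensional, proving (2). For (3) I apply Lemma~\ref{lem:KPair} once more to check that, when $u\perp v$, the Hessian of $u^2\ell$ evaluated at the point orthogonal to $[v]$ is a rank-one matrix of the form $\mathbf{n}\,\mathbf{n}^{t}$ with $\mathbf{n}$ orthogonal to the relevant evaluation vector, whence $\KPair{u^2\ell}{v^2v'}=0$ for all $\ell\in P_1[x,y]$ and $v'\in P_1[x,y]^*$; thus $v^2\!\cdot\! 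P_1[x,y]^*\subseteq\mathcal{P}(u^3)^\perp$, and equality holds by the dimension count, proving (3).

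Finally, assembling these fibrewise statements over each orbit yields the displayed descriptions of $\Lambda_1,\Lambda_2,\Lambda_3$. A point $r\in C_1$ (resp. $r\in C_2$) determines the line $[u]$ at which $r$ has its triple (resp. double) root; then $[v]$ is the unique point of $\BP^1$ with $u\perp v$, and $\Lambda_r$ is $\{s\tq v^2\mid s\}$ (resp. $\CC v^3=\{s\tq v^3\mid s\}$), while $\Lambda_3$ is just $\{(r,s)\tq r\in C_3,\ s=0\}$. Conversely, any tuple $([u],r,s,[v])$ satisfying the listed incidence conditions arises from a unique point of $\Lambda_i$; one only observes that when $s=0$ the coordinate $[v]$ is no longer recovered from $s$ but remains pinned down by $u\perp v$, so the stated map really is a bijection.

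The main obstacle is the bookkeeping with the several sign conventions in play --- the identifications $u(x,y)=u_1y-u_2x\leftrightarrow[u_1:u_2]$ for $P_1[x,y]$ and for its dual, the plus-sign convention of Lemma~\ref{lem:KPair}, and the twisted action \eqref{eqn:HactiononV*} --- which must be tracked carefully to confirm that $u\perp v$ (meaning $u_1v_1+u_2v_2=0$) is the correct incidence relation and that the rank-one Hessian identity used in case (3) comes out exactly.
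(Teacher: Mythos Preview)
Your proof is correct and takes a genuinely different route from the paper's. The paper argues by $\GL_2$-equivariance alone: since the ideal generated by the four entries of $[r,s]$ is $\GL_2$-stable (this is precisely the content of the unlabelled lemma you invoke), it suffices to verify each claim at a single orbit representative, and the paper simply plugs $r=y^3-3x^2y$, $r=-3xy^2$, $r=y^3$ into \eqref{eqn:Lie} and solves the resulting linear system for $s$ by hand in three lines. Your argument instead identifies the conormal fibre $\Lambda_r$ invariantly as the $\KPair{}{}$-annihilator of the polar space $\mathcal{P}(r)$, computes $\mathcal{P}(r)$ from the factorisation of $r$ (as $u^2 P_1$ or $u P_2$), and then reads off the annihilator via Lemma~\ref{lem:KPair} together with a dimension count drawn from Lemma~\ref{lem:stabilizers}.

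The paper's approach is shorter and needs no structure theory beyond equivariance; yours explains \emph{why} the answer takes the form it does---the condition $u\perp v$ is visibly the incidence between the multiple root of $r$ and that of $s$, and $\dim\Lambda_r=\dim Z_{\GL_2}(r)$ is transparent. Your observation that the $\det^{-1}$ twist does not change $\mathcal{P}(r)$ (because Euler's relation forces $r\in\mathcal{P}(r)$, so subtracting multiples of $r$ from the generators leaves the span unchanged) is exactly what is needed to justify $\mathcal{P}(r)=T_rC_r$. The rank-one Hessian step you sketch for case~(3) does check out, though if the sign bookkeeping becomes unpleasant you can always fall back on the paper's device at that one point: verify $\KPair{y^2\ell}{x^2v'}=0$ directly in coordinates (it is immediate from the formula for $\KPair{}{}$, since the nonzero $r_i$ live in indices $0,1$ and the nonzero $s_j$ in indices $2,3$), and then transport by $\GL_2$.
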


\begin{proof}
Because the ideal generated by $[r,s]=0$ is $\GL_2$ stable it suffices to verify the claim on any fixed base points.
For $C_3$ we may use
\[ r(x,y) = y^3-3x^2y. \]
Then from the second and third equations we obtain $s_2=0$ and $s_1=0$. Using this with the first and fourth equation we then obtain $s_0=0$ and $s_4=0$.

For $C_2$ we may use
\[ r(x,y) =-3 xy^2. \]
Then we quickly obtain that $s_1=s_2=s_3=0$ so that indeed $s(x,y)$ is a scalar multiple of $x^3$, noting that $x\perp y$.

For $C_1$ we may use
\[ r(x,y) = y^3. \]
We then quickly obtain that $r_2=r_3=0$ so that indeed $s(x,y) = x^2(ax+by)$, noting that $x\perp y$ and $(ax+by)\in P_1[x,y]^\ast$ is arbitrary.

Because the desired relations hold for our chosen base points they also hold any point in the orbit.
\end{proof}

Now we find the equivariant microlocal fundamental groups 
\[
A_{C_i}^\mathrm{mic} \ceq \pi_0(Z_{\GL_2}(c_i,d_i)) = \pi_1(\Lambda_{i}^\mathrm{reg},(r,s))_{Z_{\GL_2}(c_i,d_i)},
\]
for fixed base points $(c_i,d_i)\in \Lambda_{i}^\mathrm{reg}$; as the notation suggests.
One might use, for example: 
\[ \begin{array}{ll} (c_0,d_0) = (0,-x(x^2+3y^2)),& (c_1,d_1) = (-3y^2x,x^3),\\ (c_2,d_2) = (y^3,-3x^2),& (c_3,d_3) = (y(y^2-3x^2),0). \end{array} \]

\begin{lemma}
For each $i=0,1,2,3$, the subvariety
\[
\Lambda_{i}^\mathrm{reg} \ceq \{ (r,s)\in  C_i\times C_i^* \tq \, [r,s]=0 \}
\]
is a single $\GL_2$-orbit. Moreover, we have that
the equivariant microlocal fundamental groups are:
\[
\begin{array}{lr}
A_{C_0}^{\mathrm{mic}} = S_3 &  A_{C_0}^{\mathrm{mic}} = S_3 \\
A_{C_1}^{\mathrm{mic}} = S_2 &  A_{C_2}^{\mathrm{mic}} = S_2 .
\end{array}
\]
\end{lemma}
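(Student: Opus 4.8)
The plan is to read everything off Lemma~\ref{lem:conormal}, together with the stabiliser analysis of Lemma~\ref{lem:stabilizers}, applied both on $V=P_3[x,y]$ and on $V^*=P_3[x,y]^*$. First I would observe that the orbit classification of Section~\ref{ssec:cubicorbits} and the stabiliser computations of Lemma~\ref{lem:stabilizers} apply verbatim to the action \eqref{eqn:HactiononV*} of $\GL_2$ on $V^*$: that action induces the usual $\PGL_2$-action on $\mathbb{P}^1$, so $V^*$ again has exactly four orbits, sorted by root type ($0$; a triple line $v^3$; a double line $v^2v'$ with $v,v'$ independent; three distinct lines $vv'v''$), and the stabiliser of such a cubic differs from the corresponding one in Lemma~\ref{lem:stabilizers} only in the character by which the scalars $\mathbb{A}^1_\times\cdot I$ act, which does not change its isomorphism type. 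With this, Lemma~\ref{lem:conormal} identifies $C_i^*$, the image of $\Lambda_i^{\mathrm{reg}}$ in $V^*$: $C_0^*=\{vv'v''\}$ is the open orbit, $C_3^*=\{0\}$, $C_1^*=\{v^2v'\}\subset V^*$, and $C_2^*=\{v^3\}\subset V^*$.

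For the assertion that each $\Lambda_i^{\mathrm{reg}}$ is a single $\GL_2$-orbit, the cases $i=0$ and $i=3$ are immediate from Lemma~\ref{lem:conormal}: $\Lambda_3^{\mathrm{reg}}=C_3\times\{0\}$ and $\Lambda_0^{\mathrm{reg}}=\{0\}\times C_0^*$, and $C_3$, $C_0^*$ are single orbits. For $i=1,2$ I would use that $\Lambda_i^{\mathrm{reg}}$ maps $\GL_2$-equivariantly onto $C_i$, a single orbit, with fibre over $r\in C_i$ the regular part of the conormal space to $C_i$ at $r$; transitivity of $\GL_2$ on $\Lambda_i^{\mathrm{reg}}$ then reduces to transitivity of the point-stabiliser $Z_{\GL_2}(r)$ on that fibre. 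For $i=1$, take $r=y^3$: by Lemma~\ref{lem:stabilizers}(1), $Z_{\GL_2}(y^3)=\{\left(\begin{smallmatrix}d^2&0\\c&d\end{smallmatrix}\right):c\in\mathbb{A}^1,\ d\in\mathbb{A}^1_\times\}$, and by Lemma~\ref{lem:conormal}(3) the fibre is $\{\gamma x^3+\delta x^2y:\gamma\in\mathbb{A}^1,\ \delta\in\mathbb{A}^1_\times\}$; a direct computation with \eqref{eqn:HactiononV*} shows $\left(\begin{smallmatrix}d^2&0\\c&d\end{smallmatrix}\right)$ carries $x^2y$ to $-\tfrac{c}{d^4}x^3+\tfrac{1}{d^2}x^2y$, whose orbit is the whole fibre. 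The case $i=2$ is the mirror image: with $r=x^2y$ one has $Z_{\GL_2}(x^2y)=\{\diag(1,d):d\in\mathbb{A}^1_\times\}$ and fibre $\{\mu y^3:\mu\in\mathbb{A}^1_\times\}$, and $\diag(1,d)$ sends $y^3$ to $\tfrac{1}{d^2}y^3$, again transitively.

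Once $\Lambda_i^{\mathrm{reg}}\cong\GL_2/Z_{\GL_2}(c_i,d_i)$ is known to be homogeneous and $\GL_2$ is connected, we have $A_{C_i}^{\mathrm{mic}}=\pi_0(Z_{\GL_2}(c_i,d_i))$, so it remains only to compute these stabilisers at the chosen base points. For $i=0$, $c_0=0$ is $\GL_2$-fixed, so $Z_{\GL_2}(c_0,d_0)=Z_{\GL_2}(d_0)\cong S_3$ by the $V^*$-version of Lemma~\ref{lem:stabilizers}(3), since $d_0$ has three distinct roots; this group is finite, so $A_{C_0}^{\mathrm{mic}}=S_3$, and dually, with $d_3=0$, $A_{C_3}^{\mathrm{mic}}=\pi_0(Z_{\GL_2}(c_3))=S_3$ by Lemma~\ref{lem:stabilizers}(3). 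For $i=1$, at a base point such as $(y^3,x^2y)\in\Lambda_1^{\mathrm{reg}}$, for which $[y^3,x^2y]=0$ by \eqref{eqn:Lie}, fixing $y^3$ forces $h=\left(\begin{smallmatrix}d^2&0\\c&d\end{smallmatrix}\right)$ and then, by the computation above, $h$ fixes $x^2y$ exactly when $c=0$ and $d^2=1$; hence $Z_{\GL_2}(y^3,x^2y)=\{I,\diag(1,-1)\}\cong S_2$ and $A_{C_1}^{\mathrm{mic}}=S_2$. The case $i=2$ is identical at $(x^2y,y^3)\in\Lambda_2^{\mathrm{reg}}$, giving $Z_{\GL_2}(x^2y,y^3)=\{I,\diag(1,-1)\}\cong S_2$ and $A_{C_2}^{\mathrm{mic}}=S_2$.

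I expect the genuinely non-formal point to be the transitivity step in the second paragraph for $i=1,2$: the soft inputs (irreducibility of the models in Lemma~\ref{lem:conormal}, together with the dimension count $\dim\Lambda_i^{\mathrm{reg}}=4$ forced by finiteness of the stabilisers) show only that the orbit of the base point is open and dense in $\Lambda_i^{\mathrm{reg}}$, and ruling out further orbits on the boundary really requires the explicit fibrewise calculation with $Z_{\GL_2}(r)$. Everything else is bookkeeping with Lemmas~\ref{lem:conormal} and~\ref{lem:stabilizers} and elementary $2\times2$ matrix algebra with \eqref{eqn:action} and \eqref{eqn:HactiononV*}.
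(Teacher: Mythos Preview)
Your proposal is correct and follows essentially the same approach as the paper: reduce transitivity on $\Lambda_i^{\mathrm{reg}}$ to transitivity of the point-stabiliser $Z_{\GL_2}(r)$ on the conormal fibre, then read off the component groups from the resulting finite stabilisers. The paper treats $i=2$ directly (where the fibre is one-dimensional) and appeals to the symmetry $\Lambda_1\leftrightarrow\Lambda_2$ for $i=1$, whereas you do $i=1$ directly and mirror for $i=2$; your explicit justification that the orbit and stabiliser analysis carries over to $V^*$ is a point the paper leaves implicit.
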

\begin{proof}
We remark that $\GL_2$ obviously acts transitively on $C_0\times C_0^\ast$ and $C_3\times C_3^\ast$ and the claim about equivariant microlocal fundamental groups is an immediate consequence of Lemma  \ref{lem:stabilizers}.

Now, we claim $\GL_2$ acts transitively on
\[ \Lambda_{1}^\mathrm{reg} = (C_1 \times C_1^\ast) \cap \Lambda_1 \]
and
\[ \Lambda_{2}^\mathrm{reg} = (C_2 \times C_2^\ast) \cap \Lambda_2. \]

We consider first $\Lambda_2$. We note that $\Lambda_2$ is a rank $1$ vector bundle over $C_2$, and $\Lambda_2^\mathrm{reg}$ is the complement of the zero-section in this bundle.
To see that $\Lambda_2^\mathrm{reg}$ has a single orbit it suffices to show that for any given $r\in C_2$ the stabilizer of $r$ acts transitively on the complement of
the zero section in the fiber over $\Lambda_{2r}$ over $r$.

Fix $r = u^2v \in C_2$ then by Lemma \ref{lem:stabilizers} we have that the stabilizer of $r$ is isomorphic to
$\left\{ \left(\begin{smallmatrix} 1 & 0 \\ 0 & d \end{smallmatrix} \right) | d\in\mathbb{C}^\times \right\}$ by
in the basis the basis $u,v$ for $P_1[x,y]$.
It follows that the stabilizer then acts on the line $[v]$ by $d^{-2}$, which we note acts transitively on complement of the zero section.
It also follows from this that the stabilizer of a point $(u^2v,v^3) \in \Lambda_1^\mathrm{reg}$ are those elements with $d^2=1$, from which the claim about the equivariant fundamental group follows.

We note that by the symmetry between $\Lambda_1$ and $\Lambda_2$ we do not need to treat explicitly the former case.
However, it is worth noting that in this case the stabilizer of a fixed $r=u^3$ was $\left\{ \left( \begin{smallmatrix} a & b\\ 0 & a^2 \end{smallmatrix}\right) | b\in \mathbb{C},d\in\mathbb{C}^\times \right\}$
with respect to a basis $u$ and $v$ where we may assume $u\perp v$. This stabilizer will act transitively on the complement of the $[v]$.
The line $[v]$ cuts out $(C_1\times \ov C_2^\ast) \cap \Lambda_2$.
\end{proof}

\begin{remark}
The complete list of $\GL_2$-orbits inside $\Lambda$ is given by the following decompositions of the conormal bundles:
\begin{align*} 
\Lambda_0 &= \Lambda_0^\mathrm{reg} \sqcup (C_0 \times C_1^*)  \sqcup (C_0 \times C_2^*) \sqcup (C_0 \times C_3^*) \\
\Lambda_1 &= \Lambda_1^\mathrm{reg}  \sqcup \{ ([u], r, s, [v]) \in \BP^1\times C_1\times C_2^* \times \BP^1 \tq u^3 \vert r,\ u\perp v,\ v^3\vert s \} \sqcup (C_1 \times C_3^*) \\
\Lambda_2 &= \Lambda_2^\mathrm{reg}  \sqcup (C_2 \times C_3^*) \\
\Lambda_3 &= \Lambda_3^\mathrm{reg} , 
\end{align*}
where we note $\Lambda =  \Lambda_0 \sqcup  \Lambda_1 \sqcup  \Lambda_2 \sqcup  \Lambda_3$.
Since each $\Lambda_i^\mathrm{reg}$ is a single $H_\mathrm{sub}$-orbit, the notions of ``regular" and ``strongly regular" and ``generic", as they appear in \cite{CFMMX}, all coincide.
\iffalse
\todo[inline]{is it better to list the closures $\ov \Lambda_i$? than their intersections?}
\begin{align*}
 \ov \Lambda_0 \cap \ov \Lambda_1 &= C_0 \times C_1^* \sqcup C_0 \times C_2^*  \sqcup C_0 \times C_3^*,\\
 \ov \Lambda_0 \cap \ov \Lambda_2 &= C_0 \times C_2^*  \sqcup C_0 \times C_3^*,\\
 \ov \Lambda_0 \cap \ov \Lambda_3 &= C_0 \times C_3^*,\\
 \ov \Lambda_1 \cap \ov \Lambda_3 &= C_1 \times C_3^* \sqcup C_0 \times C_3^* ,\\
 \ov \Lambda_2 \cap \ov \Lambda_3 &= C_2 \times C_3^* \sqcup C_1 \times C_3^* \sqcup C_0 \times C_3^*,\\
 \ov \Lambda_1 \cap \ov \Lambda_2 &= 
                           \{ ([u], r, s, [v]) \in \BP^1\times C_1\times C_2^* \tq u^3 \vert r,\ u\perp v,\ v^3\vert s \} \sqcup \\ &\qquad\qquad C_1 \times C_3^* \sqcup C_0 \times C_2^\ast \sqcup  C_0 \times C_3^*.
 \end{align*}
 \fi
Aside from $\Lambda_i^\mathrm{reg}$ all of the other $\GL_2$-orbits in $\Lambda$ have trivial equivariant fundamental groups, except
\[  \{ ([u], r, s, [v]) \in \BP^1\times C_1\times C_2^* \tq u^3 \vert r,\ u\perp v,\ v^3\vert s \} . \]
We compute the equivariant fundamental group for an element $([u], r, s, [v])$ by considering the stabilizer of $u^3$ in the basis $u$ and $v$. It has the form
\[ h =  \left( \begin{smallmatrix} a & b \\ 0 & a^2 \end{smallmatrix} \right) \]
 Then $h. v^3 = a^{-3} v^3 $ from which we can conclude the equivariant fundamental group is $\mu_3$.
 \end{remark}

The lemma above now give the following

\begin{proposition}\label{prop:conormal}
Consider the cover ${\wt \Lambda} \ceq \{ (g,(r,s))\in \GL_2\times \Lambda \tq g.(r,s) = (r,s)\}$ and the projection $\rho_{\Lambda} : {\wt \Lambda} \to \Lambda$ defined by $\rho(g,(r,s))= (r,s)$. While this is not proper, the pull-back $\rho_{\Lambda^\mathrm{reg}} : \GL_2\times \Lambda^\mathrm{reg} \to \Lambda^\mathrm{reg}$ is finite and hence proper, and
\[
\begin{array}{rcl}
&&\hskip-8mm
(\rho_{\Lambda^\mathrm{reg}})_*\1_{\wt \Lambda}  \\
&=&
\1_{\Lambda_0^\mathrm{reg}} \oplus 2\mathcal{R}_{\Lambda_0^\mathrm{reg}}
\oplus \mathcal{E}_{\Lambda_0^\mathrm{reg}}  \oplus
\1_{\Lambda_1^\mathrm{reg}} \oplus \mathcal{T}_{\Lambda_1^\mathrm{reg}} \\
&& \oplus\ 
\1_{\Lambda_3^\mathrm{reg}} \oplus 2\mathcal{R}_{\Lambda_3^\mathrm{reg}}
\oplus \mathcal{E}_{\Lambda_3^\mathrm{reg}} 
\oplus 
\1_{\Lambda_2^\mathrm{reg}} \oplus \mathcal{T}_{\Lambda_2^\mathrm{reg}} ,
\end{array}
\]
where:
\begin{enumerate}
\item $\1_{\Lambda_{i}^\mathrm{reg}}$ is the constant local system on $\Lambda_{i}^\mathrm{reg}$ for $i=0,1,2,3$;
\item $\mathcal{T}_{\Lambda_{1}^\mathrm{reg}}$ (resp. $\mathcal{T}_{\Lambda_{2}^\mathrm{reg}}$) is the rank $1$ local system on $\Lambda_{1}^\mathrm{reg}$ (resp. $\Lambda_{2}^\mathrm{reg}$) corresponding to the non-trivial character $\tau$ of $A^\mathrm{mic}_{C_1} = S_2$ (resp. $A^\mathrm{mic}_{C_2} = S_2$);
\item $\mathcal{R}_{\Lambda_3^\mathrm{reg}}$ is the rank $2$ local system on $\Lambda_{3}^\mathrm{reg}$ corresponding to the reflection ($2$-dimensional) representation $\varrho$ of $A^\mathrm{mic}_{C_3} = S_3$;
\item $\mathcal{E}_{\Lambda_3^\mathrm{reg}}$ is the rank $1$ local system on $\Lambda_{3}^\mathrm{reg}$ corresponding to the sign representation $\varepsilon$ of $A^\mathrm{mic}_{C_3} = S_3$.
\end{enumerate}
\end{proposition}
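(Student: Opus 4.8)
The plan is to reduce the claim, via the orbit stratification of $\Lambda^\mathrm{reg}$, to a statement about the regular representations of the microlocal fundamental groups. First I would use the preceding lemmas: $\Lambda^\mathrm{reg} = \Lambda_0^\mathrm{reg}\sqcup\Lambda_1^\mathrm{reg}\sqcup\Lambda_2^\mathrm{reg}\sqcup\Lambda_3^\mathrm{reg}$, with each $\Lambda_i^\mathrm{reg}$ a single $\GL_2$-orbit, so $\wt\Lambda$ decomposes accordingly over these (open and closed) pieces and pushforward of the constant sheaf is additive over the decomposition. Thus it suffices to compute $(\rho_i)_*\1$, where $\rho_i$ denotes the restriction of $\rho_{\Lambda^\mathrm{reg}}$ over $\Lambda_i^\mathrm{reg}$.

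Next I would make $\rho_i$ explicit. Writing $Z_i \ceq Z_{\GL_2}(c_i,d_i)$ for the chosen base point, the orbit map $g\mapsto g\cdot(c_i,d_i)$ identifies $\Lambda_i^\mathrm{reg}\cong \GL_2/Z_i$; by Lemma~\ref{lem:stabilizers} and the preceding lemma, $Z_i$ is finite in each case, with $Z_i\cong S_3$ for $i=0,3$ and $Z_i\cong S_2$ for $i=1,2$. The crux is then to identify $\rho_i$ with the finite \'etale $A^\mathrm{mic}_{C_i}$-Galois cover $\GL_2\to\GL_2/Z_i$ (using $A^\mathrm{mic}_{C_i}=\pi_0(Z_i)=Z_i$), so that in particular $\rho_i$ is finite and hence proper; this also explains why $\rho_\Lambda$ fails to be proper over all of $\Lambda$, since over the non-regular orbits its fibres are the positive-dimensional stabilizers from Lemma~\ref{lem:stabilizers}, namely copies of $\mathbb{G}_a\rtimes\mathbb{G}_m$ and $\mathbb{G}_m$.

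With this in hand I would invoke the standard fact that the pushforward of the constant sheaf along a finite \'etale $A$-Galois cover is the local system attached to the regular representation $\CC[A]$. Since the $\GL_2$-equivariant fundamental group of $\Lambda_i^\mathrm{reg}$ is $A^\mathrm{mic}_{C_i}$ by definition, this gives $(\rho_i)_*\1 = \bigoplus_\chi (\dim\chi)\,\mathcal{L}_\chi$, the sum over irreducible characters $\chi$ of $A^\mathrm{mic}_{C_i}$, with $\mathcal{L}_\chi$ the corresponding equivariant local system of Proposition~\ref{prop:IC}. To finish, I would decompose the two regular representations that occur: $\CC[S_2]=\1\oplus\tau$, giving $(\rho_i)_*\1 = \1_{\Lambda_i^\mathrm{reg}}\oplus\mathcal{T}_{\Lambda_i^\mathrm{reg}}$ for $i=1,2$; and $\CC[S_3]=\1\oplus\varepsilon\oplus 2\varrho$, each irreducible occurring with multiplicity equal to its dimension, giving $(\rho_i)_*\1 = \1_{\Lambda_i^\mathrm{reg}}\oplus 2\mathcal{R}_{\Lambda_i^\mathrm{reg}}\oplus\mathcal{E}_{\Lambda_i^\mathrm{reg}}$ for $i=0,3$. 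Summing over $i$ yields the asserted formula.

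The step I expect to be the main obstacle is the identification in the second paragraph: pinning down the cover $\wt\Lambda$ over each $\Lambda_i^\mathrm{reg}$, together with its $\GL_2$-equivariant structure, as the $A^\mathrm{mic}_{C_i}$-Galois cover $\GL_2\to\GL_2/Z_i$, and checking finiteness and properness carefully. Once that is settled, the remaining steps are a routine unwinding of the regular representation using the orbit data already recorded above.
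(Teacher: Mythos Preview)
Your outline matches the paper's approach: the paper offers no proof beyond the sentence ``The lemma above now give the following,'' so the intended argument is precisely to read off, from the computations of $A^\mathrm{mic}_{C_i}$ in the preceding lemmas, the list of simple equivariant local systems on each component $\Lambda_i^\mathrm{reg}$, and your regular-representation decompositions of $\CC[S_2]$ and $\CC[S_3]$ do exactly that.

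There is, however, a genuine problem at the step you yourself flagged.  The cover $\wt\Lambda$ as literally defined is the \emph{inertia} cover, with fibre $\Stab_{\GL_2}(r,s)$ over $(r,s)$, and this is \emph{not} $\GL_2$-equivariantly isomorphic to the principal $Z_i$-bundle $\GL_2\to\GL_2/Z_i$.  Under the natural equivariant structure $h\cdot(g,(r,s))=(hgh^{-1},h\cdot(r,s))$, the stabiliser $Z_i$ acts on the fibre $Z_i$ over the base point by \emph{conjugation}, so the pushforward of $\1$ corresponds to the permutation representation of $Z_i$ on itself by conjugation, not to the regular representation.  These differ: for $Z_i=S_2$ (the cases $i=1,2$) conjugation is trivial and one obtains $\1\oplus\1$ rather than $\1\oplus\mathcal{T}$; for $Z_i=S_3$ one obtains $3\cdot\1\oplus\varrho\oplus\varepsilon$ rather than $\1\oplus 2\varrho\oplus\varepsilon$.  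Concretely, when $Z_i$ is abelian the inertia cover is the \emph{trivial} $|Z_i|$-sheeted cover $(\GL_2/Z_i)\times Z_i$, and for $Z_i=S_3$ it is not even Galois.  So the identification you propose does not hold, and the displayed decomposition does not follow from the stated definition of $\wt\Lambda$ via your argument.  The proposition's actual role in the paper is only to name the simple objects of $\Loc_{\GL_2}(\Lambda_i^\mathrm{reg})\cong\Rep(A^\mathrm{mic}_{C_i})$; the specific cover realising them is never used downstream, and the enumeration of simple local systems is exactly the regular-representation content you supply.
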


\subsection{Microlocal vanishing cycles and the Fourier transform}\label{sssec:Evs}

The microlocal vanishing cycles functor
\[
\Evs: \Perv_{H}(V) \to \Loc_{H}(\Lambda^\mathrm{reg})
\]
is defined in \cite{CFMMX}*{}, for any reductive (not necessarily connected) group $H$ acting on $V\iso \BA^{n}$ with finitely many orbits, where $\Lambda = T^*_{H}(V)$ is the conormal variety to $V$. 
The main result of this section, Theorem~\ref{thm:Evs}, calculates this functor on simple objects in $ \Perv_{\GL_2}(P_3[x,y])$.
\iffalse
In this section we compute
\[
\Evs: \Perv_{\GL_2}(P_3[x,y]) \to \Loc_{\GL_2}(T^*_{\GL_2}(P_3[x,y])_\mathrm{reg})
\]
on the six simple objects in $\Perv_{\GL_2(\C)}(P_3[x,y])$. 
Observe that 
\[
T^*_{\GL_2}(P_3[x,y])_\mathrm{reg}
= \coprod_{i=0}^3 T^*_{C_i}(P_3[x,y])_\mathrm{reg}.
\]
\fi
From \cite{CFMMX}*{} we observe that, for any $\mathcal{F}\in \Perv_{\GL_2}(P_3[x,y])$, we have
\begin{equation}\label{eqn:EVC}
\Evs_{C_i} \mathcal{F}
= \left(\RPhi_{\KPair{}{} }[-1] \mathcal{F} \boxtimes \1^!_{C_i^*}[\dim C_i^*]\right)\vert_{\Lambda_{i}^\mathrm{reg}}[-4].
\end{equation}
We will calculate \eqref{eqn:EVC} case-by-case in Sections~\ref{ssec:covC0} through \ref{sssec:covC3''}. 

In these sections we also calculate, on simple objects, the Fourier transform 
\[
\Ft : \Perv_{\GL_2}(P_3[x,y]) \to \Perv_{\GL_2}(P_3[x,y]^*),
\]
defined by
\[
\Ft = q_* \RPhi_{\KPair{}{}}[-1] p^*,
\]
where $p : T^*(P_3[x,y])\to P_3[x,y]$ and $q: T^*(P_3[x,y])\to P_3[x,y]^*$ are the obvious projections.
It follows that
\begin{equation}\label{eqn:Ft}
\RPhi_{\KPair{}{}}[-1] \left( \mathcal{F} \boxtimes \1_{P_3[x,y]^*}[4] \right)
= \1^!_{C_0} \boxtimes \left( \Ft  \mathcal{F} \right)
\end{equation} 
and this is what we will use to determine $\Ft  \mathcal{F}$.

\begin{remark}
The usual definition of the Fourier transform, adapted to our case, is the following. Recall the trait $Z\ceq \Spec{\CC[[t]]}$.
Consider the morphism 
\[ \gamma = \KPair{}{}\times\id : T^*(V) = V \times V^* \to Z\times V^* \]
 defined by $\gamma(r,s) = (\KPair{r}{s},s)$.
Let $t : Z\times V^* \to Z$ be the projection, so that $t(z,s) = z$. 
Then the usual definition of the Fourier-Sato transform is $\RPhi_{t}[-1]\gamma_! p^*$ which is supported on $\{ ( 0,s) \tq s\in V^*\} \iso V^*$, where $0\in Z$ is the special fibre of the trait $Z$ \cite{Schurmann}*{Example 6.0.16}. Our definition \eqref{eqn:Ft} is in fact modelled after the definition of ${\wt \Psi}_{P}$ appearing in \cite{KS}*{(3.7.7)}, which, by \cite{KS}*{Theorem 3.7.7}, is equivalent to ${\wt \Phi}_{P'}$, which is the complex analytic version of the Fourier-Sato transform above, see also \cite{Schurmann}*{Section 5.4}.
\end{remark}

\subsection{Overview of the calculations}\label{sssec:Evsoverview}%Fourier transform and vanishing cycles calculations}

We now give an overview of the calculations to be performed in Sections~\ref{ssec:covC0} through \ref{sssec:covC3''}.

For each orbit $C_j\subset P_3[x,y]$ we find a proper cover $\rho_j : {\widetilde C_j}\to {\overline C_j}$ where ${\widetilde C_j}$ is smooth. Using the decomposition theorem, we find all the simple equivariant perverse sheaves $\mathcal{F}$ that appear in
\begin{equation}\label{eqn:rhoCj}
(\rho_j)_* \1_{\widetilde C_j}[\dim {\widetilde C_j}].
\end{equation}
Using this, for each such $\mathcal{F}$ we calculate the left hand side of \eqref{eqn:Ft}.
The way we do this is to calculate
\begin{equation}\label{eqn:VCFt}
\begin{array}{rcl}
&&\hskip-8mm \RPhi_{\KPair{}{}}[-1] \left( (\rho_j)_* \1_{\widetilde C_j}[\dim {\widetilde C_j}] \boxtimes \1_{P_3[x,y]^*}[4]\right)\\
&=&
(\rho_j\times\id_{P_3[x,y]^*})_* \RPhi_{{\tilde f}_{j,0}}[-1] \left( \1_{\widetilde C_j}[\dim {\widetilde C_j}] \boxtimes \1_{P_3[x,y]^*}[4]\right),
\end{array}
\end{equation}
where 
${\tilde f}_{j,i} : {\widetilde C_j}\times {\overline C_i^*} \to S$
\[
\begin{tikzcd}
{\widetilde C_j}\times {\overline C_i^*} \arrow{rd}[swap]{(\rho_j \times \id_{\ov C_i^*})} \arrow{rr}{{\tilde f}_{j,i}} &&  S\\
& {\overline C_j}\times {\overline C_i^*} \arrow{ru}[swap]{\KPair{}{}\vert_{{\overline C_j}\times {\overline C_i^*}}} &
\end{tikzcd}
\]
 is the composition of $(\rho_j \times \id) : {\widetilde C_j}\times {\overline C_i^*} \to {\overline C_j}\times {\overline C_i^*}$ with the restriction of the pairing $\KPair{}{} : P_3[x,y] \times P_3[x,y]^*\to S$ to ${\overline C_j}\times {\overline C_i^*}$ 
and isolate \eqref{eqn:Ft}, inductively.

%The strategy here is to express ${\tilde f}_{j,i}$ in local coordinates .... \todo{Include a diagram to explain this idea.}
Then, for each $C_i \leq C_j$, we calculate the vanishing cycles 
\begin{equation}\label{eqn:VtC}
\RPhi_{{\tilde f}_{j,i}} \left( \1_{\widetilde C_j}[\dim {\widetilde C_j}]\boxtimes \1_{\overline C_i^*}\right)\vert_{(\rho_j\times \id_{\ov C_i^*})^{-1}\Lambda_i^\mathrm{reg}}
\end{equation}
and the push-forward
\begin{equation}\label{eqn:pVtC}
\begin{array}{rcl}
&&\hskip-8mm (\rho_j \times \id)_* \RPhi_{{\tilde f}_{j,i}} \left( \1_{\widetilde C_j}[\dim {\widetilde C_j}]\boxtimes \1_{\overline C_i^*}\right) \vert_{\Lambda_i^\mathrm{reg}} \\
&=&
\RPhi_{\KPair{}{}} \left(  (\rho_j)_*\1_{\widetilde C_j}[\dim {\widetilde C_j}] \boxtimes \1_{\overline C_i^*}\right)\vert_{\Lambda_i^\mathrm{reg}}.
\end{array}
\end{equation}
Again working inductively, we isolate
\begin{equation}\label{eqn:Evs}
\Evs_{C_i} \mathcal{F} = \left(\RPhi_{\KPair{}{}}[-1] \left( \mathcal{F} \boxtimes \1_{\overline C_i^*}[\dim C_i^*]\right)\right)\vert_{\Lambda_i^\mathrm{reg}}[-4],
\end{equation}
for each simple $\mathcal{F}$ appearing in $(\rho_j)_* \1_{\widetilde C_j}[\dim {\widetilde C_j}]$.

\subsection{$\IC(\1_{C_0})$}\label{ssec:covC0}

This case is degenerate: $\Lambda_{C_0} = C_0 \times C_0^*$ and ${\widetilde C_0} = {\overline C_0} = C_0$ so $\Lambda_{C_0}^{\rho_0} = \Lambda_{C_0}  = C_0 \times C_0^*$ and ${\tilde f}_{0,0}(r,s) = 0$ for all $(r,s)\in \Lambda_{C_0}$.
Therefore, 
\[
\RPhi_{0}[-1] \1_{\Lambda_{C_0}} = \1_{\Lambda_{C_0}}
=
\1_{{C_0}\times {\overline C_0^*}}.
\]
In other words,
\begin{equation}\label{eqn:VtC00}
\RPhi_{f_{0,0}}[-1] \left(\1_{C_0}\boxtimes \1_{\overline C_0^*}[4]\right)
=
\1_{C_0}\boxtimes \1_{\overline C_0^*}[4].
\end{equation}
This computes \eqref{eqn:VtC} in this case.
Note that \eqref{eqn:VtC00} may also be written in the form
\begin{equation}\label{eqn:Ft0}
\RPhi_{\KPair{}{}}[-1] \left( \IC(\1_{C_0})\boxtimes \1_{\overline C_0^*}[4]\right)
=
\1_{C_0}\boxtimes \IC(\1_{C_0^*}).
\end{equation}
It follows that
\[
\Ft \IC(\1_{C_0}) = \IC(\1_{C_0^*}).
\]
and also that
\[
\begin{array}{rcl}
&& \hskip-8mm \Evs_{C_0} \IC(\1_{C_0}) \\
&\ceq& \left(\RPhi_{\KPair{}{}}[-1]\left( \IC(\1_{C_0})\boxtimes\1_{C_0^*}[4]\right)\right)\vert_{\Lambda_0}[-4] \\
&=& \1_{C_0}\boxtimes \IC(\1_{C_0^*})[-4] \\
&=& \1_{\Lambda_{C_0}}.
\end{array}
\]

\subsection{$\IC(\1_{C_1})$}\label{sssec:covC1}

Define
\[
\widetilde C_1 \iso \{ ([u],r) \in  \BP^1\times \ov C_1  \tq u\vert r \} \\
\]
Recall that the condition $u\vert r$ means $u(x,y)=u_1y-u_2x$ divides $r(x,y)$.
This condition is algebraic in the variables that define $r$ and $u$; it corresponds to three polynomial equations, as we see in the alternate description of $\wt C_1$, below.
As a variety over $\CC$,  $\widetilde C_1$ is smooth as the space is a rank $1$ vector bundle over $\mathbb{P}^1$.
We equip ${\wt C_1}$ with the $\GL_2$-action $h([u],r) = (h.[u],h.r)$. 
 
This cover may also be written in the following form:
\begin{align}
\widetilde C_1 
&\iso \{ ([u], r) \in  \BP^1\times P_3[x,y] \tq {\rm Hess}(r)_{u} = 0 \} 
\end{align}
The equivalence of the two descriptions of ${\widetilde C_1}$ follows from the identities
\begin{align*}
{\rm Grad}(r)_{(u_1,u_2)}
&=  
\frac{1}{2}
\begin{pmatrix} u_1 & u_2 \end{pmatrix} {\rm Hess}(r)_{(u_1,u_2)} 
\\
r(u_1,u_2) 
&= \frac{1}{3}{\rm Grad}(r)_{(u_1,u_2)}\,^t\begin{pmatrix} u_1 & u_2 \end{pmatrix}. 
\end{align*}
The above equations imply both the gradient of $r(x,y)$ and $r(x,y)$ itself vanish at $(u_1,u_2)$, but also that $(u_1,u_2)$ is a triple point of $r(x,y)$ and therefore that $u = u_1y-u_2x$ divides $r(x,y)$.
 
Consider the map $\rho_1: \widetilde C_1 \rightarrow \ov C_1$ defined by the projection $\rho_1([u],r) = r$.
 Then $\rho_1$ is proper, as it is a closed immersion composed with a base change of a proper map, and realizes the blowup of $\ov C_1$ at $C_0$.
The map $\rho_1$ is not smooth, but it is an isomorphism over $C_1$.
The fibers of $\rho_1$ are isomorphic to $\mathbb{P}^1$ over $C_0$, hence $\pi_1$ is a semi-small map.
%
%\subsubsection{Stalks of $\IC(\1_{C_1})$}
%
It follows from the Decomposition Theorem that
 \begin{equation}\label{eqn:pi1}
 (\rho_{1})_*(\1_{\widetilde C_1}[2]) = \IC(\1_{C_0})\oplus \IC(\1_{C_1}) 
 \end{equation}
 
For later use in this paper, we give yet another description of $\rho_1 : {\wt C_1}\to {\ov C_1}$.
Set $\BA^2_\times = \BA^2-\wpair{0}$ and consider the map 
\[
\begin{array}{rcl}
\BA^1\times \BA^2_\times &\rightarrow& \ov C_1 \\
(\lambda, (u_1,u_2)) &\mapsto& \lambda (u_1y-u_2x)^3
%(\lambda u_1^3, -\lambda u_1^2u_2,-\lambda u_1 u_2^2,-\lambda u_2^3).$$
\end{array}
\]
Let $\mathbb{G}_m$ act on $\BA^1\times \BA^2_\times$ by 
$$a.(\lambda,(u_1,u_2))=(a^{-3}\lambda,(au_1,au_2)).$$
Then $\BA^1\times \BA^2_\times\rightarrow \ov C_1$ is $\mathbb{G}_m$ invariant. This implies that the map
\begin{equation}\label{eqn:r1}
\begin{array}{rcl}
(\BA^1\times \BA^2_\times)/\mathbb{G}_m &\to& \widetilde C_1\\
\left[\lambda,u_1,u_2\right] &\mapsto& ( \lambda(u_1y-u_2x)^3, [u_1:u_2])
\end{array}
\end{equation}
is an isomorphism and induces the map $\rho_1:\widetilde C_1\rightarrow \ov C_1$ above.

%\todo[inline]{Say a few words to explain why this is true and then use it to find the stalks.}

\subsubsection{$\Ft \IC(\1_{C_1})$ and $\Evs_{C_0}\IC(\1_{C_1})$}\label{sssec:Evs01}

We begin by studying the function 
${\tilde f}_{1,0} : {\widetilde C_1}\times {\overline C_0^*} \to S$ locally. 
Using Section~\ref{sssec:covC1}, consider the affine part $\mathbb{A}^2 \hookrightarrow {\widetilde C_1}$ given by $(\lambda,u_2) \mapsto (  [1:u_2], \lambda(y-u_2x)^3)$.
Then, by Lemma~\ref{lem:KPair},
\[
{\tilde f}_{1,0}\vert_{\mathbb{A}^2\times {\overline C_0^*}} (\lambda,u_2,s) 
=  \KPair{\lambda(y-u_2x)^3}{s} 
%&=&  \lambda \KPair{t^3 - 3 u_2 t^2s + 3u_2^2 t s^2 - u_2^3 s^3}{s_0 t^3 - 3s_1 t^2 s - 3s_2ts^2 - s_3 s^3} \\
=  \lambda ( s_0 + 3 s_1 u_2 - 3 s_2 u_2^2 +s_3 u_2^3).
\]
\iffalse
Note that if we set $p_1(x,y) = y-u_2x$ then we have just seen that
\[
\KPair{p_1(x,y)}{s(x,y)} =  s(-u_2,1).
\]
\fi
Set $z(\lambda,u_2,s) \ceq \lambda$ and $g(\lambda,u_2,s) \ceq s_0 +3 s_1 u_2 - 3 s_2 u_2^2 + s_3 u_2^3$ as functions on this part of ${\widetilde C_1}\times {\overline C_0^*}$.
Observe that the locus of $z$ is $C_0\times {\overline C_0^*}$ in the part of ${\widetilde C_1}\times {\overline C_0^*}$ and that the locus of $g$ is the affine part of
\[
\{ ([1:u_2],r,s) \in \BP^1\times {\overline C_1}\times  {\overline C_0^*} \tq s(u_2,-1) =0 \}.
\]
If we then turn our attention to the affine part of ${\widetilde C_1}$ parametrized by $(\lambda,u_1) \mapsto [\lambda,(u_1,1)] \mapsto ( [u_1:1], \lambda(u_1y-x)^3)$ then we reach the same conclusions and we can extend the definitions of $z$ and $g$ to all of ${\widetilde C_1}\times C_0^*$ such that 
\[
{\tilde f}_{1,0} = z g.
\]
Using \cite[Corollary 7.2.6]{CFMMX}, it follows that
\[
\RPhi_{f_{1,0}} \1_{{\widetilde C_1}\times {\overline C_0^*}}
=
\RPhi_{zg} \1_{{\widetilde C_1}\times {\overline C_0^*}} 
\]
is the skyscraper sheaf supported by $g=0$ in $C_0\times {\overline C_0^*}$. To describe this support, consider
\begin{equation}
{\widetilde C_0^*} \ceq \{ (s, [v])\in  {\overline C_0^*} \times \BP^1\tq s(v)=0\}
\end{equation}
and let $\rho_0^* : {\widetilde C_0^*} \to {\overline C_0^*}$ be the cover $\rho_0^*(s, [v]) = s$. 
Here it is important to remark that the conditions $[v]\in (\mathbb{P}^1)^*$ and $s(v)=0$ mean, in coordinates, $[v]= [v_1:v_2]$ and $s(v) = s(v_2,-v_1)=0$.
We remark that ${\widetilde C_0^*}$ is precisely the dual of the cover $\rho_3 : {\widetilde C_3} \to {\overline C_3}$ that will appear in Section~\ref{sssec:covC3}.
Then
\[
\RPhi_{zg} \1_{{\widetilde C_1}\times {\overline C_0^*}} 
=
\1_{{\overline C_0}\times {\widetilde C_0^*}}[-1].
\]
In summary, we have found
\begin{equation}\label{eqn:VtC10}
\RPhi_{f_{1,0}}[-1] \left(\1_{\widetilde C_1}[2]\boxtimes \1_{\overline C_0^*}\right)
=
\1_{C_0}\boxtimes \1_{\widetilde C_0^*}.
\end{equation}
This computes \eqref{eqn:VtC} in this case.

We now see that
\[
\begin{array}{rcl}
&& \hskip-8mm (\rho_1\times \id_{\overline C_0*})_* \RPhi_{f_{1,0}}[-1] \left(\1_{\widetilde C_1}[2]\boxtimes \1_{\overline C_0^*}[4] \right) \\
&=&
(\id_{\overline C_1}\times \rho_0^*)_* \left(\1_{C_0}\boxtimes \1_{\widetilde C_0^*}[4]\right) \\
&=&
\1_{C_0} \boxtimes (\rho_0^*)_* \1_{\widetilde C_0^*}[4] \\
&=&
\1_{\overline C_0} \boxtimes \left(\IC(\1_{C_0^*})\oplus \IC(\mathcal{R}_{C_0^*})\right) \\
&=&
\left( \1_{C_0} \boxtimes \IC(\1_{C_0^*}) \right) 
\oplus 
\left( \1_{C_0} \boxtimes \IC(\mathcal{R}_{C_0^*})\right) \\
\end{array}
\]
On the other hand,
\[
\begin{array}{rcl}
&& \hskip-8mm (\rho_1\times \id_{\overline C_0*})_* \RPhi_{f_{1,0}}[-1] \left(\1_{\widetilde C_1}[2]\boxtimes \1_{\overline C_0^*}[4]\right) \\
&=& \RPhi_{\KPair{}{}}[-1] \left( \left( \IC(\1_{C_0})\oplus \IC(\1_{C_1}) \right) \boxtimes \1_{\overline C_0^*}[4]\right) \\
&=& \RPhi_{\KPair{}{}}[-1] \left(  \IC(\1_{C_0}) \boxtimes \1_{\overline C_0^*}[4] \right) \oplus \RPhi_{\KPair{}{}}[-1] \left( \IC(\1_{C_1}) \boxtimes \1_{\overline C_0^*}[4]\right) \\
&=& \left( \1_{C_0}\boxtimes \IC(\1_{C_0^*}) \right) \oplus \RPhi_{\KPair{}{}} \left( \IC(\1_{C_1}) \boxtimes \1_{\overline C_0^*} \right).
\end{array}
\]
For the last equality, we used \eqref{eqn:Ft0}.
Therefore,
\begin{equation}\label{eqn:Ft1}
\RPhi_{\KPair{}{}} [-1]\left( \IC(\1_{C_1})\boxtimes \1_{\ov C_0^*}[4]\right)
= \1_{C_0}\boxtimes \IC(\mathcal{R}_{C_0^*})
\end{equation}
If follows that
\[
\Ft \IC(\1_{C_1}) = \IC(\mathcal{R}_{C_0^*}).
\]

We now restrict  \eqref{eqn:Ft1} to $\Lambda_0 = \Lambda_0^\mathrm{reg}$ to find
\begin{equation}
\begin{array}{rcl}
&& \hskip-8mm \Evs_{C_0}  \IC(\1_{C_1})  \\
&=&
\left( \RPhi_{\KPair{}{}}[-1] \left( \IC(\1_{C_1})\boxtimes \1_{\ov C_0^*}[4]\right)\right)\vert_{\Lambda_0^\mathrm{reg}} [-4] \\
&=&
\left( \1_{C_0}\boxtimes \IC(\mathcal{R}_{C_0^*}) \right)\vert_{\Lambda_0^\mathrm{reg}} [-4] \\
&=&
\1_{C_0}\boxtimes \mathcal{R}_{C_0^*}[-4] \\
&=&
\mathcal{R}_{\Lambda_0^\mathrm{reg}}.
\end{array}
\end{equation}

\subsubsection{$\Evs_{C_1}\IC(\1_{C_1})$}\label{sssec:Evs11}

From Section~\ref{sssec:covC1} recall  isomorphism \eqref{eqn:r1}
\[
\begin{array}{rcl}
(\BA^1\times \BA^2_\times)/\mathbb{G}_m &\to& \widetilde C_1\\
\left[\lambda,u_1,u_2\right] &\mapsto& ( [u_1:u_2], \lambda(u_1y-u_2x)^3)
\end{array}
\]
Consider the affine part $\BA^2 \hookrightarrow {\widetilde C_1}$  defined by $(\lambda,u_2) \mapsto (  [1:u_2], \lambda(y-u_2x)^3)$. 
Set 
\[
r =  \lambda(y-u_2x)^3  % = \lambda(t^3-3u_2t^2s + 3u_2^2 ts^2 - u_2^3 s^3).
\]
Now give $s\in {\overline C_1^*}$ local coordinates by writing  
\[
s = (y-v_2x)^2(v_3y-v_4x) %= q^2 q_1.% = v_3 t^3 + (v_4-2v_2v_3) t^2s + (v_2v_3-2v_2v_4)ts^2 + v_2^2v_4 s^3 .
\]
Then, by Lemma~\ref{lem:KPair},
\[
\begin{array}{rcl}
\KPair{r}{s} &=& \lambda \KPair{(y-u_2x)^3}{(y-v_2x)^2(v_3y-v_4x)}\\
&=&  \lambda (1+u_2v_2)^2 (v_3+u_2 v_4) %= \lambda q(v)^2 q_1(u)
\end{array}
\]
%\iffalse
%\todo{Wow - we can also get this from Andrew's groovy Hessian formula for the Killing form}
%Note that if we set $p_1 = y-u_2x$ so $r = \lambda p_1^3$ and if we set $q_1= y-v_2x$ and $q_2 = v_3y+v_4x$ so $s = q_1^2 q_2$ then we have just seen that
%\[
%\KPair{p_1^3}{q_1^2 q_2} = q_1(-u_2,1)^2 q_2(-u_2,1).\]
%\fi
We set $z(\lambda,u_2,v_2,v_3,v_4) \ceq \lambda(1+u_2v_2)$ and $g(\lambda,u_2,v_2,v_3,v_4)\ceq v_3+u_2 v_4$ . 
%\todo[inline]{Or equivalently, $z(\lambda, u_2, q_1, q_2) = q_1(-u_2,1)$ and $g(\lambda, u_2, q_1, q_2) = \lambda q_2(-u_2,1)$.}
Then
\[
\RPhi_{{\tilde f}_{1,1}} \1_{{\widetilde C_1}\times {\overline C_1^*}}
=
\RPhi_{z^2g} \1_{{\widetilde C_1}\times {\overline C_1^*}}.
\]
Introduce
\[
{\widetilde \Lambda_1} 
\ceq 
\left\{ ([u],r,s,[v])\in \mathbb{P}^1\times {\overline C_1}\times {\overline C_1^*} \times \BP^1 \tq 
\begin{array}{c} 
u\vert r,\  v\vert s \\
v\perp u  
\end{array} \right\}.
\]
Now we restrict these vanishing cycles to
\[
(\rho_1\times \id_{\ov C_1^*})^{-1}\Lambda_1^\mathrm{reg}
\ceq
\{ 
([u],r,s)\in {\wt C_1}\times {C_1^*} \tq r\in C_1, [r,s] =0
\}.
\]
Here we have just used the fact that $\Lambda_1^\mathrm{reg}\subset C_1\times C_1^*$.
We may identify this with 
\[
\left\{ 
([u],r,s,[v])\in  \BP^1\times {C_1}\times{C_1^*}\times \BP^1  \tq \ [r,s] =0,\  
\begin{array}{c}
u\vert r,\ v\vert s \\
u\perp v
\end{array}
\right\}
\]
Observe that $z=0$ on this variety while $g$ is never zero on this variety since if $g(u,r,s,v) =0$ then $s=v^3$ or $s=0$ in which case $s\in {\ov C_2^*}$ which does not contain $C_1^*$. 
Therefore, by \cite{CFMMX}*{}, 
\begin{equation}\label{eqn:VtC11}
\left( \RPhi_{z^2g} \1_{{\widetilde C_1}\times {\overline C_1^*}}\right)\vert_{(\rho_1\times \id_{\ov C_1^*})^{-1}\Lambda_1^\mathrm{reg}}
=
\mathcal{T}_{(\rho_1\times \id_{\ov C_1^*})^{-1}\Lambda_1^\mathrm{reg}},
\end{equation}
where $\mathcal{T}$ is the local system defined by the double cover $\sqrt{g}$.
This computes \eqref{eqn:VtC} in this case.

Now we turn to \eqref{eqn:pVtC}. 
Since $\rho_1$ is an isomorphism over $C_1$, we now have
\[
\begin{array}{rcl}
&&\hskip-8mm \left( (\rho_1\times \id_{\ov C_1^*})_* 
\RPhi_{{\tilde f}_{1,1}}[-1] \1_{\widetilde C_1}[2]\boxtimes \1_{\overline C_1^*}[3] \right)\vert_{\Lambda_1^\mathrm{reg}} [-4]\\
&=&
 (\rho_1\times \id_{\ov C_1^*})_* \mathcal{T}_{(\rho_1\times \id_{\ov C_1^*})^{-1}\Lambda_1^\mathrm{reg}} \\
&=&
\mathcal{T}_{\Lambda_1^\mathrm{reg}}.
\end{array}
\]
On the other hand,
\[
\begin{array}{rcl}
&&\hskip-8mm \left( (\rho_1\times \id_{\ov C_1^*})_* 
\RPhi_{{\tilde f}_{1,1}}[-1] \1_{\widetilde C_1}[2]\boxtimes \1_{\overline C_1^*}[3] \right)\vert_{\Lambda_1^\mathrm{reg}}[-4] \\
&=&
\RPhi_{\KPair{}{}}[-1] \left( (\rho_1)_* \1_{\wt C_1}[2]\boxtimes \1_{\ov C_1^*}[3] \right)\vert_{\Lambda_1^\mathrm{reg}}[-4] \\
&=&
\RPhi_{\KPair{}{}}[-1] \left( \left( \IC(\1_{C_0}) \oplus \IC(\1_{C_1}) \right) \boxtimes \1_{\ov C_1^*}[3] \right)\vert_{\Lambda_1^\mathrm{reg}}[-4] \\
&=&
\RPhi_{\KPair{}{}}[-1] \left( \IC(\1_{C_0})  \boxtimes \1_{\ov C_1^*}[3] \right)\vert_{\Lambda_1^\mathrm{reg}}[-4]  \\
&& \oplus \ 
\RPhi_{\KPair{}{}}[-1] \left(\IC(\1_{C_1}) \boxtimes \1_{\ov C_1^*} [3]\right)\vert_{\Lambda_1^\mathrm{reg}}[-4] \\
&=& \Evs_{C_1}\IC(\1_{C_0}) \oplus \Evs_{C_1}\IC(\1_{C_1}) \\
&=&  \Evs_{C_1}\IC(\1_{C_1}),
\end{array}
\]
since $\Evs_{C_1}\IC(\1_{C_0}) =0$.
Therefore,
\[
\Evs_{C_1}\IC(\1_{C_1})
=
\mathcal{T}_{\Lambda_1^\mathrm{reg}}.
\]

\subsection{$\IC(\1_{C_2})$}\label{sssec:covC2}
 
 We define 
\begin{align}\label{eqn:covC2}
 \widetilde C_2 
% &= \{ (r,[u_1:u_2]) \in {\ov C_2} \times \mathbb{P}^1 \tq r(u_1,u_2)=0,\; \Delta_r(u_1,u_2) = 0 \}\\
&= \{ ([u],r) \in  \BP^1 \times P_3[x,y] \tq {\rm Grad}(f)_{(u)} = 0 \} 
 \end{align}
and equip it with an action of $\GL_2$ coming from existing action on $P_3[x,y]$ and the action $[u_1:u_2] \mapsto [u_1:u_2] h^{-1}$ on $\mathbb{P}^1$.
As a complex variety, $\widetilde C_2$ is smooth and $\widetilde C_2$ is a rank $2$ vector bundle over $\mathbb{P}^1$.
The condition ${\rm Grad}(f)_{(u)} = 0$ means that that both $r$ and $\Delta_r$ vanish at $u$ and thus that $r$ vanishes to order $2$ at $u$ and thus that $u(x,y)^2 = (u_1y-u_2x)^2$ divides $r(x,y)$.
Using this, together with the isomorphism $P_1[x,y]^\times/\mathbb{G}_m \iso \mathbb{P}^1$ from Section~\ref{sssec:covC1}, we can also write
\[
 \widetilde C_2 = \{ ([u],r) \in \BP^1 \times {\ov C_2} \tq u^2 \vert r \}.
\] 
%Let  $\rho_2 : {\widetilde C_2}\to {\overline C_2}$ be the cover defined by projection.

The projection map $\rho_2: \widetilde C_2 \rightarrow \ov C_2$ is proper, as it is a closed immersion composed with a base change of a proper map, and it realizes the blowup of $\ov C_2$ at $C_1$.
The fiber product of $\widetilde C_2$ over $\ov C_1$ is isomorphic to $\widetilde C_1$.
%\todo{Do we need this observation?}
The map $\rho_2$ is an isomorphism over $C_2 \cup C_1$ while the fibres of $\rho_2$ are $\mathbb{P}^1$ over $C_0$.
It follows that $\rho_2$ is a small map.
By the Decomposition Theorem,
\begin{equation}\label{eqn:pi2}
(\rho_{2})_*(\1_{\widetilde C_2}[3]) = \IC(\1_{C_2}) .
\end{equation}

 An alternate description of this cover is as follows:
Define $\BA^2_\times \times \BA^2\rightarrow \ov C_2$ by 
\[
(u_1,u_2,u_3,u_4 )\mapsto (u_1y-u_2x )^2(u_3y-u_4x).
\]
Let $\mathbb{G}_m$ act on $\BA^2_\times \times \BA^2$ by 
$$a.((u_1,u_2),(u_3,u_4))=((au_1,au_2),(a^{-2}u_3,a^{-2}u_4)).$$
Then 
\[
(\BA^2_\times \times \BA^2)/\mathbb{G}_m \iso \widetilde C_2.
\]
The map $\BA^2_\times \times \BA^2\rightarrow \ov C_2$ is $\mathbb{G}_m$-invariant and induces $\rho_2: \widetilde C_2\rightarrow \ov C_2$.

\subsubsection{$\Ft \IC(\1_{C_2})$ and $\Evs_{C_0}\IC(\1_{C_2})$}\label{sssec:Evs02}

Consider the affine part of $\widetilde C_2$ given by
\[
(u_2,u_3,u_4 )\mapsto ( [1:u_2],(y-u_2x )^2(u_3y-u_4x) ).
\]
For $r=(y-u_2x )^2(u_3y-u_4x) \in \ov C_2$ and $s=s_0y^3-3s_1y^2x-3s_2yx^2-s_3x^3\in C_0^*$ we have 
\[
\begin{array}{rcl}
&& \hskip-8mm \widetilde f_{2,0}(u_2,u_3,u_4,s) \\
&=&\KPair{(y-u_2x )^2(u_3y-u_4x) }{s}\\
&=& u_3s_0+s_1(2u_2u_3+u_4)-s_2(2u_2u_4+u_2^2u_3)+u_2^2u_4s_3\\
&=& u_3(s_0+2s_1u_2-s_2u_2^2) + u_4(s_1-2s_2u_2+s_3u_2^2).
\end{array}
\]
Set $z_1(u_2,u_3,u_4)=u_3$, $z_2(u_2,u_3,u_4)=u_4$, 
$g_1(u_2,u_3,u_4,s)=s_0+2s_1u_2-s_2u_2^2$ and 
$g_2(u_2,u_3,u_4,s)=s_1-2s_2u_2+s_3u_2^2$. 
Then we can write 
\[
\widetilde f_{2,0}(u_2,u_3,u_4,s)=z_1g_1+z_2g_2.
\]
Thus, $\RPhi_{\widetilde f_{2,0}} \1_{{\widetilde C_2}\times {\overline C_0^*}}$ is the skyscraper sheaf supported on the locus of $z_1$, $z_2$, $g_1$ and $G_2$.
To describe this locus, we introduce 
\[
%\widetilde C_0^* = \{ ([v],s) \in  \BP^1\times {\overline C_0^*}  \tq s(v)=0,\  \Delta_s(v)=0 \} = 
C_1^*\ceq  \{ (s,[v]) \in  {\overline C_1^*}\times \BP^1 \tq s(v)=0,\  \Delta_s(v)=0 \}
\] 
and the obvious cover $\rho_1^* : {\widetilde C_1^*} \to {\overline C_1^*}$. 
Note that this is nothing more than the dual of the cover $\rho_2 : {\widetilde C_2} \to {\overline C_2}$ defined above.
Similarly, if we consider a different affine chart of $\widetilde C_2$, we can draw the same conclusion after extending the definition $z_1$, $z_2$, $g_1$ and $G_2$ accordingly. 
Using \cite{CFMMX}*{Corollary 7.2.6} and the Sebastiani-Thom theorem, it now follows that
\begin{equation}\label{eqn:VtC20}
\RPhi_{\widetilde f_{2,0}}[-1] \left( \1_{\widetilde C_2}\boxtimes \1_{\overline C_0^*}\right)
=
 \1_{C_0}\boxtimes \1_{\widetilde C_1^*}[-4].
\end{equation}
This computes \eqref{eqn:VtC} in this case.
%\todo{Shifts work out! It's delicate!}

Using \eqref{eqn:VtC20} we now have
\begin{equation}\label{eqn:Ft2}
\begin{array}{rcl}
&&\hskip-8mm \RPhi_{\KPair{}{}}[-1] \left( \IC(\1_{C_2})\boxtimes \1_{\ov C_0^*}[4]\right) \\
&=& \RPhi_{\KPair{}{}}[-1] \left((\rho_{2})_*(\1_{\widetilde C_2}[3]) \boxtimes \1_{\ov C_0^*}[4]\right) \\
&=& (\rho_2\times\id_{\ov C_0^*})_* \RPhi_{\widetilde f_{2,0}}[-1] \left( \1_{\widetilde C_2}[3]\boxtimes \1_{\overline C_0^*}[4]\right) \\
&=& (\rho_2\times\id_{\ov C_0^*})_* \RPhi_{\widetilde f_{2,0}}[-1] \left( \1_{\widetilde C_2}\boxtimes \1_{\overline C_0^*}\right)[7] \\
&=& (\id_{C_0}\times \rho_1^*)_*  \left(
 \1_{C_0}\boxtimes \1_{\widetilde C_1^*}\right)[7-4] \\ 
&=& \1_{C_0}\boxtimes (\rho_1^*)_* \1_{\widetilde C_1^*}[3] \\  
&=& \1_{C_0}\boxtimes \IC(\1_{C_1^*}) \\  
\end{array}
\end{equation}
Therefore,
\[
\Ft \IC(\1_{C_2}) = \IC(\1_{C_1^*}).
\]

Using \eqref{eqn:Ft2} we also find $\Evs_{C_0} \IC(\1_{C_2}$ easily too:
\[
\begin{array}{rcl}
&&\hskip-8mm
\Evs_{C_0}\IC(\1_{C_2}) \\
&=& 
\RPhi_{\KPair{}{}}[-1] \left( \IC(\1_{C_2})\boxtimes \1_{\ov C_0^*}[4]\right)\vert_{\Lambda_0^\mathrm{reg}}[-4] \\
&=& \1_{C_0}\boxtimes \IC(\1_{C_1^*})\vert_{\Lambda_0^\mathrm{reg}}[-4] \\  
&=& 0 ,
\end{array}
\]
since $\Lambda_0^\mathrm{reg} = C_0\times C_0^*$ and $\IC(\1_{C_1^*})\vert_{C_0^*} = 0$.

\subsubsection{$\Evs_{C_1}\IC(\1_{C_2})$}\label{sssec:Evs12}

As above, consider the affine part of $\widetilde C_2$ given by
\[
(u_2,u_3,u_4)\mapsto ( [1:u_2],(y-u_2x )^2(u_3y-u_4x) ).
\]
Set $r = (y-u_2x )^2(u_3y-u_4x)$. 
Now choose local coordinates for ${\overline C_1^*}$ by writing $s =  (y-v_2x )^2(v_3y-v_4x)$.
Then
\[
\begin{array}{rcl}
&&\hskip-8mm 3 {\tilde f}_{2,1}([1:u_2],r,s) \\
&=& \KPair{r}{s} \\
&=& \KPair{(y-u_2x )^2(u_3y-u_4x)}{(y-v_2x )^2(v_3y-v_4x)} \\
&=&  3z_1^2 (u_3v_3+u_4v_4)-2z_1z_2(v_4-v_2v_3)\\
&=& 3g_1\left(z_1-\frac{g_2}{3g_1}z_2 \right)^2-\frac{g_2^2}{3g_1}z_2^2,
\end{array}
\]
where $z_1 = 1+u_2v_2$, $z_2 = u_4-u_2u_3$, $g_1=u_3v_3+u_4v_4$, and $g_2=v_4-v_2v_3$. 
%The matrix of the above bilinear form is 
%$$\bpm 3g_1 &-g_2\\ -g_2 &0 \epm,$$
%whose determinant is a square. \todo{One can complete square here.}
Observe that on $(\rho_2\times\id_{\ov C_1^*})^{-1}\Lambda_1^\mathrm{reg}$, $z_1=0$ and $z_2=0$ while $g_1\ne 0$ and $g_2\ne0$. 
Therefore, by \cite{CFMMX}*{Proposition 7.2.5},
\begin{equation}
\begin{array}{rcl}
\RPhi_{\tilde f_{2,1}}[-1] \left( \1_{\widetilde C_2}\boxtimes \1_{\overline C_1^*}\right)\vert_{(\rho_2\times\id_{\ov C_1^*})^{-1}\Lambda_1^\mathrm{reg}}
=
\1_{(\rho_2\times\id_{\ov C_1^*})^{-1}\Lambda_1^\mathrm{reg}}[-2],
\end{array}
\end{equation}
This completes the calculation of \eqref{eqn:VtC} in this case.
%\todo{Shifts work out!}

From this we easily find $\Evs_{C_1} \IC(\1_{C_2})$ as follows.
\[
\begin{array}{rcl}
&&\hskip-8mm 
\Evs_{C_1} \IC(\1_{C_2}) \\
&=& \left( \RPhi_{\KPair{}{}}[-1] \IC(\1_{C_2})\boxtimes\1_{\ov C_1^*}[3]\right)\vert_{\Lambda_1^\mathrm{reg}} [-4]\\
&=& \left( \RPhi_{\KPair{}{}}[-1] \left( (\rho_2)_*\1_{\wt C_2}[3] \boxtimes\1_{\ov C_1^*}[3]\right) \right)\vert_{\Lambda_1^\mathrm{reg}}[-4] \\
&=& \left( \RPhi_{\KPair{}{}}[-1] \left( (\rho_2)_*\1_{\wt C_2} \boxtimes\1_{\ov C_1^*}\right) \right)\vert_{\Lambda_1^\mathrm{reg}}[2] \\
&=& (\rho_2\times\id_{\ov C_1^*})_* \left( \RPhi_{\KPair{}{}}[-1] \left( \1_{\wt C_2}\boxtimes\1_{\ov C_1^*}\right) \right)\vert_{\Lambda_1^\mathrm{reg}}[2] \\
&=& (\rho_2\times\id_{\ov C_1^*})_* \left( \RPhi_{\tilde f_{2,1}}[-1] \left( \1_{\widetilde C_2}\boxtimes \1_{\overline C_1^*}\right) \right)\vert_{\Lambda_1^\mathrm{reg}}[2]\\
&=& (\rho_2\times\id_{\ov C_1^*})_* \1_{(\rho_2\times\id_{\ov C_1^*})^{-1}\Lambda_1^\mathrm{reg}}[2-2]\\
&=& \1_{\Lambda_1^\mathrm{reg}},
\end{array}
\]
since $\rho_2$ is an isomorphism over $C_1$.

\subsubsection{$\Evs_{C_2}\IC(\1_{C_2})$}\label{sssec:Evs22}

As above, consider the affine part of $\widetilde C_2$ given by
\[
(u_2,u_3,u_4)\mapsto ( [1:u_2],(y-u_2x )^2(u_3y-u_4x) ).
\]
Set $r =  (y-u_2x )^2(u_3y-u_4x)$. 
Now choose local coordinates for ${\overline C_2^*}$ by writing $s =  (y-v_2x )^3$.
Then
\[
\begin{array}{rcl}
&&\hskip-8mm {\tilde f}_{2,2}([1:u_2],r,s) \\
&=& \KPair{r}{s} \\
&=& \KPair{(y-u_2x )^2(u_3y-u_4x)}{(y-v_2x )^3} \\
&=& \lambda(1+u_2v_2)^2 (u_3+u_4v_2).
\end{array}
\]
Set $z_1 = 1+u_2v_2$ and $g_1 = u_3+u_4v_2$. %\todo{Check these. It's correct.}.
The locus of these two polynomials is 
\[
{\widetilde \Lambda_2} 
\ceq 
\left\{ ([u],r,s,[v])\in {\overline C_2}\times \mathbb{P}^1 \times (\mathbb{P}^1)^*\times {\overline C_2^*} \tq 
\begin{array}{c} u\vert r,\  v\vert s \\
 v\perp u  \end{array} \right\}.
\]
Then 
\begin{equation}\label{eqn:VtC22}
\RPhi_{{\tilde f}_{2,2}} \1_{{\widetilde C_2}\times {\overline C_2}}
=
\mathcal{T}_{{\widetilde \Lambda_2}}
\end{equation}
where $\mathcal{T}_{{\widetilde \Lambda_2}}$ is the local system defined by the double cover $\sqrt{g}$.
Upon restriction of $(\rho_2\times\id_{C_2^*})^{-1}(\Lambda_2^\mathrm{reg})$ we get 
\begin{equation}%\label{eqn:VtC22}
\left( \RPhi_{{\tilde f}_{2,2}}[-1] \1_{{\widetilde C_2}\times {\overline C_2}}\right)\vert_{(\rho_2\times\id_{C_2^*})^{-1}(\Lambda_2^\mathrm{reg})}
=
\mathcal{T}_{(\rho_2\times\id_{C_2^*})^{-1}(\Lambda_2^\mathrm{reg})}[-1].
\end{equation}
This computes \eqref{eqn:VtC} in this case.

Now we find $\Evs_{C_2}\IC(\1_{C_2})$ easily:
\[
\begin{array}{rcl}
&&\hskip-8mm 
\Evs_{C_2}\IC(\1_{C_2}) \\
&=& \left(\RPhi_{\KPair{}{}}[-1] \IC(\1_{C_2})\boxtimes\1_{C_2^*}[2]\right)\vert_{\Lambda_2^\mathrm{reg}}[-4] \\
&=& \left(\RPhi_{\KPair{}{}}[-1] (\rho_2)_*\1_{\wt C_2}[3] \boxtimes\1_{C_2^*}[2]\right)\vert_{\Lambda_2^\mathrm{reg}}[-4] \\
&=& (\rho_2 \times \1_{C_2^*})_* \left(\RPhi_{{\tilde f}_{2,2}}[-1] \1_{\wt C_2}[3] \boxtimes\1_{C_2^*}[2]\right)\vert_{\Lambda_2^\mathrm{reg}}[-4] \\
&=& (\rho_2 \times \1_{C_2^*})_* \left(\RPhi_{{\tilde f}_{2,2}}[-1] \1_{\wt C_2} \boxtimes\1_{C_2^*}\right)\vert_{\Lambda_2^\mathrm{reg}}[1] \\
&=& (\rho_2 \times \1_{C_2^*})_* \mathcal{T}_{(\rho_2\times\id_{C_2^*})^{-1}(\Lambda_2^\mathrm{reg})}[1-1] \\
&=& \mathcal{T}_{\Lambda_2^\mathrm{reg}},
\end{array}
\]
since $\rho_2$ is an isomorphism over $C_2$.

\subsection{$\IC(\1_{C_3})$}\label{sssec:C3}
 
Because the variety $P_3[x,y]$ is smooth and $C_3$ is dense in $P_3[x,y]$, we have 
\[ 
\IC(\1_{C_3}) = \1_{P_3[x,y]}[4] .
\]
 
\subsubsection{$\Ev_{C_0}\IC(\1_{C_3})$ }\label{sssec:Evs03}

The only singularity of the killing form on $P_3[x,y]\times \overline{C_0^\ast}$ is at the origin so
\begin{equation}\label{VtC30}
\RPhi_{\KPair{}{}} \left( \1_{\overline C_3}\boxtimes \1_{\overline C_0^*}\right)
=
\1_{\overline C_0}\boxtimes \1_{\overline C_3^*}
=
\1_{C_0}\boxtimes \1_{C_3^*}
\end{equation}
This computes \eqref{eqn:VtC} in this case. 

Note that \eqref{VtC30} may be written in the form
\begin{equation}\label{eqn:Ft3}
\RPhi_{\KPair{}{}} \left( \IC(\1_{C_3}) \boxtimes \1_{\overline C_0^*}\right)
=
\1_{\overline C_0}\boxtimes \IC(\1_{C_3^*})
\end{equation}
from which it follows that
\begin{equation}\label{Ft3}
\Ft \IC(\1_{C_3}) = \IC(\1_{C_3^*}).
\end{equation}

Now, consider the restriction of \eqref{VtC30} to $\Lambda_{0} = C_0\times C_0^*$.
Since
\[
\Lambda_{0}\cap (C_0\times C_3^*) = \emptyset,
\]
it follows that
\begin{equation}\label{VC03}
\left( \RPhi_{\KPair{}{}} \left( \1_{\overline C_3}\boxtimes \1_{\overline C_0^*}\right) \right)\vert_{\Lambda_0}
=
\left( \1_{C_0}\boxtimes \1_{C_3^*}\right)\vert_{\Lambda_0}
=
0
\end{equation}
and therefore that
\[ 
\Ev_{C_0}\IC(\1_{C_3}) 
= 
0 .
\]

\subsubsection{$\Ev_{C_1}\IC(\1_{C_3})$}\label{sssec:Evs13}

Again, observe that the only singularity of the killing form on $P_3[x,y]\times \overline{C_1^\ast}$ is at the origin so
\begin{equation}\label{VtC31}
\RPhi_{\KPair{}{}} \left( \1_{\overline C_3}\boxtimes \1_{\overline C_1^*}\right)
=
\1_{\overline C_0}\boxtimes \1_{\overline C_3^*}
=
\1_{C_0}\boxtimes \1_{C_3^*}
\end{equation}
This computes \eqref{eqn:VtC} in this case. 

Now, consider the restriction of \eqref{VtC31} to $\Lambda_{1} \subset C_1\times C_1^*$.  Since
\[
(C_1\times C_1^*) \cap (C_0\times C_3^*) = \emptyset,
\]
it follows that
\begin{equation}\label{VC31}
\left( \RPhi_{\KPair{}{}} \left( \1_{\overline C_3}\boxtimes \1_{\overline C_1^*}\right) \right)\vert_{\Lambda_1}
=
\left( \1_{C_0}\boxtimes \1_{C_3^*}\right)\vert_{\Lambda_1}
=
0
\end{equation}
and therefore that
\[ 
\Ev_{C_1}\IC(\1_{C_3}) 
= 
0 .
\]

\subsubsection{$\Ev_{C_2}\IC(\1_{C_3})$}\label{sssec:Evs23}

Arguing as in the two cases above,
\begin{equation}\label{VtC32}
\RPhi_{\KPair{}{}} \left( \1_{\overline C_3}\boxtimes \1_{\overline C_2^*}\right)
=
\1_{\overline C_0}\boxtimes \1_{\overline C_3^*}
=
\1_{C_0}\boxtimes \1_{C_3^*}.
\end{equation}
This computes \eqref{eqn:VtC} in this case.

The restriction of \eqref{VtC32} to $\Lambda_{2} \subset C_2\times C_2^*$ is
\begin{equation}\label{VC32}
\left( \RPhi_{\KPair{}{}} \left( \1_{\overline C_3}\boxtimes \1_{\overline C_2^*}\right) \right)\vert_{\Lambda_2}
=
\left( \1_{C_0}\boxtimes \1_{C_3^*}\right)\vert_{\Lambda_2}
=
0
\end{equation}
and therefore
\[ 
\Ev_{C_2}\IC(\1_{C_3}) 
= 
0 .
\]

\subsubsection{$\Ev_{C_3}\IC(\1_{C_3})$}\label{sssec:Evs33}

Since $\KPair{}{}$ vanishes on  ${\overline C_3}\times {\overline C_3^*}$ we have
\begin{equation}\label{VtC33}
\RPhi_{\KPair{}{}} \left( \1_{\overline C_3}\boxtimes \1_{\overline C_3^*}\right)
=
\RPhi_{0} \left( \1_{\overline C_3}\boxtimes \1_{\overline C_3^*}\right)
=
\1_{\overline C_3}\boxtimes \1_{\overline C_3^*}.
\end{equation}
This computes \eqref{eqn:VtC} in this case. 

This time, the restriction of \eqref{VtC33} to $\Lambda_{3} = C_3\times C_3^*$ is
\begin{equation}\label{VC33}
\left( \RPhi_{\KPair{}{}} \left( \1_{\overline C_3}\boxtimes \1_{\overline C_3^*}\right) \right)\vert_{\Lambda_3}
=
\left( \1_{\overline C_3}\boxtimes \1_{\overline C_3^*}\right)\vert_{\Lambda_3}
=
\1_{\overline C_3}\boxtimes \1_{\overline C_3^*}.
\end{equation}
Therefore
\[ 
\Ev_{C_3}\IC(\1_{C_3}) 
= 
\1_{\Lambda_3}.
\]

\subsection{$\IC(\mathcal{R}_{C_3})$}\label{sssec:covC3}
 
Consider 
\begin{equation}\label{eqn:sssec:covC3}
\widetilde C_3 = \{ ([u],r) \in \mathbb{P}^1\times P_3[x,y]  \tq u\vert r \}
\end{equation}
and equip it with an action of $\GL_2$ coming from existing action on $P_3[x,y]$ and the right action of $\GL_2$ on $\mathbb{P}^1$ as in Sections~\ref{sssec:covC1} and \ref{sssec:covC2}.
Then $\widetilde C_3$ is smooth and is a rank $3$ vector bundle over $\mathbb{P}^1$.
 
For use below, we give an alternate description of this cover.
Consider 
\begin{equation}\label{eq: p3}
\begin{array}{rcl}
\BA^2_\times \times \BA^3 &\rightarrow& \ov C_3 \\
((u_1,u_2),(u_3,u_4,u_5)) & \rightarrow& (u_1y-u_2x)(u_3y^2 + u_4 yx + u_5 x^2).
 %(3u_1u_3,u_1u_4+u_2u_3,u_1u_5+u_4u_2,-3u_2u_5).
\end{array}
\end{equation}
Let $\mathbb{G}_m$ act on $\BA^2_\times \times \BA^3$ by 
$$a.((u_1,u_2),(u_3,u_4,u_5))\mapsto ((au_1,au_2),(a^{-1}u_3,a^{-1}u_4,a^{-1}u_5)).$$
Then 
\[
\begin{array}{rcl}
(\BA^2_\times \times \BA^3)/\mathbb{G}_m &\to& {\widetilde C_3} \\
\left[ (u_1,u_2),(u_3,u_4,u_5) \right] &\mapsto& \left( [u_1:u_2], (u_1y-u_2x)(u_3y^2 + u_4 yx + u_5 x^2) \right)
\end{array}
\]
is an isomorphism and $\rho_3: \widetilde C_3\rightarrow \ov C_3$ is the map induced by $\BA^2_\times \times \BA^3\rightarrow \ov C_3$.

The map $\rho_3: \widetilde C_3\rightarrow \ov C_3$ is proper, as it is a closed immersion composed with a base change of a proper map.
Recall that $\ov C_3 = P_3[x,y]$.
The map $\rho_3$ is also non-Galois, finite, a $3:1$ cover over $C_3 \cup C_2 \cup C_1$, which becomes $2:1$ over $C_2$ and $1:1$ over $C_1$.
The fibre of $\rho_3$ is isomorphic to $\mathbb{P}^1$ over $C_0$.
 We note that the fibre product of $\widetilde C_3$ over $\ov C_2$ has two irreducible components intersecting over $\ov C_1$.
 One of the two components is precisely $\widetilde C_2$ defined above, the other is isomorphic to it.
 The fiber product of $\widetilde C_3$ over $\ov C_1$ is isomorphic to $\widetilde C_1$.
The map $\rho_3$ is small map and
 \begin{equation}\label{eqn:pi3}
 (\rho_3)_*(\1_{\widetilde C_3}[4]) = \IC(\1_{C_3}) \oplus \IC(\mathcal{R}_{C_3}) 
 \end{equation}
 where $\mathcal{R}_{C_3}$ is the rank $2$ sheaf on $C_3$ associated to the two dimensional irreducible representation $\varrho$ of $S_3$.

\subsubsection{$\Ft \IC(\mathcal{R}_{C_3})$ and $\Ev_{C_0}\IC(\mathcal{R}_{C_3})$}\label{sssec:Evs03R}

From Section~\ref{sssec:covC3} recall the isomorphism
\[
\begin{array}{rcl}
(\BA^2_\times \times \BA^3)/\mathbb{G}_m &\to& {\widetilde C_3} \\
\left[ (u_1,u_2),(u_3,u_4,u_5) \right] &\mapsto& \left( [u_1:u_2] , (u_1y-u_2x)(u_3y^2 + u_4 y x  + u_5 x^2)\right)
\end{array}
\]
We choose an affine part of ${\widetilde C_3}$ given locally by
\[
\begin{array}{rcl}
\mathbb{A}^4 &\hookrightarrow& {\widetilde C_3} \\
(u_2,u_3,u_4,u_5) &\mapsto& \left( [1:u_2] , (y-u_2x)(u_3y^2 + u_4 yx + u_5 x^2)\right).
\end{array}
\]
Then 
\[
{\tilde f}_{3,0} : {\widetilde C_3}\times {\overline C_0^*} \to \mathbb{A}^1
\]
is given locally by
\[
\begin{array}{rcl}
&&\hskip-8mm {\tilde f}_{3,0}\vert_{\BA^4 \times C_0^*} (u_2,u_3,u_4,u_5, s(x,y))\\
&=& \KPair{ (y-u_2x)(u_3y^2 + u_4 yx + u_5 x^2)}{s_0 y^3 - 3s_1y^2 x - 3s_2y x^2 - s_3 x^3} \\
%&=& \KPair{u_3 t^3 + (u_4-u_2u_3)t^2s + (u_5-u_2u_4)ts^2 - u_2u_5 s^3 }{s_0 t^3 - 3s_1t^2 s - 3s_2t s^2 - s_3 s^3} \\ 
&=& u_3 s_0 - (u_4-u_2u_3) s_1 - (u_5-u_2u_4) s_2 + u_2u_5 s_3\\
&=& u_3 (s_0 +u_2s_1) + u_4(u_2s_2-s_1)+u_5(u_2s_3-s_2).
\end{array} 
\]
Now consider the vanishing cycles appearing in \eqref{eqn:VtC}. %  \todo{what ref?}
Set 
\[ \begin{array}{lcl}
z_1(u_2,u_3,u_4,u_5, s) = u_3, &\quad& g_1(u_2,u_3,u_4,u_5, s) = s_0 +u_2s_1,\\
z_2(u_2,u_3,u_4,u_5, s) = u_4, && g_2(u_2,u_3,u_4,u_5, s) = u_2s_2-s_1,\\
z_3(u_2,u_3,u_4,u_5, s) = u_5,&& g_3(u_2,u_3,u_4,u_5, s) = u_2s_3-s_2;
\end{array}
\]
Then, 
\[
\RPhi_{{\tilde f}_{3,0}} \1_{{\widetilde C_3}\times {\overline C_0^*}}\\
= 
\RPhi_{z_1g_1+z_2g_2+z_3g_3} \1_{{\widetilde C_3}\times {\overline C_0^*}}.
\]
\iffalse
By the Tom-Sebastiani Theorem,
\[
\begin{array}{rcl}
&& \RPhi_{z_1g_1+z_2g_2+z_3g_3} \1_{{\widetilde C_3}\times {\overline C_0^*}}\\
&=&
\left(  \RPhi_{z_1g_1} \1_{{\widetilde C_3}\times {\overline C_0^*}}\right) \otimes
\left(  \RPhi_{z_2g_2} \1_{{\widetilde C_3}\times {\overline C_0^*}}\right) \otimes
\left(  \RPhi_{z_3g_3} \1_{{\widetilde C_3}\times {\overline C_0^*}}\right).
\end{array}
\]
\fi
Thus, \eqref{eqn:VtC} is the skycraper sheaf supported by the solution to the equations $z_1=z_2=z_3=0$ and $g_1=g_2=g_3=0$, which is to say, $u_3=u_4=u_5=0$ and $s_0 +u_2s_1=u_2s_2-s_1= u_2s_3-s_2 =0$; this is precisely 
$
{\overline C_0}\times {\widetilde C_2^*} 
$
where 
\[
{\widetilde C_2^*}\ceq \{ (s,[v] ) \in  {\overline C_2^*}\times \BP^1 \tq g(v) =0 \}. 
\]
Define $\rho_2^* : {\widetilde C_2^*} \to {\overline C_2^*}$ by $\rho_2^*([v],s) = s$. 
Then
\begin{equation}\label{eqn:VtC30}
\RPhi_{{\tilde f}_{3,0}}[-1] \left( \1_{\widetilde C_3}\boxtimes \1_{\overline C_0^*}\right)
= 
\1_{\overline C_0}\boxtimes \1_{\widetilde C_2^*}[-6] .
\end{equation}
This computes \eqref{eqn:VtC} in this case.
%\todo{Shifts work out!}

Now, 
\[
\begin{array}{rcl}
&& \hskip-8mm 
(\rho_3\times \id_{\overline C_0^*})_* \RPhi_{{\tilde f}_{3,0}}[-1] \left( \1_{\widetilde C_3}[4]\boxtimes \1_{\overline C_0^*}[4]\right) \\
&=& 
(\id_{C_0} \times \rho_2^*)_* \left( \1_{\overline C_0}[4]\boxtimes \1_{\widetilde C_2^*}[4] \right) [-6]\\
&=&
\1_{C_0}\boxtimes (\rho_2^*)_*\1_{\widetilde C_2^*}[2]\\
&=&
\1_{C_0}\boxtimes \left( \IC(\1_{C_2^*})\oplus \IC(\1_{C_3^*}) \right) \\
&=&
\left( \1_{C_0}\boxtimes \IC(\1_{C_2^*})\right) \oplus \left( \1_{C_0}\boxtimes \IC(\1_{C_3^*}) \right).
\end{array} 
\]
On the other hand,
\[
\begin{array}{rcl}
&& \hskip-8mm (\rho_3\times \id_{\overline C_0^*})_*\RPhi_{{\tilde f}_{3,0}}[-1] \left( \1_{\widetilde C_3}[4]\boxtimes \1_{\overline C_0^*}[4]\right) \\
&=& \RPhi_{\KPair{}{}} \left( (\rho_3)_* \1_{\widetilde C_3}[4] \boxtimes \1_{\overline C_0^*}[4]\right) \\
&=& 
\RPhi_{\KPair{}{}}[-1] \left( \left( \IC(\1_{C_3}) \oplus \IC(\mathcal{R}_{C_3}) \right) \boxtimes \1_{\overline C_0^*}[4]\right) \\
&=& 
\RPhi_{\KPair{}{}}[-1] \left(  \IC(\1_{C_3})\boxtimes \1_{\overline C_0^*}[4]\right) \oplus \RPhi_{\KPair{}{}}[-1]  \left( \IC(\mathcal{R}_{C_3})  \boxtimes \1_{\overline C_0^*}[4]\right)  \\
&=& 
\left( \1_{C_0} \boxtimes \IC(\1_{C_3^*}) \right) \oplus \RPhi_{\KPair{}{}} [-1] \left( \IC(\mathcal{R}_{C_3})  \boxtimes \1_{\overline C_0^*}[4]\right)  \\
\end{array}
\]
Therefore,
\begin{equation}\label{eqn:Ft3'}
\RPhi_{\KPair{}{}}[-1] \left(\IC(\mathcal{R}_{C_3})\otimes\1_{\ov C_0^*}[4]\right)
=
\1_{C_0}\boxtimes  \IC(\1_{C_2^*}).
\end{equation}
If follows that
\[
\Ft \IC(\mathcal{R}_{C_3}) = \IC(\1_{C_2^*}).
\]

Returning to \eqref{eqn:VtC30}, since 
\[
(C_0\times {\widetilde C_2^*}) \cap \Lambda_{C_0}^{\rho_3} = \emptyset
\]
it follows now that
\begin{equation}
\left( \RPhi_{{\tilde f}_{3,0}} \1_{{\widetilde C_3}\times {\overline C_0^*}}\right)\vert_{\Lambda_{0}^{\rho_3}}
= 
0
\end{equation}
so
\[
\Evs_{C_0} \IC(\mathcal{R}_{C_3}) =0.
\]
 
\iffalse 
\todo[inline]{Decide what, if anything, to include from Andrew's Version}

\todo[inline]{be more carefull about duality in everything follows}

We consider
\[ \tilde{C}_3 \times C_0^\ast = \{  (f,g,[a:b]) | f(a,b) = 0 \} . \]
we compute the Jacobian
\[ \begin{array}{cccccccccc}
    a             & b             & r_0 & r_1 & r_2 & r_3 & s_0 & s_1 & s_2 & s_3 \\
    \hline
    f_s(a,b) & f_t(a,b)   & a^3 & 3a^2b & 3ab^2 & b^3 & 0 & 0 & 0 & 0 \\
    0 & 0 & s_0 &3 s_1 &3 s_2 & s_3 & r_0 &3 r_1 &3 r_2 & r_3 
    \end{array}
    \]
We immediately recognise that to be singular we must have $r_0=r_1=r_2=r_3=0$ and  $s_0 , 3s_1,3 s_2, s_3$ a multiple of $(a^3, 3a^2b, 3ab^2, b^3 )$ implying that
$(-b,a)$ is a triple point of $g$ and hence $(f,g)$ in $C_0\times C_1$ which is not generic.

It follows that 
\[  \Ev_{C_0} (\pi_{2*} \1_{\tilde{C}_3})  = 0 \]

\fi

\subsubsection{$\Evs_{C_1}\IC(\mathcal{R}_{C_3})$}\label{sssec:Evs13R}

As above, for $([u],r)\in {\widetilde C_3}$ 
we take  $u=[1:u_2]$ and $r=(y-u_2x)(u_3y^2 +u_4yx+u_5x^2)$.
For $s\in {\overline C_1^*}$ we take $s = (y-v_2x )^2(v_3y-v_4x)$.
Then
\[
\begin{array}{cl}
&\hskip-7mm {\tilde f}_{3,1}([u],r,s) \\
=&\hskip-5pt \KPair{r}{s} \\
=&\hskip-5pt \KPair{(y-u_2x )(u_3y^2 +u_4yx+u_5x^2)}{(y-v_2x )^2(v_3y-v_4x)} \\
=&\hskip-5pt\frac{1}{3}(3u_3 v_3+(u_2u_3-u_4)(2v_2v_3+v_4)+(u_5-u_2u_4)(v_2^2v_3+2v_2v_4)+3u_2u_5v_2^2v_4)\\
=&\hskip-5pt z_1 g_1 + z_2 g_2 + z_3 g_3 
\end{array}
\]
where %\todo{Qing, could you please convert these? Fixed!}
$z_1=u_2v_2+1$,
$z_2=u_4+2u_2u_3$ and
$z_3=2u_5+u_2u_4$, while
%\todo{Qing, could you please convert these, too? Fixed!}
\begin{align*}
    g_1&=\frac{1}{2}(2u_3v_3+2u_5v_2v_4-u_4(v_2v_3+v_4)),\\
    g_2&=-\frac{1}{6}(v_2v_3-v_4),\\
    g_3&=\frac{1}{6}v_2(v_2v_3-v_4).
\end{align*}
In fact $g_1$ is in the ideal generated by $z_1,z_2,z_3$, as
$$2g_1=z_1(2u_3v_3-u_4v_4)-z_2v_2v_3+z_3v_2v_4.$$ 
It follows that
\begin{equation}\label{eqn:VtC31}
\RPhi_{{\tilde f}_{3,1}}[-1] \left( \1_{\widetilde C_3}\boxtimes \1_{\overline C_1^*}\right)
= 
\1_{\widetilde{\Lambda_1}}[-6]
\end{equation}
where
\[
\widetilde{\Lambda_1} = 
\left\{ ([u],r,s,[v])\in \BP^1\times {\overline C_1}\times {\overline C_2^*} \times \BP^1 \tq 
\begin{array}{rcl}
u\vert r,\ v\vert s \\
v\perp u
\end{array}
\right\}
\]
Now compare $\widetilde{\Lambda_1}$ with
\[
(\rho_3\times\id_{C_1^*})^{-1} \Lambda_1^\mathrm{reg}
\subseteq 
\left\{
([u],r,s, [v])\in \BP^1\times {\overline C_1}\times {\overline C_1^*} \times \BP^1 \tq 
\begin{array}{rcl}
u\vert r,\ v\vert s \\
v\perp u
\end{array}
\right\}.
\]
Since ${\overline C_2^*} \cap C_1^* =\emptyset$, it follows that
\[
\left( \RPhi_{{\tilde f}_{3,1}}[-1] \left( \1_{\widetilde C_3}\boxtimes \1_{\overline C_1^*}\right) \right)\vert_{(\rho_3\times\id_{C_1^*})^{-1} \Lambda_1^\mathrm{reg}}
=
0.
\]
It follows that
\[
\Evs_{C_1} \IC(\mathcal{R}_{C_3}) =0.
\]

\iffalse

\todo[inline]{Decide what, if anything, to include from Andrew's Version}

\todo[inline]{be more carefull about duality in everything follows}

\[ \widetilde{C}_3 \times C_1^\ast = \{  (f,g,[a:b]) | f(a,b) = 0, D_g = 0 \} . \]
we compute the Jacobian
\[ \begin{array}{cccccccccc}
    a             & b             & r_0 & r_1 & r_2 & r_3 & s_0 & s_1 & s_2 & s_3 \\
    \hline
    f_s(a,b) & f_t(a,b)   & a^3 & 3a^2b & 3ab^2 & b^3 & 0 & 0 & 0 & 0 \\
    0 & 0 & s_0 &3 s_1 &3 s_2 & s_3 & r_0 &3 r_1 &3 r_2 & r_3 \\
    0 & 0 & 0    & 0       & 0      &  0    & \frac{\partial}{\partial s_0} D_g & \frac{\partial}{\partial s_1} D_g & \frac{\partial}{\partial s_2} D_g & \frac{\partial}{\partial s_3} D_g
    \end{array}
    \]
The last line effectively implies that $f$ is a vector which is in the normal bundle to $C_1^\ast$ at $g$, in particular it implies that
$f$ is the cube of the line which vanishes to order $2$ on $g$.
To be singular we must have the entries in the jacobian coming fram $a,b$ vanish. This implies that $[a:b]$ must be  the triple point of $f$, and hence also the double point of $g$.
However, we must also then have that $(s_0,3 s_1,3 s_2 ,s_3 )$ is a scallar multiple of $(a^3 , 3a^2b ,3ab^2 ,b^3 )$ which implies that $g$ vanishes to order $3$.

Thus the singular locus is not generic.
It follows that 
\[  \Ev_{C_1} (\pi_{3*} \1_{\tilde{C}_3})  = 0 \]

\fi

\subsubsection{$\Evs_{C_2}\IC(\mathcal{R}_{C_3})$}\label{sssec:Evs23R}

As above, for $(r,[u])\in {\widetilde C_3}$ 
we take $r=(y-u_2x)(u_3y^2 +u_4yx+u_5x^2)$ and $u=[1:u_2]$.
For $s\in {\overline C_2^*}$ we take $s = (y-v_2x )^3$.
Then
\[
\begin{array}{rcl}
&&\hskip-8mm {\tilde f}_{3,2}(r,[u],s) \\
%&=& \KPair{r}{s} \\
&=& \KPair{(y-u_2x )(u_3y^2 +u_4yx+u_5x^2)}{(y-v_2x )^3} \\
&=&  (1+u_2v_2)(u_3 -u_4v_2+u_5v_2^2)
\end{array}
\]
Set
\begin{align*}
    z_1&=u_2v_2+1,\\
    z_2&=u_3 -u_4v_2+u_5v_2^2.
\end{align*}
Then
\begin{equation}\label{eqn:VtC32}
\RPhi_{{\tilde f}_{3,2}}[-1] \left( \1_{\widetilde C_3}\boxtimes \1_{\overline C_2^*}\right)
= 
\1_{\widetilde{\Lambda_2}}[-2]
\end{equation}
where
\[
\widetilde{\Lambda_2} = \left\{ (r,[u],[v],s)\in {\overline C_2}\times \mathbb{P}^1\times (\mathbb{P}^1)^*\times {\overline C_2^*} \tq 
\begin{array}{c} u^2\vert r,\ v\vert s \\ v\perp u \end{array} \right\}.
\]
Note that
\[
\widetilde{\Lambda_2} = (\rho_3\times\id_{\ov C_3^*})^{-1}({\ov \Lambda_2})
\]
Upon restriction to generic elements, we have
\begin{equation}\label{eqn:VC32}
\left(\RPhi_{{\tilde f}_{3,2}}[-1] \left( \1_{\widetilde C_3}\boxtimes \1_{\overline C_2^*}\right)\right)\vert_{(\rho_3\times\id_{C_3^*})^{-1} \Lambda_2^\mathrm{reg}}
= 
\1_{(\rho_3\times\id_{C_3^*})^{-1} \Lambda_2^\mathrm{reg}}.
\end{equation}
Since the cover $\wt \Lambda_2\to \Lambda_2$ is one-to-one over $\Lambda_2^{\mathrm{reg}}$, it follows that
\[
\Evs_{C_2}(\IC(\mathcal{R}_{C_3}))
=
\1_{\Lambda_2^\mathrm{reg}}.
\]

\subsubsection{$\Evs_{C_3}\IC(\mathcal{R}_{C_3})$}\label{sssec:Evs33R}

Since ${\tilde f}_{3,3} : {\widetilde C_3} \times {\overline C_3^*} \to S$ is trivial,
\begin{equation}\label{eqn:VtC33}
\begin{array}{rcl}
&&\hskip-8mm \RPhi_{{\tilde f}_{3,3}}[-1] \left( \1_{\widetilde C_3}\boxtimes \1_{\overline C_3^*}\right) \\
&=& \RPhi_{0}[-1] \left( \1_{\widetilde C_3}\boxtimes \1_{\overline C_3^*}\right) \\
&=& \1_{\widetilde C_3}\boxtimes \1_{\overline C_3^*}
\end{array}
\end{equation}
%This determines \eqref{eqn:VtC} in this case.
From \eqref{eqn:VtC33} we have
\[
\begin{array}{rcl}
&&\hskip-8mm (\rho_3\times \id)_* \RPhi_{{\tilde f}_{3,3}}[-1] \left( \1_{\widetilde C_3}[4]\boxtimes \1_{\overline C_3^*}[0]\right)[-4] \\
&=& (\rho_3\times \id)_* \left( \1_{\widetilde C_3}[4]\boxtimes \1_{\overline C_3^*}\right)[-4] \\
&=& \left( (\rho_3)_*\1_{\widetilde C_3}[4]\right) \boxtimes \1_{\overline C_3^*}[-4] \\
&=& \left( \IC(\1_{C_3}) \oplus \IC(\mathcal{R}_{C_3}) \right) \boxtimes \1_{\overline C_3^*} [-4]\\
&=& \left( \IC(\1_{C_3})\boxtimes \1_{\overline C_3^*} \right)[-4] \oplus \left( \IC(\mathcal{R}_{C_3}) \boxtimes \1_{\overline C_3^*} \right)[-4].  
\end{array}
\]
This determines \eqref{eqn:pVtC} in this case.

Finally, upon restriction to generic elements, it now follows that
\[
\begin{array}{rcl}
&& \hskip-8mm \Evs_{C_3} (\rho_3)_* \1_{\widetilde C_3}[4] \\
&=& (\rho_3\times \id)_* \RPhi_{{\tilde f}_{3,3}}[-1] \left( \1_{\widetilde C_3}[4]\boxtimes \1_{\overline C_3^*}[0]\right)\vert_{\Lambda_3^\mathrm{reg}}[-4] \\
&=& \left( \left( \IC(\1_{C_3})\boxtimes \1_{\overline C_3^*} \right)[-4] \oplus \left( \IC(\mathcal{R}_{C_3}) \boxtimes \1_{\overline C_3^*} \right)[-4]\right)\vert_{\Lambda_3^\mathrm{reg}}\\
&=& \1_{\Lambda_3^\mathrm{reg}} \oplus \mathcal{R}_{\Lambda_3^\mathrm{reg}}.
\end{array}
\]
On the other hand, using \eqref{eqn:pi3},
\[
\begin{array}{rcl}
&& \hskip-8mm \Evs_{C_3} (\rho_3)_* \1_{\widetilde C_3}[4] \\
&=& \Evs_{C_3} \left( \IC(\1_{C_3}) \oplus \IC(\mathcal{R}_{C_3}) \right)\\
&=& \left( \Evs_{C_3} \IC(\1_{C_3}) \right) \oplus \left( \Evs_{C_3} \IC(\mathcal{R}_{C_3}) \right) \\
&=& \1_{\Lambda_3^\mathrm{reg}} \oplus \Evs_{C_3} \IC(\mathcal{R}_{C_3}).
\end{array}
\]
Therefore,
\[ 
\Evs_{C_3}\IC(\mathcal{R}_{C_3})   =  \mathcal{R}_{\Lambda_3^\mathrm{reg}}.
\]

 \subsection{$\IC(\mathcal{E}_{C_3})$}\label{sssec:covC3''}

Consider the cover
\[
{\widetilde C_3''} 
\ceq 
\{ ([u],[u'],[u''],r) \in \mathbb{P}^1\times \mathbb{P}^1\times \mathbb{P}^1\times P_3[x,y]  \tq u u' u'' \vert r \}.
\] 
%\todo[inline]{Replace $C_3''$ with $C_3'$.}
\iffalse
We introduce an additional cover
\[ \widetilde C_3'' = \{ (f,[\ell_1],[\ell_2],[\ell_3]) \in \Sym^3 \times \mathbb{P}(\Sym^1)^3 \;\mid\; f \in [\ell_1\ell_2\ell_3] \} \]
where in the above we interpret $f \in [\ell_1\ell_2\ell_3]$ as the image of the embedding $\mathbb{P}(\Sym^1)^3 \rightarrow \mathbb{P}(\Sym^3)$.
\fi
We are thus defining $\widetilde C_3''$ to be the pull back of the bundle $\mathcal{O}(-1)$ over $\mathbb{P}(\Sym^3)$ through the map $\mathbb{P}(\Sym^1)^3 \rightarrow \mathbb{P}(\Sym^3)$.
It follows that $\widetilde C_3''$ is smooth, being a vector bundle.
Define $\rho''_3: \widetilde C_3''\rightarrow \ov C_3$ by $([u],[u'],[u''],r) \mapsto r$. This map is proper, as it is a closed immersion composed with a base change of a proper map.
 The map $\rho''_3$ is a finite Galois cover, with Galois group $S_3$, over $C_3 \cup C_2 \cup C_1$.
 The map degenerates to $3:1$ over $C_2$ and $1:1$ over $C_1$.
 The fiber of the map is isomorphic to $(\mathbb{P}^1)^3$ over $C_0$.
Note that $\rho_3''$ is not semi-small.
%
%\todo[inline]{I have a hard time understand this, as it seems to be missing some nouns: "... and by inspecting the stalks over $C_0$ we have \[ \mathbb{C}[4] \oplus 3\mathbb{C}[2]  \oplus 3\mathbb{C}[0]  \oplus \mathbb{C}[-2] \] We note that $\mathbb{C}[-2] $ implies the appearance of $\IC(\1_{C_0})[-2]$ which by Verdier duality implies the appearance of $\IC(\1_{C_0})[2]$. The appearance of $\mathbb{C}[4]$ is accounted for already by $\IC(\1_{C_3})$ and the remaining $2\mathbb{C}[2]$ are accounted for by $2\IC(\mathcal{R}_{C_3})$. This leaves the remainder accounted for by $3 \IC(\1_{C_0})$."} 
%
We note that the fibre product of $\widetilde C_3''$  over $\ov C_2$ has three irreducible components intersecting over $\ov C_1$. 
 For each of these irreducible components the map to $\ov C_2$ factors through $\widetilde C_2$ above.
 \begin{equation}\label{eqn:pi''3}
 \begin{array}{rcl}
&&\hskip-8mm 
(\rho''_3)_*(\1_{\widetilde C_3''}[4]) \\
&=& \IC(\1_{C_3})\oplus 2\IC(\mathcal{R}_{C_3})\oplus \IC(\mathcal{E}_{C_3})  \\
&&\ \oplus\ 3 \IC(\1_{C_0}) \oplus\  \IC(\1_{C_0})[2] \oplus \IC(\1_{C_0})[-2] 
\end{array}
 \end{equation}

We parameterize $\wt C_3''$ as follows. Define $\mathbb{A}\times \mathbb{A}^2_{\times}\times \mathbb{A}^2_{\times}\times \mathbb{A}^2_{\times}\to \overline{C}_3$ by 
$$(\lambda,(u_1,u_2),(u_3,u_4),(u_5,u_6))\mapsto \lambda (u_1y-u_2x)(u_3y-u_4x)(u_5y-u_6x).$$
Let $\mathbb{G}_m^3$ acts on $\mathbb{A}\times \mathbb{A}^2_{\times}\times \mathbb{A}^2_{\times}\times \mathbb{A}^2_{\times} $ by 
\begin{align*}&(a,b,c).(\lambda,(u_1,u_2),(u_3,u_4),(u_5,u_6))\\
&\quad =(a^{-1}b^{-1}c^{-1}\lambda,(au_1,au_2),(bu_3,bu_4),(cu_5,cu_6)).
\end{align*}
Then the map $\mathbb{A}\times \mathbb{A}^2_{\times}\times \mathbb{A}^2_{\times}\times \mathbb{A}^2_{\times}\to \overline{C_3} $ is $\mathbb{G}_m^3$-invariant and defines an isomorphism
$$(\mathbb{A}\times \mathbb{A}^2_{\times}\times \mathbb{A}^2_{\times}\times \mathbb{A}^2_{\times})/(\mathbb{G}_m^3)\to \widetilde{C_3} $$
$$[\lambda,(u_1,u_2),(u_3,u_4),(u_5,u_6)]\mapsto ([u_1,u_2],[u_3,u_4],[u_5,u_6], r(x,y)),$$
with 
$$r(x,y)=\lambda (u_1y-u_2x)(u_3y-u_4x)(u_5y-u_6x).$$

%\todo[inline]{Andrew: I added a bit more info about the geometry of this cover. Someone should double check I haven't broken your conventions here.}
We will also make use another cover of ${\ov C_3}$, defined here.
Set
\begin{align*}
 E &= \wpair{([u_1:u_2],[u_3:u_4],r)\in \mathbb{P}^1\times \mathbb{P}^1\times \ov C_3: \begin{pmatrix} u_1 & u_2 \end{pmatrix} {\rm Hess}(r)_{(u_3,u_4)}  = 0 } \\
  &=\wpair{([u],[u'],r)\in \mathbb{P}^1\times \mathbb{P}^1\times \ov C_3: g_1(u,u',r)=g_2(u,u',r)=0},
\end{align*}
where \begin{align*}
g_1([u],[u'],r)&=u_1u_3r_0+(u_2u_3+u_1u_4)r_1-u_2u_4r_2,\\
g_2([u],[u'],r)&=u_1u_3r_1-(u_2u_3+u_1u_4)r_2+u_2u_4r_3,
\end{align*}
for $[u]=[u_1,u_2],[u']=[u_3,u_4],r=(r_0,r_1,r_2,r_3)$. 
Let $\rho_E:E\rightarrow \ov C_3$ is the projection map. 
By considering the first model we can recognize that
 $\rho_E$ is $2:1$ over $C_1$ and $1:1$ over $C_2$ as $[u],[u']$ must be the two lines dividing $\Delta_r$. 
We also have $\rho_E$ is $(\mathbb{P}^1\oplus \mathbb{P}^1):1$ over $C_1$ as at least one of $[u],[u']$ must be the line dividing $r$, note these copies of $\mathbb{P}^1$ intersect at that point.
Finally  $\rho_E$ is $\mathbb{P}^1\times \mathbb{P}^1:1$ over $C_0$. It follows that
\begin{equation}\label{eqn:pushforwardofrhoE}
(\rho_E)_* \1_{E}[4] = \IC(\1_{C_3}) \oplus \IC(\mathcal{E}_{C_3}) \oplus 2\IC(\1_{C_1}) \oplus \IC(\1_{C_0}). 
\end{equation}

\subsubsection{$\Ft \IC(\mathcal{E}_{C_3})$ and $\Evs_{C_0}\IC(\mathcal{E}_{C_3})$}\label{sssec:Evs03E}

Take a point $[\lambda,(u_1,u_2),(u_3,u_4),(u_5,u_6)]\in \wt C_3 $ and its image $r(x,y)=\lambda (u_1y-u_2x)(u_3y-u_4x)(u_5y-u_6x)$ in $\ov C_3$. For $s(x,y)=s_0y^3-3s_1y^2x-3s_2yx^2-s_3x^3\in \ov{C_0^*}$, we have 
\[
\begin{array}{rcl}
&&\hskip-8mm {\tilde f}_{3'',0}([\lambda,(u_1,u_2),(u_3,u_4),(u_5,u_6),s) \\
%&=& \KPair{r}{s}\\
&=& \lambda( u_1u_3u_5s_0+(u_1u_3u_6+u_2u_3u_5+u_1u_4u_5)s_1\\
&& -(u_2u_4u_5+u_2u_3u_6+u_1u_4u_6)s_2+u_2u_4u_6s_3 ).
\end{array}
\]

Set $z=\lambda$ and $g([\lambda,(u_1,u_2),(u_3,u_4),(u_5,u_6)],s )=u_5g_1+u_6g_1 $ with
 \begin{align*}
g_1([\lambda, (u_1,u_2),(u_3,u_4),(u_5,u_6)],s)&=u_1u_3s_0+(u_2u_3+u_1u_4)s_1-u_2u_4s_2,\\
g_2([\lambda, (u_1,u_2),(u_3,u_4),(u_5,u_6)],s)&=u_1u_3s_1-(u_2u_3+u_1u_4)s_2+u_2u_4s_3.
\end{align*}
Note that $ {\tilde f}_{3'',0}([\lambda,(u_1,u_2),(u_3,u_4),(u_5,u_6),s)=zg.$
Then the singular locus of  ${\tilde f}_{3'',0}$ is $C_0 \times X$
where %\todo{Also add the condition $uu'u''\vert s$ below?}
\[
X
= \{([u],[u'],[u''],s)\in \ov{C_0^*}\times (\BP^1)^3  \tq g(u,u',u'',s)=0 \}
\]
and
\begin{equation}\label{eqn:VtC3''3}
\RPhi_{{\tilde f}_{3'',0}}[-1] \left( \1_{\wt C_3''} \boxtimes \1_{\ov C_0^*} \right) \\
= 
\1_{C_0\times X}[-2].
\end{equation}
so
\begin{equation}%\label{eqn:VtC3''3}
\RPhi_{{\tilde f}_{3'',0}}[-1] \left( \1_{\wt C_3''}[4] \boxtimes \1_{\ov C_0^*}[4] \right) \\
= 
\1_{C_0\times X}[6].
\end{equation}
%This determines \eqref{eqn:VtC} in this case.

Now
\[
\begin{array}{rcl}
&& \hskip-8mm
(\rho_3''\times \id)_* \RPhi_{{\tilde f}_{3'',0}}[-1] \left( \1_{\wt C_3''}[4] \boxtimes \1_{\ov C_0^*}[4] \right)\\
&=&
(\rho_3''\times \id)_* \1_{C_0 \times X}[6] \\
&=&
\1_{C_0}\boxtimes (\rho'_3)_* \1_X[6], 
\end{array}
\]
for $\rho'_3: X \to {\ov C_0^*}$ defined by $\rho'_3([u],[u'],[u''],s) = s$. 
Consider the stratification 
\[
\begin{tikzcd}
U \arrow{r} \arrow{d}{\iso} & X \arrow{d}{\rho'_3} &\arrow{l} F \arrow{d}{\BP^1}\\
V \arrow{r} \arrow{dr}[swap]{\BP^1\times \BP^1} &Y \arrow{d}{\pi} &\arrow{l} E \arrow{dl}{\rho_E} \\
& {\ov C_0^*} 
\end{tikzcd}
\]
with $Y=\BP^1\times\BP^1\times {\ov C_0^*}$ and
\[
F = 
\left\{ 
([u],[u'],[u''],s)\in X % (\BP^1)^3 \times \ov{C_0^*} 
\tq 
\begin{array}{c}
g_1(u,u',u'',s)=0  \mathrm{ \ and\ } \\
g_2(u,u',u'',s) =0
\end{array}
\right\}
\]
and
\[
U = 
\left\{ 
(s, [u],[u'],[u''])\in X % (\BP^1)^3 \times \ov{C_0^*} 
\tq
\begin{array}{c}
g_1(s,u,u',u'')\ne 0 \mathrm{ \ or\ }\\ 
g_2(s,u,u',u'')\ne 0
\end{array}
\right\}
\]
If $([u],[u'],[u''],s)\in U$ then we can solve $[u'']=[u_5:u_6]$ (as a point in $\mathbb{P}^1$) uniquely from $g=0$ and $g_1\ne 0$ or $g_2\ne 0$.
Abusing notation slightly, set 
\begin{align*}
g_1((u_1,u_2),(u_3,u_4),s)&=u_1u_3s_0+(u_2u_3+u_1u_4)s_1-u_2u_4s_2,\\
g_2((u_1,u_2),(u_3,u_4),s)&=u_1u_3s_1-(u_2u_3+u_1u_4)s_2+u_2u_4s_3.
\end{align*}
and define
\[
E \ceq
\left\{ 
([u],[u'],s)\in \BP^1\times\BP^1 \times {\ov C_0^*} 
\tq 
\begin{array}{c}
g_1(u,u',s)=0 \\
g_2(u,u',s) =0
\end{array}
\right\}
\]
Factor $\rho_3' = \rho_E \circ\rho_E'$ where $\rho_E' :X \to \BP^1\times\BP^1\times {\ov C_0^*}$ is defined by $\rho_E'(u,u',u'',s) = (u,u',s)$ and where $\rho_E :E\to {\ov C_0^*}$ is defined by $\rho_E(u,u',s)=s$.
Then $(\rho_E')\vert_U : U \to \BP^1\times\BP^1\times {\ov C_0^*}$ is an isomorphism while the fibre of $(\rho_E')\vert_F : F \to E$ is $\BP^1$. 
By the decomposition theorem,%\todo{!}
\[
(\rho_E')_* \1_{X}[6]
=
\1_{Y}[6] \oplus \1_{E}[4].
\]
Therefore,
\[
(\rho'_3)_* \1_{X}[4]
=
(\pi)_*\1_{Y}[6] \oplus (\rho_E)_* \1_{E}[4].
\]
Now
\[
(\pi)_*\1_{Y}[6] = H^\bullet(\BP^1\times \BP^1)\otimes \1_{\ov C_0^*}[6]
=
\1_{\ov C_0^*}[6] \oplus 2\1_{\ov C_0^*}[4]\oplus \1_{\ov C_0^*}[2],
\]
while
\[
(\rho_E)_* \1_{E}[4] = \IC(\1_{C_0^*}) \oplus \IC(\mathcal{E}_{C_0^*}) \oplus 2\IC(\1_{C_2^*}) \oplus \IC(\1_{C_3^*}). 
\]
Thus,
\[
\begin{array}{rcl}
&&\hskip-8mm (\rho'_3)_* \1_{X}[6] \\
&=& 
\left( H^\bullet(\BP^1\times \BP^1)\otimes \1_{\ov C_0^*}[6]\right)\\
&& \oplus\ \IC(\1_{C_0^*}) \oplus \IC(\mathcal{E}_{C_0^*}) \oplus\ 2\IC(\1_{C_2^*}) \oplus \IC(\1_{C_3^*}) \\
&=& \IC(\1_{C_0^*})[-2]\oplus 2 \IC(\1_{C_0^*})[0] \oplus \IC(\1_{C_0^*})[2]\\
&& \oplus\ \IC(\1_{C_0^*}) \oplus \IC(\mathcal{E}_{C_0^*}) \oplus\ 2\IC(\1_{C_2^*}) \oplus \IC(\1_{C_3^*}) \\
&=& \IC(\1_{C_0^*})[-2]\oplus 3 \IC(\1_{C_0^*})[0] \oplus \IC(\1_{C_0^*})[2] \\
&& \oplus\  \IC(\mathcal{E}_{C_0^*}) \oplus\ 2\IC(\1_{C_2^*}) \oplus \IC(\1_{C_3^*}) 
\end{array}
\]
So,
\begin{equation}\label{eqn:LFt3}
\begin{array}{rcl}
&&\hskip-8mm (\rho_3''\times \id)_* \RPhi_{{\tilde f}_{3'',0}} [-1]\left( \1_{\wt C_3''}[4] \boxtimes \1_{\ov C_0^*}[4] \right) \\
&=&
\1_{C_0}\boxtimes (\rho'_3)_*\1_{\mathcal{S}}[6]\\
&=& \left( \1_{C_0}\boxtimes \IC(\1_{C_0^*})\right) [-2]\oplus 3  \left( \1_{C_0}\boxtimes \IC(\1_{C_0^*})\right) [0] \\
&& \oplus\  \left( \1_{C_0}\boxtimes \IC(\1_{C_0^*})\right) [2] \oplus\   \left( \1_{C_0}\boxtimes \IC(\mathcal{E}_{C_0^*})\right)[0] \\
&& \oplus\ 2 \left( \1_{C_0}\boxtimes \IC(\1_{C_2^*})\right)[0]  \oplus  \left( \1_{C_0}\boxtimes \IC(\1_{C_3^*})\right)[0].
\end{array}
\end{equation}
On the other hand,
\[
\begin{array}{rcl}
&&\hskip-8mm (\rho_3''\times \id)_* \RPhi_{{\tilde f}_{3'',0}}[-1] \left( \1_{\wt C_3''}[4] \boxtimes \1_{\ov C_0^*}[4] \right) \\
&=&
\RPhi_{\KPair{}{}}[-1] \left( \left( (\rho_3'')_* \1_{\wt C_3''}[4] \right) \boxtimes \1_{\ov C_0^*}[4] \right) \\
&=& \RPhi_{\KPair{}{}}[-1] \left( \left( \IC(\1_{C_3})\boxtimes \1_{\ov C_0^*}[4]\right) \oplus 2\left( \IC(\mathcal{R}_{C_3})\boxtimes \1_{\ov C_0^*}[4]\right) \right) \\
&& \oplus\ \RPhi_{\KPair{}{}}[-1] \left( \left(\IC(\mathcal{E}_{C_3}) \boxtimes \1_{\ov C_0^*}[4]\right) \oplus\  3 \left( \IC(\1_{C_0})\boxtimes \1_{\ov C_0^*}[4]\right) \right) \\
&& \oplus\  \RPhi_{\KPair{}{}}[-1] \left( \left( \IC(\1_{C_0})[2] \boxtimes \1_{\ov C_0^*}[4]\right) \oplus \left( \IC(\1_{C_0})[-2] \boxtimes \1_{\ov C_0^*}[4]\right)\right)\\
\end{array}
\]
Now use \eqref{eqn:Ft0}, \eqref{eqn:Ft3} and \eqref{eqn:Ft3'} to conclude
\begin{equation}\label{eqn:RFt3}
\begin{array}{rcl}
&&\hskip-8mm
\RPhi_{\KPair{}{}}[-1] \left( \left( (\rho_3'')_* \1_{\wt C_3''}[4] \right) \boxtimes \1_{\ov C_0^*}[4] \right) \\
&=&  \left( \1_{C_0}\boxtimes \IC(\1_{C_3^*}) \right) \oplus 2\left( \1_{C_0}\boxtimes  \IC(\1_{C_2^*})\right) \\
&& \oplus\ \RPhi_{\KPair{}{}}[-1]  \left(\IC(\mathcal{E}_{C_3}) \boxtimes \1_{\ov C_0^*}[4]\right) \oplus\ 3 \left( \1_{C_0}\boxtimes \IC(\1_{C_0^*}) \right) \\
&& \oplus\  \left(  \1_{C_0}\boxtimes \IC(\1_{C_0^*})\right)[2] \oplus \left(  \1_{C_0}\boxtimes \IC(\1_{C_0^*})\right)[-2]\\
\end{array}
\end{equation}
Comparing \eqref{eqn:RFt3} with \eqref{eqn:LFt3} gives
\begin{equation}\label{eqn:Ft3''}
\RPhi_{\KPair{}{}}[-1]  \left(\IC(\mathcal{E}_{C_3}) \boxtimes \1_{\ov C_0^*}[4]\right) 
=
\1_{C_0}\boxtimes \IC(\mathcal{E}_{C_0^*})
\end{equation}
It follows that
\[
\Ft \IC(\mathcal{E}_{C_3})  = \IC(\mathcal{E}_{C_0^*}) .
\]

Returning to \eqref{eqn:RFt3} we now see
\[
\begin{array}{rcl}
&&\hskip-8mm 
\Evs_{C_0}\left( (\rho_3'')_* \1_{\wt C_3''}[4] \right) \\
&=&
\RPhi_{\KPair{}{}}[-1] \left( \left( (\rho_3'')_* \1_{\wt C_3''}[4] \right) \boxtimes \1_{\ov C_0^*}[4] \right)\vert_{\Lambda_0^\mathrm{reg}}[-4] \\ 
&=&   \left( \1_{C_0}\boxtimes \IC(\1_{C_3^*}) \right)\vert_{\Lambda_0^\mathrm{reg}}[-4]  \oplus 2\left( \1_{C_0}\boxtimes  \IC(\1_{C_2^*}) \right)\vert_{\Lambda_0^\mathrm{reg}}[-4]  \\
&& \oplus\ \left( \1_{C_0}\boxtimes \IC(\mathcal{E}_{C_0^*})\right)\vert_{\Lambda_0^\mathrm{reg}}[-4]  \oplus\ 3 \left( \1_{C_0}\boxtimes \IC(\1_{C_0^*}) \right)\vert_{\Lambda_0^\mathrm{reg}}[-4]  \\
&& \oplus\  \left(  \1_{C_0}\boxtimes \IC(\1_{C_0^*})\right)\vert_{\Lambda_0^\mathrm{reg}}[-2] \oplus \left(  \1_{C_0}\boxtimes \IC(\1_{C_0^*})\right)\vert_{\Lambda_0^\mathrm{reg}}[-6] \\
&=& \mathcal{E}_{\Lambda_0^\mathrm{reg}}\oplus 3\1_{\Lambda_0^\mathrm{reg}} \oplus \1_{\Lambda_0^\mathrm{reg}}[2] \oplus \1_{\Lambda_0^\mathrm{reg}}[-2].
\end{array}
\]
On the other hand,
\[
\begin{array}{rcl}
&&\hskip-8mm 
\Evs_{C_0}\left( (\rho_3'')_* \1_{\wt C_3''}[4] \right) \\
&=& 
\Evs_{C_0}\left( 
\IC(\1_{C_3})\oplus 2\IC(\mathcal{R}_{C_3})\oplus \IC(\mathcal{E}_{C_3}) \right) \\
&&\ \oplus\ \Evs_{C_0}\left(  3 \IC(\1_{C_0}) \oplus\  \IC(\1_{C_0})[2] \oplus \IC(\1_{C_0})[-2]\right) \\
&=& 
\Evs_{C_0}\IC(\mathcal{E}_{C_3}) \oplus 3 \1_{\Lambda_0^\mathrm{reg}} \oplus  \1_{\Lambda_0^\mathrm{reg}}[2] \oplus \1_{\Lambda_0^\mathrm{reg}}[-2].
\end{array}
\]
Therefore,
\[
\Evs_{C_0}\IC(\mathcal{E}_{C_3}) 
=
\mathcal{E}_{\Lambda_0^\mathrm{reg}}.
\]

\subsubsection{$\Evs_{C_1}\IC(\mathcal{E}_{C_3})$}\label{sssec:Evs13E}

Take a point $[\lambda,(u_1,u_2),(u_3,u_4),(u_5,u_6)]\in \wt C_3 $ and its image $r(x,y)=\lambda (u_1y-u_2x)(u_3y-u_4x)(u_5y-u_6x)$ in $\ov C_3$. For $s\in \ov{C_1^*}$, we write $s(x,y)=s_0y^3-3s_1y^2x-3s_2yx^2-s_3x^3$, we get 
\[
\begin{array}{rcl}
&&\hskip-8mm {\widetilde f}_{3'',1}([\lambda,(u_1,u_2),(u_3,u_4),(u_5,u_6),s) \\
%&=& \KPair{r}{s}\\
&=& \lambda( u_1u_3u_5s_0+(u_1u_3u_6+u_2u_3u_5+u_1u_4u_5)s_1\\
&& -(u_2u_4u_5+u_2u_3u_6+u_1u_4u_6)s_2+u_2u_4u_6s_3 ).
\end{array}
\]

For simplicity, consider the affine part $u_1=u_3=1$. Then we have 
$$\widetilde f_{3'',1}=\lambda(s_0+(u_2+u_4+u_6)s_1-(u_2u_4+u_2u_6+u_4u_6)s_2+u_2u_4u_6s_3).$$
Denote $z_2=u_4-u_2,z_3=u_6-u_2$. Note that $z_2,z_3$ are the equations which define $(\rho_{3}'')^{-1}(C_1\times C_1^*)$ inside $\widetilde C_3''\times C_1^*$.
A simple calculation shows that
\begin{align*}
\wt f_{3'',1}&=\lambda(s_0+3u_2s_1-3u_2^2s_2+u_2^3s_3) +\lambda z_2(s_1-2u_2s_2+u_2^2s_3)\\
& +\lambda z_3 (s_1-2u_2s_2+u_2^2s_3)+\lambda z_2z_3(s_2-u_2s_3).
\end{align*}
The condition $[r,s]=0$ is given by an equation $z_1$ which is equivalent to that $s$ vanishes of order $2$ at $(u_2,-1)$. We actually have $z_1^2|(s_0+3u_2s_1-3u_2^2s_2+u_2^3s_3)$ and $z_1|(s_1-2u_2s_2+u_2^2s_3)$. In fact, if we take $s(x,y)=(v_1y-v_2x)^2(v_3y-v_4x)$, then we can take $z_1=v_1+v_2u_2$. We then have
$$s_0+3u_2s_1-3u_2^2s_2+u_2^3s_3=-s(u_2,-1)=z_1^2(v_3+v_4u_2),$$
and 
$$s_1-2u_2s_2+u_2^2s_3=-\frac{1}{3}\left.\frac{\partial s}{\partial x}\right|_{x=u_2,y=-1} =\frac{1}{3}z_1(2v_2v_3+v_1v_4+3v_2u_2v_4). $$
Denote by $g_1=\lambda(v_3+v_4u_2), g_{12}=\frac{\lambda}{3}( 2v_2v_3+v_1v_4+3v_2u_2v_4)$ and $g_{23}=\lambda (s_2-u_2s_3)=\frac{\lambda}{3}(2v_1v_2v_4+v_2^2v_3+3u_2v_2^2v_4)$. We get that 
$$\wt f_{3'',1}=g_1z_1^2+g_{12}z_1(z_2+z_3)+g_{23}z_2z_3 .$$
Let $I$ be the ideal generated by $z_1,z_2,z_3$ in $\mathbb{C}[z_1,z_2,z_3]$. Then modulo $I$, we have 
\[
g_{12}=-\frac{2}{3}u_2^{-1}v_1g_1
\qquad\text{and}\qquad
g_{23}=\frac{1}{3}u_2^{-2}v_1^2g_1.
\]
Thus we get
$$\wt f_{3'',1}=g_1(z_1^2-\frac{2}{3}u_2^{-1}v_1z_1(z_2+z_3)+\frac{1}{3}u_2^{-2}v_1^2z_2z_3)\mod I^3.$$

By completing squares, we get that
\begin{align*}\wt f_{3'',1}&=g_1(z_1-\frac{1}{3}u_2^{-1}v_1(z_2+z_3))^2-\frac{1}{9}g_1u_2^{-2}v_2^2(z_2-\frac{1}{2}z_3)^2\\
&-\frac{1}{12}g_1u_2^{-2}v_1^2z_3^2  \mod I^3.
\end{align*}
Note that $g_1\notin I$ if and only if $s\in C_2^*$. In particular, if $s\in C_1^*$, we have $g_1\notin I$. It follows that

\[
\begin{array}{rcl}
%&&\hskip-8mm
\left( \RPhi_{{\tilde f}_{3'',1}} [-1] \1_{{\wt C_3''}\times {C_1^*}} \right)
\vert_{(\rho_3''\times\1_{C_1^*})^{-1}\Lambda_1^\mathrm{reg}} %\\
&=& 
\mathcal{T}_{(\rho_3''\times\1_{C_1^*})^{-1}\Lambda_1^\mathrm{reg}} [-3],
\end{array}
\]
and consequently we have,
\[
\begin{array}{rcl}
&&\hskip-8mm
\Evs_{C_1} \left( (\rho_3'')_* \1_{\wt C_3''}[4]\right) \\
&=&
\left(\RPhi_{\KPair{}{}}[-1] (\rho_3'')_* \1_{\wt C_3''}[4] \boxtimes \1_{C_1^*}[3] \right)\vert_{\Lambda_1^\mathrm{reg}}[-4] \\
&=&
\left(  (\rho_3''\times\id_{C_1^*})_* \RPhi_{{\tilde f}_{3,1}}[-1] \1_{\wt C_3''}[4] \boxtimes \1_{C_1^*}[3] \right)\vert_{\Lambda_1^\mathrm{reg}} [-4]\\
&=&
\left(  (\rho_3''\times\id_{C_1^*})_* \RPhi_{{\tilde f}_{3,1}}[-1] \1_{\wt C_3''} \boxtimes \1_{C_1^*} \right)\vert_{\Lambda_1^\mathrm{reg}} [3] \\
&=& 
(\rho_3''\times\id_{C_1^*})_* \left( \mathcal{T}_{(\rho_3''\times\1_{C_1^*})^{-1}\Lambda_1^\mathrm{reg}}\right) [3-3] \\
&=&
\mathcal{T}_{\Lambda_1^\mathrm{reg}}.
\end{array}
\]
On the other hand,
\[
\begin{array}{rcl}
&&\hskip-8mm
\Evs_{C_1} \left( (\rho_3'')_* \1_{\wt C_3''}[4]\right) \\
&=&
\Evs_{C_1} \left( \IC(\1_{C_3})\oplus 2\IC(\mathcal{R}_{C_3})\oplus \IC(\mathcal{E}_{C_3}) \right) \\
&&\ \oplus\ \Evs_{C_1} \left( 3 \IC(\1_{C_0}) \oplus\  \IC(\1_{C_0})[2] \oplus \IC(\1_{C_0})[-2]\right) \\
&=&
\Evs_{C_1} \IC(\mathcal{E}_{C_3}).
\end{array}
\]
Therefore,
\[
\Evs_{C_1} \IC(\mathcal{E}_{C_3})
=
\mathcal{T}_{\Lambda_1^\mathrm{reg}}.
\]

\iffalse

\todo[inline]{be more carefull about duality in what follows}

Next we consider
\[ \tilde{C}_3'' \times C_1^\ast = \{  (f,g,[a_1:b_1],[a_2:b_2],[a_3:b_3]) | f \sim (b_1s-a_1t)(b_2s-a_2t)(b_3s-a_3t) ,\; D_g = 0 \} . \]
As always while consider $\Ev_{C_1}$ we will have that being on the singular locus implies $f$ is the cube of the double point from $g$, it follows that the three points are equal to that triple point.
By specializing the Jacobian under these conditions we can see that it is always singular.
As the the three points are uniquely determined by $f,g$ the map from the singular locus to $\mathcal{O}_{C_1}$ is an isomorphism.

Fixing local charts....
\todo[inline]{figure out nice local coordinate description using Qing's ideas}

This implies that 
\[ \Ev_{C_1}(\pi_{3*}'' \1_{\tilde{C}_3''}) = \mathrm{not zero} \]

\fi

\subsubsection{$\Evs_{C_2}\IC(\mathcal{E}_{C_3})$}\label{sssec:Evs23E}

To compute $\Ev_{C_2}\IC(\mathcal{E}_{C_3})$, we use the cover $\rho_E:E\rightarrow \ov C_3$ defined at the bottom of Section~\ref{sssec:covC3''}.

Denoting $ ([u],[u'],r)\in E$ and $s= (v_1y-v_2x)^3\in C_2^* $ we consider the function 
\begin{align*}
\wt f_{E,2}([u],[u'],r,s )&=\KPair{r}{s}\\
&=r_0v_1^3+3v_1^2v_2r_1-3v_1v_2^2r_2+v_2^3r_3.
\end{align*}
For simplicity, we restrict to the affine cover when $v_1\ne 0$ and thus we can assume that $v_1=1$. Moreover, we work on the affine part when $u_1=u_3=1$.  From $g_1=g_2=0$, we can get that 
\begin{align*}
r_1&=(u_2+u_4)r_2-u_2u_4r_3,\\
r_0&=-(u_2u_4+u_2^2+u_4^2)r_2+(u_2+u_4)u_2u_4r_3.
\end{align*}
Thus we get that 
\begin{align*}
\wt f_{E,2}([u],[u'],r,s)&=-(u_2u_4+u_2^2+u_4^2 )r_2+(u_2+u_4)u_2u_4r_3\\
&+3(u_2+u_4)r_2v_2-3u_2u_4v_2r-3v_2^2 r_2+v_2^3r_3.
\end{align*}
Set $z_1=u_2-v_2, z_2=u_2-v_2$. We can check that 
$$\wt f_{E,2}=(z_1^2+z_1z_2+z_2^2)(r_3v_2-r_2)+r_3(z_1^2z_2+z_1z_2^2).$$
Let $I$ be the ideal generated by $z_1,z_2$ in $\mathbb{C}[z_1,z_2]$. We have
$$\wt f_{E,2} =(z_1^2+z_1z_2+z_2^2)(r_3v_2-r_2)\mod I^3.$$
We then get that 
$$\RPhi_{\wt f_{E,2}}[-1](\1_{E\times  C_2^*})=\1_{(\rho_E\times\id_{C_2^*})^{-1}\Lambda_2^\mathrm{reg}} [-2].$$
Therefore,
\[
\begin{array}{rcl}
&&\hskip-8mm 
\Evs_{C_2}(\rho_E)_*\1_E[4] \\
&=&
\left( (\rho_E\times\id_{C_2^*})_*(\RPhi_{\wt f_{E,2}}[-1](\1_{E}[4]\boxtimes\1_{C_2^*}[2])\right)\vert_{\Lambda_2^\mathrm{reg}}[-4] \\
&=&
\left( (\rho_E\times\id_{C_2^*})_*(\RPhi_{\wt f_{E,2}}[-1](\1_{E}\boxtimes\1_{C_2^*})\right)\vert_{\Lambda_2^\mathrm{reg}}[2] \\
&=&
(\rho_E\times\id_{C_2^*})_* \1_{(\rho_E\times\id_{C_2^*})^{-1}\Lambda_2^\mathrm{reg}} [2-2] \\
&=& \1_{\Lambda_2^{\mathrm{reg}}},
\end{array}
\]
since $\rho_E$ is one-to-one on $C_2$.
On the other hand,
\[
\begin{array}{rcl}
&&\hskip-8mm 
\Evs_{C_2}(\rho_E)_*\1_E [4] \\
&=& 
\Ev_{C_2} \left( \IC(\1_{C_3}) \oplus \IC(\mathcal{E}_{C_3}) \oplus 2\IC(\1_{C_1}) \oplus \IC(\1_{C_0})\right) \\
&=&
\Ev_{C_2} \IC(\mathcal{E}_{C_3}) .
\end{array}
\]
Therefore,
\[
\Evs_{C_2} \IC(\mathcal{E}_{C_3})
=
\1_{\Lambda_2^{\mathrm{reg}}}.
\]

\iffalse

By \eqref{eqn:pushforwardofrhoE} and the fact that $\Ev_{C_2}(\IC(\1_{C_3}))=0$, we get
$$ \Ev_{C_2}(\IC(\mathcal{E}_{C_3}))=\1_{\Lambda_2^{\mathrm{reg}}}.$$

\todo[inline]{be more carefull about duality in what follows}

Next we consider
\[ \tilde{C}_3'' \times C_2^\ast = \{  (f,g,[a_1:b_1],[a_2:b_2],[a_3:b_3]) | f \sim (b_1s-a_1t)(b_2s-a_2t)(b_3s-a_3t) ,\; \Delta_g = 0 \} . \]
As always while consider $\Ev_{C_2}$  we will have that being on the singular locus implies $f$ has a double point which matches the triple point of $g$, it follows that two
of the three points $[a_1:b_1],[a_2:b_2],[a_3:b_3]$ are the triple point of $g$, the remaining is the other line dividing $f$.

By specializing the Jacobian under these conditions we can see that it is always singular.
As the the collection of three points, $\{ [a_1:b_1],[a_2:b_2],[a_3:b_3] \}$, are uniquely determined by $f,g$ but there actual order is not
the map from the singular locus to $\mathcal{O}_{C_1}$ is three to one.

We note that the cover is reducible with three reducible components defined respectively by
\[ [a_1:b_1] = [a_2:b_2], \qquad [a_1:b_1] = [a_3:b_3] \qquad [a_2:b_2] = [a_3:b_3] \]

Fixing local charts....
\todo[inline]{figure out nice local coordinate description using Qing's ideas}

This implies that 
\[ \Ev_{C_1}(\pi_{3*}'' \1_{\tilde{C}_3''}) = \mathrm{not zero} \]

\fi

\subsubsection{$\Evs_{C_3}\IC(\mathcal{E}_{C_3})$}\label{sssec:Evs33E}

Since ${\tilde f}_{3,3} : {\wt C_3''}\times{\ov C_3^*} \to S$ is trivial, we have
\[
\RPhi_{{\tilde f}_{3,3}}[-1] \1_{{\wt C_3''}\times{\ov C_3^*}}
=
\RPhi_{0}[-1] \1_{{\wt C_3''}\times{\ov C_3^*}}
=
\1_{{\wt C_3''}\times{\ov C_3^*}}.
\]
This determines \eqref{eqn:VtC} in this case.

Therefore,
\[
\begin{array}{rcl}
&& \hskip-8mm (\rho_3''\times \id)_* \left(\RPhi_{{\tilde f}_{3,3}}[-1] \1_{{\wt C_3''}\times{\ov C_3^*}}\right) \\
&=& (\rho_3''\times \id)_* \left( \1_{\wt C_3''}\boxtimes \1_{\ov C_3^*}\right) \\
&=& \left( (\rho_3'')_* \1_{\wt C_3''}\right) \boxtimes \1_{\ov C_3^*} \\
&=& \left( \IC(\1_{C_3})\boxtimes \1_{\ov C_3^*}\right)[-4]  \oplus  2 \left( \IC(\mathcal{R}_{C_3})\boxtimes \1_{\ov C_3^*}\right)[-4] \\
&& \oplus\  \left( \IC(\mathcal{E}_{C_3})\boxtimes \1_{\ov C_3^*}\right)[-4]     \oplus\ 3  \left(\IC(\1_{C_0})\boxtimes \1_{\ov C_3^*}\right)[-4]  \\
&& \oplus\   \left( \IC(\1_{C_0})[2] \boxtimes \1_{\ov C_3^*}\right)[-4]  \oplus  \left( \IC(\1_{C_0})[-2]\boxtimes \1_{\ov C_3^*}\right)[-4] ,
\end{array}
\]
using the expression for $(\rho_3'')_* \1_{\wt C_3''}[4]$ from \eqref{eqn:pi''3}.
Now restrict to ${\ov \Lambda_3} = {\ov C_3} \times {\ov C_3^*} = P_3[x,y]\times C_3^*$ to find
\[
\begin{array}{rcl}
&& \hskip-8mm (\rho_3''\times \id)_* \left(\RPhi_{{\tilde f}_{3,3}}[-1] \1_{{\wt C_3''}\times{\ov C_3^*}}\right)\vert_{\ov \Lambda_3} \\
&=& \left( \IC(\1_{C_3})\boxtimes \1_{\ov C_3^*}\right)\vert_{\ov \Lambda_3}[-4]  \oplus  2 \left( \IC(\mathcal{R}_{C_3})\boxtimes \1_{\ov C_3^*}\right) \vert_{\ov \Lambda_3}[-4] \\
&& \oplus\  \left( \IC(\mathcal{E}_{C_3})\boxtimes \1_{\ov C_3^*}\right)\vert_{\ov \Lambda_3}[-4] .
\end{array}
\]
Finally, restrict to $\Lambda_3^\mathrm{reg} = C_3 \times C_3^*$:
\[
 (\rho_3''\times \id)_* \left(\RPhi_{{\tilde f}_{3,3}} \1_{{\wt C_3''}\times{\ov C_3^*}}\right)[4]\vert_{\Lambda_3^\mathrm{reg}} \\
= \1_{\Lambda_3^\mathrm{reg}}[4]  \oplus  2 \mathcal{R}_{\Lambda_3^\mathrm{reg}}[4] \oplus  \mathcal{E}_{\Lambda_3^\mathrm{reg}}[4].
\]
Therefore,
\[
\Evs_{C_3} \left( (\rho_3'')_* \1_{\wt C_3''}[4] \right) = 
\1_{\Lambda_3^\mathrm{reg}}  \oplus  2 \mathcal{R}_{\Lambda_3^\mathrm{reg}} \oplus  \mathcal{E}_{\Lambda_3^\mathrm{reg}}.
\]

On the other hand,
\[
\begin{array}{rcl}
&&\hskip-8mm \Evs_{C_3} \left( (\rho_3'')_* \1_{\wt C_3''}[4] \right) \\
&=& \Evs_{C_3} \IC(\1_{C_3})\oplus 2 \Evs_{C_3} \IC(\mathcal{R}_{C_3})\oplus \Evs_{C_3} \IC(\mathcal{E}_{C_3})  \\
&&\hskip2mm\ \oplus\ 3 \Evs_{C_3} \IC(\1_{C_0}) \oplus\  \Evs_{C_3} \IC(\1_{C_0})[2] \oplus \Evs_{C_3} \IC(\1_{C_0})[-2] \\
&=& \1_{\Lambda_3^\mathrm{reg}}\oplus 2 \mathcal{R}_{\Lambda_3^\mathrm{reg}}\oplus \Evs_{C_3} \IC(\mathcal{E}_{C_3}) .
\end{array}
\]
Therefore,
\[
\Evs_{C_3} \IC(\mathcal{E}_{C_3})
=
\mathcal{E}_{\Lambda_3^\mathrm{reg}}.
\]

\subsection{Stalks and the geometric multiplicity matrix}\label{ssec:stalks}

\begin{table}%[htdp]
\caption{Restrictions of Simple Objects to Boundary}
\begin{center}
$
 \begin{array}{|c | l  l  l  l |}
\hline
 \mathcal{P} &  \mathcal{P}\vert_{C_0} &\mathcal{P}\vert _{C_1} & \mathcal{P}\vert_{C_2} & \mathcal{P}\vert_{C_3} \\
 \hline
 \IC(\1_{C_0})& \1_{C_0}[0] & 0 & 0 & 0 \\
 \IC(\1_{C_1})& \1_{C_0}[2] & \1_{C_1}[2] & 0 & 0 \\
 \IC(\1_{C_2})& \1_{C_0}[1]\oplus \1_{C_0}[3] & \1_{C_1}[3] & \1_{C_2}[3] & 0 \\
 \IC(\1_{C_3})& \1_{C_0}[4] & \1_{C_1}[4] & \1_{C_2}[4] & \1_{C_3}[4] \\
 \IC(\mathcal{R}_{C_3}) & \1_{C_0}[2] & 0 & \1_{C_2}[4] & \mathcal{R}_{C_3}[4] \\
 \IC(\mathcal{E}_{C_3}) & 0 & 0 & 0 & \mathcal{E}_{C_3}[4]\\
 \hline
 \end{array}
 $
\end{center}
\label{table:stalks}
\end{table}%

\begin{table}%[htdp]
\caption{The geometric multiplicity matrix for equivariant perverse sheaves with infinitesimal parameter $\lambda_\mathrm{sub}$. Here we use the notation $\mathcal{L}_C^\sharp$ for the constructible sheaf complex defined by $\IC(\mathcal{L}_C) = \mathcal{L}_C^\sharp[\dim C]$ and $\mathcal{L}_C^! \ceq j_!\mathcal{L}_C$ where $j : C \to V$ is inclusion.}
\begin{center}
$
 \begin{array}{|c | ccccc |c |}
\hline
 &  \1^!_{C_0} & \1^!_{C_1} & \1^!_{C_2} & \1^!_{C_3} & \mathcal{R}^!_{C_3} & \mathcal{E}^!_{C_3} \\
 \hline
\1^\sharp_{C_0} & 1 & 0 & 0 & 0 & 0 & 0 \\
\1^\sharp_{C_1} & 1 & 1 & 0 & 0 & 0 & 0 \\
\1^\sharp_{C_2} & 2 & 1 & 1 & 0 & 0 & 0 \\
\1^\sharp_{C_3} & 1 & 1 & 1 & 1& 0 & 0 \\
\mathcal{R}^\sharp_{C_3} & 1 & 0 & 1 & 0 & 1 & 0 \\
\hline
\mathcal{E}^\sharp_{C_3} & 0 & 0 & 0 & 0 & 0 & 1 \\
 \hline
 \end{array}
$
\end{center}
\label{table:geomult}
\end{table}%

\begin{theorem}\label{thm:stalks}
The stalks of the simple objects in $\Perv_{\GL_2}(P_3[x,y])$ are given by Table~\ref{table:stalks}.
The geometric multiplicity matrix is given by Table~\ref{table:geomult}.
 \end{theorem}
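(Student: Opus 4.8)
The plan is to compute the stalks of the six simple perverse sheaves $\IC(\1_{C_0}), \IC(\1_{C_1}), \IC(\1_{C_2}), \IC(\1_{C_3}), \IC(\mathcal{R}_{C_3}), \IC(\mathcal{E}_{C_3})$ on the boundary strata $C_0, C_1, C_2, C_3$ using the proper covers $\rho_j : \widetilde C_j \to \ov C_j$ (and $\rho_E : E\to \ov C_3$, $\rho_3'' : \widetilde C_3''\to \ov C_3$) introduced in Section~\ref{ssec:cubicorbits} through Section~\ref{sssec:covC3''}, together with the decomposition theorem identities \eqref{eqn:pi1}, \eqref{eqn:pi2}, \eqref{eqn:pi3}, \eqref{eqn:pi''3}, \eqref{eqn:pushforwardofrhoE}. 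The key point is that each $\widetilde C_j$ is smooth and $\rho_j$ is an isomorphism over the open orbit of $\ov C_j$, so $(\rho_j)_*\1_{\widetilde C_j}[\dim \widetilde C_j]$ is a direct sum of shifted $\IC$ sheaves whose stalks are computed directly from the fibres of $\rho_j$ (which are explicitly points, $\BP^1$, or $(\BP^1)^k$ over the various strata, as recorded in those subsections). Inductively peeling off the known summands, one isolates the stalk of the remaining simple object.

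Concretely: $\IC(\1_{C_3}) = \1_{P_3[x,y]}[4]$ by smoothness of $P_3[x,y]$, giving the fourth row of Table~\ref{table:stalks} immediately. For $\IC(\1_{C_1})$ use \eqref{eqn:pi1}: since $\rho_1$ has fibre $\BP^1$ over $C_0$ and is an isomorphism over $C_1$, $(\rho_1)_*\1_{\widetilde C_1}[2]$ has stalk $H^\bullet(\BP^1)[2]$ over $C_0$ and $\1[2]$ over $C_1$; subtracting $\IC(\1_{C_0}) = \1_{C_0}$ leaves the second row. For $\IC(\1_{C_2})$ use \eqref{eqn:pi2}: $\rho_2$ is small with fibre $\BP^1$ over $C_0$ and an isomorphism over $C_1\cup C_2$, so $(\rho_2)_*\1_{\widetilde C_2}[3] = \IC(\1_{C_2})$ directly, and its $C_0$-stalk is $H^\bullet(\BP^1)$ appropriately shifted, yielding $\1_{C_0}[1]\oplus \1_{C_0}[3]$. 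For $\IC(\mathcal{R}_{C_3})$ use \eqref{eqn:pi3}: $\rho_3$ is small, $3{:}1$ over $C_3$, $2{:}1$ over $C_2$, $1{:}1$ over $C_1$, with fibre $\BP^1$ over $C_0$; compute $(\rho_3)_*\1_{\widetilde C_3}[4]$ stratum by stratum via the trace of the $S_3$-action on cohomology of fibres, then subtract $\IC(\1_{C_3})$ to extract $\IC(\mathcal{R}_{C_3})$ — in particular the vanishing $\IC(\mathcal{R}_{C_3})\vert_{C_1} = 0$ and $\IC(\mathcal{R}_{C_3})\vert_{C_2}=\1_{C_2}[4]$ come from the fact that the reflection representation has no invariants under the stabilizers occurring over $C_1$ and has a one-dimensional invariant over $C_2$. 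For $\IC(\mathcal{E}_{C_3})$ use \eqref{eqn:pushforwardofrhoE} with the cover $\rho_E$ (which is $2{:}1$ over $C_3$, and over $C_1,C_0$ has the described $\BP^1$-type fibres): since $\IC(\mathcal{E}_{C_3})$ is the only summand of $(\rho_E)_*\1_E[4]$ supported in the top dimension not yet computed, and the sign representation of $S_3$ has no invariants under any proper parabolic-type stabilizer, one gets $\IC(\mathcal{E}_{C_3})\vert_{C_i} = 0$ for $i=0,1,2$.

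Once Table~\ref{table:stalks} is established, the geometric multiplicity matrix Table~\ref{table:geomult} follows formally: the multiplicity of the standard sheaf $\mathcal{L}_{C_i}^! = j_!\mathcal{L}_{C_i}$ in a simple perverse sheaf $\IC(\mathcal{L}_{C_j})$ equals, up to the sign conventions recorded in the pairing of Section~\ref{ssec:KL}, the rank of the appropriate stalk cohomology of $\IC(\mathcal{L}_{C_j})$ along $C_i$ twisted by the local system $\mathcal{L}_{C_i}$. This is the standard translation between stalks of $\IC$ sheaves and the entries of the (transpose of the) composition-series multiplicity matrix for standard objects, and each entry of Table~\ref{table:geomult} is read off by decomposing the corresponding stalk in Table~\ref{table:stalks} into its $A_{C_i}$-isotypic pieces — here only the $C_3$-stalks can carry nontrivial local systems, and they do so exactly as recorded.

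The main obstacle will be the correct bookkeeping of the $S_3$-equivariant structure on the cohomology of the fibres of $\rho_3$ and $\rho_3''$ over the deeper strata $C_1$ and $C_2$: one must identify not just the dimensions but the actual representations of the relevant stabilizer subgroups (the Borel-type and torus-type subgroups of Lemma~\ref{lem:stabilizers}) acting on these fibres, to correctly separate the $\1$, $\mathcal{R}_{C_3}$ and $\mathcal{E}_{C_3}$ contributions and verify the zero entries. Everything else — the dimensions of fibres, the smallness/semismallness of the maps, and the shifts — is already recorded in Sections~\ref{sssec:covC1}–\ref{sssec:covC3''}, so the remaining work is careful assembly rather than new geometry. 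Finally, one checks that Table~\ref{table:geomult} is indeed the transpose of Table~\ref{table:repmult}, which is the assertion used in the proof of Proposition~\ref{prop:KL}.
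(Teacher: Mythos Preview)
Your approach is essentially the same as the paper's: use the decomposition-theorem identities \eqref{eqn:pi1}, \eqref{eqn:pi2}, \eqref{eqn:pi3}, \eqref{eqn:pi''3} for the pushforwards $(\rho_j)_*\1_{\widetilde C_j}[\dim\widetilde C_j]$, compute their stalks on each stratum by proper base change from the explicit fibre descriptions, and then inductively peel off known summands in the Grothendieck group to isolate each $\IC$ sheaf.

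One simplification you are missing, which the paper exploits, is that the equivariant fundamental groups $A_{C_0}, A_{C_1}, A_{C_2}$ are all trivial (Lemma~\ref{lem:fg}). Consequently every equivariant local system on these boundary strata is constant, and the ``$S_3$-equivariant bookkeeping'' you flag as the main obstacle is unnecessary: the multiplicity of $\1_{C_i}^!$ in $\mathcal P$ for $i\le 2$ is simply the total rank of $\mathcal P|_{C_i}$. The paper therefore just tabulates ranks of the pushforwards $(\rho_j)_*\1_{\widetilde C_j}$ over each stratum (e.g.\ rank $2$ over $C_0$ when the fibre is $\BP^1$, rank $3$ over $C_2$ for $\rho_3''$ since the fibre has three points) and subtracts. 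Your representation-theoretic interpretation via invariants of $\varrho$ under stabilizer subgroups gives the same answers but is more work than needed. The only stratum where isotypic decomposition matters is $C_3$ itself, and there each $\IC(\mathcal L_{C_3})$ restricts tautologically to $\mathcal L_{C_3}[4]$.

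A minor point: for $\IC(\mathcal E_{C_3})$ the paper uses $\rho_3''$ and the identity \eqref{eqn:pi''3} rather than $\rho_E$; either works, but with $\rho_3''$ one must remember the extra $\IC(\1_{C_0})$ summands (including the shifted copies $[\pm 2]$) when subtracting.
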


\begin{proof}
In Sections~\ref{sssec:covC1}, \ref{sssec:covC2}, \ref{sssec:covC3} and \ref{sssec:covC3''}, we found:
\[
\begin{array}{rclr}

(\rho_1)_*(\1_{\widetilde C_1}[2])&=&\IC(\1_{C_0})\oplus \IC(\1_{C_1}) & \\
(\rho_2)_*(\1_{\widetilde C_2}[3])&=&\IC(\1_{C_2}),\\
(\rho_3)_*(\1_{\widetilde C_3}[4])&=&\IC(\1_{C_3})\oplus \IC(\mathcal{R}_{C_3}), & \\
%(\pi_3')_*(\1_{\widetilde C_3'}[4])&=&\IC(\1_{C_3})\oplus 2\IC(\mathcal{R}_{C_3})\oplus \IC(\mathcal{E}_{C_3})\oplus \IC(\1_{C_0}), & \\
(\rho''_3)_*(\1_{\widetilde C_3''}[4]) &=& \IC(\1_{C_3})\oplus 2\IC(\mathcal{R}_{C_3})\oplus \IC(\mathcal{E}_{C_3})   & \\
&& \oplus \ 3 \IC(\1_{C_0}) \oplus\  \IC(\1_{C_0})[2] \oplus \IC(\1_{C_0})[-2]. & 
\end{array}
\]
Using this, we compute the geometric multiplicity matrix as follows.
The fact that the only non-trivial sheaves are on the open orbit $C_3$ simplifies this as computing the multiplicity of $\1_{C_i}$ in $\mathcal{P}|_{C_i}$ thus reduces to computing the rank of the stalks.

We may compute the ranks of the following stalks geometrically using proper base change and our computations of the stalks above.
\[
\begin{array}{|c | c  c  c| }
\hline
 \mathcal{P} & {\rm rank}(\mathcal{P}|_{C_2}) &{\rm rank}(\mathcal{P}|_{C_1})  & {\rm rank}(\mathcal{P}|_{C_0})  \\
 \hline
 (\rho_1)_*(\1_{\widetilde C_1}[2]) & 0 & 1 & 2 \\
 (\rho_2)_*(\1_{\widetilde C_2}[3]) & 1 & 1 & 2\\
 (\rho_3)_*(\1_{\widetilde C_3}[4]) & 2 & 1 & 2 \\
 (\rho_3'')_*(\1_{\widetilde C_3}[4]) & 3 & 1 & 8\\
 \hline
 \end{array}
 \]
 We may then solve for the ranks of each of the $\IC$ sheaves, in the order below using what we know from the decomposition of the above sheaves. Note that $\IC(\1_{C_0}) = \1_{C_0}$ and $\IC(\1_{C_3}) = \1_{\ov C_3}[4]$.
 
 \[
\begin{array}{|c | c  c  c |}
\hline
 \mathcal{P} & {\rm rank}(\mathcal{P}|_{C_2}) &{\rm rank}(\mathcal{P}|_{C_1})  & {\rm rank}(\mathcal{P}|_{C_0})  \\
 \hline
 \IC(\1_{C_0})& 0 & 0 & 1 \\
 \IC(\1_{C_1})& 0 & 1 & 1\\
 \IC(\1_{C_2})& 1 & 1 & 2 \\
 \IC(\1_{C_3})& 1 & 1 & 1\\
 \IC(\mathcal{R}_{C_3}) & 1 & 0 & 1\\
 \IC(\mathcal{E}_{C_3}) & 0 & 0 & 0 \\
 \hline
 \end{array}
 \]
 To understand how to find the above notice that we may compute the row for $ \IC(\mathcal{E}_{C_3}) $ by looking at $ \IC(\mathcal{E}_{C_3})$ which we know equals
 \[ (\pi_3'')_*(\1_{\widetilde C_3''}[4]) - \IC(\1_{C_3})- 2\IC(\mathcal{R}_{C_3})- 3 \IC(\1_{C_0}) -  \IC(\1_{C_0})[2] - \IC(\1_{C_0})[-2] \]
 in the Grothendieck group.%\todo[inline]{... I guess the question is if the reviewer or readers can understand what calculations we are implying they should be doing... How much faith do we have in our readers?} 
 \end{proof}

\subsection{Normalised microlocal vanishing cycles functor}\label{ssec:NEvs}

\begin{theorem}\label{thm:Evs}
On simple objects, the microlocal vanishing cycles functor $\Evs$ is given by:
\[
\begin{array}{rcl}
\Perv_{\GL_2}(P_3[x,y]) &\mathop{\longrightarrow}\limits^{\Evs} & \Loc_{\GL_2}(\Lambda^\mathrm{reg})\\
\IC(\1_{C_{0}}) &\mapsto& \1_{\Lambda^\mathrm{reg}_0}  \\
\IC(\1_{C_{1}}) &\mapsto& \mathcal{T}_{\Lambda^\mathrm{reg}_1} \oplus \mathcal{R}_{\Lambda^\mathrm{reg}_0} \\
\IC(\1_{C_{2}}) &\mapsto& {\mathcal{T}}_{\Lambda^\mathrm{reg}_2} \oplus \1_{\Lambda^\mathrm{reg}_1}   \\
\IC(\1_{C_{3}}) &\mapsto& \1_{\Lambda^\mathrm{reg}_3}  \\
\IC(\mathcal{R}_{C_{3}}) &\mapsto& {\mathcal{R}}_{\Lambda^\mathrm{reg}_3} \oplus \1_{\Lambda^\mathrm{reg}_2} \\
\IC(\mathcal{E}_{C_{3}}) &\mapsto& {\mathcal{R}}_{\Lambda^\mathrm{reg}_3}  \oplus \mathcal{T}_{\Lambda^\mathrm{reg}_2} \oplus \1_{\Lambda^\mathrm{reg}_1} \oplus {\mathcal{E}}_{\Lambda^\mathrm{reg}_0},
\end{array}
\]
where $\1_{\Lambda_{i}^\mathrm{reg}}$, $\mathcal{T}_{\Lambda_{1}^\mathrm{reg}}$, $\mathcal{T}_{\Lambda_{2}^\mathrm{reg}}$, $\mathcal{R}_{\Lambda_3^\mathrm{reg}}$ and $\mathcal{E}_{\Lambda_3^\mathrm{reg}}$ appear in Section~\ref{ssec:conormal}.
This information is also summarised in Table~\ref{table:Evs}.
\end{theorem}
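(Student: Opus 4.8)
\textbf{Proof proposal for Theorem~\ref{thm:Evs}.}

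The plan is to assemble the theorem from the case-by-case computations carried out in Sections~\ref{ssec:covC0} through \ref{sssec:covC3''}. The structure of the argument is as follows. Each simple object $\mathcal{P}\in\Perv_{\GL_2}(P_3[x,y])$ is one of the six objects enumerated in Proposition~\ref{prop:IC}, and each arises as a summand of the pushforward of the constant sheaf along one of the proper covers $\rho_1$, $\rho_2$, $\rho_3$, $\rho_3''$ (or $\rho_E$), according to the decomposition-theorem identities \eqref{eqn:pi1}, \eqref{eqn:pi2}, \eqref{eqn:pi3}, \eqref{eqn:pi''3} and \eqref{eqn:pushforwardofrhoE}. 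Since $\Evs$ is additive, and since the pushforwards of constant sheaves along these covers can be computed on the nose from the descriptions \eqref{eqn:EVC} and the commutation of $\RPhi$ with proper pushforward \eqref{eqn:VCFt}, one can solve for $\Evs$ on each simple object by downward induction on the dimension of the support — exactly the inductive bookkeeping already performed in each subsection where the line ``This computes \eqref{eqn:VtC} in this case'' appears.

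Concretely, the first step is to record, for each pair $(C_j, C_i)$ with $C_i\le C_j$, the local computation of the vanishing cycles $\RPhi_{{\tilde f}_{j,i}}[-1]\left(\1_{\wt C_j}[\dim\wt C_j]\boxtimes\1_{\ov C_i^*}\right)$ restricted to $(\rho_j\times\id)^{-1}\Lambda_i^\mathrm{reg}$; these are \eqref{eqn:VtC00}, \eqref{eqn:VtC10}, \eqref{eqn:VtC11}, \eqref{eqn:VtC20}, and the analogous formulas in Sections~\ref{sssec:Evs12}--\ref{sssec:Evs33E}. In each case the function ${\tilde f}_{j,i}$ is, in suitable local coordinates coming from the explicit parametrizations of the covers (e.g.\ \eqref{eqn:r1}, \eqref{eqn:covC2}, \eqref{eq: p3}), of the form $\sum_k z_k g_k$ or $z^2 g$ with the $z_k$ cutting out the relevant conormal stratum and the $g_k$ nonvanishing there; invoking \cite{CFMMX}*{Corollary 7.2.6, Proposition 7.2.5} and the Sebastiani--Thom theorem then identifies the vanishing cycles with a (shifted) constant or rank-one local system, the latter being $\mathcal{T}_{\Lambda_i^\mathrm{reg}}$ whenever a square root $\sqrt{g}$ appears. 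The second step is to push forward along $\rho_j\times\id$ and restrict to $\Lambda_i^\mathrm{reg}$, using that $\rho_j$ is an isomorphism (or a known finite cover) over $C_i$ for $i$ large, and to peel off $\Evs_{C_i}\mathcal{P}$ for the top simple summand $\mathcal{P}$ by subtracting the contributions of the lower-dimensional summands, whose $\Evs$ values are already known by induction. The third step is simply to collect the six resulting rows; matching them against the stated table is then immediate, using Lemma~\ref{lem:conormal} to identify $\Lambda_i^\mathrm{reg}$ and the microlocal fundamental groups $A^\mathrm{mic}_{C_i}$ (so that $\mathcal{R}_{\Lambda_0^\mathrm{reg}}$, $\mathcal{E}_{\Lambda_0^\mathrm{reg}}$, $\mathcal{T}_{\Lambda_1^\mathrm{reg}}$, $\mathcal{T}_{\Lambda_2^\mathrm{reg}}$, $\mathcal{R}_{\Lambda_3^\mathrm{reg}}$, $\mathcal{E}_{\Lambda_3^\mathrm{reg}}$ are the named summands of $(\rho_{\Lambda^\mathrm{reg}})_*\1_{\wt\Lambda}$ from Proposition~\ref{prop:conormal}).

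The main obstacle — and the only place requiring genuine care rather than routine computation — is the case $\Evs_{C_0}\IC(\mathcal{E}_{C_3})$ treated in Section~\ref{sssec:Evs03E}. There the cover $\rho_3''$ is not semismall, its pushforward \eqref{eqn:pi''3} contains the ``eccentric'' summands $\IC(\1_{C_0})[\pm 2]$, and the vanishing-cycle locus $X$ must be further stratified and resolved via the auxiliary cover $E$ of \eqref{eqn:pushforwardofrhoE}; one must run the decomposition theorem a second time on $\rho_E'$ and then carefully cancel the contributions of $\IC(\1_{C_3})$, $\IC(\mathcal{R}_{C_3})$, $\IC(\1_{C_0})$ and the two eccentric copies using the already-established Fourier-transform identities \eqref{eqn:Ft0}, \eqref{eqn:Ft3}, \eqref{eqn:Ft3'} in order to isolate \eqref{eqn:Ft3''}, hence $\Evs_{C_0}\IC(\mathcal{E}_{C_3}) = \mathcal{E}_{\Lambda_0^\mathrm{reg}}$. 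The parallel computation of $\Evs_{C_3}\IC(\mathcal{E}_{C_3})$ is easy because ${\tilde f}_{3,3}$ is trivial, but it too requires subtracting the known $\Evs_{C_3}$ values of the other summands of $(\rho_3'')_*\1_{\wt C_3''}[4]$; once these cancellations are checked the six rows of the table are complete.
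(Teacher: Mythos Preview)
Your proposal is correct and follows exactly the paper's approach: the proof is nothing more than a collation of the twenty-one case-by-case computations already carried out in Sections~\ref{ssec:covC0}--\ref{sssec:covC3''}, together with the vanishing on non-support strata from \cite{CFMMX}*{Proposition 7.5.1}. Your expanded narrative of the inductive bookkeeping and the special handling of $\Evs_{C_0}\IC(\mathcal{E}_{C_3})$ via the auxiliary cover $E$ accurately reflects the content of those sections; the paper's own proof simply lists the references without repeating this explanation.
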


\begin{proof}
\begin{enumerate}
\item
$\Evs_{C_0} \IC(\1_{C_0}) = \1_{\Lambda_0^\mathrm{reg}}$ by Section~\ref{ssec:covC0}
\item $\Evs_{C_i} \IC(\1_{C_0}) = 0$ for $i=1$, $2$ and $3$ by \cite{CFMMX}*{Proposition 7.5.1}
\item $\Evs_{C_0} \IC(\1_{C_1}) = \mathcal{R}_{\Lambda_0^\mathrm{reg}}$ by Section~\ref{sssec:Evs01}
\item $\Evs_{C_1} \IC(\1_{C_1}) = \mathcal{T}_{\Lambda_1^\mathrm{reg}}$ by Section~\ref{sssec:Evs11}
\item $\Evs_{C_i} \IC(\1_{C_1}) = 0$ for $i=2$ and $3$ by \cite{CFMMX}*{Proposition 7.5.1}
\item $\Evs_{C_0} \IC(\1_{C_2}) = 0$ by Section~\ref{sssec:Evs02}
\item $\Evs_{C_1} \IC(\1_{C_2}) = 1_{\Lambda_1^\mathrm{reg}}$ by Section~\ref{sssec:Evs12}
\item $\Evs_{C_2} \IC(\1_{C_2}) = \mathcal{L}_{\Lambda_2^\mathrm{reg}}$ by Section~\ref{sssec:Evs22}
\item $\Evs_{C_3} \IC(\1_{C_2}) = 0$ by \cite{CFMMX}*{Proposition 7.5.1}
\item $\Evs_{C_0} \IC(\1_{C_3}) = 0$ by Section~\ref{sssec:Evs03}
\item $\Evs_{C_1} \IC(\1_{C_3}) = 0$ by Section~\ref{sssec:Evs13}
\item $\Evs_{C_2} \IC(\1_{C_3}) = 0$ by Section~\ref{sssec:Evs23}
\item $\Evs_{C_3} \IC(\1_{C_3}) = \1_{\Lambda_3^\mathrm{reg}}$ by Section~\ref{sssec:Evs33}
\item $\Evs_{C_0} \IC(\mathcal{R}_{C_3}) = 0$ by Section~\ref{sssec:Evs03R}
\item $\Evs_{C_1} \IC(\mathcal{R}_{C_3}) = 0$ by Section~\ref{sssec:Evs13R}
\item $\Evs_{C_2} \IC(\mathcal{R}_{C_3}) = \1_{\Lambda_2^\mathrm{reg}}$ by Section~\ref{sssec:Evs23R}
\item $\Evs_{C_3} \IC(\mathcal{R}_{C_3}) = \mathcal{R}_{\Lambda_3^\mathrm{reg}}$ by Section~\ref{sssec:Evs33R}
\item $\Evs_{C_0} \IC(\mathcal{E}_{C_3}) = 0$ by Section~\ref{sssec:Evs03E}
\item $\Evs_{C_1} \IC(\mathcal{E}_{C_3}) = 0$ by Section~\ref{sssec:Evs13E}
\item $\Evs_{C_2} \IC(\mathcal{E}_{C_3}) = 0$ by Section~\ref{sssec:Evs23E}
\item $\Evs_{C_3} \IC(\mathcal{E}_{C_3}) = \mathcal{E}_{\Lambda_3^\mathrm{reg}}$ by Section~\ref{sssec:Evs33E}\qedhere
\end{enumerate}
\end{proof}

\begin{table}%[htdp]
\caption{Values of $\Evs$ on Simple Objects}
\begin{center}
$
\begin{array}{| c || cccc |}
\hline
\Perv_{\GL_2}(P_3[x,y]) & \Loc_{\GL_2}(\Lambda_0^\mathrm{reg}) &  \Loc_{\GL_2}(\Lambda_1^\mathrm{reg}) &  \Loc_{\GL_2}(\Lambda_2^\mathrm{reg}) &  \Loc_{\GL_2}(\Lambda_3^\mathrm{reg})  \\
\hline\hline
\IC(\1_{C_0}) & \1_{\Lambda_0^\mathrm{reg}} & 0 & 0 & 0 \\
\IC(\1_{C_1})  & \mathcal{R}_{\Lambda_0^\mathrm{reg}} & \mathcal{T}_{\Lambda_1^\mathrm{reg}} & 0 & 0 \\
\IC(\1_{C_2}) & 0 & \1_{\Lambda_1^\mathrm{reg}} & \mathcal{T}_{\Lambda_2^\mathrm{reg}} & 0 \\
\IC(\1_{C_3}) & 0 & 0 & 0 & \1_{\Lambda_3^\mathrm{reg}} \\
\IC(\mathcal{R}_{C_3}) &  0 & 0 & \1_{\Lambda_2^\mathrm{reg}} & \mathcal{R}_{\Lambda_3^\mathrm{reg}} \\
%\hline
\IC(\mathcal{E}_{C_3}) & \mathcal{E}_{\Lambda_0^\mathrm{reg}} & \mathcal{T}_{\Lambda_1^\mathrm{reg}} & \1_{\Lambda_2^\mathrm{reg}} & \mathcal{E}_{\Lambda_3^\mathrm{reg}} \\
\hline
\end{array}
$
\end{center}
\label{table:Evs}
\end{table}%

From \cite{CFMMX}*{Section 7.10}, we recall the normalised microlocal vanishing cycles functor
\[
\NEvs_C : \Perv_{H}(V) \to \Loc_{H}(T^*_{C}\Lambda^\mathrm{reg}),
\]
defined by
\[
\NEvs_C \mathcal{F} \ceq  \operatorname{\mathcal{H}\mathrm{om}}
\left( 
\Evs_C \IC(C),
\Evs_C \mathcal{F}
\right)
= \left(\Evs_C \IC(C)\right)^\vee \otimes \Evs_C \mathcal{F}.
\]

\begin{theorem}\label{thm:NEvs}
On simple objects, the normalised microlocal vanishing cycles functor $\NEvs$ is given by:
\[
\begin{array}{rcl}
\Perv_{\GL_2}(P_3[x,y]) &\mathop{\longrightarrow}\limits^{\NEvs} & \Loc_{\GL_2}(\Lambda^\mathrm{reg})\\
\IC(\1_{C_{0}}) &\mapsto& \1_{\Lambda^\mathrm{reg}_0}  \\
\IC(\1_{C_{1}}) &\mapsto& \1_{\Lambda^\mathrm{reg}_1} \oplus \mathcal{R}_{\Lambda^\mathrm{reg}_0} \\
\IC(\1_{C_{2}}) &\mapsto& \1_{\Lambda^\mathrm{reg}_2} \oplus \mathcal{T}_{\Lambda^\mathrm{reg}_1}   \\
\IC(\1_{C_{3}}) &\mapsto& \1_{\Lambda^\mathrm{reg}_3}  \\
\IC(\mathcal{R}_{C_{3}}) &\mapsto& {\mathcal{R}}_{\Lambda^\mathrm{reg}_3} \oplus \mathcal{T}_{\Lambda^\mathrm{reg}_2} \\
\IC(\mathcal{E}_{C_{3}}) &\mapsto& {\mathcal{E}}_{\Lambda^\mathrm{reg}_3}  \oplus \1_{\Lambda^\mathrm{reg}_2} \oplus \mathcal{T}_{\Lambda^\mathrm{reg}_1} \oplus {\mathcal{E}}_{\Lambda^\mathrm{reg}_0}
\end{array}
\]
This information is also summarised in Table~\ref{table:NEvs}.
\end{theorem}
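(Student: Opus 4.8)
The plan is to deduce Theorem~\ref{thm:NEvs} directly from Theorem~\ref{thm:Evs} together with the definition
\[
\NEvs_C \mathcal{F} \ceq \left(\Evs_C \IC(C)\right)^\vee \otimes \Evs_C \mathcal{F}
\]
recalled just above the statement. Since $\Perv_{\GL_2}(P_3[x,y])$ has only finitely many simple objects and $\Lambda^\mathrm{reg}$ decomposes as the disjoint union of the four orbits $\Lambda_0^\mathrm{reg},\ldots,\Lambda_3^\mathrm{reg}$, it suffices to compute, for each $i\in\{0,1,2,3\}$ and each of the six simple objects $\mathcal{F}$, the tensor product $(\Evs_{C_i}\IC(C_i))^\vee \otimes \Evs_{C_i}\mathcal{F}$ as a local system on $\Lambda_i^\mathrm{reg}$, reading off both factors from Table~\ref{table:Evs}.

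First I would record the ``normalising factors'' $\Evs_{C_i}\IC(C_i)$, i.e. the entries on the diagonal of Table~\ref{table:Evs}: these are $\1_{\Lambda_0^\mathrm{reg}}$, $\mathcal{T}_{\Lambda_1^\mathrm{reg}}$, $\mathcal{T}_{\Lambda_2^\mathrm{reg}}$ and $\1_{\Lambda_3^\mathrm{reg}}$ for $i=0,1,2,3$ respectively. The key elementary observation is that each of these is self-dual: the trivial local system is obviously self-dual, and $\mathcal{T}_{\Lambda_1^\mathrm{reg}}$, $\mathcal{T}_{\Lambda_2^\mathrm{reg}}$ correspond to the order-two character $\tau$ of $A^\mathrm{mic}_{C_1}=A^\mathrm{mic}_{C_2}=S_2$, so $\mathcal{T}^\vee\cong\mathcal{T}$. (One also uses here that $\mathcal{R}$ and $\mathcal{E}$, being the reflection and sign representations of $S_3$, are real and hence self-dual, though this is not strictly needed for the final bookkeeping.) Consequently $(\Evs_{C_i}\IC(C_i))^\vee$ equals $\1_{\Lambda_i^\mathrm{reg}}$ for $i=0,3$ and $\mathcal{T}_{\Lambda_i^\mathrm{reg}}$ for $i=1,2$, and $\NEvs_{C_i}\mathcal{F}$ is obtained from $\Evs_{C_i}\mathcal{F}$ by leaving it unchanged when $i\in\{0,3\}$ and tensoring with $\mathcal{T}_{\Lambda_i^\mathrm{reg}}$ when $i\in\{1,2\}$.

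Then I would carry out the twist case by case, using the multiplication rules in the character groups $A^\mathrm{mic}_{C_1}=A^\mathrm{mic}_{C_2}=S_2$ and $A^\mathrm{mic}_{C_0}=A^\mathrm{mic}_{C_3}=S_3$: on $\Lambda_1^\mathrm{reg}$ and $\Lambda_2^\mathrm{reg}$ one has $\mathcal{T}\otimes\mathcal{T}\cong\1$ and $\mathcal{T}\otimes\1\cong\mathcal{T}$. For instance, $\Evs_{C_1}\IC(\1_{C_1})=\mathcal{T}_{\Lambda_1^\mathrm{reg}}$ gives $\NEvs_{C_1}\IC(\1_{C_1})=\mathcal{T}_{\Lambda_1^\mathrm{reg}}\otimes\mathcal{T}_{\Lambda_1^\mathrm{reg}}=\1_{\Lambda_1^\mathrm{reg}}$, while the $\Lambda_0$-component of $\IC(\1_{C_1})$ is unchanged, yielding $\mathcal{R}_{\Lambda_0^\mathrm{reg}}$; similarly $\Evs_{C_2}\IC(\mathcal{R}_{C_3})=\1_{\Lambda_2^\mathrm{reg}}$ becomes $\mathcal{T}_{\Lambda_2^\mathrm{reg}}$ while the $\Lambda_3$-component $\mathcal{R}_{\Lambda_3^\mathrm{reg}}$ survives; and $\Evs_{C_1}\IC(\mathcal{E}_{C_3})=\mathcal{T}_{\Lambda_1^\mathrm{reg}}$ becomes $\1_{\Lambda_1^\mathrm{reg}}$, the $\Lambda_2$-entry $\1_{\Lambda_2^\mathrm{reg}}$ becomes $\mathcal{T}_{\Lambda_2^\mathrm{reg}}$, and the $\Lambda_0$, $\Lambda_3$ entries $\mathcal{E}_{\Lambda_0^\mathrm{reg}}$, $\mathcal{E}_{\Lambda_3^\mathrm{reg}}$ are unchanged. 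Matching every row against Table~\ref{table:NEvs} in this way completes the proof. There is no real obstacle here beyond careful bookkeeping; the only conceptual input is the self-duality of the normalising local systems, which is immediate, so the ``hard part'' was already done in Theorem~\ref{thm:Evs}.
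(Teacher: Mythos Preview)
Your proposal is correct and is exactly the paper's own argument: read off the diagonal entries $\Evs_{C_i}\IC(\1_{C_i})$ from Table~\ref{table:Evs} (trivial for $i=0,3$, the order-two local system $\mathcal{T}$ for $i=1,2$), then obtain Table~\ref{table:NEvs} from Table~\ref{table:Evs} by tensoring the $\Lambda_1^\mathrm{reg}$ and $\Lambda_2^\mathrm{reg}$ columns with $\mathcal{T}$. Your version is simply more explicit about the self-duality of $\mathcal{T}$ and the case-by-case bookkeeping; note that your computation for $\IC(\mathcal{E}_{C_3})$ agrees with Table~\ref{table:NEvs} (and with Table~\ref{table:nevs}) rather than with the displayed formula in the theorem statement, which has the $\Lambda_1^\mathrm{reg}$ and $\Lambda_2^\mathrm{reg}$ entries swapped by a typo.
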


\begin{proof}
From Table~\ref{table:Evs} we see that $\Evs_{C_i} \IC(\1_{C_i}) = \1_{\Lambda_i^\mathrm{reg}}$ for $i=0,3$ and $\Evs_{C_i} \IC(\1_{C_i}) = \mathcal{T}_{\Lambda_i^\mathrm{reg}}$ for $i=1,2$.
Table~\ref{table:NEvs} is therefore obtained from Table~\ref{table:Evs} by tensoring the columns $\Loc_{\GL_2}(\Lambda_1^\mathrm{reg})$ and $\Loc_{\GL_2}(\Lambda_2^\mathrm{reg})$ by $\mathcal{T}_{\Lambda_1^\mathrm{reg}}$ and $\mathcal{T}_{\Lambda_2^\mathrm{reg}}$, respectively.
\end{proof}

\begin{table}%[htdp]
\caption{Values of $\NEvs$ on Simple Objects}
\begin{center}
$
\begin{array}{| c || cccc |}
\hline
\Perv_{\GL_2}(P_3[x,y]) & \Loc_{\GL_2}(\Lambda_0^\mathrm{reg}) &  \Loc_{\GL_2}(\Lambda_1^\mathrm{reg}) &  \Loc_{\GL_2}(\Lambda_2^\mathrm{reg}) &  \Loc_{\GL_2}(\Lambda_3^\mathrm{reg})  \\
\hline\hline
\IC(\1_{C_0}) & \1_{\Lambda_0^\mathrm{reg}} & 0 & 0 & 0 \\
\IC(\1_{C_1})  & \mathcal{R}_{\Lambda_0^\mathrm{reg}} & \1_{\Lambda_1^\mathrm{reg}} & 0 & 0 \\
\IC(\1_{C_2}) & 0 & \mathcal{T}_{\Lambda_1^\mathrm{reg}} & \1_{\Lambda_2^\mathrm{reg}} & 0 \\
\IC(\1_{C_3}) & 0 & 0 & 0 & \1_{\Lambda_3^\mathrm{reg}} \\
\IC(\mathcal{R}_{C_3}) &  0 & 0 &\mathcal{T}_{\Lambda_2^\mathrm{reg}} & \mathcal{R}_{\Lambda_3^\mathrm{reg}} \\
%\hline
\IC(\mathcal{E}_{C_3}) & \mathcal{E}_{\Lambda_0^\mathrm{reg}} & \1_{\Lambda_1^\mathrm{reg}}  & \mathcal{T}_{\Lambda_2^\mathrm{reg}} & \mathcal{E}_{\Lambda_3^\mathrm{reg}} \\
\hline
\end{array}
$
\end{center}
\label{table:NEvs}
\end{table}%

\subsection{Fourier transform}\label{ssec:Ft}

\begin{theorem}\label{thm:geoFt}%\todo{This label is defined twice}
On the simple objects in $\Perv_{\GL_2}(P_3[x,y])$, the Fourier transform is given by
\[
\begin{array}{rcl}% rcl}
\Ft \IC(\1_{C_0}) 	&=& \IC(\1_{C_0^*})\\% &\mapsto& \IC(\1_{C_3})\\
\Ft \IC(\1_{C_1})	&=& \IC(\mathcal{R}_{C_0^*})\\% &\mapsto& \IC(\mathcal{R}_{C_3}) \\
\Ft \IC(\1_{C_2})	&=& \IC(\1_{C_1^*})\\% &\mapsto& \IC(\1_{C_2}) \\
\Ft \IC(\1_{C_3})	&=& \IC(\1_{C_3^*})\\% &\mapsto& \IC(\1_{C_0}) \\
\Ft \IC(\mathcal{R}_{C_3}) 	&=& \IC(\1_{C_2^*})\\% &\mapsto& \IC(\1_{C_1}) \\
\Ft \IC(\mathcal{E}_{C_3}) 	&=& \IC(\mathcal{E}_{C_0^*}).
% &\mapsto& \IC(\mathcal{E}_{C_3})
\end{array}
\]
\end{theorem}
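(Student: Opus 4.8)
\textbf{Proof proposal for Theorem~\ref{thm:geoFt}.}

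The plan is to read off the Fourier transform of each of the six simple objects directly from the vanishing-cycle computations already carried out in Sections~\ref{ssec:covC0} through \ref{sssec:covC3''}. Recall from \eqref{eqn:Ft} that, for any $\mathcal{F}\in\Perv_{\GL_2}(P_3[x,y])$, one has
\[
\RPhi_{\KPair{}{}}[-1]\left(\mathcal{F}\boxtimes \1_{P_3[x,y]^*}[4]\right) = \1^!_{C_0}\boxtimes (\Ft\,\mathcal{F}),
\]
so isolating $\Ft\,\mathcal{F}$ amounts to computing the left-hand side and stripping off the $\1^!_{C_0}$ factor in the first variable. Thus the theorem is not a new calculation at all: it is the bookkeeping collation of the identities \eqref{eqn:Ft0}, \eqref{eqn:Ft1}, \eqref{eqn:Ft2}, \eqref{Ft3}, \eqref{eqn:Ft3'} and \eqref{eqn:Ft3''}, which were established precisely as the ``$\Ft$ halves'' of the microlocal computations.

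Concretely, I would proceed as follows. First, $\Ft\IC(\1_{C_0}) = \IC(\1_{C_0^*})$ is \eqref{eqn:Ft0}, obtained in Section~\ref{ssec:covC0} from the degenerate case where ${\tilde f}_{0,0}\equiv 0$. Next, $\Ft\IC(\1_{C_1}) = \IC(\mathcal{R}_{C_0^*})$ is \eqref{eqn:Ft1}: in Section~\ref{sssec:Evs01} one uses the proper cover $\rho_1:\wt C_1\to\ov C_1$ with $(\rho_1)_*\1_{\wt C_1}[2] = \IC(\1_{C_0})\oplus\IC(\1_{C_1})$, computes $\RPhi_{{\tilde f}_{1,0}}$ via \cite{CFMMX}*{Corollary 7.2.6}, pushes forward to recognize $\1_{\ov C_0}\boxtimes\left(\IC(\1_{C_0^*})\oplus\IC(\mathcal{R}_{C_0^*})\right)$, and then cancels the $\IC(\1_{C_0})$-contribution using the already-known \eqref{eqn:Ft0}. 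Then $\Ft\IC(\1_{C_2}) = \IC(\1_{C_1^*})$ is \eqref{eqn:Ft2}, from Section~\ref{sssec:Evs02} using the small map $\rho_2$ with $(\rho_2)_*\1_{\wt C_2}[3] = \IC(\1_{C_2})$ and the Sebastiani--Thom theorem. The case $\Ft\IC(\1_{C_3}) = \IC(\1_{C_3^*})$ is \eqref{Ft3} from Section~\ref{sssec:Evs03}, immediate since $\KPair{}{}$ has an isolated singularity on $P_3[x,y]\times\ov C_0^*$. For $\Ft\IC(\mathcal{R}_{C_3}) = \IC(\1_{C_2^*})$ we use \eqref{eqn:Ft3'} from Section~\ref{sssec:Evs03R}, where the cover $\rho_3:\wt C_3\to\ov C_3$ has $(\rho_3)_*\1_{\wt C_3}[4] = \IC(\1_{C_3})\oplus\IC(\mathcal{R}_{C_3})$, and one cancels the $\IC(\1_{C_3})$-term with \eqref{Ft3}. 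Finally $\Ft\IC(\mathcal{E}_{C_3}) = \IC(\mathcal{E}_{C_0^*})$ is \eqref{eqn:Ft3''}, extracted in Section~\ref{sssec:Evs03E} from the non-semismall cover $\rho_3'':\wt C_3''\to\ov C_3$ by subtracting off all the lower-dimensional contributions identified via the auxiliary cover $E\to\ov C_3$ and the decomposition theorem, using \eqref{eqn:Ft0}, \eqref{eqn:Ft3} and \eqref{eqn:Ft3'}.

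The substantive content therefore lies entirely in those earlier sections, and so the only work to be done in the proof of the theorem is to check that the inductive cancellations are consistent and exhaustive: the six covers $\rho_1,\rho_2,\rho_3,\rho_3''$ (together with $E$) suffice to pin down all six Fourier transforms, because each new cover introduces exactly one new simple summand beyond those already resolved. The main obstacle — really the only one — is making sure the six results are mutually compatible, e.g.\ that $\Ft$ is an involution up to the orbit-duality $C_i\mapsto C_{3-i}^*$; this serves as a useful consistency check (note $\Ft\IC(\1_{C_1}) = \IC(\mathcal{R}_{C_0^*})$ and $\Ft\IC(\mathcal{R}_{C_3}) = \IC(\1_{C_2^*})$ are indeed swapped, as are $\IC(\1_{C_0})\leftrightarrow\IC(\1_{C_3})$ and $\IC(\mathcal{E}_{C_3})\leftrightarrow\IC(\mathcal{E}_{C_0^*})$ self-dual in the appropriate sense), but requires no further computation. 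The proof is thus a one-line appeal to the enumerated equations above.

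\begin{proof}
Each assertion was established in the course of computing microlocal vanishing cycles in Section~\ref{sec:cubics}. Explicitly, $\Ft\IC(\1_{C_0}) = \IC(\1_{C_0^*})$ is \eqref{eqn:Ft0}; $\Ft\IC(\1_{C_1}) = \IC(\mathcal{R}_{C_0^*})$ is \eqref{eqn:Ft1}; $\Ft\IC(\1_{C_2}) = \IC(\1_{C_1^*})$ is \eqref{eqn:Ft2}; $\Ft\IC(\1_{C_3}) = \IC(\1_{C_3^*})$ is \eqref{Ft3}; $\Ft\IC(\mathcal{R}_{C_3}) = \IC(\1_{C_2^*})$ is \eqref{eqn:Ft3'}; and $\Ft\IC(\mathcal{E}_{C_3}) = \IC(\mathcal{E}_{C_0^*})$ is \eqref{eqn:Ft3''}. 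In each case, the computation of the left-hand side of \eqref{eqn:Ft} proceeded by choosing the proper cover $\rho_j$ of $\ov C_j$ from Section~\ref{ssec:cubicorbits}, computing $\RPhi_{{\tilde f}_{j,0}}$ in local coordinates via \cite{CFMMX}*{Corollary 7.2.6} and the Sebastiani--Thom theorem, pushing forward along $\rho_j\times\id$, and then inductively subtracting the contributions of the simple perverse sheaves of strictly smaller support whose Fourier transforms had already been determined.
\end{proof}
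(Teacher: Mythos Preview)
Your proposal is correct and matches the paper's own proof essentially verbatim: the paper's proof of Theorem~\ref{thm:geoFt} is nothing more than a six-item list pointing to Sections~\ref{ssec:covC0}, \ref{sssec:Evs01}, \ref{sssec:Evs02}, \ref{sssec:Evs03}, \ref{sssec:Evs03R}, \ref{sssec:Evs03E}, which are exactly the sections containing the equations \eqref{eqn:Ft0}, \eqref{eqn:Ft1}, \eqref{eqn:Ft2}, \eqref{Ft3}, \eqref{eqn:Ft3'}, \eqref{eqn:Ft3''} you cite. Your additional commentary on the inductive-cancellation mechanism and the involution consistency check is accurate but goes slightly beyond what the paper records.
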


\begin{proof}
\begin{enumerate}
\item $\Ft \IC(\1_{C_0}) = \IC(\1_{C_0^*})$ by Section~\ref{ssec:covC0}
\item $\Ft \IC(\1_{C_1}) = \IC(\mathcal{R}_{C_0^*})$ by Section~\ref{sssec:Evs01}
\item $\Ft \IC(\1_{C_2}) = \IC(\1_{C_1^*})$ by Section~\ref{sssec:Evs02} 
\item $\Ft \IC(\1_{C_3}) = \IC(\1_{C_3^*})$ by Section~\ref{sssec:Evs03}
\item $\Ft \IC(\mathcal{R}_{C_3}) = \IC(\1_{C_2^*})$ by Section~\ref{sssec:Evs03R} 
\item $\Ft \IC(\mathcal{E}_{C_3}) = \IC(\mathcal{E}_{C_0^*})$ by Section~\ref{sssec:Evs03E} \qedhere
\end{enumerate} 
\end{proof}

\begin{bibdiv}
\begin{biblist}

\bib{Aubert:Dualite}{article}{
   author={Aubert, Anne-Marie},
   title={Dualit\'{e} dans le groupe de Grothendieck de la cat\'{e}gorie des
   repr\'{e}sentations lisses de longueur finie d'un groupe r\'{e}ductif $p$-adique},
%   language={French, with English summary},
   journal={Trans. Amer. Math. Soc.},
   volume={347},
   date={1995},
   number={6},
   pages={2179--2189},
   issn={0002-9947},
%   review={\MR{1285969}},
%   doi={10.2307/2154931},
}

\bib{Arthur:Conjectures}{article}{
   author={Arthur, James},
   title={Unipotent automorphic representations: conjectures},
   note={Orbites unipotentes et repr\'{e}sentations, II},
   journal={Ast\'{e}risque},
   number={171-172},
   date={1989},
   pages={13--71},
   issn={0303-1179},
%   review={\MR{1021499}},
}

\bib{Arthur:Character}{article}{
   author={Arthur, James},
   title={On local character relations},
   journal={Selecta Math. (N.S.)},
   volume={2},
   date={1996},
   number={4},
   pages={501--579},
   issn={1022-1824},
%   review={\MR{1443184}},
%   doi={10.1007/PL00001383},
}

\bib{Arthur:book}{book}{
   author={Arthur, James},
   title={The endoscopic classification of representations},
   series={American Mathematical Society Colloquium Publications},
   volume={61},
   note={Orthogonal and symplectic groups},
   publisher={American Mathematical Society, Providence, RI},
   date={2013},
   pages={xviii+590},
   isbn={978-0-8218-4990-3},
%   review={\MR{3135650}},
   doi={10.1090/coll/061},
}

\iffalse
\bib{BGS}{article}{
   author={Beilinson, Alexander},
   author={Ginzburg, Victor},
   author={Soergel, Wolfgang},
   title={Koszul duality patterns in representation theory},
   journal={J. Amer. Math. Soc.},
   volume={9},
   date={1996},
   number={2},
   pages={473--527},
   issn={0894-0347},
%   review={\MR{1322847}},
   doi={10.1090/S0894-0347-96-00192-0},
}
\fi
	
\bib{BBD}{article}{
   author={Be\u{\i}linson, A. A.},
   author={Bernstein, J.},
   author={Deligne, P.},
   title={Faisceaux pervers},
%   language={French},
   conference={
      title={Analysis and topology on singular spaces, I},
      address={Luminy},
      date={1981},
   },
   book={
      series={Ast\'{e}risque},
      volume={100},
      publisher={Soc. Math. France, Paris},
   },
   date={1982},
   pages={5--171},
%   review={\MR{751966}},
}	

\bib{BumpJoyner}{article}{
author={ Bump, Daniel},
author={Joyner, David },
title={L-functions for $G_2$},
journal={Unpublished notes},
note={\\\url{http://citeseerx.ist.psu.edu/viewdoc/summary?doi=10.1.1.63.521}},
volume={},
pages={}
}

\bib{Carter}{book}{
   author={Carter, Roger W.},
   title={Finite groups of Lie type},
   series={Wiley Classics Library},
   note={Conjugacy classes and complex characters;
   Reprint of the 1985 original;
   A Wiley-Interscience Publication},
   publisher={John Wiley \& Sons, Ltd., Chichester},
   date={1993},
   pages={xii+544},
   isbn={0-471-94109-3},
%   review={\MR{1266626}},
}

\bib{Ch}{article}{
author={B. Chang},
title={The Conjugate Classes of Chevalley Groups of Type $(G_2)$},
journal={Journal of Algebra},
volume={9},
date={1968},
pages={190-211},
}

\bib{CFMMX}{article}{
   author={Cunningham, Clifton},
   author={Fiori, Andrew},
   author={Moussaoui, Ahmed},
   author={Mracek, James},
   author={Xu, Bin},
   title={Arthur packets for p-adic groups by way of microlocal vanishing cycles of perverse sheaves, with examples},
   journal={Memoirs of the American Mathematical Society (in press)},
   %volume={},
   date={2021},
   %number={},
   %pages={},
   note={\url{https://arxiv.org/abs/arXiv:1705.01885}}
}

\bib{CFZ:unipotent}{article}{
   author={Cunningham, Clifton},
   author={Fiori, Andrew},
   author={Zhang, Qing},
   title={Arthur packets for unipotent representations of p-adic G2},
   %journal={},
   %volume={},
   %date={in preparation},
   %number={},
   %pages={},
   note={in preparation}
}

\bib{CFZ:endoscopic}{article}{
   author={Cunningham, Clifton},
   author={Fiori, Andrew},
   author={Zhang, Qing},
   title={Endoscopic classification of unipotent representations of p-adic G2},
   %journal={},
   %volume={},
   %date={in preparation},
   %number={},
   %pages={},
   note={in preparation}
}

\bib{DL}{article}{
   author={Deligne, P.},
   author={Lusztig, G.},
   title={Representations of reductive groups over finite fields},
   journal={Ann. of Math. (2)},
   volume={103},
   date={1976},
   number={1},
   pages={103--161},
%   issn={0003-486X},
%   review={\MR{393266}},
%   doi={10.2307/1971021},
}

\bib{FOS}{article}{
author={Feng, Yongqi},
author={Opdam, Eric},
author={Solleveil, Maarten},
title={Supercuspidal Unipotent Representations: L-packets and Formal Degrees},
journal={arXiv:1805.01888v2},
date={2019},
}

\bib{FioriG2}{article}{
   author={Fiori, Andrew},
   title={Rational Conjugacy Classes of Certain Subgroups of G2},
   pages={pp. 19},
   date={2015},
   journal={arXiv:1501.03431},
   note={\url{https://arxiv.org/abs/1501.03431}}
   }

\bib{GGJ}{article}{
author={Gan, Wee Teck},
author={Gurevich, Nadya},
author={Jiang, Dihua},
title={Cubic unipotent Arthur parameters and multiplicities of square integrable automorphic forms},
journal={Invent. math.},
volume={149},
date={2002},
pages={225-265},
}

\bib{GG}{article}{
   author={Gan, Wee Teck},
   author={Gurevich, Nadya},
   title={Non-tempered Arthur packets of $G_2$},
   conference={
      title={Automorphic representations, $L$-functions and applications:
      progress and prospects},
   },
   book={
      series={Ohio State Univ. Math. Res. Inst. Publ.},
      volume={11},
      publisher={de Gruyter, Berlin},
   },
   date={2005},
   pages={129--155},
%   review={\MR{2192822}},
%   doi={10.1515/9783110892703.129},
}

\bib{Huang-Magaard-Savin}{article}{
   author={Huang, Jing-Song},
   author={Magaard, Kay},
   author={Savin, Gordan},
   title={Unipotent representations of $G_2$ arising from the minimal
   representation of $D_4^E$},
   journal={J. Reine Angew. Math.},
   volume={500},
   date={1998},
   pages={65--81},
%   issn={0075-4102},
%   review={\MR{1637485}},
}

\bib{HII}{article}{
author={Hiraga, Kaoru},
author={Ichino, Atsushi},
author={Ikeda, Tamotsu},
title={Formal degrees and adjoint $\gamma$-factors},
journal={Journal of the American Mathematical Society},
volume={21},
date={2008},
pages={283--304},
}

\bib{KMSW:Unitary}{article}{
	author = {Kaletha, Tasho},
	author={Minguez, Alberto}, 
	author={Shin, Sug Woo},
	author={White, Paul-James},
	note= {\url{https://arxiv.org/abs/1409.3731}},
	title = {Endoscopic Classification of Representations: Inner Forms of Unitary Groups},
	date = {2014}
}

\bib{KS}{book}{
   author={Kashiwara, Masaki},
   author={Schapira, Pierre},
   title={Sheaves on manifolds},
   series={Grundlehren der Mathematischen Wissenschaften [Fundamental
   Principles of Mathematical Sciences]},
   volume={292},
   note={With a chapter in French by Christian Houzel;
   Corrected reprint of the 1990 original},
   publisher={Springer-Verlag, Berlin},
   date={1994},
   pages={x+512},
   isbn={3-540-51861-4},
%   review={\MR{1299726}},
}

\bib{Kazhdan-Lusztig}{article}{
author={Kazhdan, David},
author={Lusztig, George},
title={Proof of the Deligne-Langlands conjecture for Hecke algebras},
journal={Invent. Math.},
volume={87},
date={1987},
pages={153--215},
}

\bib{Involutions}{book}{
   author={Knus, Max-Albert},
   author={Merkurjev, Alexander},
   author={Rost, Markus},
   author={Tignol, Jean-Pierre},
   title={The book of involutions},
   series={American Mathematical Society Colloquium Publications},
   volume={44},
   note={With a preface in French by J. Tits},
   publisher={American Mathematical Society, Providence, RI},
   date={1998},
   pages={xxii+593},
   isbn={0-8218-0904-0},
%   review={\MR{1632779}},
   doi={10.1090/coll/044},
}

\bib{Lusztig:Intersectioncohomology}{article}{
 author={Lusztig, George},
title={Intersection cohomology complexes on a reductive group},
journal={Invert. Math.},
volume={75},
date={1984},
pages={205--272},
}

\bib{Lusztig:Cuspidal2}{incollection}{
	Author = {Lusztig, George},
	Booktitle = {Representations of groups ({B}anff, {AB}, 1994)},
%	Date-Added = {2018-03-31 19:23:16 +0000},
%	Date-Modified = {2018-03-31 21:01:34 +0000},
%	Mrclass = {22E50 (20G05)},
%	Mrnumber = {1357201},
%	Mrreviewer = {Bhama Srinivasan},
	Note = {With errata for Part I [Inst. Hautes \'Etudes Sci. Publ. Math. No. 67 (1988), 145--202},
	Pages = {217--275},
	Publisher = {Amer. Math. Soc., Providence, RI},
	Series = {CMS Conf. Proc.},
	Title = {Cuspidal local systems and graded {H}ecke algebras. {II}},
	Volume = {16},
	Year = {1995}}

\bib{Lusztig:Classification1}{article}{
   author={Lusztig, George},
   title={Classification of unipotent representations of simple $p$-adic
   groups},
   journal={Internat. Math. Res. Notices},
   date={1995},
   number={11},
   pages={517--589},
%   issn={1073-7928},
%   review={\MR{1369407}},
%   doi={10.1155/S1073792895000353},
}

\bib{Lusztig:Classification2}{article}{
   author={Lusztig, George},
   title={Classification of unipotent representations of simple $p$-adic
   groups. II},
   journal={Represent. Theory},
   volume={6},
   date={2002},
   pages={243--289},
%   review={\MR{1927955}},
 %  doi={10.1090/S1088-4165-02-00173-5},
}

\bib{Mok:Unitary}{article}{
   author={Mok, Chung Pang},
   title={Endoscopic classification of representations of quasi-split
   unitary groups},
   journal={Mem. Amer. Math. Soc.},
   volume={235},
   date={2015},
   number={1108},
   pages={vi+248},
   issn={0065-9266},
   isbn={978-1-4704-1041-4},
   isbn={978-1-4704-2226-4},
%   review={\MR{3338302}},
%   doi={10.1090/memo/1108},
}
				
\bib{Muic}{article}{
   author={Mui\'{c}, Goran},
   title={The unitary dual of $p$-adic $G_2$},
   journal={Duke Math. J.},
   volume={90},
   date={1997},
   number={3},
   pages={465--493},
   issn={0012-7094},
%   review={\MR{1480543}},
%   doi={10.1215/S0012-7094-97-09012-8},
}

\bib{PR}{book}{
   author={Platonov, Vladimir},
   author={Rapinchuk, Andrei},
   title={Algebraic groups and number theory},
   series={Pure and Applied Mathematics},
   volume={139},
   note={Translated from the 1991 Russian original by Rachel Rowen},
   publisher={Academic Press, Inc., Boston, MA},
   date={1994},
   pages={xii+614},
   isbn={0-12-558180-7},
%   review={\MR{1278263}},
}

\bib{Ree}{article}{
   author={R. Ree},
   title={A Family of Simple Groups associated with the Simple Lie Algebra of Type $(G_2)$},
   journal={Am. J. Math},
   volume={83},
   date={1961},
   number={3},
   pages={432--462},
  % issn={0012-7094},
%   review={\MR{1480543}},
%   doi={10.1215/S0012-7094-97-09012-8},
}

\bib{Reeder}{article}{
   author={Reeder, Mark},
   title={On the Iwahori-spherical discrete series for $p$-adic Chevalley
   groups; formal degrees and $L$-packets},
   journal={Ann. Sci. \'{E}cole Norm. Sup. (4)},
   volume={27},
   date={1994},
   number={4},
   pages={463--491},
%   issn={0012-9593},
%   review={\MR{1290396}},
}

\bib{Reedergeneric}{article}{
author={Reeder, Mark},
title={Whittaker models and unipotent representations of $p$-adic groups},
journal={Math. Ann.},
volume={308},
date={1997},
pages={587-592}
}

\bib{Reederformaldegree}{article}{
author={Reeder, Mark},
title={Formal degrees and $L$-packets of unipotent discreate series representations of excpetional $p$-adic groups, with an appendix by Frank L\"{u}beck},
journal={J. Reine Angew. Math.},
volume={520},
date={2000},
pages={37-93}
}

\bib{Renard}{book}{
   author={Renard, David},
   title={Repr\'{e}sentations des groupes r\'{e}ductifs $p$-adiques},
   language={French},
   series={Cours Sp\'{e}cialis\'{e}s [Specialized Courses]},
   volume={17},
   publisher={Soci\'{e}t\'{e} Math\'{e}matique de France, Paris},
   date={2010},
   pages={vi+332},
   isbn={978-2-85629-278-5},
%   review={\MR{2567785}},
}

\bib{Schurmann}{book}{
   author={Sch\"{u}rmann, J\"{o}rg},
   title={Topology of singular spaces and constructible sheaves},
   series={Instytut Matematyczny Polskiej Akademii Nauk. Monografie
   Matematyczne (New Series) [Mathematics Institute of the Polish Academy of
   Sciences. Mathematical Monographs (New Series)]},
   volume={63},
   publisher={Birkh\"{a}user Verlag, Basel},
   date={2003},
   pages={x+452},
   isbn={3-7643-2189-X},
%   review={\MR{2031639}},
%   doi={10.1007/978-3-0348-8061-9},
}

\iffalse
\bib{Soergel}{article}{
   author={Soergel, Wolfgang},
   title={Langlands' philosophy and Koszul duality},
   conference={
      title={Algebra---representation theory},
      address={Constanta},
      date={2000},
   },
   book={
      series={NATO Sci. Ser. II Math. Phys. Chem.},
      volume={28},
      publisher={Kluwer Acad. Publ., Dordrecht},
   },
   date={2001},
   pages={379--414},
%   review={\MR{1858045}},
}
\fi

\bib{Solleveld}{article}{
author={Solleveld, Maarten},
title={A local Langlands correspondence for unipotent representations},
journal={arXiv:1806.11357v2},
date={2019},
}

\bib{Vogan:Langlands}{article}{
   author={Vogan, David A., Jr.},
   title={The local Langlands conjecture},
   conference={
      title={Representation theory of groups and algebras},
   },
   book={
      series={Contemp. Math.},
      volume={145},
      publisher={Amer. Math. Soc., Providence, RI},
   },
   date={1993},
   pages={305--379},
%   review={\MR{1216197}},
%   doi={10.1090/conm/145/1216197},
}

\end{biblist}
\end{bibdiv}

\end{document}